	\tikzstyle{decision} = [diamond, draw, fill=blue!20, 
	\tikzstyle{block} = [rectangle, draw, fill=blue!20, 
	\tikzstyle{block2} = [rectangle, draw, fill=yellow!20, 
	\tikzstyle{line} = [draw, -latex']
	\tikzstyle{cloud} = [draw, ellipse,fill=red!20, node distance=3cm,
\tikzstyle{intt}=[draw,text centered,minimum size=6em,text width=5.25cm,text height=0.34cm]
\tikzstyle{intl}=[draw,text centered,minimum size=2em,text width=2.75cm,text height=0.34cm]
\tikzstyle{int}=[draw,minimum size=2.5em,text centered,text width=3.5cm]
\tikzstyle{intg}=[draw,minimum size=3em,text centered,text width=6.cm]
\tikzstyle{sum}=[draw,shape=circle,inner sep=2pt,text centered,node distance=3.5cm]
\tikzstyle{summ}=[drawshape=circle,inner sep=4pt,text centered,node distance=3.cm]
\def\e{\eqref}
\def\i1n{i=1,\cdots,n}
\def\j1n{j=1,\cdots,n}
\def\ij1n{i,j=1,\cdots,n}
\def\R{\mathbb R}
\def \i{\mathrm i}
 \numberwithin{equation}{section}
\theoremstyle{definition}
\def\R{{\bf R}}
\def\e{{\epsilon}}
\newtheorem{thm}{Theorem}[section]
\newtheorem{cor}{Corollary}[section]
\newtheorem{lem}{Lemma}[section]
\newtheorem{rem}{Remark}[section]
\begin{document}

\begin{CJK*}{GB}{gbsn}
\title{Sharp lifespan estimate for the compressible Euler system with critical time-dependent damping in $\R^2$}


\author{Lv Cai \thanks{School of Mathematical Sciences, Fudan University, Shanghai 200433, China (18110180023@fudan.edu.cn).}
\and Ning-An Lai \thanks{School of Mathematical Sciences, Zhejiang Normal University, Jinhua 321004, China (ninganlai@zjnu.edu.cn).}
\and Wen-Ze Su \thanks{School of Mathematical Sciences, Fudan University, Shanghai 200433, China (19110180013@fudan.edu.cn).}}	

\maketitle

\begin{abstract}

This paper concerns the long time existence to the smooth solutions of the compressible Euler system with critical time dependent damping in $\R^2$. We establish the sharp lifespan estimate from below, with respect to the small parameter of the initial perturbation. For this end, the vector fields $\widehat{Z}$ (defined below) are used instead of the usual one $Z$, to get better decay for the linear error terms. This idea may also apply to the long time behavior study of nonlinear wave equations with time-dependent damping.

\end{abstract}

\begin{keywords}

\end{keywords}

\section{Introduction}

\par
In this paper, we are concerned with the maximum existence time (lower bound of lifespan estimate) to the smooth solutions of the Cauchy problem of compressible Euler equations with time-dependent damping in $\R^2$:
\begin{equation} \label{Euler eqns}
	\begin{aligned}
		\begin{split}
			\left\lbrace
			\begin{array}{lr}
				\partial_{t} \rho + \nabla \cdot (\rho u)=0,~~~(t, x)\in [0, T]\times\R^2,\\
				\partial_{t} (\rho u) + \nabla \cdot (\rho u \otimes u ) + \nabla p = - \frac{\mu}{1+t} \rho u ,~(t, x)\in [0, T]\times\R^2,\\
				u(0,x) = \epsilon u_{0}(x), \ \rho (0,x) = \bar{\rho} + \epsilon \rho_{0}(x),~~~~~~x\in \R^2.
			\end{array}	
			\right. 
		\end{split}	
	\end{aligned}
\end{equation}
 The unknown functions are the strictly positive fluid density $ \rho: [0,T]  \times \R^{2} \rightarrow \R $, the velocity field $ u = ( u_{1}, u_{2} ): [0,T]  \times \R^{2} \rightarrow \R^{2} $, and the pressure $ p: [0,T]  \times \R^{2} \rightarrow \R $. The positive constant $\mu$ describes the effect of the damping. Moreover, we assume that the pressure $p$ satisfies the $\gamma$-law
\begin{equation} 
	p = p( \rho ) = \frac{1}{\gamma} \rho^{\gamma},
	\label{pressure law}
\end{equation}
where $\gamma > 1 $ is the adiabatic index.

\subsection{Review of the literature}
The compressible Euler equations with time-dependent damping term describe the evolution of a compressible fluid subject to both the conservation laws and the dissipation of energy due to damping. These equations are widely used in the study of fluid dynamics, and also have applications in various fields, including aerodynamics, astrophysics, and combustion. Specifically, the compressible Euler equations form the fundamental system governing the motion of compressible fluids. However, in realistic scenarios, the presence of various damping effects can significantly influence the behavior of the fluid motion. We try to give a comprehensive review of the literature on the following system:
\begin{equation} \label{1.2}
	\begin{aligned}
		\begin{split}
			\left\lbrace
			\begin{array}{lr}
				\partial_{t} \rho + \nabla \cdot (\rho u)=0, \ \ (t,x) \in [0,T] \times \R^{n} \\
				\partial_{t} (\rho u) + \nabla \cdot (\rho u \otimes u ) + \nabla p = - \frac{\mu}{(1+t)^{\lambda}} \rho u , \ \ (t,x) \in [0,T] \times \R^{n} \\
				u(0,x) = \epsilon u_{0}(x), \ \rho (0,x) = \bar{\rho} + \epsilon \rho_{0}(x) , \ \ x \in \R^{n}
			\end{array}	
			\right. 
		\end{split}	
	\end{aligned}
\end{equation}
where $ n $ is the spatial dimension, $ \frac{\mu}{(1+t)^{\lambda}} $ denotes the time-dependent frictional coefficient, with $ \mu \geq 0 $ and $ \lambda \geq 0 $ representing the strength and decay of the damping respectively.

\subsubsection{Euler system without damping}
Setting $ \mu =0 $, the system $ ( \ref{1.2} ) $ reduces to the classic Euler equations, a set of quasilinear hyperbolic equations, which is fundamental in fluid dynamics to describe the adiabatic and inviscid flow of an ideal fluid. Generally speaking, the compressible Euler equations develop shock waves in finite time for general initial data (see \cite{CH09, CH07, Lax64, Lax72, PS06, Sideris1985, Xin98} and the references therein). In the following, we will use $C,C_{1},C_{2}$ to denote
generic positive constants independent of $\epsilon$, the value of which may change from line to line.

\begin{table}[htb]
\centering
		\caption{Euler system without damping}
		\label{table:1}
	\begin{tabular}{|c|c|c|}
	\hline  \textbf{Author} & \textbf{Result} & \textbf{Remark} \\
	\hline  Sideris\cite{Sideris1985} & 3D, blow-up results for ideal fluid & \thead{with large data and \\ small initial perturbations} \\
	\hline  $^{*}$Sideris\cite{Sideris1991}\cite{Sideris1992}& 3D, $ \exp{(C_{1}\epsilon^{-1})} \leq T_{\epsilon} \leq \exp{(C_{2}\epsilon^{-2})} $ & \thead{lower bound:$\nabla \times u_{0} =0$; \\ upper bound:$\gamma=2$} \\
	\hline  Rammaha\cite{Rammaha} & 2D, formation of singularity ($ T_{\epsilon} < \infty $) & $T_{\epsilon}\le C\epsilon^{-2}$ for $\gamma=2$  \\
	\hline  Alinhac\cite{Alinhac} & 2D, $ \lim\limits_{\epsilon \rightarrow \infty} \epsilon^{2} T_{\epsilon} = C $ &  rotationally invariant data \\
	\hline  $^{**}$Jin and Zhou\cite{JINZHOU} & $ T_{\epsilon} \leq C \epsilon^{-n} (n=1,2), \exp{(C\epsilon^{-1})} (n=3) $ & for $\gamma=2$ \\
	\hline Lai and  Schiavone \cite{LS23} & $ T_{\epsilon} \leq C \epsilon^{-n} (n=1,2), \exp{(C\epsilon^{-1})} (n=3) $ & for $\gamma>1$ \\
	\hline
	\end{tabular}
\end{table}

\begin{rem}
\textbf{(1)} ($^{*}$): In particular, if the vorticity vanishes, Sideris extended the generic lower bound
\begin{equation} 
	T_{\epsilon} \geq C \epsilon^{-1},
\end{equation}
which typically holds for symmetric hyperbolic systems in all space dimensions
(see \cite{KATO}\cite{Majda} for the general theory).

\textbf{(2)} ($^{**}$): The upper bound of the lifespan estimate in $3$-$D$ improves the one in ($^{*}$), which in turn shows the optimality in $3$-$D$ when $\gamma=2$. 

On the other hand, putting together the above results, we find that the lower bounds corresponding to ($^{**}$) hold for any $\gamma>1$ and vanishing vorticity data, namely
\begin{equation} 
	\begin{aligned}
		T_{\epsilon} \geq
		\begin{split}
			\left\lbrace
			\begin{array}{lr}
				C \epsilon^{ -1 }, &  n=1,\\
				C \epsilon^{ -2 }, &  n=2,\\
				\exp{(C\epsilon^{-1})}, & n=3.
			\end{array}	
			\right. 
		\end{split}	
	\end{aligned}
\end{equation}

\end{rem}

\subsubsection{Euler system with damping}
Let us turn our attention to the corresponding problem in the presence of a damping term. Wang and Yang in \cite{WY01} proved global existence in $\R^{n} (n\ge 1)$ for the system $ ( \ref{1.2} ) $ with $\mu > 0$, $\lambda = 0$ and small initial perturbation of some constant state -- see also a similar result by Sideris, Thomases and Wang \cite{TCSBTDW} in $\R^{3}$.

If $\lambda>0$, then system \eqref{1.2} is called compressible Euler system with time-dependent damping. It attracts much attention and there is extensive literature on the long time behavior for this problem, see \cite{CYZZ18,CYZ23,CLLMZ20,GLM20,GHW21,GHJW23,HP03,JM230,JM23,SY21,SY22,SWY23} and references therein. In particular, Hou, Witt and Yin in \cite{HOUIWYIN} and Hou and Yin in \cite{HOUYIN} studied the system $ ( \ref{1.2} ) $ in $2$-$D$ and $3$-$D$, proving:
\begin{table}[htb]
	\centering
	\caption{Euler system with time-dependent damping in $\R^2$ and $\R^3$ (\cite{HOUIWYIN, HOUYIN})}
	\label{table:2}
	\begin{tabular}{|c|c|}
		\hline  \textbf{Condition} & \textbf{Result}  \\
		\hline  $0 \leq \lambda <1$, $\mu > 0$ & global existence  \\
		\hline  $\lambda =1 $, $\mu > 3-n$, $\nabla\times u_0$ & global existence \\
		\hline  $\lambda >1 $, $\mu > 0$ & finite time blow-up  \\
		\hline   $\lambda =1 $, $0 < \mu \leq 1$, $n=2$ & finite time blow-up  \\
		\hline
	\end{tabular}
\end{table}

Also, they established the upper bound of the lifespan estimate
\begin{equation} \label{1.7}
	T_{\epsilon} \leq \exp{(C \epsilon^{-2})}
\end{equation}
for $\gamma=2$.

In \cite{Pan1} and \cite{Pan2}, Pan studied the corresponding Cauchy problem in $1$-$D$, showing that (Table 3):
\begin{table}[htb]
	\centering
	\caption{Euler system with time-dependent damping in $1$-$D$ (\cite{Pan1,Pan2})}
	\label{table:3}
	\begin{tabular}{|c|c|}
		\hline  \textbf{Condition} & \textbf{Result}  \\
		\hline  $0 \leq \lambda <1$, $\mu > 0$ & there exists a global solution \\
		\hline  $\lambda =1 $, $\mu > 2$ & there exists a global solution\\
		\hline  $\lambda =1 $, $0 \leq \mu \leq 2$ & the $C^{1}$-solutions will blow up in finite time  \\
		\hline   $\lambda >1 $, $\mu \geq 0$ & the $C^{1}$-solutions will blow up in finite time  \\
		\hline
	\end{tabular}
\end{table}

Moreover, in the latter case, the same lifespan estimate as $ ( \ref{1.7} ) $ was established in \cite{Pan1} for $\gamma=2$. From these results, we may conclude that the point $(\lambda , \mu) = (1, 3-n)$ is the threshold for global existence and finite time blow up for the Cauchy problem \eqref{1.2} in $\R^n (n=1, 2, 3)$. And this is the reason we call "critical" time-dependent damping in the title. We should mention that the critical time-dependent damping admits significant physical background. Actually, nonlinear wave equations with critical damping for $\theta$ and $u$ (see \eqref{wave eqn of theta} and \eqref{wave eqn of u}) will be derived in section $2$, which are closely related to the generalized nonlinear Tricomi equations, appearing in gas dynamic problems, connected to gas flows with nearly sonic speed, see \cite{HWY17,LS22} and references therein. For more introduction to the nonlinear wave equations with critical damping, we refer to \cite{LTW17,IS18,LST20} and references therein.

In \cite{Sugiyama}, Sugiyama studied the system $ ( \ref{1.2} ) $ in $\R$ in Lagrangian coordinate, which is the so-called p-system (with damping)
\begin{equation} 
	\begin{aligned}
		\begin{split}
			\left\lbrace
			\begin{array}{lr}
				v_{t} - u_{x} = 0,\\
				u_{t} + p_{x} + a(t,x) v= 0,\\
				u(0,x) = \epsilon u_{0}(x), v(0,x) = 1+ \epsilon v_{0}(x).
			\end{array}	
			\right. 
		\end{split}	
	\end{aligned}
\end{equation}
where $ p \equiv p(v) = \frac{v^{- \gamma}}{\gamma} $ and $v = \frac{1}{\rho}$ is the specific volume. For the space-independent damping $a(t,x) = \frac{\mu}{(1+t)^{\lambda}}$, the sharp lifespan estimate
\begin{equation} \label{1dlf}
	\begin{aligned}
		T_{\epsilon} \thicksim
		\begin{split}
			\left\lbrace
			\begin{array}{lr}
				C \epsilon^{ -1 }, &  \lambda > 1 \text{ and } \mu \geq 0,\\
				C \epsilon^{ -\frac{2}{2-\mu} }, &  \lambda = 1 \text{ and } 0 \leq \mu <2,\\
				\exp{(C\epsilon^{-1})}, & \lambda = 1 \text{ and } \mu =2
			\end{array}	
			\right. 
		\end{split}	
	\end{aligned}
\end{equation}
was established.

From the sharp lifespan estimate \eqref{1dlf} in $1$-$D$, it is reasonable to believe 
the lifespan should depend on the constant $\mu$ in the blow up case, if the decay rate of the damping coefficient is critical ($\lambda=1$). Actually, the second author and Schiavone \cite{LS23} introduced the Orlicz spaces techniques
and obtained the following upper bound of lifespan estimate for \eqref{1.2} with $\lambda=1, \gamma>1$ and $n=1, 2$
\begin{equation}\label{eq:thm2}
	T_\e \le
	\left\{
	\begin{aligned}
		&C\e^{-\frac{2}{3-n-\mu}}
		&&\text{if $\mu<3-n$,}
		\\
		&\exp\left(C \e^{-1}\right)
		&&\text{if $\mu=3-n$,}
	\end{aligned}
	\right.
\end{equation}
where the case of $n=1$ coincide with the result in \eqref{1dlf}. It is natural to verify the optimality of the lifespan \eqref{eq:thm2} when $n=2$, and this is the goal of the present paper. The sharpness has been proved for $n=1$ in \cite{Sugiyama} by employing the characteristic method, which seems not traceable in high dimensional case. To our end, we will use vector field method instead. We will confront two main difficulties: the first one comes from the linear damping term, which will be overcome by using $\widehat{Z}$ instead of $Z$, to get better decay for the linear error term. The second one is due to the quasilinear nonlinear terms, which will be handled by employing the symmetric structure and the equations. The strategy of our proof can be sketched in Flow chart $1$ below.

\begin{figure}[!ht]
	\centering
\caption{Flow chart 1}
	\begin{tikzpicture}[ >=latex', auto]
		
		\node [intg] (Reformulate) { Reformulate a quasilinear system for $ \theta,u $};
		\node [intg, below of = Reformulate, node distance = 2cm] (Assume) { Assume $ E_{5}+ \eta \leq M \epsilon$};
		\node [int,  below of = Assume, node distance = 3cm] (semilinear) {Estimate the semilinear terms by $\eta, E_{5} $};
		\node [int,  left of = semilinear, node distance = 4cm] (Zalpha) { Apply $\hat{Z}^{\alpha}$ to the system instead of $Z^{\alpha}$, the linear error terms have better decay};
		\node [block, left of = Zalpha, node distance = 4cm] (Section4) { Section 4};
		\node [int, right of = semilinear, node distance = 4cm] (quasilinear) {Estimate the quasilinear terms by $\eta , \chi_{5}, \tilde{\chi}_{5}, E_{5} $};
		\node [int, below of = semilinear, node distance = 3cm] (auxiliary) {Control $\eta , \chi_{5}, \tilde{\chi}_{5}$ by $E_{5}$};
		\node [block, below of = Section4, node distance = 2cm] (Section5) { Section 5};
		\node [int, below of = auxiliary, node distance = 2cm] (Eu) {Control $E_{5}[u](t)$ by $E_{5}[\theta](t)$};
		\node [block, below of = Section5, node distance = 2cm] (Section6) { Section 6};
		\node [int, below of = Eu, node distance = 2cm] (Etheta) {Control $E_{5}[\theta](t)$ by multiplier $ m_{\mu}^{\alpha} $};
		\node [block, below of = Section6, node distance = 2cm] (Section7) { Section 7};
		
		
		\path [line] (Reformulate) -- node {bootstrap argument}(Assume);
		\path [line] (Assume) -- ($(Assume.south)+(0,-0.75)$) -| (quasilinear) node[above,pos=0.25] {} ; 
		\path [line] (Assume) -- ($(Assume.south)+(0,-0.75)$) -- (semilinear) node[above,pos=0.25] {};
		\path [line] (Assume) -- ($(Assume.south)+(0,-0.75)$) -| (Zalpha) node[above,pos=0.25] {} ; 
		\path [line] (Section4) -- (Zalpha);
		\path [line] (Zalpha) |- (auxiliary);
		\path [line] (semilinear) -- (auxiliary);
		\path [line] (quasilinear) |- (auxiliary);
		\path [line] (Section5) -- (auxiliary);
		\path [line] (auxiliary) -- (Eu);
		\path [line] (Eu) -- (Etheta);
		\path [line] (Section6) -- (Eu);
		\path [line] (Section7) -- (Etheta);
		\path [line] (Zalpha) |- (Etheta);
		
	\end{tikzpicture}
\end{figure}

Our main result is precisely stated as follows. 

\begin{thm} \label{thm1}
	Consider the system \eqref{Euler eqns}\eqref{pressure law} with $0 < \mu \leq 1 $. If the perturbations $(\rho_0,u_0)$ satisfy
	\begin{itemize}
		\item $(\rho_0,u_0)\in C_c^\infty(B_R(0))$, where $B_R(0)=\{x\in\R^2:|x|<R\}$;
		\item $ \operatorname{curl} u_{0}(0,x) = 0$ holds for any $x\in\mathbb{R}^2$;
	\end{itemize}
	then there exist constants $0<\epsilon_0\ll1$ and $C>0$, such that for all $ \epsilon\in(0,\epsilon_0] $, \eqref{Euler eqns}\eqref{pressure law} admits a classical solution $ (\rho,u) \in C^\infty ([ 0,T_{\epsilon} ]\times\R^2)$, where
	\begin{equation} 
		\begin{aligned}
			T_{\epsilon} =
			\begin{split}
				\left\lbrace
				\begin{array}{lr}
					C \epsilon^{ - \frac{2}{1-\mu} }, \ \ 0 < \mu < 1 \\
					C \exp ( \epsilon^{ - 1 } ) , \ \ \mu = 1.
				\end{array}	
				\right. 
			\end{split}	
		\end{aligned}
	\end{equation}
	The quantities $\epsilon_0,C$ only depend on the perturbations $(u_0,\rho_0)$ and the constants $\mu,\gamma$. Moreover, we have $ {\rho} >0 $ on $[ 0,T_{\epsilon} ]\times\R^2$. 
	
\end{thm}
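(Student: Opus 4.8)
The plan is a vector-field energy bootstrap on a reformulated system of nonlinear wave equations with critical time-dependent damping, in which the \emph{conjugated} vector fields $\widehat Z$ replace the usual $Z$ precisely so that the commutators with the damping term become time-integrable.

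\textbf{Step 1: reformulation.} I would introduce the sound speed $c(\rho)=\sqrt{p'(\rho)}=\rho^{(\gamma-1)/2}$ and the unknown $\theta=\tfrac{2}{\gamma-1}\big(c(\rho)-c(\bar\rho)\big)$ (an enthalpy-type perturbation). Taking the curl of the momentum equation in its non-conservative form $\partial_t u+(u\cdot\nabla)u+\nabla h(\rho)=-\tfrac{\mu}{1+t}u$ gives, for the scalar vorticity $\omega=\operatorname{curl}u$, the transport equation $\partial_t\omega+(u\cdot\nabla)\omega+\omega\,\nabla\!\cdot u=-\tfrac{\mu}{1+t}\omega$; since $\omega(0,\cdot)=\operatorname{curl}u_0=0$, integration along particle paths forces $\omega\equiv0$, i.e.\ the flow stays irrotational for all times. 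One then checks that $(\theta,u)$ solves the symmetric first-order system $\partial_t\theta+\bar c\,\nabla\!\cdot u=f_1$, $\partial_t u+\bar c\,\nabla\theta+\tfrac{\mu}{1+t}u=f_2$ with $f_1,f_2$ quadratic and symmetric, and — differentiating once more and using $\nabla(\nabla\!\cdot u)=\Delta u$ (irrotationality) — that $\theta$ and $u$ each satisfy a quasilinear wave equation with critical damping (with $\Box_{\bar c}:=\partial_t^2-\bar c^2\Delta$),
\[
\Box_{\bar c}\theta+\frac{\mu}{1+t}\partial_t\theta=N_\theta,\qquad
\Box_{\bar c}u+\frac{\mu}{1+t}\partial_t u=\frac{\mu}{(1+t)^2}u+N_u ,
\]
the extra short-range term $\tfrac{\mu}{(1+t)^2}u$ being the reason the $u$-equation must be handled after the $\theta$-equation. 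This is the box \textbf{Reformulate a quasilinear system for $\theta,u$} of Flow chart 1; after rescaling we may take $\bar c=1$.

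\textbf{Step 2: conjugated fields and energy.} I would use the invariant fields $Z\in\{\partial_t,\partial_1,\partial_2,\Omega,S\}$ of $\Box_{\bar c}$ on $\R^{1+2}$ ($S=t\partial_t+x\cdot\nabla$; no Lorentz boosts, since the damping breaks boost invariance). Applied to $\theta$ or $u$, such a $Z$ generates the linear commutator $[\tfrac{\mu}{1+t}\partial_t,Z]$, which for $Z=S$ equals $-\tfrac{\mu(1+2t)}{(1+t)^2}\partial_t$ — a term of strength $(1+t)^{-1}$ that is just non-integrable in time and, appearing as a source, would force polynomial energy growth (the first difficulty mentioned in the introduction). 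To remove it I would conjugate, $\widehat Z:=(1+t)^{-\mu/2}\,Z\,(1+t)^{\mu/2}$, so that $\widehat\partial_i=\partial_i$, $\widehat\Omega=\Omega$, $\widehat\partial_t=\partial_t+\tfrac{\mu}{2(1+t)}$, $\widehat S=S+\tfrac{\mu t}{2(1+t)}$, and $w:=(1+t)^{\mu/2}v$ (for $v=\theta$ or $u$) solves $\Box_{\bar c}w+\tfrac{\mu(2-\mu)}{4(1+t)^2}w=(1+t)^{\mu/2}N$, a wave equation perturbed only by the short-range potential $(1+t)^{-2}$; now the linear commutators produce only terms of strength $(1+t)^{-2}$, which are time-integrable against the energy and hence harmless. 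With these fields I would define, for $k=5$, the energy $E_k[v]$ built from $\widehat Z^{\alpha}v$, $|\alpha|\le k$, together with the damping dissipation and the time-weight $(1+t)^{\mu/2}$ compensating the enhanced damping decay (equivalently, the standard wave energy of $w=(1+t)^{\mu/2}v$), and the $\langle t-r\rangle$-weighted ghost quantities $\eta,\chi_5,\widetilde\chi_5$ recording the good-derivative gain; Klainerman--Sobolev in $2$-D then yields the sharp pointwise decay $|\partial\widehat Z^{\alpha}v(t,x)|\lesssim (1+t)^{-\frac{1+\mu}{2}}\langle t-r\rangle^{-1/2}E_{|\alpha|+2}(t)$ for $|\alpha|\le 3$.

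\textbf{Step 3: bootstrap, nonlinear estimates, closing.} Assuming $E_5+\eta\le M\epsilon$ on $[0,T]$, I would estimate the semilinear quadratic terms in $N_\theta,N_u$ by $\eta$ and $E_5$, and the quasilinear terms (Section 5) by $\eta,\chi_5,\widetilde\chi_5,E_5$, in each case exploiting the symmetric/divergence structure and substituting the equations themselves to convert a transversal second derivative into a tangential one modulo lower-order and source terms, so that only "good" pairings meet the top-order energy; the auxiliary estimates then bound $\eta,\chi_5,\widetilde\chi_5$ by $E_5$. Running the multiplier identity for the damped wave equation with the multi-index-adapted time weight $m_\mu^{\alpha}$ (Section 7) and controlling $E_5[u]$ by $E_5[\theta]$ (Section 6) gives
\[
E_5(t)^2\ \lesssim\ \epsilon^2+\int_0^t\big\|\partial(\theta,u)(s)\big\|_{L^\infty_x}\,E_5(s)^2\,ds\ \lesssim\ \epsilon^2+C\epsilon\int_0^t(1+s)^{-\frac{1+\mu}{2}}\,E_5(s)^2\,ds ,
\]
using the decay of Step 2 under the bootstrap hypothesis. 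Gronwall yields $E_5(t)^2\lesssim\epsilon^2\exp\!\big(C\epsilon\int_0^t(1+s)^{-\frac{1+\mu}{2}}ds\big)$, and since $\int_0^t(1+s)^{-\frac{1+\mu}{2}}ds\sim t^{\frac{1-\mu}{2}}$ for $\mu<1$ and $\sim\log(1+t)$ for $\mu=1$, for $M$ large this stays $\le\tfrac14 M^2\epsilon^2$ as long as $t\le T_\epsilon$, which strictly improves the bootstrap. Together with local well-posedness and the continuation criterion for quasilinear symmetric hyperbolic systems (finite propagation speed keeps the solution a smooth, compactly supported perturbation at each time, so the $H^5$-type bound suffices to continue), this produces the classical solution on $[0,T_\epsilon]\times\R^2$; and $\|\rho-\bar\rho\|_{L^\infty}\lesssim\epsilon$ by Sobolev embedding, so $\rho>0$ there.

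\textbf{Main obstacle.} The crux is the top-order energy estimate for the quasilinear terms: the compressible Euler nonlinearity does not satisfy the null condition, so a careless estimate loses the extra decay and yields only an $\epsilon^{-1}$-type lifespan. One must combine the symmetric structure of $N_\theta,N_u$, the substitution of the equations, the $\langle t-r\rangle$-weighted quantities $\chi_5,\widetilde\chi_5$, and the damping-tailored multiplier $m_\mu^{\alpha}$ so that the quasilinear contribution is genuinely of size $\|\partial(\theta,u)\|_{L^\infty}E_5^2$ with the sharp $(1+t)^{-(1+\mu)/2}$ rate and no logarithmic loss — the borderline exponent $\tfrac{1+\mu}{2}$ hitting $1$ exactly at $\mu=1$, which is what turns the polynomial lifespan $\epsilon^{-2/(1-\mu)}$ into the exponential $\exp(c\,\epsilon^{-1})$.
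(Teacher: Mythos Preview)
Your overall plan matches the paper closely: reformulate in $(\theta,u)$, derive damped wave equations, run a vector-field bootstrap with auxiliary $\langle t-r\rangle$-weighted quantities $\eta,\chi_5,\tilde\chi_5$, control $E_5[u]$ by $E_5[\theta]$ via the first-order system, and close with a weighted multiplier and Gronwall to extract the critical rate $(1+t)^{-(1+\mu)/2}$. The final Gronwall step and the resulting lifespan are exactly as in the paper.

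The one genuine difference is the mechanism for taming the damping commutator. You remove the first-order damping by the Liouville substitution $w=(1+t)^{\mu/2}v$, obtaining $\Box w+\tfrac{\mu(2-\mu)}{4(1+t)^2}w=(1+t)^{\mu/2}N$ with only a short-range potential, and then commute the \emph{standard} Klainerman fields through; your conjugated $\widehat Z=(1+t)^{-\mu/2}Z(1+t)^{\mu/2}$ are $\mu$-dependent perturbations of $Z$. The paper instead keeps the damping and defines $Z=(\partial,S-1,\Omega)$, $\widehat Z=(\partial,S+1,\Omega)$ --- a $\mu$-\emph{independent} integer shift --- and exploits the exact identities $\widehat Z^\alpha\Box=\Box Z^\alpha$ and $\widehat Z_3\bigl(\tfrac{1}{1+t}\partial_t f\bigr)-\tfrac{1}{1+t}\partial_t Z_3 f=\tfrac{1}{(1+t)^2}\partial_t f$, so that applying $\widehat Z^\alpha$ to the equation produces the damped wave equation for $Z^\alpha\theta$ with linear error of size $(1+t)^{-2}$. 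The two routes are equivalent at the level of the energy: the paper's multiplier $m_\mu^\alpha=\tfrac{\mu}{2}(1+t)^{\mu-1}Z^\alpha\theta+(1+t)^\mu\partial_tZ^\alpha\theta$ is precisely $(1+t)^{\mu/2}\partial_t\bigl[(1+t)^{\mu/2}Z^\alpha\theta\bigr]$, i.e.\ the standard $\partial_t w$ multiplier in your variables. Your Liouville transform is conceptually cleaner and reduces to a textbook wave-with-potential setup, while the paper's shift trick keeps everything in the original variables and makes the cancellation algebraically explicit. Note also that the paper takes $\theta=\tfrac{1}{\gamma-1}(c^2-1)$ rather than your $\theta=\tfrac{2}{\gamma-1}(c-\bar c)$, but this is an inessential change of dependent variable.
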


\begin{rem}
	The initial data are small perturbations of constant states $ (\bar{\rho},0)$ with the background density $\bar\rho>0$. The "smallness" is quantified by
	the parameter $ \epsilon>0 $. Without loss of generality, we may assume $\bar{\rho}= 1$ and $\operatorname{supp}(\rho_0,u_0)\subset \overline{ B_{1/2}(0)}$. 
\end{rem}

\begin{rem}
	Theorem \ref{thm1} provides a lower bound $T_{max}> T_\epsilon$ for the lifespan $T_{max}$ of the solution, which is defined as the largest time $T$ such that $(\rho,u)\in C^\infty([0,T)\times\R^2)$ and $\rho>0$ on $[0,T)\times\R^2$. 
\end{rem}

\begin{rem}
	We define the local sound speed by $c=\sqrt{\partial p/\partial\rho }=\rho^{\frac{\gamma-1}{2}}$, and let $\bar c=\bar \rho^{\frac{\gamma-1}{2}}=1$ be the sound speed of the background solution. We will see that the support of $(\rho-1,u)$ evolves at a finite speed not exceeding $\bar c=1$. More precisely, we have
	\begin{equation}
		\operatorname{supp}(\rho-1,u)\subset \overline{B_{1/2+t}(0)}.
	\end{equation}
	The proof will be sketched in Lemma \ref{finite speed of propagation}. 
\end{rem}

The paper will be organized as follows. In section $2$ we reformulate the original system to a hyperbolic system, and then the local well-posedness, blow-up criterion and finite speed of propagation property are introduced. Also, the nonlinear wave equations with time-dependent damping for the local sound speed $\theta$ and velocity field $u$ are derived. Then some useful properties of the vector fields and inequalities are introduced. In section $3$ we reduce the proof to a bootstrap argument, and hence some necessary energy functionals are introduced. The linear error terms from the damping term, the semilinear terms and the quasilinear terms are estimated by the energy functionals respectively in section $4$. In section $5$, we show the auxiliary energies $\eta, \chi, \widetilde{\chi}$ are controlled by $E[\theta, u]$, while $E_5[u]$ is proved to be controlled by $E_5[\theta]$ in section $6$. In section $7$ we demonstrate the estimate for $E_5[\theta]$ by using a multiplier $m^\alpha_\mu$ and finally close the bootstrap argument. What is more, some useful lemmas are listed at the end of the paper, i.e. right before the references.

\section{Preliminaries}
In this section, we reformulate the damped Euler equations to a hyperbolic system by subtracting the background stationary state. Moreover, in order to apply the techniques from nonlinear wave equations, we deduce wave equations with time-dependent damping for both the local sound speed and the velocity field. 
\subsection{Reformulating as a hyperbolic system}
Let us first reformulate system $ (\ref{Euler eqns}) $ to a hyperbolic system. Define 
\begin{equation} 
	\begin{aligned}
		\theta \triangleq \frac{1}{\gamma -1} (\rho^{\gamma -1} -1 ) = \frac{1}{\gamma -1} (c^{2}(\rho) -1),
	\end{aligned}
\end{equation}
where $c=\sqrt{p^{\prime}(\rho) }=\rho^{\frac{\gamma-1}{2}}$ is the local sound speed. As mentioned above, we assume that $\bar{c} = c( \bar{\rho}) =1 $. The problem $ (\ref{Euler eqns}) $ can be rewritten as 
\begin{equation}\label{2}
	\left\{
	\begin{aligned}
		&\partial_t\theta+u\cdot\nabla\theta+\left(1+(\gamma-1)
		\theta\right)\nabla\cdot u=0,~~~~~(t, x)\in [0,T] \times \R^{2} ,
		\\
		&\partial_tu+ \frac{\mu}{1+t} u + u\cdot \nabla u+\nabla\theta=0,~~~~~(t, x)\in [0,T] \times \R^{2} ,\\
		&\theta(0,x)= \frac{1}{\gamma-1} [ ( 1 + \epsilon \rho_{0}(x) )^{ \gamma-1 } -1 ] \triangleq \epsilon \theta_{0}(x) + \epsilon^{2} g (x, \epsilon) ,\\
		& u(0,x)=\epsilon u_{0}(x),~~~x\in \R^{2}.
	\end{aligned}
	\right.
\end{equation}
The initial data of $\theta$ can be obtained by using Taylor expansion with integral remainder: 
\begin{equation} 
	\label{initial data of theta}
	\begin{aligned}
		& \theta_{0}(x) = \rho_{0}(x),\ g (x, \epsilon) = (\gamma-2)\rho_{0}^{2}(x)\int_{0}^{1}  ( 1 + \epsilon \sigma \rho_{0}(x) )^{ \gamma-3 }  ( 1 - \sigma) d \sigma.
	\end{aligned}
\end{equation}
Note that $ g (x, \epsilon)$ is smooth in $(x , \epsilon)$ and has compact support in $x$:
\begin{equation} 
	\begin{aligned}
		& g (x, \epsilon) \in C^{\infty} (\R^{2} \times [0,1]),\  \operatorname{supp} g(x, \epsilon) \subset \overline{B_{1/2}(0)} \times [0,1].
		\label{properties of g} 
	\end{aligned}
\end{equation}

\begin{rem}
	\label{vorticity free}
	Let $ \omega = \operatorname{curl} u = \partial_{1} u_{2} - \partial_{2} u_{1} $ be the vorticity. By applying $\operatorname{curl}$ to both sides of the second equation in \eqref{2}, one has
	\begin{equation} \label{equ of vorticity}
		\begin{aligned}
			\partial_t \omega + \frac{\mu}{1+t} \omega + u\cdot \nabla \omega + \omega \nabla \cdot u =0.
		\end{aligned}
	\end{equation}
	Thus $\omega$ satisfies a transport-type equation, with $-\left(\frac{\mu}{1+t}+\nabla\cdot u\right)\omega$ as its forcing term. This forcing term vanishes when $\omega=0$. Consequently, if $ \operatorname{curl} u_{0} = 0  $, then $ \operatorname{curl} u \equiv 0  $ holds as long as the smooth solution $( \theta,u )$ of \eqref{2} exists. 
\end{rem}

\subsubsection{Local theory for the reformulated system}
By applying Theorem 2.1 and Theorem 2.2 in \cite{Majda} to \eqref{Euler eqns}, one can obtain the local well-posedness and blow-up criterion for \eqref{Euler eqns}. If we translate this result to the system  \eqref{2} of $(u,\theta)$, we will arrive at the following theorems.  

\begin{thm} \label{LWP for theta,u}
	\textbf{[Local well-posedness]} Let $G=\mathbb{R}^2\times(-\frac{1}{\gamma-1},+\infty)$ and $G_1\subset\subset G$ be two open domains. Suppose $k \geq 3$ and the initial data of \eqref{2} satisfy
	\begin{itemize}
		\item $(u(0,x),\theta(0,x)) \in H^{k}_x(\R^2)$;
		\item $ (u(0,x),\theta(0,x))\in G_{1} $ for any $x\in\R^2$.
	\end{itemize}
	Then we have that
	\begin{itemize}
		\item there exists a time $T>0$ depending only on $G_1$ and the $H^k_x$ norm of $(u(0,x),\theta(0,x))$;
		\item there exists a open domain $G_2\subset\subset G$;
		\item there exists a vector-valued function $(u,\theta)\in C^1([0,T]\times\R^2)\cap C([0,T];H^k(\R^2)) \cap C^{1}([0,T];H^{k-1}(\R^2))$ , solving \eqref{Euler eqns} and having $(u(0,x),\theta(0,x))$ as its initial data. Furthermore, we have that $(u(t,x),\theta(t,x))\in G_2$ for any $(t,x)\in[0,T]\times\R^2$. 
	\end{itemize}
\end{thm}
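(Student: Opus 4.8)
The statement is a transcription of the classical local theory for quasilinear symmetric hyperbolic systems, so the plan is to exhibit \eqref{2} (equivalently the original system \eqref{Euler eqns}) in that form and then quote Majda's local existence theorem and continuation principle.

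First I would write \eqref{2} in vector form $\partial_t U + A_1(U)\partial_1 U + A_2(U)\partial_2 U = F(t,U)$ with $U=(\theta,u_1,u_2)^\top$ and, writing $w(\theta)=1+(\gamma-1)\theta$,
\[
A_1(U)=\begin{pmatrix} u_1 & w(\theta) & 0\\ 1 & u_1 & 0\\ 0 & 0 & u_1\end{pmatrix},\qquad
A_2(U)=\begin{pmatrix} u_2 & 0 & w(\theta)\\ 0 & u_2 & 0\\ 1 & 0 & u_2\end{pmatrix},\qquad
F=\Bigl(0,\,-\tfrac{\mu}{1+t}u_1,\,-\tfrac{\mu}{1+t}u_2\Bigr)^\top .
\]
The key algebraic fact is that $A_0(U):=\operatorname{diag}\bigl(1,w(\theta),w(\theta)\bigr)$ is a symmetrizer: $A_0A_1$ and $A_0A_2$ are symmetric, and $A_0$ is symmetric positive definite precisely on $\{w(\theta)>0\}=G=\R^2\times(-\tfrac1{\gamma-1},\infty)$, which is exactly the physical region $\rho>0$. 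Since $A_0,A_1,A_2$ are smooth on $G$ and $F$ is linear in $U$ with coefficient $\tfrac{\mu}{1+t}$ that is $C^\infty$ and, together with its derivatives, bounded on every finite interval $[0,T]$, the system is quasilinear symmetric hyperbolic with a harmless smooth lower-order source. (Alternatively one may symmetrize \eqref{Euler eqns} in the variables $(\rho,u)$ — symmetrizer $\operatorname{diag}(p'(\rho)/\rho^2,1,1)$ — and then transfer the conclusion through the smooth diffeomorphism $\rho\mapsto\theta=\tfrac{\rho^{\gamma-1}-1}{\gamma-1}$ carrying $\{\rho>0\}$ onto $G$, noting that composition with it preserves $H^k$ and sends compact sets to compact sets by Moser-type estimates, since $H^k(\R^2)$ is an algebra for $k>1$.)

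Now I would apply Majda's local existence theorem, Theorem 2.1 of \cite{Majda} (and, for the companion continuation statement, Theorem 2.2). For $n=2$ and $k\ge3$ one has $k> n/2+1$, so $H^k(\R^2)\hookrightarrow C^1(\R^2)$ and the data $(u(0,\cdot),\theta(0,\cdot))$, valued in the compact set $\overline{G_1}\subset G$, lie uniformly inside the hyperbolicity region. Majda's theorem then yields $T>0$ depending only on $G_1$ and $\|(u(0,\cdot),\theta(0,\cdot))\|_{H^k}$, a compact $\overline{G_2}\subset G$ (one may enlarge $G_1$ so that $G_1\subset\subset G_2\subset\subset G$) and a unique $(u,\theta)\in C([0,T];H^k(\R^2))\cap C^1([0,T];H^{k-1}(\R^2))$ that solves \eqref{2}, has $(u(0,\cdot),\theta(0,\cdot))$ as data, and takes values in $G_2$. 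Finally $(u,\theta)\in C^1([0,T]\times\R^2)$: the spatial derivatives lie in $C([0,T];H^{k-1})\hookrightarrow C([0,T];C^0)$ because $k-1>n/2$, and $\partial_tU=-A_1\partial_1U-A_2\partial_2U+F\in C([0,T];H^{k-1})\hookrightarrow C([0,T];C^0)$ because $H^{k-1}$ is a module over $H^k$; hence every first-order derivative of $U$ is jointly continuous in $(t,x)$.

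I do not expect a genuine obstacle here — the result is an adaptation of textbook theory — but three points deserve care. (i) One must match the positive-definiteness domain of the symmetrizer with the constraint $\rho>0$, so that keeping $(u,\theta)$ in a compact subset of $G$ is the same as keeping $\rho$ bounded away from $0$ and $\infty$. (ii) Going through \eqref{Euler eqns} requires checking that the change of variables $\rho\leftrightarrow\theta$ preserves $H^k$-regularity and compact containment (nonlinear composition estimates). (iii) Majda's theorems are stated for $t$-independent coefficients, so one records that the energy-estimate-plus-iteration scheme goes through unchanged for matrices and sources depending smoothly and boundedly on $t$, the only such dependence here being the factor $\tfrac{\mu}{1+t}$; alternatively this linear term is absorbed by Grönwall's inequality in the a priori estimates.
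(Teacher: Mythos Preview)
Your proposal is correct and follows essentially the same approach as the paper: the paper simply states that Theorem~\ref{LWP for theta,u} (and the blow-up criterion) follow by applying Theorems~2.1 and~2.2 of \cite{Majda} to \eqref{Euler eqns} and then translating the result to the $(u,\theta)$ variables via \eqref{2}. You have spelled out this application in more detail---exhibiting the explicit symmetrizer $A_0=\operatorname{diag}(1,w(\theta),w(\theta))$, identifying its positivity region with $G$, and checking the Sobolev threshold $k\ge3>n/2+1$---which is more than the paper itself provides.
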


\begin{thm} \label{thmmajda2}
	\textbf{[Blow-up criterion]} Suppose $k \geq 3$ and $(u,\theta)\in C^1([0,T_{max})\times\R^2)\cap C([0,T_{max});H^k(\R^2)) \cap C^{1}([0,T_{max});H^{k-1}(\R^2))$ solves \eqref{Euler eqns} with $T_{max}$ to be its maximal lifespan. Then $T_{max}<\infty$ if and only if at least one of the following scenarios occur:
	\begin{itemize}
		\item Formation of shock waves or ODE type blow-up:
		\begin{equation}
			\limsup\limits_{t\uparrow T_{max}}\left(\|u(t,\cdot)\|_{C^1(\R^2)}+\|\theta(t,\cdot)\|_{C^1(\R^2)} + \|\partial_{t} u(t,\cdot)\|_{L^{\infty}(\R^2)}+\|\partial_{t}\theta(t,\cdot)\|_{L^{\infty}(\R^2)} \right)=\infty;
		\end{equation}
		\item Appearance of vacuum: there exists a sequence $\{(t_j,x_j)\}_{j=1}^\infty\subset[0,T_{max})\times\R^2$ such that $\lim\limits_{j\rightarrow\infty}t_j=T_{max}$ and
		\begin{equation}
			\lim\limits_{j\rightarrow\infty}\theta(t_j,x_j)=-\frac{1}{\gamma-1}.
		\end{equation}
	\end{itemize}
\end{thm}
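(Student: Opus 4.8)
The plan is to put the reformulated system \eqref{2} into quasilinear symmetric hyperbolic form and then invoke the local well-posedness and continuation theory of Majda \cite{Majda}, exactly as anticipated in the text. Writing $V=(\theta,u_1,u_2)^{\mathrm T}$ and $a(\theta)=1+(\gamma-1)\theta=\rho^{\gamma-1}$, the equations \eqref{2} take the form $\partial_t V+\sum_{j=1}^{2}M_j(V)\,\partial_jV=F(t,V)$ with
\[
	M_1=\begin{pmatrix} u_1 & a & 0\\ 1 & u_1 & 0\\ 0 & 0 & u_1\end{pmatrix},\qquad
	M_2=\begin{pmatrix} u_2 & 0 & a\\ 0 & u_2 & 0\\ 1 & 0 & u_2\end{pmatrix},\qquad
	F(t,V)=\Big(0,\,-\tfrac{\mu}{1+t}u_1,\,-\tfrac{\mu}{1+t}u_2\Big).
\]
The first step is to note that $A_0(\theta)=\operatorname{diag}\!\big(a(\theta)^{-1},1,1\big)$ is symmetric, smooth in $\theta$, and positive definite \emph{precisely} on $G=\R^2\times(-\tfrac1{\gamma-1},+\infty)$, while $A_0M_1$ and $A_0M_2$ are symmetric; hence $A_0\partial_tV+\sum_jA_0M_j\,\partial_jV=A_0F$ is symmetric hyperbolic on $G$, with a source linear in $V$ whose coefficient $\tfrac{\mu}{1+t}$ is, together with all its $t$-derivatives, bounded on $[0,+\infty)$. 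Since $\rho\mapsto\theta=\tfrac1{\gamma-1}(\rho^{\gamma-1}-1)$ is a $C^\infty$ diffeomorphism of $(0,+\infty)$ onto $(-\tfrac1{\gamma-1},+\infty)$, the Cauchy problems \eqref{Euler eqns} and \eqref{2} are equivalent, with the positivity $\rho>0$ translating into $V(t,x)\in G$, i.e. $\theta>-\tfrac1{\gamma-1}$.

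With the system in this form, Theorem 2.1 of \cite{Majda} applies ($n=2$, $k\ge3>\tfrac n2+1$) and yields Theorem \ref{LWP for theta,u}; gluing local solutions together and using uniqueness then produces a well-defined maximal time $T_{max}\in(0,+\infty]$ with $(u,\theta)$ in the asserted class on $[0,T_{max})$. The core input is the continuation principle, Theorem 2.2 of \cite{Majda}: the solution extends past a time $T$, in the same class, as soon as $\int_0^T\|\nabla V(t,\cdot)\|_{L^\infty}\,dt<\infty$ and the closure of the range $V([0,T)\times\R^2)$ is a compact subset of $G$. I would also record, since the equations \eqref{2} express $\partial_t\theta$ and $\partial_tu$ algebraically through $(\theta,u,\nabla\theta,\nabla u)$, the pointwise bound $\|\partial_t\theta\|_{L^\infty}+\|\partial_tu\|_{L^\infty}\le C\big(1+\|\theta\|_{L^\infty}+\|u\|_{L^\infty}\big)\big(\|\nabla\theta\|_{L^\infty}+\|\nabla u\|_{L^\infty}\big)+\mu\|u\|_{L^\infty}$, which shows the composite quantity in the first scenario stays bounded on a time interval exactly when $\|u\|_{C^1}+\|\theta\|_{C^1}$ does; so the first scenario may be read simply as $\limsup_{t\uparrow T_{max}}\big(\|u\|_{C^1}+\|\theta\|_{C^1}\big)=+\infty$.

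It then remains to match the two failure modes of the continuation principle with the two scenarios. If the integral condition fails at a finite $T_{max}$, then $\sup_{t<T_{max}}\|\nabla V(t,\cdot)\|_{L^\infty}=+\infty$, hence a fortiori $\limsup_{t\uparrow T_{max}}\big(\|u\|_{C^1}+\|\theta\|_{C^1}\big)=+\infty$, which is the first scenario. If instead the compact-range condition fails, then --- the $u$-slot of $G$ being unconstrained and only the $\theta$-slot having a finite boundary --- either $\sup_{t<T_{max}}\|V(t,\cdot)\|_{L^\infty}=+\infty$ (again the first scenario), or $\inf_{[0,T_{max})\times\R^2}\theta=-\tfrac1{\gamma-1}$; in the latter case I would choose $t_j\uparrow T_{max}$ with $\inf_x\theta(t_j,x)\to-\tfrac1{\gamma-1}$, use $\theta(t_j,\cdot)\in H^k\hookrightarrow C^1(\R^2)$ (so that $\theta(t_j,x)\to0$ as $|x|\to\infty$, whereas $-\tfrac1{\gamma-1}<0$) to pick $x_j$ with $\theta(t_j,x_j)<\inf_x\theta(t_j,x)+\tfrac1j$, and combine with $\theta>-\tfrac1{\gamma-1}$ to conclude $\theta(t_j,x_j)\to-\tfrac1{\gamma-1}$, i.e. the vacuum scenario. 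Conversely, if neither scenario holds up to a finite time $T$, then $\|u\|_{C^1}+\|\theta\|_{C^1}$ is bounded on $[0,T)$, so $\int_0^T\|\nabla V\|_{L^\infty}\,dt<\infty$, and $\theta$ stays bounded away from $-\tfrac1{\gamma-1}$; together with the resulting $L^\infty$ bounds on $(\theta,u)$ this keeps the range of $V$ in a compact subset of $G$, and the continuation principle forces $T<T_{max}$. Hence $T_{max}<\infty$ if and only if continuation breaks down, if and only if one of the two scenarios occurs.

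I do not anticipate a genuine obstacle, since the argument is a translation of Majda's theory to the concrete system \eqref{2}. The one point deserving real care is the last identification --- equating ``the range of $V$ stays in a compact subset of $G$'' with ``the solution does not approach vacuum'' --- which rests on the spatial decay of $H^k$ functions to convert an infimum over $x$ into an honest minimizing sequence $\{(t_j,x_j)\}$. Checking that the time-dependent damping $\tfrac{\mu}{1+t}u$ is a harmless zeroth-order term, so that the $H^k$-energy and continuation estimates of \cite{Majda} apply with at most the innocuous growth factor $(1+T)^\mu$ arising from $\int_0^T\tfrac{\mu}{1+t}\,dt$, is routine.
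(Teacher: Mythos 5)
Your proposal is correct and follows the same route as the paper, which simply invokes Theorems 2.1 and 2.2 of Majda \cite{Majda} for the reformulated system without spelling out the reduction to symmetric hyperbolic form or the matching of continuation failure modes to the two scenarios. Your verification of the symmetrizer $A_0=\operatorname{diag}\big(a(\theta)^{-1},1,1\big)$, the observation (via the equations) that the time-derivative terms in the first scenario are controlled by $\|u\|_{C^1}+\|\theta\|_{C^1}$, and the use of the $H^k\hookrightarrow C_0$ decay to turn failure of the compact-range condition into a genuine minimizing sequence $\{(t_j,x_j)\}$ approaching $T_{max}$ supply exactly the details the paper leaves implicit.
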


\begin{cor}
	If $ (u(0,x),\theta(0,x))\in C^{\infty}_{c} (\R^2) \subset \bigcap\limits_{ k \geq 1} H^{k} (\R^2) $, then  $ (u,\theta)\in \bigcap\limits_{ k \geq 1} C ([0,T];H^{k}(\R^2)) $ for $T\in(0,T_{max})$, where $T_{max}$ is the lifespan. Therefore, $ (u,\theta) (t) \in \bigcap\limits_{ k \geq 1} H^{k}(\R^2) \subset C^{\infty} (\R^2) $ for all $t \in [0,T]$. Combining this with the following finite speed of propagation, we have that $ (u,\theta) (t) \in C^{\infty}_{c} (\R^2) $ for all $t \in [0,T]$. The blow-up criterion for the smooth solutions is the same as that for the $H^k$ solutions.  
\end{cor}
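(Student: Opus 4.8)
The plan is to obtain the corollary by feeding the smooth data into the local theory and then combining, in order, the local well-posedness Theorem \ref{LWP for theta,u}, the blow-up criterion Theorem \ref{thmmajda2}, the finite speed of propagation property stated below, and the Sobolev embedding. Since $C^\infty_c(\R^2)\subset\bigcap_{k\ge1}H^k(\R^2)$, for each $k\ge3$ the (admissible) data satisfies the hypotheses of Theorem \ref{LWP for theta,u}, so it generates a solution that extends, in the standard way, to a maximal one with lifespan $T^{(k)}_{max}$ in $C([0,T^{(k)}_{max});H^k)\cap C^1([0,T^{(k)}_{max});H^{k-1})$. The only real point to establish is that $T^{(k)}_{max}$ is independent of $k$; granting that, all the remaining assertions are bookkeeping.

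\emph{Step 1: the lifespan does not depend on $k$.} Fix $k\ge3$. Uniqueness together with $H^k\hookrightarrow H^3$ gives $T^{(k)}_{max}\le T^{(3)}_{max}=:T_{max}$. For the reverse I would argue by contradiction: suppose $T^{(k)}_{max}<T_{max}$, so $T^{(k)}_{max}<\infty$. On the compact interval $[0,T^{(k)}_{max}]$ the $H^3$-solution is defined, hence (using $H^3(\R^2)\hookrightarrow C^1(\R^2)$ and continuity in time) $\|u(t,\cdot)\|_{C^1}+\|\theta(t,\cdot)\|_{C^1}$ is bounded there, and $\theta$ stays inside a compact subset of $(-\tfrac1{\gamma-1},\infty)$ so that no vacuum forms. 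Expressing $\partial_t u$ and $\partial_t\theta$ from \eqref{2} in terms of $u,\theta$ and their first spatial derivatives (and bounding $\tfrac{\mu}{1+t}\le\mu$), the quantities $\|\partial_t u(t,\cdot)\|_{L^\infty}+\|\partial_t\theta(t,\cdot)\|_{L^\infty}$ are likewise bounded on $[0,T^{(k)}_{max}]$. Thus none of the scenarios in Theorem \ref{thmmajda2} can occur as $t\uparrow T^{(k)}_{max}$, contradicting that theorem applied to the $H^k$-solution with finite maximal lifespan $T^{(k)}_{max}$. Hence $T^{(k)}_{max}=T_{max}$ for every $k\ge3$, and therefore $(u,\theta)\in C([0,T];H^k)\cap C^1([0,T];H^{k-1})$ for every $T<T_{max}$ and every $k\ge3$, hence for every $k\ge1$.

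\emph{Step 2: smoothness, compact support, and transfer of the blow-up criterion.} Fix $T<T_{max}$ and $t\in[0,T]$. By Step 1, $(u,\theta)(t,\cdot)\in\bigcap_{k\ge1}H^k(\R^2)$, so the Sobolev embedding $H^k(\R^2)\hookrightarrow C^{k-2}(\R^2)$ gives $(u,\theta)(t,\cdot)\in\bigcap_{m\ge0}C^m(\R^2)=C^\infty(\R^2)$. The finite speed of propagation property (Lemma \ref{finite speed of propagation}) shows that $(u,\theta)(t,\cdot)$ has compact support for each $t$ (contained in $\overline{B_{R_0+t}(0)}$ if $\operatorname{supp}(u(0,\cdot),\theta(0,\cdot))\subset B_{R_0}(0)$), so $(u,\theta)(t,\cdot)\in C^\infty_c(\R^2)$. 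Finally, since the $H^k$-lifespan and the smooth lifespan both equal $T_{max}$, Theorem \ref{thmmajda2} transfers verbatim to smooth solutions; in particular, using \eqref{2} to dominate the $\partial_t$-terms there by $\|u\|_{C^1}+\|\theta\|_{C^1}$, the only obstructions to global smooth existence are blow-up of $\|u\|_{C^1}+\|\theta\|_{C^1}$ or the appearance of vacuum.

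\emph{Expected main obstacle.} The sole non-routine ingredient is Step 1, and it goes through cheaply precisely because Theorem \ref{thmmajda2} is phrased in terms of $k$-independent low-regularity norms (the $C^1$-norm and the $L^\infty$-norm of the time derivatives), which are already under control once any $H^3$-solution is known to exist. A more computational alternative would be to prove persistence of $H^k$-regularity directly---differentiate \eqref{2} up to order $k$, apply commutator/Moser estimates, and close a Gronwall inequality with $\|u\|_{C^1}+\|\theta\|_{C^1}$ as the controlling quantity---but invoking Theorem \ref{thmmajda2} is the shorter route. Upgrading further to joint $C^\infty$-smoothness in $(t,x)$, as in Theorem \ref{thm1}, would in addition require trading each $\partial_t$ for spatial derivatives via \eqref{2}; this refinement is not needed here.
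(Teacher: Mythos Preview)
Your proposal is correct and takes essentially the same approach the paper has in mind: the paper states this corollary without proof, treating it as an immediate consequence of Theorems \ref{LWP for theta,u} and \ref{thmmajda2} together with Lemma \ref{finite speed of propagation}, and your Step 1 (that the $k$-independent blow-up criterion forces all $T^{(k)}_{max}$ to coincide) is precisely the standard argument underlying that claim. Your Step 2 is routine bookkeeping, as you say.
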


\subsubsection{Evolution of the support}
Using \eqref{2}, we can prove that the support of $(u,\theta)$ evolves at a finite speed not exceeding 1. This finite speed of propagation property is a classical result and can be found in various textbooks on hyperbolic systems, we refer to \cite{Sideris1985,Sideris1991,TCSBTDW} for a more exhaustive discussion. However, for the readers' convenience, we provide a brief outline of the proof here.
\begin{lem}
	\label{finite speed of propagation}
	Suppose $(u,\theta)\in C^\infty([0,T]\times\R^2)$ solves system \eqref{2} with the initial data $(u(0,x),\theta(0,x))$ supported in $\overline{B_{1/2}(0)}$, we have that
	\begin{equation}
		\operatorname{supp}(u(t,\cdot),\theta(t,\cdot))\subset \overline{B_{1/2+t}(0)},\ \ \forall t\in[0,T].
		\label{estimate on the support} 
	\end{equation}
	The expanding speed 1 comes from the fact that the local sound speed $c=1$ outside the support. 
\end{lem}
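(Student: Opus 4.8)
The plan is to prove the finite speed of propagation property by a standard energy (or domain-of-dependence) argument applied to the symmetrizable hyperbolic system \eqref{2}. First I would fix a point $(t_0,x_0)$ with $|x_0| > 1/2 + t_0$ and consider the backward light cone $K_{t_0,x_0} = \{(t,x) : 0 \le t \le t_0,\ |x - x_0| \le 1/2 + t_0 - t\}$, whose slope $1$ matches the background sound speed $\bar c = 1$. The goal is to show $(u,\theta) \equiv 0$ on this cone, which immediately gives \eqref{estimate on the support} by varying $(t_0,x_0)$. Since $(u,\theta)$ is smooth and $(u(0,\cdot),\theta(0,\cdot))$ vanishes on the disk $|x-x_0| \le 1/2+t_0 \supset$ (base of the cone), the initial data vanish on the base.

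Next I would symmetrize \eqref{2}. Writing the system for $V = (\theta, u)$ in the form $A_0(V)\partial_t V + \sum_j A_j(V)\partial_j V + B(t)V = 0$, the natural symmetrizer (coming from the entropy of the compressible Euler system) makes $A_0$ symmetric positive definite and each $A_j$ symmetric, with the eigenvalues of $\sum_j A_j \xi_j / |\xi|$ being $u\cdot\xi/|\xi|$ and $u\cdot\xi/|\xi| \pm c$, all bounded in absolute value by $|u| + c$. The key point is that along the truncated cone, as long as the solution stays close to the background $(\theta, u)=(0,0)$ — or more robustly, as long as we localize near the lateral boundary where we will propagate vanishing inward — the propagation speed $|u| + c$ does not exceed $1$; the cleanest route is a continuity/connectedness argument: let $t^* $ be the supremum of times $\tau \le t_0$ such that $(u,\theta)$ vanishes on $K_{t_0,x_0} \cap \{t \le \tau\}$; on this set $u = 0$ and $c = 1$, so the relevant characteristic speed is exactly $1$, matching the cone.

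The core step is the energy inequality: define $\mathcal E(\tau) = \int_{B_{1/2+t_0-\tau}(x_0)} \langle A_0(V) V, V\rangle \, dx$ and differentiate in $\tau$. The bulk contribution produces, after integration by parts, a term $\int (\partial_t A_0 + \sum_j \partial_j A_j)\,|V|^2$ which is bounded by $C(\|V\|_{C^1})\,\mathcal E(\tau)$, plus the damping term $-\int \langle (B(t) + B(t)^\top) V, V\rangle$ which — since $B(t) = \tfrac{\mu}{1+t}\,\mathrm{diag}(0, \mathrm{Id})$ in the symmetrized variables is nonnegative — only helps (it has a favorable sign), or at worst is absorbed into $C\mathcal E(\tau)$. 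The boundary contribution from shrinking the ball is $-\int_{\partial B} \langle (A_0 - \sum_j A_j \nu_j) V, V\rangle\, dS$ where $\nu$ is the outward normal; because the cone's slope equals the maximal characteristic speed at points where $V=0$ on the prior slice, $A_0 - \sum_j A_j\nu_j \ge 0$ there, so this boundary term is $\le 0$. Hence $\mathcal E'(\tau) \le C\,\mathcal E(\tau)$ with $\mathcal E(0) = 0$, and Grönwall forces $\mathcal E \equiv 0$, i.e. $V \equiv 0$ on the cone; this shows $t^* = t_0$.

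I expect the main obstacle to be the bookkeeping of the boundary term and the logical structure ensuring the characteristic-speed bound holds where it is needed: one must either run the continuity argument in $\tau$ (so that $|u|+c = 1$ exactly on the slice being crossed), or invoke the a priori smallness of the solution. A secondary technicality is writing down the symmetrizer explicitly for \eqref{2} and checking $A_0 - \sum_j A_j \nu_j \ge 0$ on the lateral cone boundary when $V$ is small, but this is the classical computation for the compressible Euler symmetrization and, together with the sign of the damping, causes no real difficulty; I would present it compactly and refer to \cite{Sideris1985, Majda} for the routine parts.
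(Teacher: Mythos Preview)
Your approach is correct and is the classical local domain-of-dependence argument for symmetric hyperbolic systems: fix an exterior point, pull back the cone, and show the energy on shrinking slices stays zero via Gronwall, with a continuity argument in $\tau$ to ensure the characteristic speed $|u|+c$ equals $1$ on the slice being crossed.

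The paper takes a slightly different, more global route. Instead of backward cones from individual points, it works with the \emph{exterior} region $B_{1/2+Kt}(0)^c$ for a parameter $K>1$, sets $f(t,K)=\|(u,\theta)\|_{L^2(B_{1/2+Kt}(0)^c)}$ and $g(t,K)=\|u\|_{L^\infty(B_{1/2+Kt}(0)^c)}$, and simply multiplies the two equations of \eqref{2} by $\theta$ and $u$ respectively (no full symmetrizer needed, since the cross terms $\theta\nabla\cdot u+u\cdot\nabla\theta=\nabla\cdot(\theta u)$ already combine to a divergence up to cubic errors). The boundary term then carries the coefficient $K-1-g(t,K)$, which is nonnegative under the bootstrap hypothesis $g\le K-1$; Gronwall gives $f\equiv0$, hence $g\equiv0$, and one passes to the limit $K\downarrow1$ by continuity. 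So where you use a continuity argument in the time variable $\tau$ along a single cone, the paper uses a continuity argument in the cone-speed parameter $K$; and where you invoke the symmetrizer $A_0(V)$, the paper gets away with the naive $L^2$ pairing because the quadratic cross terms are already in divergence form. Both arguments are standard and essentially equivalent in difficulty; the paper's version is marginally more streamlined for this radially symmetric support, while yours is the textbook local statement and would transfer unchanged to non-radial geometries.
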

\begin{proof}
	Define $f(t,K)=\|(u,\theta)\|_{L^2(B_{1/2+Kt}(0)^c)}$, and $g(t,K)=\|u\|_{L^\infty(B_{1/2+Kt}(0)^c)}$. Then $f,g$ are continuous functions on $[0,T]\times[1,\infty)$ since $(u,\theta)\in C^\infty([0,T]\times\R^2) \cap C( [0,T]; H^{3} (\R^2) ) $, and $f,g$ are decreasing with respect to $K$. Multiplying $\theta$ to the first equation of \eqref{2}, taking the inner product of $u$ and the second equation of \eqref{2}, summing up the equations and integrating over the region $B_{1/2+Kt}(0)^c$, We obtain for $t\in[0,T]$
	\begin{equation}
		\partial_tf^2(t,K)+\left[K-1-g(t,K)\right]\|(\rho,u)\|_{L^2(\partial B_{1+Kt}(0))}^2\le C(\|(u,\theta)\|_{C^1})f^2(t,K).
		\label{auxi ineq for finite speed of propagation}
	\end{equation}

	Now fix $K>1$. We impose the bootstrap assumption $g(t,K)\le K-1$. Clearly $f(0,K)=g(0,K)=0$, thus the bootstrap assumption holds true at least for a short time. Suppose $g(t,K)\le K-1$ remains true on $[0,T_1]$ with $T_1\le T$. Then \eqref{auxi ineq for finite speed of propagation} together with the Gronwall inequality imply $0\le f^2(t,K)\le f^2(0,K)e^{C(\|(u,\theta)\|_{C^1})t}=0$ on $[0,T_1]$. Thus $g(t,K)=0$ on $[0,T_1]$. By the standard bootstrap argument, we know that $g(t,K)=f(t,K)=0$ for $t\in[0,T]$. Since $K>1$ is arbitrary, by the continuity of $f,g$, we have $f(t,1)=g(t,1)=0$ for $t\in[0,T]$. This proves the lemma. 
	
\end{proof}

\subsection{Reformulating as wave equations}

Let us first derive the damped wave equations of $\theta$ in \eqref{2} in detail. It follows from the first equation in \eqref{2} that
\begin{equation} \label{no1}
	\partial_{t} \operatorname{div} u = - \frac{1}{( 1+ ( \gamma -1 )\theta) )}\left ( \partial_{t}^{2} \theta + u \cdot \nabla \partial_{t} \theta + \partial_{t} u \cdot \nabla \theta  + ( \gamma -1 ) \partial_{t} \theta \operatorname{div} u\right).
\end{equation}
Taking divergence on the second equation in \eqref{2} gives
\begin{equation} \label{no2}
	\operatorname{div} \partial_{t} u + \frac{\mu}{1+t} \operatorname{div} u + \Delta \theta + u \cdot \nabla \operatorname{div} u + \sum\limits_{i,j=1}^{2} \partial_{i} u_{j} \partial_{j} u_{i} =0,
\end{equation}
where $ \Delta=\partial_{1}^{2} +\partial_{2}^{2} $. Substituting \eqref{no1} into \eqref{no2} yields the damped wave equation for $\theta$:

\begin{equation} \label{wave eqn of theta}
	\boxed{\Box \theta + \frac{\mu}{1+t} \partial_{t}  \theta  = F_{\theta}},
\end{equation}
where $\Box = \partial_{t}^{2} - \partial_{x_{1}}^{2}- \partial_{x_{2}}^{2}$ is the two dimensional wave operator, and the forcing terms are
\begin{equation}
	\begin{aligned}
		F_{\theta}  \triangleq &F_{\theta,1}  + F_{\theta,2} ,\\
		F_{\theta,1} \triangleq&  - \frac{\mu}{1+t} u \cdot \nabla \theta - \sum\limits_{i,j=1}^{2} u_{i} \partial_{i} u_{j} \partial_{j} \theta - \partial_{t} u \cdot \nabla \theta \\
		& + (1 + ( \gamma -1 )\theta) \left[\sum\limits_{i,j=1}^{2} \partial_{i} u_{j} \partial_{j} u_{i} +  ( \gamma -1 ) |\nabla \cdot u|^{2} \right], \\
		F_{\theta,2} \triangleq& ( \gamma -1 ) \theta \Delta \theta - 2u \cdot \nabla \partial_{t} \theta - \sum\limits_{i,j=1,2} u_{i} u_{j} \partial_{ij}^{2} \theta.
	\end{aligned}
\end{equation}
Noting that $F_{\theta,1}$ contains first order derivative terms, and it has the form $ \sum\limits_{0 \leq \alpha, \beta \leq 2} \sum\limits_{i=1}^{2} g_{\alpha, \beta, i} (\theta , u) \partial_{\alpha}\theta \partial_{\beta} u_{i} $ with $ g_{\alpha, \beta, i} (\theta , u) $ being linear with respect to $(\theta , u)$. $F_{\theta,2}$ consists of the second order derivative terms in $F_{\theta}$.

Next, we derive the damped wave equation for $u$. Differentiating the first equation in \eqref{2} and using the fact that $\operatorname{curl}u=0$ give
\begin{equation} \label{uno1}
	\nabla \partial_t \theta+ \Delta u + \nabla \left(u\cdot\nabla\theta+ (\gamma-1) \theta \nabla\cdot u \right)=0.
\end{equation}
It follows from the second equation in \eqref{2} that
\begin{equation} \label{uno2}
	\partial_{tt}u+ \partial_t \left(\frac{\mu}{1+t} u\right) + \partial_t (u\cdot \nabla u)+\nabla  \partial_t \theta=0
\end{equation}
Substituting \eqref{uno1} into \eqref{uno2} yields the damped wave equation for $u$:
\begin{equation} \label{wave eqn of u}
	\boxed{\Box u + \frac{\mu}{1+t} u_{t} - \frac{\mu}{(1+t)^{2}} u = F_{u}},
\end{equation}
where
\begin{equation}
	\begin{aligned}
		F_{u}=& - \partial_t (u\cdot \nabla u) + \nabla \left(u\cdot\nabla\theta+ (\gamma-1) \theta \nabla\cdot u \right)\\
		\triangleq& F_{u,1} + F_{u,2} ,\\
		F_{u,1} \triangleq& - (\partial_{t} u \cdot \nabla) u + \nabla u \cdot \nabla \theta  + ( \gamma -1 ) \nabla \theta \nabla\cdot u, \\
		F_{u,2} \triangleq&  - (u \cdot \nabla )\partial_{t} u + u \cdot \nabla^2 \theta + ( \gamma -1 ) \theta \Delta u .
	\end{aligned}
\end{equation}
Equivalently, we can write the above equation in component form:
\begin{equation}
	\begin{aligned}
		\Box u_i+\partial_t\left(\frac{\mu}{1+t}u_i\right)=&\underbrace{-\partial_tu_j\partial_ju_i+\partial_iu_j\partial_j\theta+(\gamma-1)\partial_i\theta\partial_ju_j}_{F_{u_i,1}}\\
		&\underbrace{-u_j\partial_j\partial_tu_i+u_j\partial_{ij}^2\theta
+(\gamma-1)\theta\partial_j\partial_ju_i}_{F_{u_i,2}},
	\end{aligned}
\end{equation}
where the Einstein summation notation is used. 

In the following we focus on studying the wave equations with time-dependent damping \eqref{wave eqn of theta} and \eqref{wave eqn of u} of ($\theta,u$).

\subsubsection{Vector fields and commutators}
We define several vector fields and deduce some commutation relations in this subsection. Denote
\begin{equation}
	\begin{aligned}
		&\partial = ( -\partial_{t}, \partial_{x_{1}}, \partial_{x_{2}} ),\ S = t \partial_{t} + x_{1}\partial_{x_{1}} + x_{2}\partial_{x_{2}} ,\ \Omega = x_{1}\partial_{x_{2}} -  x_{2}\partial_{x_{1}}.
	\end{aligned}
\end{equation}
It is well-known for the communication relations 
\begin{equation}
	[ \partial, \Box ] = 0 ,\ \ [ \Omega, \Box ] = 0 ,\ \ [S,\Box]=-2\Box,
\end{equation}
where $[A, B] = AB-BA$ stands for the commutator between $A$ and $B$.

To further utilize these vector fields, we define
\begin{equation}
	Z=(Z_0,Z_1,Z_2,Z_3,Z_4)=(\partial,S-1,\Omega),
	\label{def of Z}
\end{equation}
\begin{equation}
	\hat Z=(\hat Z_0,\hat Z_1,\hat Z_2,\hat Z_3,\hat Z_4)=(\partial,S+1,\Omega).
	\label{def of hatZ}
\end{equation}
For any multi-index $\alpha=(\alpha_0,\cdots,\alpha_4)\in\mathbb{N}^5$, we define $Z^\alpha=Z_0^{\alpha_0}\cdots Z_4^{\alpha_4}$ and $Z^{(0,0,0,0,0)}=1$. Similarly, define $\hat Z^\alpha=\hat Z_0^{\alpha_0}\cdots \hat Z_4^{\alpha_4}$. 

Direct calculation yields 
\begin{lem}\label{commutator}
	We have that:
	\begin{equation}
		[\partial,S-1]=\partial,\ \ [\partial_{x_i},\Omega]=\varepsilon_{ij}\partial_{x_j},\ \ [S,\Omega]=0,
	\end{equation}
	where $i\in\{1,2\}$, $\partial$ represents one of the operator in $\{\partial_t,\partial_{x_1},\partial_{x_2}\}$, and
	\begin{equation}
		\varepsilon_{ij}=\left\{
		\begin{aligned}
			&1,\ \ &(i,j)=(1,2),\\
			&-1,\ \ &(i,j)=(2,1),\\
			&0,\ \ &i=j.\\
		\end{aligned}\right.
	\end{equation}
\end{lem}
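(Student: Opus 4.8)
\textbf{Proof proposal for Lemma \ref{commutator}.}

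The plan is to verify each of the three commutator identities by direct computation, treating the vector fields as first-order differential operators acting on an arbitrary smooth test function $\varphi=\varphi(t,x_1,x_2)$ and evaluating $[A,B]\varphi = A(B\varphi)-B(A\varphi)$. No clever structural argument is needed; the content is entirely in keeping the bookkeeping of the coefficient functions straight.

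First I would handle $[\partial, S-1]$. Since $S-1$ differs from $S$ only by a constant (which commutes with everything), it suffices to compute $[\partial_t, S]$ and $[\partial_{x_i}, S]$. Writing $S = t\partial_t + x_1\partial_{x_1} + x_2\partial_{x_2}$, one applies the product rule: for instance $\partial_t(S\varphi) = \partial_t(t\partial_t\varphi + x_j\partial_{x_j}\varphi) = \partial_t\varphi + t\partial_t^2\varphi + x_j\partial_{x_j}\partial_t\varphi$, while $S(\partial_t\varphi) = t\partial_t^2\varphi + x_j\partial_{x_j}\partial_t\varphi$, so the difference is $\partial_t\varphi$. The same computation with $\partial_{x_i}$ in place of $\partial_t$ gives the extra term $\partial_{x_i}\varphi$ (coming from differentiating the coefficient $x_i$ in the $x_i\partial_{x_i}$ summand). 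Hence $[\partial, S] = \partial$ for each of $\partial_t, \partial_{x_1}, \partial_{x_2}$, and therefore $[\partial, S-1]=\partial$.

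Next, for $[\partial_{x_i}, \Omega]$ with $\Omega = x_1\partial_{x_2} - x_2\partial_{x_1}$, the only contributions again come from differentiating the coefficients $x_1, x_2$ in $\Omega$. Computing $\partial_{x_1}(\Omega\varphi) - \Omega(\partial_{x_1}\varphi) = \partial_{x_2}\varphi$ and $\partial_{x_2}(\Omega\varphi) - \Omega(\partial_{x_2}\varphi) = -\partial_{x_1}\varphi$, which is exactly $\varepsilon_{ij}\partial_{x_j}$ with the sign convention stated. Finally, for $[S,\Omega]$: $\Omega$ is a vector field with homogeneous degree-one coefficients in $(x_1,x_2)$ and no $t$-dependence, so it is scaling-invariant; concretely, the coefficient-derivative terms from $S$ acting on $\Omega$'s coefficients give $(x_1\partial_{x_1}+x_2\partial_{x_2})(x_1)=x_1$ and similarly for $x_2$, which precisely reproduce $\Omega$ with the opposite sign when one also accounts for $\Omega$ acting on $S$'s coefficients $(t,x_1,x_2)$ — and these two contributions cancel, giving $[S,\Omega]=0$. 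There is no real obstacle here; the only thing to be careful about is sign conventions and the definition $\partial=(-\partial_t,\partial_{x_1},\partial_{x_2})$, but since the claimed identity $[\partial,S-1]=\partial$ is homogeneous in $\partial$ it holds for $-\partial_t$ just as for $\partial_t$, so the sign in the first slot causes no trouble.
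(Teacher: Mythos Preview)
Your proposal is correct and is exactly the ``direct calculation'' that the paper invokes without writing out; the paper offers no argument beyond that phrase, so your explicit verification of each commutator on a test function is precisely what is intended.
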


The above lemma shows that $\mathfrak{g}=\{\sum_i\lambda_iZ_i\}$ is a Lie algebra over $\R$. Its universal enveloping algebra is denoted by $U(\mathfrak{g})$. Besides, the $\R$-algebra generated by $\{Z^\alpha:\alpha_i\ge0,i=0,\ldots,4\}$ is denoted by $\mathcal{A}$. Using the universal property of $U(\mathfrak{g})$, there exists a unique algebra homomorphism $h:U(\mathfrak{g})\rightarrow\mathcal{A}$. Clearly $h$ is surjective. Consequenty, it sends a basis of $U(\mathfrak{g})$ to a generating set of $\mathcal{A}$. According to Poincar\'e-Birkhoff-Witt theorem, $\{Z^\gamma|\gamma_i\ge0\}$ forms a basis of $U(\mathfrak{g})$. Thus, $\{Z^\gamma|\gamma_i\ge0\}$ generates $\mathcal{A}$. More precisely, any element in $\mathcal{A}$ can be written in the form
\begin{equation}
	\sum_{\text{finite sum}} C_\gamma Z^\gamma.
\end{equation}
Since $U(\mathfrak{g})$ has a canonical filtration, we know that
\begin{equation}
	Z^\alpha Z^\beta=\sum_{|\gamma|\le|\alpha|+|\beta|} C_{\alpha,\beta,\gamma} Z^\gamma.
	\label{cononical filtration}
\end{equation}

What is more, we may prove by using mathematical induction
\begin{lem}[\cite{LIZHOUbook}, Corollary 3.1.1]For any multi-index $\alpha,\beta>0$, we have the representation of the commutator:
\begin{equation}
	[Z^\alpha,Z^\beta]=\sum_{\substack{|\gamma|\le|\alpha|+|\beta|-1\\ \gamma_0+\gamma_1+\gamma_2\ge1}}C_{\alpha,\beta,\gamma}Z^\gamma.
\end{equation}
Notably any monomial $Z^\gamma$ that appears in the right hand side satisfies the condition $\gamma_0+\gamma_1+\gamma_2\ge1$. We abbreviate this fact as
\begin{equation}
	[Z^\alpha,Z^\beta]\overset{\text{abbr.}}{=}\sum_{|\gamma|\le|\alpha|+|\beta|-2}C_{\alpha,\beta,\gamma}\partial Z^\gamma
	\overset{\text{abbr.}}{=}\partial Z^{<|\alpha|+|\beta|-1}.
	\label{representation of commutators}
\end{equation}
\end{lem}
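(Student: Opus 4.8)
The plan is to argue by induction on the total degree $N=|\alpha|+|\beta|$, peeling off one vector field at a time via the Leibniz-type identity $[AB,C]=A[B,C]+[A,C]B$, while tracking two features of the monomials that appear: their degree, and whether they still contain a $\partial$-factor. The degree bookkeeping is the standard filtered-algebra fact already underlying \eqref{cononical filtration}: by the Poincar\'e--Birkhoff--Witt theorem the associated graded of $U(\mathfrak g)$ is the commutative symmetric algebra, so the top-degree parts of $Z^\alpha Z^\beta$ and $Z^\beta Z^\alpha$ coincide and cancel, leaving $[Z^\alpha,Z^\beta]$ of degree $\le|\alpha|+|\beta|-1$. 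The second feature carries the new content, and it rests on the observation that $V\defs\operatorname{span}_\R\{\partial_t,\partial_{x_1},\partial_{x_2}\}$ is an \emph{abelian ideal} of $\mathfrak g$: Lemma \ref{commutator} gives $[\partial_a,S-1]=\partial_a$, $[\partial_{x_i},\Omega]=\varepsilon_{ij}\partial_{x_j}$ and $[\partial_a,\partial_b]=0$, i.e.\ $[V,\mathfrak g]\subseteq V$.

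From this ideal property I would first deduce that the subspace $\mathcal I\subseteq\mathcal A$ spanned by the monomials $\{Z^\gamma:\gamma_0+\gamma_1+\gamma_2\ge1\}$ is a two-sided ideal of $\mathcal A$. Indeed, every such $Z^\gamma$ already carries its $\partial$-factors on the far left in standard order, so rewriting $Z_iZ^\gamma$ or $Z^\gamma Z_i$ back into standard order only ever requires commuting a vector field past a $\partial$; by Lemma \ref{commutator} each such commutator lies again in $V$, and moving $S-1$ past $\Omega$ produces nothing new, so a $\partial$-factor always survives at the front. Granting this, the condition $\gamma_0+\gamma_1+\gamma_2\ge1$ in the statement is exactly the assertion $[Z^\alpha,Z^\beta]\in\mathcal I$ whenever $\alpha,\beta>0$.

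The induction then runs as follows. For $N=2$, $Z^\alpha=Z_i$ and $Z^\beta=Z_j$, and Lemma \ref{commutator} gives $[Z_i,Z_j]\in V\subseteq\mathcal I$ of degree $\le1=|\alpha|+|\beta|-1$. For $N\ge3$ write $Z^\alpha=Z_iZ^{\alpha'}$ with $|\alpha'|=|\alpha|-1$ and use
\[
[Z^\alpha,Z^\beta]=Z_i\,[Z^{\alpha'},Z^\beta]+[Z_i,Z^\beta]\,Z^{\alpha'}.
\]
If $|\alpha'|\ge1$, the induction hypothesis, applied to $[Z^{\alpha'},Z^\beta]$ and to $[Z_i,Z^\beta]$ — each a commutator of two nonzero monomials of combined degree $<N$ — places them in $\mathcal I$ with degrees $\le|\alpha'|+|\beta|-1$ and $\le|\beta|$ respectively; multiplying by $Z_i$, respectively $Z^{\alpha'}$, and re-expanding through \eqref{cononical filtration} keeps us in $\mathcal I$ (ideal property) without raising the degree beyond $|\alpha|+|\beta|-1$. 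If $|\alpha'|=0$, i.e.\ $Z^\alpha=Z_i$, a short secondary induction on $|\beta|$ closes the gap: writing $Z^\beta=Z_jZ^{\beta'}$ and expanding $[Z_i,Z_jZ^{\beta'}]=Z_j[Z_i,Z^{\beta'}]+[Z_i,Z_j]Z^{\beta'}$, the first term lies in $\mathcal I$ by the secondary hypothesis (trivially if $\beta'=0$) and the second because $[Z_i,Z_j]\in V$. Collecting terms gives $[Z^\alpha,Z^\beta]=\sum C_{\alpha,\beta,\gamma}Z^\gamma$ with the sum ranging over $|\gamma|\le|\alpha|+|\beta|-1$ and $\gamma_0+\gamma_1+\gamma_2\ge1$; the abbreviated form \eqref{representation of commutators} then follows by factoring one $\partial$ out of each monomial.

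The step I expect to be the genuine obstacle is the careful proof that $\mathcal I$ is a two-sided ideal — equivalently, that Poincar\'e--Birkhoff--Witt normal ordering of a product one of whose factors already carries a $\partial$ never yields a $\partial$-free monomial. This is a finite but fiddly bookkeeping argument that must use both that $V$ is an ideal (so commutators feed back into $V$) and that it is abelian; once it is in place, the degree estimates and the two-layer induction are routine.
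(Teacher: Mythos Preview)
Your proposal is correct and is a fully detailed version of what the paper only indicates: the paper gives no proof beyond the phrase ``we may prove by using mathematical induction'' together with a citation to \cite{LIZHOUbook}, and your two-layer induction on $|\alpha|+|\beta|$ (with the inner induction handling $[Z_i,Z^\beta]$) is exactly how such an argument is carried out.

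One small comment: the ``genuine obstacle'' you flag --- that $\mathcal I=\operatorname{span}\{Z^\gamma:\gamma_0+\gamma_1+\gamma_2\ge1\}$ is a two-sided ideal of $\mathcal A$ --- is in fact a standard consequence of your own observation that $V=\operatorname{span}\{\partial_t,\partial_{x_1},\partial_{x_2}\}$ is a Lie ideal of $\mathfrak g$. Whenever $V\triangleleft\mathfrak g$, the left ideal $U(\mathfrak g)V$ equals the right ideal $VU(\mathfrak g)$ and hence is two-sided; under PBW (with $V$-generators ordered first) this ideal is precisely the span of basis monomials carrying at least one $V$-factor. So the ``fiddly bookkeeping'' you anticipate reduces to this one structural fact, and the rest of your induction is routine.
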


\begin{rem}
	Hereafter, we will always use the notation
	\begin{equation}
		\label{abbreviation}
		Z^{<k}=\sum_{|\beta|<k}C_\beta Z^\beta,\ Z^{\le k}=\sum_{|\beta|\le k}C_\beta Z^\beta,\ Z^{<\alpha}=\sum_{\beta<\alpha}C_\beta Z^\beta,\ Z^{\le \alpha}=\sum_{\beta\le \alpha}C_\beta Z^\beta,
	\end{equation}
	where $\beta\le\alpha$ means $\beta_i\le\alpha_i$ for all $i$. These notations may simplify the calculation, but make it hard to track the constants $C_\beta$. Therefore, these notations are only used in the case that \emph{all the constants $C_\beta$ are universal}. 
\end{rem}

\begin{cor} \label{lempartialZ}
	Suppose $k\ge0$ is an integer, then the following relation holds:
	\begin{equation} 
		\partial Z^{\le k}=Z^{\le k}\partial. 
		\label{equivalent form of DZ^alpha}
	\end{equation}
\end{cor}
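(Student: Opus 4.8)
\textbf{Proof plan for Corollary \ref{lempartialZ}.}

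The goal is to show that the set of operators obtainable as $\partial \circ P$, where $P$ is a linear combination of monomials $Z^\beta$ with $|\beta|\le k$ and $\partial\in\{\partial_t,\partial_{x_1},\partial_{x_2}\}$, coincides with the set of operators $Q\circ\partial$ with $Q$ of the same form. Both inclusions will be proved by induction on $k$, using the commutation relations from Lemma \ref{commutator}, namely $[\partial, S-1]=\partial$ (equivalently $\partial S = (S-1)\partial + \text{[lower]}$, but more usefully $\partial(S-1) = S\partial = (S-1+1)\partial$), $[\partial_{x_i},\Omega]=\varepsilon_{ij}\partial_{x_j}$, and $[\partial,\partial]=0$. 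The crucial structural point is that each of these commutators $[\partial, Z_i]$ is again of the form $\pm\partial'$ for some $\partial'\in\{\partial_t,\partial_{x_1},\partial_{x_2}\}$ (or zero), so moving a single first-order derivative $\partial$ to the right past one generator $Z_i$ costs only a change of which derivative appears, not an increase in the number of $Z$'s.

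For the inclusion $\partial Z^{\le k}\subseteq Z^{\le k}\partial$: it suffices to treat a single monomial $\partial Z^\beta$ with $|\beta|\le k$. I would induct on $|\beta|$. If $|\beta|=0$ there is nothing to prove. Otherwise write $Z^\beta = Z_i Z^{\beta'}$ with $|\beta'|=|\beta|-1$. Then
\begin{equation}
	\partial Z_i Z^{\beta'} = Z_i \partial Z^{\beta'} + [\partial,Z_i] Z^{\beta'} = Z_i \partial Z^{\beta'} \pm \partial' Z^{\beta'},
\end{equation}
where $\partial'$ is again one of $\partial_t,\partial_{x_1},\partial_{x_2}$ (or the term is absent). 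By the induction hypothesis applied to the $(|\beta|-1)$-fold monomials, $\partial Z^{\beta'} = Z^{\le |\beta|-1}\partial$ and $\partial' Z^{\beta'} = Z^{\le|\beta|-1}\partial'$; substituting gives $\partial Z^\beta = Z_i Z^{\le|\beta|-1}\partial \pm Z^{\le|\beta|-1}\partial' \subseteq Z^{\le|\beta|}\partial$, since $Z_i Z^{\le|\beta|-1}$ has order $\le |\beta|\le k$. Summing over the monomials comprising $Z^{\le k}$ finishes this direction. The reverse inclusion $Z^{\le k}\partial\subseteq\partial Z^{\le k}$ is entirely symmetric: one instead moves $\partial$ leftward past the generators, using the same commutators read as $Z_i\partial = \partial Z_i \mp \partial'$, and the same induction on the number of generators.

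The main thing to be careful about — rather than a genuine obstacle — is the bookkeeping of the universal-versus-tracked constants flagged in the Remark after \eqref{representation of commutators}: the notation $Z^{\le k}$ is only legitimate when all coefficients $C_\beta$ are universal, and here they are, since every $C_\beta$ produced is a product of the constants $\pm 1$ and $\varepsilon_{ij}$ from Lemma \ref{commutator}, which depend on nothing. I would also note that, because $[\partial,\partial']=0$, the order in which the several derivatives are written is immaterial, so no ambiguity arises from "$\partial$" denoting any of the three derivatives. Thus the corollary is really just the observation that the span of $\{\partial_t,\partial_{x_1},\partial_{x_2}\}$ is an invariant (two-sided) "slot" that commutes through $Z^{\le k}$ at the cost of staying within that same three-dimensional span and within order $k$.
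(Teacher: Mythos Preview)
Your proof is correct and is essentially the argument the paper has in mind: the corollary is stated without proof, immediately after \eqref{representation of commutators}, and the natural justification is the same induction you carry out. The only cosmetic difference is that the paper would presumably invoke \eqref{representation of commutators} directly (taking $Z^\alpha=\partial$ there gives $[\partial,Z^\beta]=\partial Z^{<|\beta|}$, then induct on $|\beta|$), whereas you peel off one generator $Z_i$ at a time using Lemma \ref{commutator}; since \eqref{representation of commutators} is itself proved by exactly that single-generator induction, the two routes coincide.
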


\begin{rem}
	Suppose $\alpha\in\mathbb{N}^5$ is an multi-index, then we have
	\begin{equation}
		\label{relation between Z and Zhat}
		\hat Z^\alpha=Z^{\le|\alpha|},\ \ Z^\alpha=\hat Z^{\le|\alpha|}.
	\end{equation}
	By virtue of \eqref{abbreviation}, the above relations should be understood as
	\begin{equation}
		\hat Z^\alpha=\sum_{|\gamma|\le|\alpha|}C_{\alpha,\gamma}Z^\gamma,\ \ Z^\alpha=\sum_{|\gamma|\le|\alpha|}C_{\alpha,\gamma}\hat Z^\gamma.
	\end{equation}
\end{rem}

For further needs, we give some simple but important properties concerning the communication between $Z,\hat Z$ and the damping terms.

\begin{lem} \label{lemrelation}
	The following relations hold for any smooth function $f(t,x)$:
	\begin{equation}
		\hat Z_3 \Box f - \Box Z_3 f =0,
		\label{commute hat Z3 with box}
	\end{equation}
	\begin{equation}
		\hat Z_3\left(\frac{1}{1+t}\partial_tf\right)-\frac{1}{1+t}\partial_tZ_3f
=\frac{1}{(1+t)^2}\partial_tf,
	\end{equation}
	\begin{equation}
		\hat Z_3\left(\frac{1}{(1+t)^2}f\right)-\frac{1}{(1+t)^2}Z_3f=\frac{2}{(1+t)^3}f.
	\end{equation}
\end{lem}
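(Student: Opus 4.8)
The plan is to prove the three identities by direct computation, using only the explicit expression $S=t\partial_t+x_1\partial_{x_1}+x_2\partial_{x_2}$ together with two facts already recorded above: the commutator $[S,\Box]=-2\Box$, and the relation $[\partial_t,S]=\partial_t$, which follows from the first identity in Lemma~\ref{commutator} (take $\partial=\partial_t$ in $[\partial,S-1]=\partial$). The conceptual content is that $\hat Z_3$ and $Z_3$ differ from the dilation generator $S$ only by the additive constants $\pm 1$, and $S$ records the degree of homogeneity of a differential operator under $(t,x)\mapsto(\lambda t,\lambda x)$: the wave operator $\Box$ is exactly homogeneous of degree $-2$, whereas the damping coefficients $\frac{1}{1+t}$ and $\frac{1}{(1+t)^2}$ fail to be homogeneous because of the shift $t\mapsto 1+t$, and this defect is precisely what produces the right-hand sides.

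For \eqref{commute hat Z3 with box}, write
\[
\hat Z_3\Box f-\Box Z_3f=(S+1)\Box f-\Box(S-1)f=(S\Box-\Box S)f+2\Box f=-2\Box f+2\Box f=0 .
\]
For the second identity, apply $S$ to $\frac{1}{1+t}\partial_t f$ by the Leibniz rule: the spatial part $x_1\partial_{x_1}+x_2\partial_{x_2}$ of $S$ passes through the coefficient, while $t\partial_t$ hitting $\frac{1}{1+t}$ contributes $-\frac{t}{(1+t)^2}\partial_t f$. Collecting the surviving terms and using $[\partial_t,S]=\partial_t$ to recognize $t\partial_t^2 f+x_i\partial_{x_i}\partial_t f=S\partial_t f=\partial_t(S-1)f=\partial_t Z_3 f$, one gets
\[
S\left(\frac{1}{1+t}\partial_t f\right)=-\frac{t}{(1+t)^2}\partial_t f+\frac{1}{1+t}\partial_t Z_3 f .
\]
Adding $\frac{1}{1+t}\partial_t f$ (the contribution of the $+1$ in $\hat Z_3$) and subtracting $\frac{1}{1+t}\partial_t Z_3 f$ leaves $\left(\frac{1}{1+t}-\frac{t}{(1+t)^2}\right)\partial_t f=\frac{1}{(1+t)^2}\partial_t f$. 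The third identity is entirely parallel: since $x$-derivatives pass through $\frac{1}{(1+t)^2}$ and $t\partial_t$ produces $-\frac{2t}{(1+t)^3}$, one finds $S\left(\frac{1}{(1+t)^2}f\right)=-\frac{2t}{(1+t)^3}f+\frac{1}{(1+t)^2}Sf$, whence $\hat Z_3\left(\frac{1}{(1+t)^2}f\right)-\frac{1}{(1+t)^2}Z_3 f=-\frac{2t}{(1+t)^3}f+\frac{2}{(1+t)^2}f=\frac{2}{(1+t)^3}f$.

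There is no genuine obstacle here; the only point requiring a little care is to keep straight which summands of $S$ differentiate the coefficient and which differentiate $f$, and to invoke $[\partial_t,S]=\partial_t$ at the right moment. A slightly more structural alternative would be to split $\frac{1}{1+t}\partial_t=\frac1t\partial_t-\frac{1}{t(1+t)}\partial_t$ and exploit the exact $(-2)$-homogeneity of the first piece, but the direct computation above is shorter and avoids any issue at $t=0$.
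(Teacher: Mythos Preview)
Your proof is correct and proceeds by exactly the kind of direct computation the paper has in mind; the paper itself states this lemma without proof, treating it as an immediate calculation using $[S,\Box]=-2\Box$ and the definitions $\hat Z_3=S+1$, $Z_3=S-1$.
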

Furthermore, we can prove the following identities by using induction argument.
\begin{cor}
	For $k\ge1$, we have 
	\begin{equation}
		\hat Z_3^k\left(\frac{1}{1+t}\partial_tf\right)-\frac{1}{1+t}\partial_tZ_3^kf=\sum_{j=0}^{k-1}\hat Z_3^{k-1-j}\left(\frac{1}{(1+t)^2}\partial_tZ_3^{j}f\right),
	\end{equation}
	\begin{equation}
		\hat Z_3^k\left(\frac{1}{(1+t)^2}f\right)-\frac{1}{(1+t)^2}Z_3^kf=\sum_{j=0}^{k-1}\hat Z_3^{k-1-j}\left(\frac{2}{(1+t)^3}Z_3^{j}f\right).
	\end{equation}
	For any multi-index $\alpha\in\mathbb{N}^5$, we have
	\begin{equation}
		\hat Z^\alpha\Box f-\Box Z^\alpha f=0,
		\label{commute hat Z with box}
	\end{equation}
	\begin{equation}
		\begin{aligned}
			\hat Z^\alpha\left(\frac{1}{1+t}\partial_tf\right)-\frac{1}{1+t}\partial_tZ^\alpha f=&\mathbbm{1}_{\alpha_0>0}\sum_{j=1}^{\alpha_0}\frac{\alpha_0!}{(\alpha_0-j)!}\frac{1}{(1+t)^{j+1}}\partial_tZ^{\alpha-je_0}f\\
			&+\mathbbm{1}_{\alpha_3>0}\sum_{j=0}^{\alpha_3-1}\hat Z^{\alpha-(j+1)e_3}\left(\frac{1}{(1+t)^2}\partial_tZ^{je_3}f\right),
		\end{aligned}
		\label{commute Z and damping, for theta}
	\end{equation}
	\begin{equation}
		\begin{aligned}
			\hat Z^\alpha\left(\frac{1}{(1+t)^2}f\right)-\frac{1}{(1+t)^2}Z^\alpha f=&\mathbbm{1}_{\alpha_0>0}\sum_{j=1}^{\alpha_0}\frac{\alpha_0!(j+1)}{(\alpha_0-j)!}\frac{1}{(1+t)^{j+2}}Z^{\alpha-je_0}f\\
			&+\mathbbm{1}_{\alpha_3>0}\sum_{j=0}^{\alpha_3-1}\hat Z^{\alpha-(j+1)e_3}\left(\frac{2}{(1+t)^3}Z^{je_3}f\right).
		\end{aligned}
		\label{commute Z and damping, for u}
	\end{equation}
    Here $\mathbbm{1}_{\alpha_0>0}$ is the characteristic function of the set $\{\alpha\in\mathbb{N}^5:\alpha_0>0\}$. $e_0=(1,0,0,0,0)$ and $e_3=(0,0,0,1,0)$ are vectors in $\mathbb{N}^5$. 
\end{cor}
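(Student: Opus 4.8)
I would establish the four displayed identities by a short chain of inductions, taking Lemma~\ref{lemrelation} (and, where relevant, the commutators $[\partial,\Box]=[\Omega,\Box]=0$, $[S,\Box]=-2\Box$ and Lemma~\ref{commutator}) as the atomic input. The two scalar identities for $\hat Z_3^k$ are proved by induction on $k\ge1$, with $k=1$ being exactly the second and third relations of Lemma~\ref{lemrelation}. For the inductive step one applies $\hat Z_3$ to the identity at level $k$ and rewrites $\hat Z_3\bigl(\frac{1}{1+t}\partial_t Z_3^kf\bigr)$, respectively $\hat Z_3\bigl(\frac{1}{(1+t)^2}Z_3^kf\bigr)$, via the $k=1$ relation applied to $Z_3^kf$; the freshly produced term is precisely the $j=k$ summand at level $k+1$, while $\hat Z_3$ moves each old summand from $\hat Z_3^{k-1-j}(\cdots)$ to $\hat Z_3^{k-j}(\cdots)$, so the sum closes.

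For \eqref{commute hat Z with box} I would first note that $\hat Z_3^k\Box g=\Box Z_3^kg$ for every $k\ge1$, by a one-line induction out of \eqref{commute hat Z3 with box}. For a general multi-index $\alpha$ I would then induct on $|\alpha|$: writing $\hat Z^\alpha=\hat Z_{i_0}\hat Z^{\alpha-e_{i_0}}$, where $i_0$ is the smallest index with $\alpha_{i_0}>0$ (so that prepending $\hat Z_{i_0}$ to the already ordered monomial $\hat Z^{\alpha-e_{i_0}}$ requires no reshuffling), applying the inductive hypothesis to $\hat Z^{\alpha-e_{i_0}}\Box f=\Box Z^{\alpha-e_{i_0}}f$, and pushing $\hat Z_{i_0}$ through $\Box$. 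This costs nothing for $i_0\in\{0,1,2,4\}$ since $\partial$ and $\Omega$ commute with $\Box$, and for $i_0=3$ it replaces $\hat Z_3$ by $Z_3$ via \eqref{commute hat Z3 with box}; in either case prepending the corresponding $Z_{i_0}$ to $Z^{\alpha-e_{i_0}}$ restores $Z^\alpha$.

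The identities \eqref{commute Z and damping, for theta} and \eqref{commute Z and damping, for u} are again proved by induction on $|\alpha|$, peeling the leftmost generator $\hat Z_{i_0}$ as above. If $i_0\in\{1,2,4\}$, then $\alpha_0=0$, no first sum is present, and the generator $\partial_{x_1},\partial_{x_2}$ or $\Omega$ commutes with every scalar $(1+t)^{-m}$ and with $\partial_t$; applying it to the identity for $\alpha-e_{i_0}$ simply prepends $\hat Z_{i_0}$ to the main term and to the strings $\hat Z^{\alpha-e_{i_0}-(j+1)e_3}$ of the second sum (turning them into $Z^\alpha$ and $\hat Z^{\alpha-(j+1)e_3}$ respectively), which is exactly the identity for $\alpha$. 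If $i_0=0$, then applying $-\partial_t$ to the main term $\frac{1}{1+t}\partial_t Z^{\alpha-e_0}f$ (respectively $\frac{1}{(1+t)^2}Z^{\alpha-e_0}f$) both reproduces the main term at level $\alpha$ and creates the $j=1$ summand of the first sum, while $-\partial_t$ applied to the $j$-th summand of the old first sum splits, through differentiation of $(1+t)^{-(j+1)}$ (respectively $(1+t)^{-(j+2)}$), into a $(j+1)$-th contribution and a $j$-th contribution; the Pascal-type identity $\frac{(\alpha_0-1)!}{(\alpha_0-1-j)!}+j\,\frac{(\alpha_0-1)!}{(\alpha_0-j)!}=\frac{\alpha_0!}{(\alpha_0-j)!}$ (and its $(j+1)$-weighted analogue for \eqref{commute Z and damping, for u}) is precisely what forces the coefficients $\frac{\alpha_0!}{(\alpha_0-j)!}$ and $\frac{\alpha_0!(j+1)}{(\alpha_0-j)!}$, whereas the old second sum passes through cleanly because $\hat Z_0\,\hat Z^{\alpha-e_0-(j+1)e_3}=\hat Z^{\alpha-(j+1)e_3}$ as operators, so no cross term is generated. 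Finally, if $i_0=3$, then $\alpha_0=\alpha_1=\alpha_2=0$ (again no first sum) and $\hat Z_3$ applied to the main term yields, via Lemma~\ref{lemrelation}, the main term at level $\alpha$ together with the $j=\alpha_3-1$ summand of the second sum, while $\hat Z_3$ applied to the old second sum shifts $\hat Z^{\alpha-e_3-(j+1)e_3}\mapsto\hat Z^{\alpha-(j+1)e_3}$ and furnishes the summands $j=0,\dots,\alpha_3-2$. The only step that demands genuine care is $i_0=0$ in this last argument: one has to confirm that the repeated differentiation of $(1+t)^{-1}$ and $(1+t)^{-2}$ produces exactly the stated coefficients and that the $\hat Z_3$-generated summands never tangle with the freshly peeled $\partial_t$; both are bookkeeping, but they are what pin down the precise shape of the two damping identities, everything else being mechanical relabelling.
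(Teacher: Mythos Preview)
Your proposal is correct. The argument by induction on $|\alpha|$, peeling off the leftmost generator $\hat Z_{i_0}$, is sound; in particular your handling of the $i_0=0$ case---splitting each old summand via the product rule on $(1+t)^{-(j+1)}$ and invoking the recursion $\frac{(\alpha_0-1)!}{(\alpha_0-1-j)!}+j\,\frac{(\alpha_0-1)!}{(\alpha_0-j)!}=\frac{\alpha_0!}{(\alpha_0-j)!}$---is exactly what is needed, and your observation that $\hat Z_0$ absorbs into $\hat Z^{\alpha-e_0-(j+1)e_3}$ without touching the inner argument keeps the second sum intact.

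The paper takes a slightly different route for \eqref{commute Z and damping, for theta} and \eqref{commute Z and damping, for u}: rather than inducting on $\alpha_0$, it applies the closed-form operator identity
\[
A^{n}B=\sum_{j=0}^{n}\binom{n}{j}[A^{(j)},B]\,A^{n-j},
\]
with $A=-\partial_t$ and $B$ equal to multiplication by $(1+t)^{-1}$ (respectively $(1+t)^{-2}$), together with the computed iterated commutators $[\partial_t^{(j)},(1+t)^{-1}]=(-1)^{j}j!\,(1+t)^{-(j+1)}$ and $[\partial_t^{(j)},(1+t)^{-2}]=(-1)^{j}(j+1)!\,(1+t)^{-(j+2)}$. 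This produces the coefficients $\frac{\alpha_0!}{(\alpha_0-j)!}$ and $\frac{\alpha_0!(j+1)}{(\alpha_0-j)!}$ in one stroke, with no Pascal-type bookkeeping, and then combines with the already-established $\hat Z_3^k$ identities for the $Z_3$-part. Your inductive derivation is more elementary and self-contained (no need to recall the adjoint-action formula), while the paper's approach is terser and makes the structure of the coefficients more transparent; either is perfectly adequate here.
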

\begin{proof}
	The identity \eqref{commute hat Z with box} follows directly from \eqref{commute hat Z3 with box}. The other two identites can be proved by using the formula $A^nB=\sum_{j=0}^n\binom{n}{j}[A^{(j)},B]A^{n-j}$, where $[A^{(j)},B]$ is defined iteratively by
	\begin{equation}
		\left\{
		\begin{aligned}
			&[A^{(j)},B]=\left[A,[A^{(j-1)},B]\right],\\
			&[A^{(0)},B]=B,
		\end{aligned}\right.
	\end{equation}
	and the facts that
	\begin{equation}
		\left[\partial_t^{(j)},\frac{1}{1+t}\right]=\frac{j!(-1)^j}{(1+t)^{j+1}},
	\end{equation}
	\begin{equation}
		\left[\partial_t^{(j)},\frac{1}{(1+t)^2}\right]=\frac{(j+1)!(-1)^j}{(1+t)^{j+2}}.
	\end{equation}
\end{proof}

\subsection{Equations of derivatives}
Applying $ \hat{Z}^{\alpha} $ to the equation \eqref{wave eqn of theta} of $ \theta $, we can obtain the equation of $Z^{\alpha} \theta$ by using \ref{commute hat Z with box}:
\begin{equation} \label{eqn of Z^alpha theta}
	\boxed{\Box Z^{\alpha} \theta + \frac{\mu}{1+t} \partial_{t} Z^{\alpha} \theta= F_{\theta}^{(\alpha)},}
\end{equation}
where
\begin{equation} \label{def of F_theta^alpha}
	F_{\theta}^{(\alpha)} =  \hat{Z}^{\alpha} F_\theta + \frac{\mu}{1+t} \partial_{t} Z^{\alpha} \theta-\hat Z^{\alpha}\left(\frac{\mu}{1+t} \partial_{t} \theta\right).
\end{equation}
Similarly, by applying $ \hat{Z}^{\alpha} $ to \eqref{wave eqn of u} we have 
\begin{equation} \label{eqn of Z^alpha u}
	\boxed{\Box Z^{\alpha} u + \frac{\mu}{1+t} \partial_{t} Z^{\alpha} u - \frac{\mu}{(1+t)^{2}} Z^{\alpha} u = F_u^{(\alpha)}},
\end{equation}
where
\begin{equation} \label{def of F_u^alpha}
	F_u^{(\alpha)} =  \hat{Z}^{\alpha} F_u + \partial_t\left(\frac{\mu}{1+t}Z^\alpha u\right)-\hat Z^\alpha\partial_t\left(\frac{\mu}{1+t}u\right).
\end{equation}

\subsection{Useful inequalities}
In this subsection we state with several inequalities, which will be useful in our proof. Throughout this paper, for any function $\Phi$ on $\R^2$ we use the notation 
\begin{equation}
 \left\| \Phi \right\| = \left\| \Phi \right\|_{L^{2} ( \R^{2} )}, \ \ |\Phi |_{\infty} = \left\| \Phi \right\|_{L^{\infty} ( \R^{2} )}.
\end{equation}

The following Klainerman-Sideris estimates were proved in \cite{KS} for the $3$-$D$ case. Nevertheless, the proof therein applies also to $2$-$D$ without modification. We refer to Lemma 3.3 in \cite{Lei2016}. 
\begin{lem}[\cite{KS}, Lemma 2.3] \label{lempointwise KS estimate}
	Denote $r=|x|$. For $\Phi\in C^\infty(\mathbb{R}_+\times\mathbb{R}^2)$, we have that
	\begin{equation}
		\left|(t-r)\partial_t^2\Phi\right|\lesssim\left|\partial Z^{\le1}\Phi\right|+r|\Box\Phi|,
		\label{pointwise KS estimate 1}
	\end{equation}
	\begin{equation}
		\left|(t-r)\Delta\Phi\right|+\left|(t-r)\partial_t\nabla\Phi\right|\lesssim\left|\partial Z^{\le1}\Phi\right|+t|\Box\Phi|.
		\label{pointwise KS estimate 2}
	\end{equation}
\end{lem}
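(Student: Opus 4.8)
The strategy is the classical one: the ``bad'' second derivatives $\partial_t^2\Phi$, $\partial_t\nabla\Phi$, $\Delta\Phi$ are transverse to the light cone $\{r=t\}$, in contrast with $S$ and $\Omega$, which are tangent to it; hence each of them should be expressible through $\Box\Phi$ and genuine first derivatives $\partial Z^{\le1}\Phi$, with a prefactor degenerating like the distance $t-r$ to the cone. The plan is to first dispose of $\partial_t^2\Phi$ and $\Delta\Phi$, and then reduce $\partial_t\nabla\Phi$ to them.

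\emph{Two weighted identities.} Writing $\partial_r=r^{-1}x\cdot\nabla$, so that $r\partial_r=S-t\partial_t$ and $r\,\partial_r\partial_t\Phi=S\partial_t\Phi-t\partial_t^2\Phi$, I would combine the planar polar identity $\partial_t^2-\partial_r^2=\Box+r^{-1}\partial_r+r^{-2}\Omega^2$ with $S^2-S=t^2\partial_t^2+2tr\,\partial_t\partial_r+r^2\partial_r^2$ and eliminate $\partial_r^2\Phi$, $\partial_t\partial_r\Phi$, obtaining
\begin{equation}
	(t^2-r^2)\,\partial_t^2\Phi=2t\,S\partial_t\Phi+t\,\partial_t\Phi-S\Phi-\Omega^2\Phi-S(S-1)\Phi-r^2\,\Box\Phi ,
\end{equation}
and hence, since $\Delta\Phi=\partial_t^2\Phi-\Box\Phi$,
\begin{equation}
	(t^2-r^2)\,\Delta\Phi=2t\,S\partial_t\Phi+t\,\partial_t\Phi-S\Phi-\Omega^2\Phi-S(S-1)\Phi-t^2\,\Box\Phi .
\end{equation}
Dividing by $t+r$ and using the elementary estimates $|\Omega\Psi|\lesssim r|\partial\Psi|$, $|S\Psi|\lesssim(t+r)|\partial\Psi|$, together with Lemma~\ref{commutator} and Corollary~\ref{lempartialZ} --- which give $S\partial_t\Phi=\partial_t Z_1\Phi$ and allow writing $\Omega^2\Phi$ and $S(S-1)\Phi$ as $\Omega$ (respectively $S$) applied to $Z_4\Phi$ (respectively $Z_1\Phi$) --- one checks that, after dividing, every term except the last is $\lesssim|\partial Z^{\le1}\Phi|$; since in addition $\frac{r^2}{t+r}\le r$ and $\frac{t^2}{t+r}\le t$, this yields \eqref{pointwise KS estimate 1} together with the $\Delta\Phi$ part of \eqref{pointwise KS estimate 2}.

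\emph{The mixed derivative.} For $\partial_t\partial_i\Phi$ I would use two complementary identities. First, from $t\partial_t=S-x\cdot\nabla$, the commutator $[\partial_i,S-1]=\partial_i$, and the purely spatial identity $\sum_k x_k\partial_k\partial_i\Phi=x_i\Delta\Phi-\varepsilon_{ij}\Omega\partial_j\Phi$, one gets
\begin{equation}
	t\,\partial_t\partial_i\Phi=\partial_i(S-1)\Phi+\varepsilon_{ij}\,\Omega\partial_j\Phi-x_i\,\Delta\Phi ;
\end{equation}
second, applying $\partial_i=\frac{x_i}{r^2}(S-t\partial_t)-\frac{\varepsilon_{ij}x_j}{r^2}\Omega$ to $\partial_t\Phi$,
\begin{equation}
	r^2\,\partial_t\partial_i\Phi=x_i\,S\partial_t\Phi-x_it\,\partial_t^2\Phi-\varepsilon_{ij}x_j\,\partial_t\Omega\Phi .
\end{equation}
On $\{r\le t\}$ I would multiply the first identity by $\frac{t-r}{t}\in[0,1]$: the only delicate term is $\frac{x_i}{t}(t-r)\Delta\Phi$ with $|x_i|/t\le1$, controlled by the $\Delta\Phi$ bound just obtained (and $r|\Box\Phi|\le t|\Box\Phi|$ there). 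On $\{r>t\}$ I would divide the second identity by $r^2$ and multiply by $t-r$: each of the coefficients $\frac{(r-t)|x_i|}{r^2}$, $\frac{(r-t)t|x_i|}{r^2}$, $\frac{(r-t)|x_j|}{r^2}$ is $\le1$, and writing $\partial_t^2\Phi=\Delta\Phi+\Box\Phi$ the surviving term splits into one piece $\le|(t-r)\Delta\Phi|$ and one piece $\le t|\Box\Phi|$. Together these complete \eqref{pointwise KS estimate 2}. (For $(t,r)$ in a fixed compact set everything is trivially $\lesssim|\partial Z^{\le1}\Phi|$, since $\partial\in Z$.)

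I expect the main obstacle to be precisely the mixed derivative $\partial_t\nabla\Phi$: neither of its two representations is usable on all of $\mathbb{R}_+\times\mathbb{R}^2$ --- the first is useless near the cone $r=t$, the second degenerates as $r\to0$ --- so one must split into $\{r\le t\}$ and $\{r>t\}$ and, crucially, feed the already-established bound on $(t-r)\Delta\Phi$ back into the argument. Everything else is routine manipulation of the vector fields $\partial$, $S$, $\Omega$.
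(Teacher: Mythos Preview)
Your proof is correct and follows the standard Klainerman--Sideris argument (the paper does not reprove this lemma, but cites \cite{KS}); the weighted identity for $(t^2-r^2)\partial_t^2\Phi$ and the two representations of $\partial_t\partial_i\Phi$ are exactly the ingredients, and your verification that all terms land in $|\partial Z^{\le1}\Phi|$ with the paper's restricted family $Z=(\partial,S-1,\Omega)$ (no Lorentz boosts) is accurate. One cosmetic slip: on $\{r>t\}$ you assert that the coefficient $\frac{(r-t)t|x_i|}{r^2}$ is $\le1$, which is false (take $r=2t$ with $t$ large), but you never actually use that bound --- you immediately split $\partial_t^2\Phi=\Delta\Phi+\Box\Phi$ and instead use $\frac{t|x_i|}{r^2}\le1$ and $\frac{(r-t)|x_i|}{r^2}\le1$, both of which are true on $\{r>t\}$; so the argument is sound, only the sentence listing ``each of the coefficients'' needs rewording.
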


Based on Lemma \ref{lempointwise KS estimate}, we may further obtain the next weighted $L^2-L^2$ inequality, which will contribute to the weighted $ L^{\infty}- L^{2} $ estimates below (Lemma \ref{lem1+t-x} and \ref{lem(1+t-x)3/2}).
\begin{lem}[\cite{KS}, Lemma 3.1]\label{Klainerman-Sideris estimate}
	Let $\sigma(t,x)=1+t-|x|$ and $\alpha\in [0,\infty)$. We have the following weighted $L^2-L^2$ estimate for smooth $\Phi$ with $\operatorname{supp}_x\Phi\subset B(0,1+t)$:
	\begin{equation}
		\left\|\sigma^{1+\alpha}\partial^2\Phi\right\|\lesssim_\alpha\left\|\sigma^\alpha\partial Z^{\le1}\Phi\right\|+(1+t)\|\sigma^\alpha\Box\Phi\|.
		\label{KS estimate}
	\end{equation}
\end{lem}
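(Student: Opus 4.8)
The plan is to deduce the weighted $L^2$-$L^2$ estimate \eqref{KS estimate} from the pointwise bounds in Lemma \ref{lempointwise KS estimate} by multiplying through by the weight $\sigma^\alpha$, taking $L^2$ norms, and then exploiting the crucial geometric fact that on the support of $\Phi$ one has $|x|\le 1+t$, hence $r=|x|\lesssim 1+t$ and also $|t-r|\le 1+t-r=\sigma$ (at least when $r\le t$; the region $t<r\le 1+t$ is a thin shell where $\sigma\in(0,1]$ and can be absorbed). First I would observe that the full Hessian $\partial^2\Phi$ in the (1+2)-dimensional sense decomposes into the purely spatial second derivatives $\Delta\Phi$ (more precisely all of $\partial_{x_i}\partial_{x_j}\Phi$, which are controlled by $\Delta\Phi$ together with first-order $Z$-derivatives via the standard elliptic-type identity $\partial_i\partial_j = \text{(angular)} + \text{(radial)}$ expressed through $\Omega$ and $\partial_r$), the mixed derivatives $\partial_t\partial_{x_i}\Phi$, and the second time derivative $\partial_t^2\Phi$. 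The point is that \eqref{pointwise KS estimate 1} handles $\partial_t^2\Phi$ and \eqref{pointwise KS estimate 2} handles $\partial_t\nabla\Phi$ and $\Delta\Phi$, so every component of $\partial^2\Phi$ is covered once we bound the tangential spatial derivatives, which is routine since $\partial_{x_i}\partial_{x_j}\Phi = \sigma^{-1}\cdot\sigma\,\partial_{x_i}\partial_{x_j}\Phi$ and the weighted Hessian is comparable to $\sigma\Delta\Phi$ plus lower-order terms carrying one $Z$.

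Concretely, I would multiply \eqref{pointwise KS estimate 1} and \eqref{pointwise KS estimate 2} by $\sigma^\alpha$ and write $\sigma^{1+\alpha}\partial^2\Phi = \sigma^\alpha\cdot\sigma\,\partial^2\Phi$. On $\operatorname{supp}_x\Phi$ we have $|t-r|\le\sigma$ (in the bulk region $r\le t$) and $r\le 1+t$, so
\begin{equation*}
	\left|\sigma^{1+\alpha}\partial^2\Phi\right|\lesssim \sigma^\alpha\left|(t-r)\partial^2\Phi\right| + \sigma^{1+\alpha}\mathbbm{1}_{t<r\le1+t}|\partial^2\Phi|,
\end{equation*}
and on the thin outer shell $\{t<r\le1+t\}$ we simply have $\sigma\le1$, so $\sigma^{1+\alpha}|\partial^2\Phi|\lesssim\sigma^\alpha|\partial^2\Phi|\lesssim\sigma^\alpha|\partial Z^{\le1}\Phi|$ after using \eqref{pointwise KS estimate 1}--\eqref{pointwise KS estimate 2} once more there (with $r\simeq t\simeq 1+t$). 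Applying \eqref{pointwise KS estimate 1}--\eqref{pointwise KS estimate 2} to bound $|(t-r)\partial^2\Phi|$ by $|\partial Z^{\le1}\Phi| + (1+t)|\Box\Phi|$ — and for the purely spatial Hessian first reducing $\partial_i\partial_j\Phi$ to $\Delta\Phi$ plus $r^{-1}$-weighted combinations of $\partial_r\Phi$ and $\Omega$-derivatives, the latter being of the form $\partial Z^{\le1}\Phi$ — and then taking the $L^2(\mathbb{R}^2)$ norm gives
\begin{equation*}
	\left\|\sigma^{1+\alpha}\partial^2\Phi\right\|\lesssim_\alpha \left\|\sigma^\alpha\partial Z^{\le1}\Phi\right\| + (1+t)\left\|\sigma^\alpha\Box\Phi\right\|,
\end{equation*}
which is exactly \eqref{KS estimate}.

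The step I expect to be the main obstacle is the reduction of the \emph{full} spatial Hessian $\partial_{x_i}\partial_{x_j}\Phi$ to $\Delta\Phi$ plus acceptable lower-order terms with the correct weight: one must pass through polar coordinates, express $\partial_i\partial_j$ in terms of $\partial_r^2$, $r^{-1}\partial_r$, $r^{-2}\Omega^2$ and $r^{-1}\partial_r\Omega$ (in two dimensions), and then argue that the dangerous factors $r^{-1}$ are harmless because on $\operatorname{supp}_x\Phi\cap\{r\ge1\}$ we have $r\gtrsim1$ while on $\{r\le1\}$ the weight $\sigma\simeq 1+t$ and $\partial^2\Phi$ is just controlled by $\partial Z^{\le1}\Phi$ since $Z$ includes all of $\partial$. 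The terms $r^{-2}\Omega^2\Phi$ and $r^{-1}\partial_r\Omega\Phi$ then carry the extra $\Omega$ needed to land in $\partial Z^{\le1}\Phi$ after one more use of $\partial_r = (x/r)\cdot\nabla$, while $\partial_r^2\Phi$ near the light cone is estimated by writing $\partial_r^2 = \partial_t^2 + (\partial_r-\partial_t)(\partial_r+\partial_t)$ and invoking $\Box\Phi = \partial_t^2\Phi - \partial_r^2\Phi - r^{-1}\partial_r\Phi - r^{-2}\Omega^2\Phi$ together with \eqref{pointwise KS estimate 1}. All of this is the standard Klainerman--Sideris machinery adapted to $n=2$; since Lemma \ref{lempointwise KS estimate} already encapsulates the hardest pointwise inequalities, the remaining work is bookkeeping of weights and the harmless splitting into $\{r\le1\}$, $\{1\le r\le t\}$, $\{t<r\le1+t\}$.
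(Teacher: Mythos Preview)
Your treatment of $\partial_t^2\Phi$, $\partial_t\nabla\Phi$ and $\Delta\Phi$ via the pointwise estimates in Lemma~\ref{lempointwise KS estimate} is fine and matches the paper: one simply writes $\sigma=1+(t-r)$, so that $\sigma^{1+\alpha}|\partial^2\Phi|\le\sigma^\alpha|\partial^2\Phi|+\sigma^\alpha|t-r|\,|\partial^2\Phi|$, and both pieces land where they should (the first because $\partial^2\in\partial Z^{\le1}$, the second by \eqref{pointwise KS estimate 1}--\eqref{pointwise KS estimate 2} and $r,t\le 1+t$).

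The gap is in your passage from $\Delta\Phi$ to the individual entries $\partial_i\partial_j\Phi$. Your polar-coordinate reduction is correct as an identity: $\partial_i\partial_j\Phi$ is a bounded-coefficient combination of $\Delta\Phi$ and terms of size $r^{-1}|\partial Z^{\le1}\Phi|$. But the conclusion you draw from it is not. On the region $\{r\ge1\}$ you only get $r^{-1}\lesssim1$, hence a remainder $\sigma^{1+\alpha}|\partial Z^{\le1}\Phi|$, which is one power of $\sigma$ too many; on $\{r\le1\}$ your claim ``$\partial^2\Phi$ is just controlled by $\partial Z^{\le1}\Phi$'' is trivially true pointwise but again leaves you with $\sigma^{1+\alpha}|\partial Z^{\le1}\Phi|\sim(1+t)\sigma^\alpha|\partial Z^{\le1}\Phi|$. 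In fact the pointwise bound $\sigma|\partial_i\partial_j\Phi|\lesssim|\partial Z^{\le1}\Phi|+(1+t)|\Box\Phi|$ that your argument implicitly targets is \emph{false}: take $\Phi(t,x)=\psi(t)\chi(x)x_1x_2$ with $\chi$ a cutoff near the origin; near $x=0$ one has $\partial_1\partial_2\Phi\approx\psi(t)$, $\Box\Phi\approx0$, $|\partial Z^{\le1}\Phi|\approx|\psi(t)|$, while $\sigma\approx1+t$, so the bound would force $1+t\lesssim1$.

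The paper avoids this by abandoning the pointwise route for the spatial Hessian and using instead the weighted $L^2$ elliptic identity. Two integrations by parts give
\[
\sum_{|\gamma|=2}\|\sigma^{1+\alpha}\nabla^\gamma\Phi\|^2
\le\|\sigma^{1+\alpha}\Delta\Phi\|^2
+\tfrac12\sum_{|\gamma|=2}\|\sigma^{1+\alpha}\nabla^\gamma\Phi\|^2
+C\|\sigma^\alpha\nabla\Phi\|^2,
\]
using only $|\nabla\sigma|\le1$; absorbing the middle term yields $\|\sigma^{1+\alpha}\nabla^2\Phi\|\lesssim\|\sigma^{1+\alpha}\Delta\Phi\|+\|\sigma^\alpha\nabla\Phi\|$, and the right-hand side is already controlled by \eqref{improved KS estimate, incomplete}. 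Replace your polar-coordinate paragraph with this integration-by-parts step and the proof goes through.
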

\begin{proof}
	From \eqref{pointwise KS estimate 1}\eqref{pointwise KS estimate 2}\eqref{equivalent form of DZ^alpha} and $\operatorname{supp}_x\Phi\subset B(0,1+t)$, we can deduce that
	\begin{equation}
		\begin{aligned}
			&\sigma^{1+\alpha}\left(\left|\Delta\Phi\right|+\left|\partial_t\partial\Phi\right|\right)
			\lesssim\sigma^\alpha\left|\partial Z^{\le1}\Phi\right|+(1+t)\sigma^\alpha\left|\Box\Phi\right|.
		\end{aligned}
	\end{equation}	
	This implies
	\begin{equation}
		\begin{aligned}
			&\left\|\sigma^{1+\alpha}\Delta\Phi\right\|+\left\|\sigma^{1+\alpha}\partial_t\partial\Phi\right\|
			\lesssim\left\|\sigma^\alpha\partial Z^{\le1}\Phi\right\|+(1+t)\left\|\sigma^\alpha\Box\Phi\right\|.
		\end{aligned}
		\label{improved KS estimate, incomplete}
	\end{equation}	
We still need to bound $\|\sigma^{1+\alpha}\nabla^2\Phi\|$. Integrating by parts and using the fact $|\nabla\sigma|_\infty\lesssim1$, we have that
	\begin{equation}
		\begin{aligned}
			\sum\limits_{ |\gamma| = 2} \left\|\sigma^{1+\alpha}  \nabla^{\gamma} \Phi \right\| ^{2} &\leq \left\|\sigma^{1+\alpha} \Delta\Phi \right\| ^{2} + \frac{1}{2} \sum\limits_{ |\gamma| = 2} \left\|\sigma^{1+\alpha}  \nabla^{\gamma} \Phi \right\| ^{2} + C\left\|\sigma ^\alpha\nabla \Phi \right\| ^{2} .
		\end{aligned}
	\end{equation}
	Hence we obtain 
	\begin{equation}
		\begin{aligned}
			\left\|\sigma^{1+\alpha}  \nabla^{2}\Phi \right\| & \lesssim  \left\|\sigma^{1+\alpha} \Delta\Phi \right\| + \left\|\sigma^\alpha \nabla \Phi \right\| \overset{\eqref{improved KS estimate, incomplete}}{\lesssim} \left\|\sigma^\alpha\partial Z^{\le1}\Phi\right\|+(1+t)\left\|\sigma^\alpha\Box\Phi\right\|.
		\end{aligned}
	\end{equation}
\end{proof}

In the rest of this section, we aim to derive three weighted $ L^{\infty}- L^{2} $ Sobolev imbedding inequalities, which will be useful in control several auxiliary energies defined in the next section, and accordingly make a valuable contribution to closing the bootstrap argument.

\begin{lem} \label{lem1+t-x}
	Suppose $ f\in C_c(\R^2)$. We have that
	\begin{equation}
		\begin{aligned}
			|  \sigma^{ \frac{1}{2} } f |_{\infty } \lesssim \left\| f \right\|^{ \frac{1}{2} } \left( \left\| \sigma \partial^{2} f  \right\|^{ \frac{1}{2} } + \left\| \partial f \right\|^{ \frac{1}{2} }   \right)+\|\sigma^{\frac{1}{2}}\partial f\|. 
		\end{aligned}
	\end{equation}
\end{lem}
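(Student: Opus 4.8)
The plan is to prove this weighted $L^\infty$–$L^2$ Sobolev inequality in $\R^2$ by a standard one-dimensional-slice argument adapted to track the weight $\sigma^{1/2}$, where $\sigma(t,x)=1+t-|x|$. Since $t$ is merely a parameter here, I will write $w(x)=\sigma^{1/2}(t,x)$ and note the key feature $|\nabla w|\lesssim \sigma^{-1/2}$, i.e. $|w\,\nabla w|\lesssim 1$, on the region where $\sigma\gtrsim$ const; more carefully, $|\nabla \sigma|\le 1$ so $|\nabla(\sigma^{1/2})|\le \tfrac12\sigma^{-1/2}$ pointwise. The first step is to reduce the $L^\infty$ bound at a fixed point $x_0\in\R^2$ to integrals along the two coordinate rays emanating from $x_0$. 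Concretely, for $g\in C_c(\R^2)$ one has the elementary identity $g(x_0)^2 = -\int_{x_{0,1}}^\infty \int_{x_{0,2}}^\infty \partial_1\partial_2\big(g^2\big)(y_1,y_2)\,dy_2\,dy_1$, which after expanding $\partial_1\partial_2(g^2)=2\partial_1\partial_2 g\cdot g + 2\partial_1 g\cdot\partial_2 g$ and applying Cauchy–Schwarz in the two variables successively yields the classical
\[
|g|_\infty^2 \lesssim \|g\|\,\big(\|\partial^2 g\| + \|\partial g\|\big) + \|\partial g\|^2 ,
\]
or a variant thereof. The target is obtained by choosing $g$ to incorporate the weight.

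The second step is the choice $g = w f = \sigma^{1/2} f$ and the bookkeeping of derivatives. Then $\partial g = w\,\partial f + (\partial w) f$, so $|\partial g|\lesssim \sigma^{1/2}|\partial f| + \sigma^{-1/2}|f|$, and $\partial^2 g = w\,\partial^2 f + 2(\partial w)(\partial f) + (\partial^2 w) f$, giving $|\partial^2 g|\lesssim \sigma^{1/2}|\partial^2 f| + \sigma^{-1/2}|\partial f| + \sigma^{-3/2}|f|$. Plugging these into the classical inequality above produces a sum of terms; the ones we want, namely $\|f\|^{1/2}(\|\sigma\partial^2 f\|^{1/2}+\|\partial f\|^{1/2})$ and $\|\sigma^{1/2}\partial f\|$, appear directly, but so do dangerous negative-power terms like $\|\sigma^{-1/2} f\|$ and $\|\sigma^{-3/2} f\|$ which are \emph{not} controlled by the right-hand side. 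The resolution — and this is where one must be careful — is that the weight $\sigma^{1/2}$ is \emph{not} differentiated all the way down: because $f$ has compact support, $\sigma$ stays bounded on $\operatorname{supp} f$... no, that is false in general since $t$ can be large. The correct fix is a slightly different, cleverer choice. One should instead carry the weight only in the "good" direction or use the one-dimensional fundamental-theorem-of-calculus representation directly on $\sigma^{1/2} f$ but integrate \emph{toward} the light cone $\{|x|=1+t\}$ where $\sigma$ is small, exploiting that $\partial_r \sigma = -1$ so that $\sigma$ is monotone along radial lines. I anticipate the actual proof does the slicing in a rotated/adapted frame (radial and angular, or one Cartesian direction chosen as the near-radial one) so that the derivative falling on $w$ always comes with the genuinely small quantity, and the $\sigma^{-1/2}f$ terms never materialize; alternatively one writes $w^2 f^2 = \sigma f^2$ and differentiates \emph{that}, since $\partial(\sigma f^2) = (\partial\sigma) f^2 + 2\sigma f\,\partial f$ with $|\partial\sigma|\le1$, avoiding all negative powers.

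Let me therefore restructure the second step along the cleaner route: apply the 1-D inequality to the scalar function $h = \sigma f^2$ (or rather bound $|\sigma f^2|_\infty$), so that $|h|_\infty \le$ (integral of $|\partial h|$ along rays). Since $\partial h = (\partial\sigma)f^2 + 2\sigma f\partial f$, we get $\|\partial h\|_{L^1}\lesssim \|f\|\,\|f\| + \|\sigma^{1/2} f\|\,\|\sigma^{1/2}\partial f\|$ after Cauchy–Schwarz, which is the wrong homogeneity — it only gives $|\sigma^{1/2} f|_\infty\lesssim \|f\| + \|\sigma^{1/2}f\|^{1/2}\|\sigma^{1/2}\partial f\|^{1/2}$, and the term $\|\sigma^{1/2} f\|$ must then itself be bounded, by interpolation $\|\sigma^{1/2}f\|^2 = \int \sigma f^2 \le |\sigma^{1/2}f|_\infty \cdot$ ... which closes circularly. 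So one must iterate once: the genuinely two-derivative version uses $g = \sigma^{1/2} f$ in the \emph{second-order} classical inequality, and the $\sigma^{-1/2}\partial f$ and $\sigma^{-3/2} f$ terms that appear are absorbed using that $\sigma\gtrsim 1$ is \emph{false}, so instead they are controlled by Hardy-type inequalities in the weight $\sigma$ — but Hardy fails at the light cone too.

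The honest assessment: the main obstacle is precisely handling the terms where $\partial$ hits the weight $\sigma^{1/2}$, producing negative powers of $\sigma$ that blow up near the light cone. I expect the paper sidesteps this by the following device — split the FTC representation of $(\sigma^{1/2}f)(x_0)^2$ so that the anti-derivative is taken along a path on which $\sigma$ is \emph{increasing} (moving inward from the light cone), so the weight evaluated at the variable endpoint is always $\le \sigma(x_0)$; this lets one pull $\sigma^{1/2}(x_0)$ or a larger $\sigma$ out and the remaining integrand involves only $|\partial f|$, $|\partial^2 f|$ against \emph{nonnegative}-power weights. Thus the plan is: (i) fix $x_0$, choose a near-radial direction and integrate $\partial(\sigma f^2)$ and then $\partial^2(\sigma f^2)$ from $x_0$ outward to the edge of $\operatorname{supp} f$ (inside the cone), using $\partial_r\sigma=-1<0$ so weights only grow along the path; (ii) apply Cauchy–Schwarz in the resulting iterated integrals to convert $L^1$ norms into products of $L^2$ norms, collecting exactly $\|f\|$, $\|\sigma\partial^2 f\|$, $\|\partial f\|$, $\|\sigma^{1/2}\partial f\|$; (iii) take the supremum over $x_0$. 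I would expect step (ii)'s exponent counting — getting the "$\tfrac12$" powers in $\|f\|^{1/2}(\|\sigma\partial^2 f\|^{1/2}+\|\partial f\|^{1/2})$ to come out, which forces a Young's-inequality/interpolation split of the double integral into a "diagonal" and "off-diagonal" part — to be the fiddly core of the argument, but no single step should be conceptually deep beyond the correct choice of integration path dictated by the sign of $\partial_r\sigma$.
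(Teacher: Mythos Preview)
Your instinct to work with $\sigma f^2$ rather than $\sigma^{1/2}f$ is exactly right, and is in fact what the paper does; the trouble is that you abandon it after a first-order attempt. The paper applies the two-dimensional fundamental theorem of calculus
\[
|\sigma f^2|_{\infty}\lesssim \|\partial_{x_1}\partial_{x_2}(\sigma f^2)\|_{L^1(\R^2)},
\]
not the one-derivative version you tried. Expanding the mixed second derivative gives three types of terms: $(\partial^2\sigma)f^2$, $(\partial\sigma)\partial(f^2)$, and $\sigma\,\partial^2(f^2)$. The second and third are harmless: $|\partial\sigma|\le 1$, so $\|(\partial\sigma)\partial(f^2)\|_{L^1}\lesssim\|f\|\,\|\partial f\|$, and $\|\sigma\partial^2(f^2)\|_{L^1}\lesssim \|f\|\,\|\sigma\partial^2 f\|+\|\sigma^{1/2}\partial f\|^2$. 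Taking a square root then gives exactly the claimed right-hand side.

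The point you are genuinely missing is how to treat $(\partial^2\sigma)f^2$. You worry that two derivatives on $\sigma^{1/2}$ produce $\sigma^{-3/2}$, which indeed blows up at the light cone and cannot be saved. But two derivatives on $\sigma$ itself do \emph{not} produce negative powers of $\sigma$: since $\sigma=1+t-|x|$, one has $|\partial^2\sigma|=|\nabla^2|x||\lesssim |x|^{-1}$. The singularity is at the \emph{origin}, not at the cone, and is handled by the standard Hardy inequality $\|f^2/|x|\|_{L^1}\lesssim\|\nabla(f^2)\|_{L^1}\lesssim\|f\|\,\|\partial f\|$. There is no need for radial paths, monotonicity of $\sigma$, or any light-cone Hardy inequality; the argument is three lines once you compute $\partial^2\sigma$ correctly.
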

\begin{proof}
	Note that $ | \partial \sigma | \lesssim 1 $ and $ | \partial^{2} \sigma | \lesssim \frac{1}{|x|} $. From the fundamental theorem of calculus, we have that
	\begin{equation}
		\begin{aligned}
			|  \sigma f^{2} |& \lesssim \left\|  \partial_{x_{1}} \partial_{x_{2}}  (\sigma f^{2}) \right\| _{L^{1}( \R^{2} )} \\
			& \lesssim \left\|  \frac{f^{2}}{|x|} \right\|_{L^{1}} + \left\|   \nabla (f^{2})  \right\| _{L^{1}} + \left\|  \sigma\nabla^2 (f^{2})  \right\|_{L^{1}}\\
			\overset{\eqref{Hardy inequality}}&{\lesssim}\|\nabla(f^2)\|_{L^1}+\|\sigma f\nabla^2f\|_{L^1}+\|\sigma\nabla f\cdot\nabla f\|_{L^1}\\
			\overset{\text{H\"older}}&{\lesssim}\left\| f \right\| \left( \left\| \sigma \partial^{2} f  \right\| + \left\| \partial f \right\|  \right)+\|\sigma^{\frac{1}{2}}\partial f\|^2.
		\end{aligned}
	\end{equation}
	This completes the proof. 
\end{proof}

\begin{lem} \label{lem(1+t-x)3/2}
	For $ f\in C_c^\infty(B_{1+t}(0)) $ , we have that
	\begin{equation}
		\begin{aligned}
			|  \sigma^{ \frac{3}{2} } f |_{\infty } & \lesssim \left\| \sigma f \right\|^{ \frac{1}{2} } \cdot ( \left\| \sigma^{2} \partial^{2} f  \right\|^{ \frac{1}{2} } + \left\| \sigma \partial f \right\|^{ \frac{1}{2} }   )  + \| \sigma^{ \frac{1}{2} } f \|. 
		\end{aligned}
	\end{equation}
	
\end{lem}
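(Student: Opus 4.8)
The statement to prove is the weighted $L^\infty$--$L^2$ embedding for $f\in C^\infty_c(B_{1+t}(0))$:
\[
|\sigma^{3/2}f|_\infty\lesssim\|\sigma f\|^{1/2}\bigl(\|\sigma^2\partial^2 f\|^{1/2}+\|\sigma\partial f\|^{1/2}\bigr)+\|\sigma^{1/2}f\|.
\]
The plan is to mimic the proof of Lemma \ref{lem1+t-x}, but applied to the function $\sigma f$ rather than to $f$ itself, and then to trade the extra derivatives that land on $\sigma$ for the lower-order term $\|\sigma^{1/2}f\|$. Concretely, I would start from the one-dimensional-in-each-variable fundamental theorem of calculus / Gagliardo--Nirenberg-type pointwise bound
\[
|\sigma^3 f^2|=|\sigma\cdot(\sigma f)^2|\lesssim\bigl\|\partial_{x_1}\partial_{x_2}\bigl(\sigma(\sigma f)^2\bigr)\bigr\|_{L^1(\R^2)},
\]
exactly as in Lemma \ref{lem1+t-x} with $f$ there replaced by $g:=\sigma f$. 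Note that on $\operatorname{supp} f\subset B_{1+t}(0)$ one has $\sigma=1+t-|x|\ge0$, and $\sigma\lesssim 1+t$; more importantly $g=\sigma f$ is again compactly supported and continuous, so the pointwise bound is legitimate.

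Next I would expand the derivatives. Using $|\partial\sigma|\lesssim1$ and $|\partial^2\sigma|\lesssim1/|x|$ (valid away from the origin; near the origin $\sigma$ is smooth, so the singular bound is the only delicate region and is handled by Hardy's inequality exactly as in Lemma \ref{lem1+t-x}), the Leibniz expansion of $\partial_{x_1}\partial_{x_2}(\sigma g^2)$ produces terms of the schematic form
\[
\frac{g^2}{|x|},\qquad \nabla(g^2),\qquad \sigma\,\nabla^2(g^2)=\sigma g\,\nabla^2 g+\sigma\,\nabla g\cdot\nabla g.
\]
Applying the Hardy inequality \eqref{Hardy inequality} to the first term converts $\|g^2/|x|\|_{L^1}$ into $\|g\,\nabla g\|_{L^1}$-type quantities, and then H\"older's inequality gives
\[
|\sigma^3 f^2|\lesssim\|g\|\bigl(\|\sigma\partial^2 g\|+\|\partial g\|\bigr)+\|\sigma^{1/2}\partial g\|^2,
\]
i.e. the literal output of Lemma \ref{lem1+t-x} with $f\leadsto g=\sigma f$. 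Taking square roots yields
\[
|\sigma^{3/2}f|_\infty\lesssim\|\sigma f\|^{1/2}\bigl(\|\sigma\,\partial^2(\sigma f)\|^{1/2}+\|\partial(\sigma f)\|^{1/2}\bigr)+\|\sigma^{1/2}\partial(\sigma f)\|.
\]

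The remaining step, which I expect to be the only real bookkeeping obstacle, is to show that every occurrence of a derivative hitting the weight $\sigma$ is absorbed into the asserted right-hand side. Since $\partial\sigma$ is bounded and $\partial^2\sigma=O(1/|x|)$, we have
\[
\partial(\sigma f)=\sigma\partial f+(\partial\sigma)f,\qquad
\partial^2(\sigma f)=\sigma\partial^2 f+2(\partial\sigma)(\partial f)+(\partial^2\sigma)f,
\]
so that $\|\partial(\sigma f)\|\lesssim\|\sigma\partial f\|+\|f\|$, $\|\sigma\partial^2(\sigma f)\|\lesssim\|\sigma^2\partial^2 f\|+\|\sigma\partial f\|+\|\,\tfrac{\sigma}{|x|}f\|$, and $\|\sigma^{1/2}\partial(\sigma f)\|\lesssim\|\sigma^{3/2}\partial f\|+\|\sigma^{1/2}f\|$. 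The terms $\|f\|$, $\|\sigma^{1/2}f\|$, and (after noting $\sigma/|x|\lesssim \sigma^{1/2}\cdot\sigma^{1/2}/|x|$ and one further application of Hardy, or simply $\sigma\lesssim 1+t$ combined with support in $B_{1+t}$) the Hardy-type term are all dominated by the stated lower-order contribution; and one checks $\|\sigma^{3/2}\partial f\|\lesssim\|\sigma^{1/2}f\|^{1/2}\|\sigma^{2}\partial^2 f\|^{1/2}+\dots$ is \emph{not} needed because $\|\sigma\partial f\|$ already appears. The one point requiring care is that after substituting $g=\sigma f$ the cross-term $\|\partial g\|$ carries a bare $\|f\|$, which must be dominated by $\|\sigma^{1/2}f\|$; this uses $\sigma\ge c>0$ on a neighborhood of $\operatorname{supp} f$ only if $t$ is bounded below, so instead one keeps $\|f\|\le \|\sigma^{1/2}f\|\cdot\|\sigma^{-1/2}\|_\infty$ is \emph{not} available — rather one notes the factor structure $\|\sigma f\|^{1/2}\|\partial g\|^{1/2}$ and uses $\|\sigma f\|^{1/2}\|f\|^{1/2}\le \tfrac12(\|\sigma f\|+\|f\|)$ together with the fact that $\|\sigma f\|$ and $\|f\|$ are both controlled by $\|\sigma^{1/2}f\|$ via interpolation $\|\sigma^{1/2}f\|^2\le\|f\|\,\|\sigma f\|$ is the wrong direction; the clean fix is to observe $\|f\|\le\|\sigma^{1/2}f\|$ fails pointwise but $\int|f|^2\le\int\sigma|f|^2$ whenever $\sigma\ge1$, which holds on $\operatorname{supp} f$ as long as $|x|\le t$, i.e. away from a thin annulus $t<|x|<1+t$ where $\sigma<1$; on that annulus $\sigma<1$ so $\sigma^{3/2}<\sigma^{1/2}$ and the desired bound is trivial there. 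Assembling these pieces gives the claimed inequality. The main obstacle is thus purely the careful tracking of which weight powers survive; the analytic input (Hardy, H\"older, the pointwise FTC bound) is identical to Lemma \ref{lem1+t-x}.
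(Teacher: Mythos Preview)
Your substitution $g=\sigma f$ is natural, but applying Lemma~\ref{lem1+t-x} to $g$ \emph{as a black box} does not close. The output contains $\|g\|^{1/2}\|\partial g\|^{1/2}$, and since $\partial(\sigma f)=\sigma\partial f+(\partial\sigma)f$ this forces the term $\|\sigma f\|^{1/2}\|f\|^{1/2}$. That quantity is \emph{not} dominated by the target right-hand side: interpolation gives $\|\sigma^{1/2}f\|\le\|\sigma f\|^{1/2}\|f\|^{1/2}$, which is the wrong direction, and there is no weighted Poincar\'e inequality that controls the bare $\|f\|$ by $\|\sigma^2\partial^2f\|+\|\sigma\partial f\|$ uniformly over $C_c^\infty(B_{1+t}(0))$. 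Your annulus fix does not rescue this: the norm $\|f\|$ arises from a \emph{global} H\"older step inside Lemma~\ref{lem1+t-x}, so restricting the point $x_0$ at which you evaluate $\sigma^{3/2}f$ does nothing to localize it; and on the region $\sigma<1$ the pointwise bound $\sigma^{3/2}|f|\le\sigma^{1/2}|f|$ still needs $|\sigma^{1/2}f|_\infty$ controlled by the stated right-hand side, which Lemma~\ref{lem1+t-x} does not provide in those terms. (There is a second, related issue: $\|\sigma\partial^2 g\|$ contains $\|\sigma(\partial^2\sigma)f\|\sim\|\sigma f/|x|\|_{L^2}$, and Hardy's inequality at the origin fails in $L^2(\R^2)$, so this norm can be infinite and the black-box inequality becomes vacuous.)

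The paper avoids both problems by working directly with $\partial_{x_1}\partial_{x_2}(\sigma^3 f^2)$ rather than with $\partial_{x_1}\partial_{x_2}(\sigma\,g^2)$. The point is that $|\partial^2(\sigma^3)|\lesssim\sigma+\sigma^2/|x|$ still carries a factor of $\sigma$, so the zeroth-order contribution is $\|\sigma f^2\|_{L^1}=\|\sigma^{1/2}f\|^2$ (exactly the allowed lower-order term) and the singular piece $\|\sigma^2 f^2/|x|\|_{L^1}$ is handled by the $L^1$ Hardy inequality, which \emph{does} hold in $\R^2$. If you open up the proof of Lemma~\ref{lem1+t-x} before applying H\"older --- keeping $\|g\,\partial g\|_{L^1}=\|\sigma^2 f\partial f\|_{L^1}+\|\sigma f^2\|_{L^1}$ instead of estimating it by $\|g\|\,\|\partial g\|$ --- you recover precisely this computation; so your approach, executed at the $L^1$ level rather than after the premature H\"older, collapses to the paper's. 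Also, your dismissal of $\|\sigma^{3/2}\partial f\|$ is too quick (the right-hand side only has $\|\sigma\partial f\|^{1/2}$, not $\|\sigma\partial f\|$); it is in fact controlled, but by an integration by parts giving $\|\sigma^{3/2}\partial f\|^2\lesssim\|\sigma f\|(\|\sigma\partial f\|+\|\sigma^2\partial^2 f\|)$, which the paper carries out explicitly for the analogous term $\|\sigma^3\partial_{x_1}f\,\partial_{x_2}f\|_{L^1}$.
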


\begin{proof}
Note that $ | \partial \sigma^3 | \lesssim \sigma^2 $ and $ | \partial^{2} \sigma^3 | \lesssim \sigma + \frac{\sigma^2}{|x|} $. Again from the fundamental theorem of calculus, we have that
	\begin{equation}
		\begin{aligned}
			|  \sigma^{3} f^{2} | \lesssim &\left\|  \partial_{x_{1}} \partial_{x_{2}}  (\sigma^{3} f^{2}) \right\| _{L^{1}}  \lesssim \left\|  \partial^{2}  (\sigma^{3}) f^{2} \right\|_{L^{1}} + \left\|  \sigma^{2} \partial (f^{2})  \right\| _{L^{1}} + \left\|  \sigma^{3}\partial_{x_{1}} \partial_{x_{2}} (f^{2})  \right\|_{L^{1}}\\
			 \lesssim &\left\| (\sigma + \frac{\sigma^{2}}{|x|})  f^{2}  \right\|_{L^{1}} + \left\|  \sigma^{2} \partial f \cdot f \right\| _{L^{1}} + \left\|  \sigma^{3} ( \partial_{x_{1}} f \cdot \partial_{x_{2}} f + f \cdot	\partial_{x_{1}} \partial_{x_{2}} f )\right\|_{L^{1}}\\
			\overset{\eqref{Hardy inequality}}{\lesssim}&\left\| \sigma^{ \frac{1}{2} } f \right\|^{2} + \left\| \partial (\sigma^{2} f^{2})   \right\|_{L^{1}} + \left\| \sigma \partial f \right\| \cdot \left\| \sigma f \right\| + \left\|  \sigma^{3}  \partial_{x_{1}} f \cdot \partial_{x_{2}} f \right\|_{L^{1}} \\
&+ \left\| \sigma^{2} \partial^{2} f \right\| \cdot \left\| \sigma f \right\|\\
			 \lesssim &\left\| \sigma^{ \frac{1}{2} } f \right\|^{2} +\left\| \sigma \partial f \right\| \cdot \left\| \sigma f \right\| + \left\| \sigma^{2} \partial^{2} f \right\| \cdot \left\| \sigma f \right\| + \left\|  \sigma^{3}  \partial_{x_{1}} f \cdot \partial_{x_{2}} f \right\|_{L^{1}},
		\end{aligned}
	\end{equation}
the last term can be estimated by using integration by parts 
	\begin{equation}
		\begin{aligned}
\left\|  \sigma^{3}  \partial_{x_{1}} f \cdot \partial_{x_{2}} f \right\|_{L^{1}} 
& \leq \int_{ \mathbb{R}^{2} } \sigma^{3}[ (\partial_{x_{1}} f)^{2} + (\partial_{x_{2}} f)^{2}]  dx_{1} dx_{2} \\		
	& \lesssim 	\left\|  \sigma^{2}  \partial f \cdot f \right\|_{L^{1}} + 	\left\|  \sigma^{3}  \partial^{2} f \cdot f \right\|_{L^{1}}\\
	& \lesssim \left\| \sigma \partial f \right\| \cdot \left\| \sigma f \right\| +  \left\|\sigma^{2} \partial^{2} f \right\|  \cdot \left\| \sigma f \right\|.
		\end{aligned}
	\end{equation}			
Combining the above estimates gives
	\begin{equation}
		\begin{aligned}
			|  \sigma^{3} f^{2} | \lesssim \left\|\sigma f \right\| \cdot  ( \left\| \sigma^{2} \partial^{2} f \right\| + \left\| \sigma \partial f \right\| ) + \| \sigma^{\frac{1}{2}} f \|^{2}.
		\end{aligned}
	\end{equation}
This completes the proof.

\end{proof}

\begin{lem} \label{KSlow}
	For any sufficiently regular function $f$ on $\R^{2}$, we have 
	\begin{equation}
		\begin{aligned}
			|f(x)| \lesssim |x|^{ -\frac{1}{2} } \left\|\partial^{\le1}\Omega^{\le1}f \right\|. 
		\end{aligned}
	\end{equation}
\end{lem}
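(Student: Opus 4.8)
The plan is to prove the weighted $L^\infty$--$L^2$ estimate $|f(x)|\lesssim |x|^{-1/2}\|\partial^{\le1}\Omega^{\le1}f\|$ by passing to polar coordinates and combining the one-dimensional Sobolev embedding on a ray with the one-dimensional Sobolev embedding on a circle. Write $x=r\omega$ with $r=|x|$ and $\omega\in S^1$. The radial derivative $\partial_r$ is a linear combination of $\partial_{x_1},\partial_{x_2}$ with bounded coefficients, so $|\partial_r f|\le|\partial f|$ pointwise; the angular derivative along the circle of radius $r$ is exactly $\Omega/r$, so $|\partial_\tau f(r\omega)|=r^{-1}|\Omega f(r\omega)|$.

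First I would use the fundamental theorem of calculus in the radial direction, starting from $r=\infty$ (where $f$ vanishes if $f$ has compact support; for the general ``sufficiently regular'' case one argues that $f\in H^1$ so $f\to0$ along almost every ray, or one localizes): for fixed $\omega$,
\begin{equation}
	|f(r\omega)|^2 = \left|\int_r^\infty \partial_s\bigl(f(s\omega)^2\bigr)\,ds\right|
	\le 2\int_r^\infty |f(s\omega)|\,|\partial_r f(s\omega)|\,ds
	\le \frac{2}{r}\int_r^\infty |f(s\omega)|\,|\partial f(s\omega)|\,s\,ds,
\end{equation}
where the last step inserts the weight $s/r\ge1$. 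By Cauchy--Schwarz this is bounded by $\frac{2}{r}\|f(\cdot\omega)\|_{L^2(s\,ds)}\|\partial f(\cdot\omega)\|_{L^2(s\,ds)}$, i.e. $|f(r\omega)|^2 \le \frac{2}{r} A(\omega)B(\omega)$ where $A(\omega)=\int_0^\infty|f(s\omega)|^2 s\,ds$ and $B(\omega)=\int_0^\infty|\partial f(s\omega)|^2 s\,ds$ are the radially-integrated squared norms.

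Next I would bound the supremum over $\omega$ of $A(\omega)B(\omega)$ using the circle Sobolev embedding $H^1(S^1)\hookrightarrow L^\infty(S^1)$. Since $A,B\ge0$, it suffices to get $\sup_\omega A(\omega)\lesssim \|f\|^2+\|\Omega f\|^2$ and similarly for $B$ with $\partial f$ in place of $f$. For this, apply the 1D Sobolev inequality on $S^1$ to the function $\omega\mapsto A(\omega)$: $\sup_\omega A(\omega)\lesssim \|A\|_{L^1(S^1)}+\|\partial_\tau A\|_{L^1(S^1)}$. Now $\|A\|_{L^1(S^1)}=\int_{S^1}\int_0^\infty |f|^2 s\,ds\,d\omega=\|f\|_{L^2(\R^2)}^2$, and $\partial_\tau A(\omega)=\int_0^\infty 2 f(s\omega)\,\partial_\tau f(s\omega)\,s\,ds = \int_0^\infty \frac{2}{s} f\,(\Omega f)\,s\,ds$, so $\|\partial_\tau A\|_{L^1(S^1)}\le 2\int_{S^1}\int_0^\infty |f||\Omega f|\,s\,ds\,d\omega\le \|f\|\,\|\Omega f\|$ by Cauchy--Schwarz. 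Hence $\sup_\omega A(\omega)\lesssim \|f\|^2+\|f\|\|\Omega f\|\lesssim \|f\|^2+\|\Omega f\|^2$, and likewise $\sup_\omega B(\omega)\lesssim\|\partial f\|^2+\|\Omega\partial f\|^2$. Combining, $|f(r\omega)|^2\lesssim \frac{1}{r}\bigl(\|f\|^2+\|\Omega f\|^2\bigr)^{1/2}\bigl(\|\partial f\|^2+\|\Omega\partial f\|^2\bigr)^{1/2}\le \frac{1}{r}\|\partial^{\le1}\Omega^{\le1}f\|^2$ after using AM--GM and absorbing everything into the stated norm (which includes $f$, $\partial f$, $\Omega f$, $\Omega\partial f$). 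Taking square roots gives the claim.

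The main obstacle is the interchange of the two one-dimensional embeddings and making sure no extra powers of $r$ sneak in: the weight $s\,ds$ from the planar measure must be tracked carefully so that (i) $A$, $B$ integrate to the full $L^2(\R^2)$ norms and (ii) the gain is exactly $r^{-1}$ in $|f|^2$, i.e. $r^{-1/2}$ in $|f|$, not $r^{-1/2}$ in $|f|^2$. A secondary technical point is justifying the boundary term at $r=\infty$ and the use of $\partial_\tau A$ at the level of the radially-integrated function; for $f\in C_c^\infty$ (which is the case of interest in the applications) these are immediate, and for merely ``sufficiently regular'' $f$ one invokes density or a cutoff. Everything else is routine Cauchy--Schwarz and the standard $H^1(S^1)\hookrightarrow L^\infty(S^1)$ inequality.
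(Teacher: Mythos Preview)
Your argument is correct and uses the same two ingredients as the paper---the Sobolev embedding on $S^1$ and the fundamental theorem of calculus in the radial direction with the weight trick $s/r\ge1$---only in the reverse order: the paper first applies $H^1(S^1)\hookrightarrow L^\infty(S^1)$ to $f$ to get $|f(r,\phi)|^2\lesssim\int_0^{2\pi}(|\Omega f|^2+|f|^2)\,d\phi$, and then integrates radially, which is a bit shorter than your route of integrating radially first and then controlling $\sup_\omega A(\omega)$. Two small slips to clean up: after Cauchy--Schwarz you should have $|f(r\omega)|^2\le\frac{2}{r}A(\omega)^{1/2}B(\omega)^{1/2}$ (not $A(\omega)B(\omega)$), and since $\partial_\phi[f(s\omega)]=(\Omega f)(s\omega)$ there is no factor $1/s$ in $\partial_\tau A$---both points are harmless because your subsequent bounds are written correctly.
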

\begin{proof}
	Using the Sobolev embedding $H^1(\mathbb{S}^1)\hookrightarrow L^\infty(\mathbb{S}^1)$, and the fundamental theorem of calculus, it is east to get
	\begin{equation}
		\begin{aligned}
			| f(r,\theta) |^{2} & \lesssim \int_{0}^{2\pi} ( |\Omega f |^{2} + | f |^{2} ) d \theta \\
			& \lesssim r^{-1} \int_{r}^{+\infty}  \int_{0}^{2\pi} (  | \partial_{r} \Omega f \cdot \Omega f  | +  | \partial_{r} f \cdot  f  | ) \rho d \theta d\rho\\
			&\lesssim r^{ -1 } \left\| \partial_{r} \Omega^{ \leq 1 } f \right\| \cdot \left\|  \Omega^{ \leq 1 } f \right\|. 
		\end{aligned}
	\end{equation}	
	This yields the desired estimate.
	
\end{proof}

\section{Reduction to a bootstrap argument}
\subsection{Main result in the reformulated system}
We state the main result to the reformulated system of $(u,\theta)$. 
\begin{thm}
\label{main thm in u,theta system}
 Suppose $0 < \mu \leq 1 $, and the initial data $(u_0,\theta_0)\in C_c^\infty(\R^2)$ satisfies:
 \begin{equation}
 	\operatorname{curl} u_{0} = 0,
 	\label{initial condition, vorticiti free}
 \end{equation}
 \begin{equation}
 	\operatorname{supp}(u_0,\theta_0)\subset \overline{B_{1/2}(0)}.
 	\label{initial condition, support}
 \end{equation}
 Then there exist constants $0<\epsilon_0\ll1$ and $C>0$, such that for all $ \epsilon\in(0,\epsilon_0] $, $ ( \ref{2} ) $ admits a $ C_c^{\infty} $ solution $ (\theta,u) $ in $ [ 0,T_{\epsilon} ] $, where 
 \begin{equation} 
 	\begin{aligned}
 		T_{\epsilon} =
 		\begin{split}
 			\left\lbrace
 			\begin{array}{lr}
 				 C \epsilon^{ - \frac{2}{1-\mu} }, \ \ 0 < \mu < 1 \\
 				C \exp ( \epsilon^{ - 1 } ) , \ \ \mu = 1.
 			\end{array}	
 			\right. 
 		\end{split}	
 	\end{aligned}
 \end{equation}
The quantity $\epsilon_0,C$ only depend on $u_0,\theta_0$ and the constants $\mu,\gamma$. Moreover, we have that
\begin{equation}
	|\theta|_{\infty}+|u|_{\infty}\lesssim \frac{1}{M},
	\label{Hk norm of the solution}
\end{equation}
where $M$ is a fixed constant large enough satisfying \eqref{M}.
\end{thm}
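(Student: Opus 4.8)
The plan is to obtain Theorem~\ref{main thm in u,theta system} from a continuity (bootstrap) argument run on a hierarchy of weighted energy functionals, along the lines of Flow chart~1. By the local well-posedness of Theorem~\ref{LWP for theta,u} and the corollary following it, for $\epsilon$ small the reformulated system \eqref{2} admits a smooth, compactly supported solution $(\theta,u)$ on a maximal interval $[0,T_{max})$; by the blow-up criterion of Theorem~\ref{thmmajda2} it then suffices to show that, on $[0,T_\epsilon]$, the $C^1$ norm of $(\theta,u)$ stays bounded and $\theta>-\tfrac1{\gamma-1}$ (no vacuum). To control the solution I would introduce the top-order energies $E_5[\theta],E_5[u]$ built from $\|\partial Z^\alpha\theta\|,\|\partial Z^\alpha u\|$ with $|\alpha|\le5$, together with the auxiliary Klainerman--Sideris quantities: a low-order weighted energy $\eta$, and the weighted norms $\chi_5,\widetilde\chi_5$ of $\sigma\,\partial^2Z^\alpha\theta$ and $\sigma\,\partial^2Z^\alpha u$, where $\sigma=1+t-|x|$. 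The bootstrap assumption is $E_5+\eta\le M\epsilon$ on $[0,T]$, which I aim to improve to $E_5+\eta\le\tfrac{M}{2}\epsilon$; here $M$ is the large constant fixed in \eqref{M}.

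The heart of the argument is the energy estimate for the derived wave equations \eqref{eqn of Z^alpha theta} and \eqref{eqn of Z^alpha u}. The key structural point --- and the reason for applying $\hat Z^\alpha$ rather than $Z^\alpha$ --- is that, by \eqref{commute hat Z with box} and \eqref{commute Z and damping, for theta}--\eqref{commute Z and damping, for u}, the commutator of $\hat Z^\alpha$ with the damping operator produces only lower-order terms carrying extra negative powers of $1+t$; hence the linear error terms $F_\theta^{(\alpha)}-\hat Z^\alpha F_\theta$ and $F_u^{(\alpha)}-\hat Z^\alpha F_u$ in \eqref{def of F_theta^alpha}, \eqref{def of F_u^alpha} decay fast enough to be absorbed, whereas with the plain $Z^\alpha$ this gain would be lost. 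For the genuine damping term $\tfrac{\mu}{1+t}\partial_tZ^\alpha\theta$ in the $\theta$-energy identity I would test the equation against a suitably tuned multiplier $m^\alpha_\mu$ --- a power of $1+t$ adapted to $\mu$ --- so that the damping contributes a favorable sign and the resulting differential inequality for $E_5[\theta]$ has a right-hand side whose time integral yields exactly the claimed lifespan. The nonlinear forcings $\hat Z^\alpha F_\theta,\hat Z^\alpha F_u$ split, as in Section~2, into a semilinear part (products of two first-order derivatives, with coefficients linear in $(\theta,u)$) and a quasilinear part (a coefficient, linear or quadratic in $(\theta,u)$, times a second-order derivative). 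The semilinear part is estimated in terms of $\eta$ and $E_5$ via the weighted $L^\infty$--$L^2$ embeddings of Lemmas~\ref{lem1+t-x}, \ref{lem(1+t-x)3/2}, \ref{KSlow}; the quasilinear part is controlled by $\eta,\chi_5,\widetilde\chi_5,E_5$ by exploiting the symmetric structure of \eqref{2} and using the equations to trade the worst second-order derivatives for lower-order quantities, together with the weighted Klainerman--Sideris estimate \eqref{KS estimate}.

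With these bounds in hand the closure follows the order of Flow chart~1. First, $\eta,\chi_5,\widetilde\chi_5$ are bounded by $E_5[\theta,u]$ using Lemma~\ref{Klainerman-Sideris estimate} and the wave equations themselves (Section~5). Next, $E_5[u]$ is bounded by $E_5[\theta]$ (Section~6): the vorticity-free condition \eqref{initial condition, vorticiti free}, which persists by Remark~\ref{vorticity free}, together with the divergence--curl structure of \eqref{2} --- the first equation expresses $\nabla\cdot u$ through $\partial_t\theta$, the second expresses $\partial_tu$ through $\nabla\theta$ --- slaves the full gradient and the time derivative of $u$ to $\theta$, up to lower-order and nonlinear terms. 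Finally, the multiplier estimate for $E_5[\theta]$ closes into an inequality of the schematic form $E_5[\theta](t)\le C\epsilon+C(M\epsilon)^2\int_0^t(1+s)^{-\frac{1+\mu}{2}}\,ds$; since $\int_0^t(1+s)^{-\frac{1+\mu}{2}}\,ds\lesssim(1+t)^{\frac{1-\mu}{2}}$ for $\mu<1$ and $\lesssim\log(2+t)$ for $\mu=1$, this stays below $\tfrac{M}{2}\epsilon$ as long as $t\le T_\epsilon$, with $T_\epsilon$ exactly of the stated size $C\epsilon^{-\frac{2}{1-\mu}}$ ($\mu<1$) or $C\exp(\epsilon^{-1})$ ($\mu=1$), once $M$ is chosen large and then $\epsilon_0$ small. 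This improves the bootstrap assumption, so the solution extends to $[0,T_\epsilon]$; the bound \eqref{Hk norm of the solution} and $\theta>-\tfrac1{\gamma-1}$ then follow from $E_5+\eta\le M\epsilon$ by Sobolev embedding (five vector fields are more than enough in $\R^2$), which also rules out both blow-up scenarios of Theorem~\ref{thmmajda2}.

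The main obstacle will be the quasilinear part of $\hat Z^\alpha F_\theta,\hat Z^\alpha F_u$: unlike the semilinear terms, the coefficient multiplying the top-order second derivative cannot be put in $L^\infty$ at the highest level, so one must redistribute derivatives using the symmetric/divergence structure of the Euler system and the weighted Klainerman--Sideris estimates, all while keeping the powers of $1+t$ and $\sigma$ compatible with the multiplier $m^\alpha_\mu$ that absorbs the damping --- matching these weights so that everything integrates to the sharp lifespan is the crux of the whole argument.
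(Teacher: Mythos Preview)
Your proposal is correct and follows essentially the same route as the paper: the bootstrap on $E_5+\eta\le M\epsilon$, the use of $\hat Z^\alpha$ to gain the extra $(1+t)^{-1}$ on the linear commutator errors, the auxiliary Klainerman--Sideris energies $\chi_5,\tilde\chi_5$, the reduction $E_5[u]\lesssim E_5[\theta]$ via curl-free $+$ div/$\partial_t$ structure, and the multiplier $m_\mu^\alpha$ on the $\theta$-equation all match the paper's Sections~4--7. One small refinement: in the final step the paper does not close on an additive inequality of the form $E_5[\theta]\le C\epsilon+C(M\epsilon)^2\int_0^t(1+s)^{-\frac{1+\mu}{2}}ds$ but rather obtains (after the multiplier identity and the forcing estimates) a Gronwall-type bound $E_5^2[\theta](t)\lesssim E_5^2[\theta](0)\exp\big(CM\epsilon\int_0^t(1+s)^{-\frac{1+\mu}{2}}ds\big)$, which yields the same lifespan and is in the same spirit as what you wrote.
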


Assuming Theorem \ref{main thm in u,theta system} holds true, we prove Theorem \ref{thm1} next. 
\begin{proof}[Proof of the main theorem]We have smooth $(\theta,u)$ on $[0,T_\epsilon]$ which solves the system \eqref{2}. To obtain a smooth solution to \eqref{Euler eqns} via
\begin{equation}
	\rho=\left[(\gamma-1)\theta+1\right]^{\frac{1}{\gamma-1}},
\end{equation}
we need to show that $(\gamma-1)\theta+1$ is bounded away from $0$. This is ensured by
\begin{equation}
	|(\gamma-1)\theta|_\infty\overset{\eqref{Hk norm of the solution}}{\lesssim}\frac{1}{M}<\frac{1}{2}.
\end{equation}
Therefore, we have a smooth solution to the original system over $[0,T_\epsilon]$. 
    
\end{proof}
\subsection{Bootstrap argument}
To prove Theorem \ref{main thm in u,theta system}, we run a bootstrap argument. 

\subsubsection{Time-weighted energies}
Recall that $Z$ is defined in \eqref{def of Z}, and $Z^\alpha=Z_0^{\alpha_0}Z_1^{\alpha_1}Z_2^{\alpha_2}Z_3^{\alpha_3}Z_4^{\alpha_4}$. For any function $\Phi(t,x)$, we define a time-weighted energy associated with the vector fields $Z$ by
\begin{equation}
\begin{aligned}
E_{k} [ \Phi ] ( t) = (1+t)^{\frac{\mu}{2}} \sum\limits_{ 0 \leq |\alpha| \leq k} \left\| \partial Z^{\alpha} \Phi ( t) \right\| + (1+t)^{\frac{\mu}{2}-1} \sum\limits_{ 0 \leq |\alpha| \leq k} \left\| Z^{\alpha} \Phi ( t) \right\| ,
\label{def of Ek}
\end{aligned}
\end{equation}
where $k$ is a fixed positive number, and $ \left\| \cdot \right\| $ stands for the $ L^{2}_{x} $ norm on $\R^{2}$. If $\Phi$ is smooth and supported in $B_{1+t}(0)$, it is easy to get by \eqref{Poincare ineq.}
\begin{equation}
	E_k[\Phi](t)\le 2(1+t)^{\frac{\mu}{2}} \sum\limits_{ 0 \leq |\alpha| \leq k} \left\| \partial Z^{\alpha} \Phi ( t) \right\|\le 2E_k[\Phi](t). 
	\label{equivalent norm of Ek}
\end{equation}

In order to close the bootstrap, we introduce another three auxiliary weighted energies
\begin{equation}
\eta [\Phi] (t) \triangleq  (1+t)^{ \frac{1+\mu}{2} } | Z^{\leq 2}  \partial \Phi (t) |_{\infty },
\label{def of eta}
\end{equation}
\begin{equation}
\chi_{k} [\Phi] (t) \triangleq (1+t)^{ \frac{\mu}{2} } \sum\limits_{ 0 \leq |\alpha| \leq k-1} \left\| \sigma \partial^{2} Z^{\alpha} \Phi \right\|,
\end{equation}

\begin{equation}
	\tilde{\chi}_{k} [\Phi] (t) \triangleq (1+t)^{ \frac{\mu}{2} } \sum\limits_{ 0 \leq |\alpha| \leq k-3} \left\| \sigma^{2} \partial^{4} Z^{\alpha} \Phi \right\|,
\end{equation}
where $\sigma(t,x)=1+t-|x|$. Obviously we have 
\begin{equation}
	\chi_k[\Phi](t)\le (1+t)E_k[\Phi](t). 
\end{equation}

Throughout the paper, we use
\begin{equation}
E_{k} [ \Phi_{1},\Phi_{2} ] \triangleq E_{k} [ \Phi_{1} ]+ E_{k} [\Phi_{2} ]
\end{equation}
to denote the sum of energies. Similar notations apply to $\eta$, $\chi_k$, and $\tilde\chi_k$.

\subsubsection{Bootstrap procedure}

From the fact that $u_0,\theta_0\in C_c^\infty(\R^2)$ and \eqref{properties of g}, we know that there exists a constant $C_0$ which relies on the initial data $u_0,\rho_0$ , such that
\begin{equation}
	E_5[\theta,u](0)+\eta[\theta,u](0)\le C_0\epsilon.
\end{equation}

At this stage, we introduce a large but fixed constant $M$, which will be used to absorb $C_0$ and the universal constants appearing in the proof. Based on $M$, we will choose $\epsilon_0$ small enough. In fact, we have the following hierarchy:
\begin{equation}\label{M}
    1\ll M\ll\exp{M}\ll\epsilon_0^{-1}. 
\end{equation}
The main theorem holds true for any $\epsilon<\epsilon_0$. 

For the case $\mu\in(0,1)$, we define a large time $T_\epsilon$ implicitly via the following identity:
\begin{equation}
	(1+T_\epsilon)^{\frac{1-\mu}{2}}\epsilon=\frac{1}{M^2}.
\end{equation}
If $\mu=1$, we define $T_\epsilon$ by
\begin{equation}
	\ln(1+T_\epsilon)\epsilon=\frac{1}{M}. 
\end{equation}

\begin{rem}
	In each case, the following inequality is always true:
	\begin{equation}
		\label{usage of T_epsilon}
		(1+T_\epsilon)^{\frac{1-\mu}{2}}\epsilon\le\frac{1}{M^2}. 
	\end{equation}
\end{rem}

\paragraph{Bootstrap assumptions}We propose the bootstrap assumption for $\theta,u$:
\begin{equation}
    \boxed{\label{bootstrap assumption}
    E_5[\theta,u](t)+\eta[\theta,u](t)\le M \epsilon.  }
\end{equation}

Now we can state the key bootstrap lemma. 
\begin{lem}\textbf{[Bootstrap lemma]}
    \label{bootstrap lemma}
    There exist positive quantities $\epsilon_0,M$ depending only on $u_0,\theta_0,\mu,\gamma$, such that for any $[0,T]\subset[0,T_\epsilon]$, if the solution $(\theta,u)$ exists on $[0,T]$ and satisfies the bounds \eqref{bootstrap assumption} for $t\in[0,T]$, then the solution actually satisfies improved bounds:
    \begin{equation}
        \label{improved bound}
    E_5[\theta,u](t)+\eta[\theta,u](t)\le \frac{1}{2}M\epsilon. 
    \end{equation}
\end{lem}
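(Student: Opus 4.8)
The plan is to run a continuity (bootstrap) argument built on the energy $E_5[\theta,u]$ and the pointwise quantity $\eta[\theta,u]$, closing it through the following chain of estimates, which mirrors Flow chart~1. We are given on $[0,T]$ the a priori bound \eqref{bootstrap assumption}. The first task (Section~4) is to estimate all the forcing terms $F_\theta^{(\alpha)}$ and $F_u^{(\alpha)}$ in \eqref{eqn of Z^alpha theta}--\eqref{eqn of Z^alpha u} in terms of $\eta$, $E_5$, and the auxiliary weighted energies $\chi_5,\tilde\chi_5$. Crucially, the \emph{linear error terms} coming from commuting $\hat Z^\alpha$ with the damping operators -- recorded in \eqref{commute Z and damping, for theta}--\eqref{commute Z and damping, for u} -- carry extra powers of $(1+t)^{-1}$ precisely because we used $\hat Z$ rather than $Z$; this is what makes them integrable in time rather than borderline. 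The semilinear terms $F_{\theta,1},F_{u,1}$ (products of two first-order derivatives, with coefficients linear in $(\theta,u)$) are controlled by $\eta\cdot E_5$ after using the $L^\infty$ decay in $\eta$ on the low-order factor. The quasilinear terms $F_{\theta,2},F_{u,2}$ (containing second-order derivatives, e.g. $\theta\Delta\theta$, $u\cdot\nabla^2\theta$, $u\cdot\nabla\partial_t u$) are the more delicate ones: here the symmetric structure of the original system \eqref{2} and the equations themselves must be exploited to rewrite the worst terms, and the Klainerman--Sideris weight $\sigma=1+t-|x|$ enters, so these are bounded using $\chi_5,\tilde\chi_5$ and the weighted $L^\infty$--$L^2$ embeddings of Lemmas~\ref{lem1+t-x}--\ref{KSlow}.

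The second stage (Section~5) closes the loop on the auxiliary energies: one shows $\eta[\theta,u]$, $\chi_5[\theta,u]$, $\tilde\chi_5[\theta,u]$ are all controlled by $E_5[\theta,u]$ (up to the small factor from \eqref{bootstrap assumption}). For $\chi_5,\tilde\chi_5$ this is exactly the content of the weighted Klainerman--Sideris estimate, Lemma~\ref{Klainerman-Sideris estimate}, applied to $\Phi=Z^\alpha\theta$ and $Z^\alpha u$: the term $(1+t)\|\sigma^\alpha\Box\Phi\|$ on the right is handled by the equations \eqref{eqn of Z^alpha theta}--\eqref{eqn of Z^alpha u}, converting $\Box Z^\alpha\Phi$ into the damping terms plus $F^{(\alpha)}$, which were estimated in Section~4; one must check that the $(1+t)$ prefactor is absorbed by the time weights built into $E_5$ and the gain in the damping/forcing estimates. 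For $\eta$, the pointwise bound follows by combining Lemma~\ref{lem1+t-x} (or \ref{KSlow} near the light cone) with $\chi_5$ and $E_5$, matching the weight $(1+t)^{(1+\mu)/2}$ in \eqref{def of eta}.

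The third stage is the genuine energy estimate for the wave equations with time-dependent damping. In Section~6 one controls $E_5[u](t)$ by $E_5[\theta](t)$ (plus lower-order/forcing contributions) -- this uses the structure of \eqref{wave eqn of u}, where the "antidamping" term $-\frac{\mu}{(1+t)^2}u$ is lower order, so $u$ inherits the decay of $\theta$ through the coupling $\nabla\theta$ in the original momentum equation. In Section~7 the heart of the matter: one estimates $E_5[\theta](t)$ by testing the equation \eqref{eqn of Z^alpha theta} against a carefully designed multiplier $m_\mu^\alpha$ (a time-weighted combination of $\partial_t Z^\alpha\theta$ and $Z^\alpha\theta$) adapted to the critical damping coefficient $\mu/(1+t)$. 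This produces a differential inequality of the schematic form $\frac{d}{dt}\mathcal E_5^2 \lesssim (\text{good damping terms, possibly with favorable sign}) + \|F^{(\alpha)}\|\cdot(\text{energy})^{1/2}$, and after inserting the Section~4 bounds (which are all of size $M\epsilon$ times an integrable-in-$t$ factor, or times $\frac1M$), Grönwall and the definition of $T_\epsilon$ via $(1+T_\epsilon)^{(1-\mu)/2}\epsilon\le M^{-2}$ (resp.\ $\ln(1+T_\epsilon)\epsilon=M^{-1}$) yield $E_5[\theta,u](t)+\eta[\theta,u](t)\le C_0\epsilon + C(M\epsilon)^2(1+T_\epsilon)^{\#} + \frac{C}{M}M\epsilon \le \frac12 M\epsilon$, once $M$ is fixed large (to beat $C_0$ and the universal $C$) and $\epsilon_0$ small. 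This is the step I expect to be the main obstacle: designing $m_\mu^\alpha$ so that the bad contributions of the damping term are either absorbed or reproduce exactly the time weights appearing in $E_5$, and simultaneously verifying that the quasilinear forcing terms, after the $\sigma$-weighted manipulations, do not destroy the gain -- in particular that the exponents of $(1+t)$ coming out of Section~4 combined with the lifespan relation \eqref{usage of T_epsilon} leave a net factor $\le \epsilon\cdot(1+T_\epsilon)^{(1-\mu)/2}\cdot M^{\#}\le M^{\#-2}\ll 1$.
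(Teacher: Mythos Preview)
Your outline is essentially the paper's own strategy, following Flow chart~1 step by step: the use of $\hat Z^\alpha$ to gain an extra $(1+t)^{-1}$ on the linear commutator terms (Lemma~\ref{estimate of F^alpha-Z^alpha F}), the control of $\eta,\chi_5,\tilde\chi_5$ by $E_5$ via Klainerman--Sideris (Corollary~\ref{thmetaE}), the multiplier $m_\mu^\alpha$ for $E_5[\theta]$ (Lemma~\ref{lemEF}), and the closure via Gr\"onwall and the definition of $T_\epsilon$ are all exactly as in the paper.

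One point where you diverge: in Section~6 you propose to control $E_5[u]$ by $E_5[\theta]$ through the wave equation \eqref{wave eqn of u}. The paper does \emph{not} do this; it works instead with the first-order system \eqref{2} directly. Using $\operatorname{curl}u\equiv0$ and the div--curl identity $\|\nabla U\|\le\|\operatorname{div}U\|+\|\operatorname{curl}U\|$, one reads off $\nabla\cdot u$ from the continuity equation (giving $\partial_t\theta$ plus nonlinear terms) and $\partial_t u$ from the momentum equation (giving $\nabla\theta$ plus nonlinear terms), then inducts on $|\alpha|$. This is both simpler and sharper than an energy argument on the second-order equation for $u$: it yields $\|\partial Z^{\le k}u\|\lesssim\|\partial Z^{\le k}\theta\|$ \emph{pointwise in time}, with no time integration and no need to deal with the extra term $-\mu(1+t)^{-2}u$ in \eqref{wave eqn of u}. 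Your wave-equation route could in principle be made to work, but it would require a separate multiplier argument for $u$ and would not give the clean algebraic bound the paper obtains.
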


The proof of this lemma will be given at the end of section \ref{section estimate of E5theta}. Assuming the bootstrap lemma \ref{bootstrap lemma} is true, we prove Theorem \ref{main thm in u,theta system}, which further implies our main result. 

\begin{proof}[Proof of Theorem \ref{main thm in u,theta system}]Define
\begin{equation}
	\begin{aligned}
	T_{*} = \sup \left\{ T \leq \min( T_{\epsilon}, T_{max}):\eqref{bootstrap assumption}\text{ holds on }[0,T]\right\},
	\end{aligned}
\end{equation}		
where $T_{max}$ is the lifespan of the solution. We prove that $T_*=T_\epsilon$. If this is not true, we have $T=T_{max}<T_\epsilon<\infty$. From the bootstrap lemma \ref{bootstrap lemma} and the standard bootstrap argument, we know that \eqref{bootstrap assumption} holds on $[0,T_{max})$. However, $\eqref{bootstrap assumption}$ precludes all the scenarios described in Theorem \ref{thmmajda2}. This is a contradiction. Consequently, it must hold that $T_*=T_\epsilon\le T_{max}$. In other words, smooth solution $(\theta,u)$ exists on $[0,T_\epsilon]$.  
	
The inequality \eqref{Hk norm of the solution} will be proved in Lemma \ref{L^infty for Z4Phi, lem}. 
\end{proof}

The rest of this paper is devoted to proving Lemma \ref{bootstrap lemma}.

\section{Forcing estimate}
In this section, we assume that $t\le T_\epsilon$, and the bootstrap assumption \eqref{bootstrap assumption} remains true on $[0,t]$. Firstly we give a basic $L^\infty$ estimate, which is an immediate consequence of bootstrap assumption. 
\begin{lem}
	\label{L^infty for Z4Phi, lem}
	For $\Phi\in\{\theta,u\}$ and $|\alpha|\le4$, we have that
	\begin{equation}
		\label{L^infty for Z4Phi}
		|Z^\alpha\Phi|_\infty\lesssim M^{-1}. 
	\end{equation}
\end{lem}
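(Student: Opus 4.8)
\textbf{Proof proposal for Lemma \ref{L^infty for Z4Phi, lem}.}

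The plan is to combine the weighted Sobolev embedding of Lemma \ref{KSlow} with the energy bound coming from the bootstrap assumption \eqref{bootstrap assumption}. First I would fix $\Phi\in\{\theta,u\}$ and a multi-index $\alpha$ with $|\alpha|\le4$, and apply Lemma \ref{KSlow} to the function $Z^\alpha\Phi$ (which is smooth and compactly supported, hence sufficiently regular), obtaining
\begin{equation}
	|Z^\alpha\Phi(x)|\lesssim |x|^{-\frac12}\left\|\partial^{\le1}\Omega^{\le1}Z^\alpha\Phi\right\|.
\end{equation}
Since $\partial^{\le1}\Omega^{\le1}Z^\alpha$ is, up to universal constants, a sum of operators of the form $\partial^{j}Z^\beta$ with $j\in\{0,1\}$ and $|\beta|\le|\alpha|+1\le5$ (using that $\Omega=Z_4$ and the commutator/filtration relations \eqref{cononical filtration}, \eqref{representation of commutators}, together with Corollary \ref{lempartialZ} to move the $\partial$ to the outside), the right-hand side is controlled by $|x|^{-1/2}\big(\sum_{|\beta|\le5}\|\partial Z^\beta\Phi\|+\sum_{|\beta|\le5}\|Z^\beta\Phi\|\big)$.

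Next I would feed in the energy bound. By the definition \eqref{def of Ek} of $E_5[\Phi](t)$ and the bootstrap assumption \eqref{bootstrap assumption},
\begin{equation}
	\sum_{|\beta|\le5}\|\partial Z^\beta\Phi(t)\|\le (1+t)^{-\frac{\mu}{2}}E_5[\Phi](t)\le (1+t)^{-\frac{\mu}{2}}M\epsilon,
\end{equation}
and similarly $\sum_{|\beta|\le5}\|Z^\beta\Phi(t)\|\le (1+t)^{1-\frac{\mu}{2}}M\epsilon$; using $(1+t)^{-\mu/2}\le1$ we get $\|\partial^{\le1}\Omega^{\le1}Z^\alpha\Phi(t)\|\lesssim (1+t)M\epsilon$. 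This gives decay in $|x|$ but not directly the claimed $M^{-1}$ bound, so the argument must exploit the finite speed of propagation: by Lemma \ref{finite speed of propagation} (extended to $\Phi=u,\theta$ as in Remark \ref{vorticity free}), $Z^\alpha\Phi(t,\cdot)$ is supported in $\overline{B_{1/2+t}(0)}$. Hence it suffices to bound $|Z^\alpha\Phi|$ for $|x|\ge \tfrac14(1+t)$, say, where $|x|^{-1/2}\lesssim (1+t)^{-1/2}$, while for $|x|<\tfrac14(1+t)$ one needs a complementary interior estimate — here I would instead use the \emph{stronger} weighted embedding Lemma \ref{lem1+t-x} (or a Klainerman--Sobolev-type inequality) on $Z^\alpha\Phi$, which on the region $|x|\le\tfrac14(1+t)$ gives $\sigma=1+t-|x|\gtrsim 1+t$, so that $|Z^\alpha\Phi|\lesssim (1+t)^{-1/2}\big(\|\partial Z^{\le5}\Phi\|+\dots\big)$ as well; combining the two regions yields $|Z^\alpha\Phi(t)|_\infty\lesssim (1+t)^{-1/2}\cdot(1+t)^{1/2}M\epsilon\cdot\epsilon^{?}$. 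Working the powers of $(1+t)$ carefully and using $(1+T_\epsilon)^{(1-\mu)/2}\epsilon\le M^{-2}$ from \eqref{usage of T_epsilon}, the $(1+t)$ factors and one power of $\epsilon$ combine to produce a net bound of the form $M\epsilon\cdot (1+t)^{1/2}\cdot (1+t)^{-\mu/2}$-type quantity times the appropriate negative power; since $\epsilon\le\epsilon_0\le M^{-3}$ and the time factor is at most $(1+T_\epsilon)^{(1-\mu)/2}\le M^{-2}\epsilon^{-1}$, everything collapses to $\lesssim M^{-1}$.

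The main obstacle I anticipate is bookkeeping the weights of $(1+t)$ and powers of $\epsilon$ so that the estimate is uniform on the whole interval $[0,T_\epsilon]$ rather than just for bounded $t$: the naive pointwise bound from Lemma \ref{KSlow} loses the $|x|^{-1/2}$ decay near the origin, so one genuinely has to invoke finite speed of propagation to restrict to $|x|\sim 1+t$ (or use the $\sigma$-weighted embeddings on the complementary region), and then the surviving factor $(1+t)^{1/2}$ must be beaten by the definition of $T_\epsilon$ via \eqref{usage of T_epsilon}. Once the region split is set up correctly, the rest is a routine application of \eqref{bootstrap assumption}, the filtration relation \eqref{cononical filtration}, and the hierarchy \eqref{M}.
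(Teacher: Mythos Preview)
Your route can be made to work, but it is considerably more involved than the paper's argument, and your power-counting is not correct as written. The paper's proof is a one-line application of the Gagliardo--Nirenberg inequality \eqref{GN inequality}:
\[
|Z^\alpha\Phi|_\infty \lesssim \|Z^\alpha\Phi\|^{1/2}\|\partial^2 Z^\alpha\Phi\|^{1/2}\lesssim (1+t)^{(1-\mu)/2}E_5[\theta,u]\le (1+t)^{(1-\mu)/2}M\epsilon\overset{\eqref{usage of T_epsilon}}{\lesssim} M^{-1},
\]
using $\|Z^\alpha\Phi\|\lesssim(1+t)^{1-\mu/2}E_5$ and $\|\partial^2 Z^\alpha\Phi\|\lesssim(1+t)^{-\mu/2}E_5$ from the definition \eqref{def of Ek} together with $\partial^2 Z^\alpha=\partial Z^{\le5}$ for $|\alpha|\le4$. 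No region splitting, no finite speed of propagation, no $\sigma$-weighted embeddings are needed.

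Your splitting argument can be salvaged, but the intermediate estimates you wrote are off. In the exterior region $|x|\gtrsim 1+t$, Lemma \ref{KSlow} involves $\|\partial^{\le1}\Omega^{\le1}Z^\alpha\Phi\|$, whose zeroth-order piece $\|Z^{\le5}\Phi\|\lesssim(1+t)^{1-\mu/2}E_5$ dominates; the resulting bound is $(1+t)^{-1/2}\cdot(1+t)^{1-\mu/2}E_5=(1+t)^{(1-\mu)/2}E_5$, not $(1+t)^{-1/2}\cdot(1+t)^{1/2}M\epsilon$ as you claim. In the interior, Lemma \ref{lem1+t-x} with the trivial bound $\|\sigma\partial^2 Z^\alpha\Phi\|\le(1+t)\|\partial Z^{\le5}\Phi\|$ gives the \emph{same} factor $(1+t)^{(1-\mu)/2}E_5$ (do not invoke $\chi_5\lesssim E_5$ here, since Corollary \ref{thmetaE} is proved downstream of the present lemma and you would create a circularity). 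Both regions therefore land on $(1+t)^{(1-\mu)/2}M\epsilon$, after which \eqref{usage of T_epsilon} closes the argument; there is no residual ``$\epsilon^{?}$'' factor and no separate appeal to $\epsilon\le M^{-3}$ is required.
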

\begin{proof}
	By the Gagliardo-Nirenberg interpolation inequality \eqref{GN inequality}, we have
	\begin{equation}
		\begin{aligned}
		|Z^\alpha\Phi|_\infty & \lesssim\|Z^\alpha\Phi\|^{\frac{1}{2}}\|\partial^2Z^\alpha\Phi\|^{\frac{1}{2}}\lesssim(1+t)^{\frac{1-\mu}{2}}E_5[\theta,u] \\
		& \lesssim(1+t)^{\frac{1-\mu}{2}}M\epsilon\overset{\eqref{usage of T_epsilon}}{\lesssim} M^{-1}. 
	\end{aligned}
	\end{equation}
	This completes the proof.
\end{proof}

The next lemma deals with the estimate of error terms generated from the linear damping in the righthand of \eqref{eqn of Z^alpha theta}\eqref{eqn of Z^alpha u}. This key ingredient is that these terms admits good enough decay by applying vector fields $\widehat{Z}^\alpha$.
\begin{lem}
	\label{estimate of F^alpha-Z^alpha F}
	For $\Phi\in\{\theta,u\}$ and $|\alpha|\le 5$, we have that
	\begin{equation}
		\left|F_\Phi^{(\alpha)}-\hat Z^\alpha F_\Phi\right|\lesssim\mathbbm{1}_{\alpha_0+\alpha_3>0}\frac{\mu}{(1+t)^2}\left|\partial Z^{\le|\alpha|-1}\Phi\right|.
		\label{estimate of F^alpha-Z^alpha F, ineq}
	\end{equation}
\end{lem}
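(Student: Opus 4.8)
The plan is to compute $F_\Phi^{(\alpha)}-\hat Z^\alpha F_\Phi$ directly from its definition and identify it with the commutator-type error between $\hat Z^\alpha$ and the linear damping operators, then estimate that error using the explicit formulas \eqref{commute Z and damping, for theta} and \eqref{commute Z and damping, for u}. Concretely, for $\Phi=\theta$, from \eqref{def of F_theta^alpha} we have
\begin{equation}
	F_\theta^{(\alpha)}-\hat Z^\alpha F_\theta=\frac{\mu}{1+t}\partial_tZ^\alpha\theta-\hat Z^\alpha\left(\frac{\mu}{1+t}\partial_t\theta\right),
\end{equation}
which is exactly $-\mu$ times the left-hand side of \eqref{commute Z and damping, for theta}. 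Similarly, for $\Phi=u$, from \eqref{def of F_u^alpha} we get
\begin{equation}
	F_u^{(\alpha)}-\hat Z^\alpha F_u=\partial_t\left(\frac{\mu}{1+t}Z^\alpha u\right)-\hat Z^\alpha\partial_t\left(\frac{\mu}{1+t}u\right)=\frac{\mu}{1+t}\partial_tZ^\alpha u-\frac{\mu}{(1+t)^2}Z^\alpha u-\hat Z^\alpha\left(\frac{\mu}{1+t}\partial_tu-\frac{\mu}{(1+t)^2}u\right),
\end{equation}
so it is a combination of the left-hand sides of \eqref{commute Z and damping, for theta} and \eqref{commute Z and damping, for u} (applied componentwise, with the two $(1+t)^{-2}$ pieces partially cancelling against each other).

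Next I would plug in the right-hand sides of \eqref{commute Z and damping, for theta} and \eqref{commute Z and damping, for u}. Each resulting term is of one of two shapes: either $\dfrac{\alpha_0!}{(\alpha_0-j)!}\dfrac{1}{(1+t)^{j+1}}\partial_tZ^{\alpha-je_0}\Phi$ (and its analogue with an extra $(1+t)^{-1}$ factor) for $1\le j\le\alpha_0$, present only when $\alpha_0>0$; or $\hat Z^{\alpha-(j+1)e_3}\big(\tfrac{1}{(1+t)^2}\partial_tZ^{je_3}\Phi\big)$ type terms for $0\le j\le\alpha_3-1$, present only when $\alpha_3>0$. In the first family, every summand carries a factor $(1+t)^{-(j+1)}$ with $j\ge1$, hence is bounded by $(1+t)^{-2}$ times $|\partial Z^{\le|\alpha|-1}\Phi|$ (since $\alpha-je_0$ has order $\le|\alpha|-1$, and $\partial_t$ of $Z^{\alpha-je_0}\Phi$ is $\partial Z^{\le|\alpha|-1}\Phi$). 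For the second family, I use \eqref{relation between Z and Zhat} to convert the outer $\hat Z^{\alpha-(j+1)e_3}$ into $Z^{\le|\alpha|-1-j}$, then distribute the differential operators across the product $\tfrac{1}{(1+t)^2}\partial_tZ^{je_3}\Phi$ via the Leibniz rule; since each $S$ or $\partial_t$ hitting $(1+t)^{-2}$ lowers the power of $(1+t)$ by at most one but also produces a bounded coefficient on the support region, one checks the worst term is still $O\big((1+t)^{-2}|\partial Z^{\le|\alpha|-1}\Phi|\big)$. Collecting, and noting that the whole expression vanishes identically when $\alpha_0+\alpha_3=0$ (both indicator functions are zero, so both commutator right-hand sides vanish), yields the claimed bound with the factor $\mathbbm{1}_{\alpha_0+\alpha_3>0}$.

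The main obstacle I anticipate is the bookkeeping for the $\alpha_3$-family of terms of the form $\hat Z^{\alpha-(j+1)e_3}\big(\tfrac{1}{(1+t)^2}\partial_tZ^{je_3}\Phi\big)$: one must verify that expanding the outer $\hat Z$'s — which include the scaling field $S$, a vector field with a $t\partial_t$ piece that does not decay — against the $(1+t)^{-2}$ weight does not degrade the decay. The point is that $S$ acting on $(1+t)^{-2}$ gives $-2t(1+t)^{-3}$, which is still $O((1+t)^{-2})$, and the derivatives of $Z^{je_3}\Phi$ that appear are all of the form $\partial$ applied to $Z^{\le|\alpha|-1}\Phi$ after using Corollary \ref{lempartialZ}; so no decay is lost, but the argument requires care with the interplay of $\hat Z$, $Z$, and the identities \eqref{relation between Z and Zhat}. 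Everything else — the algebraic identification of $F_\Phi^{(\alpha)}-\hat Z^\alpha F_\Phi$ with the damping commutators, and the elementary estimate of the $\alpha_0$-family — is routine.
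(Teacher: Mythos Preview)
Your proposal is correct and follows essentially the same route as the paper: identify $F_\Phi^{(\alpha)}-\hat Z^\alpha F_\Phi$ with the linear damping commutators via the definitions \eqref{def of F_theta^alpha}--\eqref{def of F_u^alpha}, apply the explicit formulas \eqref{commute Z and damping, for theta}--\eqref{commute Z and damping, for u}, and estimate the $\alpha_0$-family trivially and the $\alpha_3$-family using $|\hat Z^\gamma((1+t)^{-k})|\lesssim(1+t)^{-k}$ together with $\hat Z^\beta=Z^{\le\beta}$ and Corollary~\ref{lempartialZ}. One small remark: your aside about ``the two $(1+t)^{-2}$ pieces partially cancelling'' is not quite accurate---the two commutators for $u$ are simply added, and the contribution from \eqref{commute Z and damping, for u} is handled separately (it produces terms of size $(1+t)^{-3}|Z^{\le|\alpha|-1}u|$, which the paper absorbs into the stated bound via \eqref{Poincare ineq.}).
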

\begin{proof}
	(1) $\Phi = \theta$: From \eqref{commute Z and damping, for theta}, and the fact that
	\begin{equation}
		\left|\hat Z^\gamma\left(\frac{1}{(1+t)^k}\right)\right|\lesssim\frac{1}{(1+t)^k},
	\end{equation}
	we have
	\begin{equation}
		\begin{aligned}
			\left|F_\theta^{(\alpha)}-\hat Z^\alpha F_\theta\right|\lesssim\mathbbm{1}_{\alpha_0>0}\frac{\mu}{(1+t)^2}\left|\partial Z^{<\alpha}\theta\right|+\mathbbm{1}_{\alpha_3>0}\frac{\mu}{(1+t)^2}\sum_{j=0}^{\alpha_3-1}\sum_{\beta\le\alpha-(j+1)e_3}\left|\hat Z^\beta\partial_t Z^{je_3}\theta\right|.
		\end{aligned}
	\end{equation}
	Since $\hat Z_3=Z_3+2$, we have
	\begin{equation}
		\hat Z^\beta=\sum_{l=0}^{\beta_3}\binom{\beta_3}{l}2^lZ^{\beta-le_3}=Z^{\le\beta}. 
	\end{equation}
	Thus, the mixed derivatives of $\theta$ can be bounded by
	\begin{equation}
		\begin{aligned}
			\left|\hat Z^\beta\partial_t Z^{je_3}\theta\right|&\lesssim\left|Z^{\le\beta}\partial_t Z^{je_3}\theta\right|\overset{\eqref{commutator}}{\lesssim}\left|\left(\partial t Z^{\le\beta}Z^{je_3}+\partial Z^{<|\beta|}\right)Z^{je_3}\theta\right|\\
			\overset{\eqref{cononical filtration}}&{\lesssim}\left|\partial Z^{\le|\beta|+j}\theta\right|
			\overset{|\beta|+j\le|\alpha|-1}{\lesssim}\left|\partial Z^{\le|\alpha|-1}\theta\right|.
		\end{aligned}
	\end{equation}
	Finally we have that
	\begin{equation}
		\begin{aligned}
			\left|F_\theta^{(\alpha)}-\hat Z^\alpha F_\theta\right|&\lesssim\mathbbm{1}_{\alpha_0>0}\frac{\mu}{(1+t)^2}\left|\partial Z^{<\alpha}\theta\right|+\mathbbm{1}_{\alpha_3>0}\frac{\mu}{(1+t)^2}\left|\partial Z^{\le|\alpha|-1}\theta\right|\\
			&\lesssim\mathbbm{1}_{\alpha_0+\alpha_3>0}\frac{\mu}{(1+t)^2}\left|\partial Z^{\le|\alpha|-1}\theta\right|.
		\end{aligned}
	\end{equation}
	
	(2) $\Phi = u$: Noting that
	\begin{equation}
		\begin{aligned}
			\left|F_u^{(\alpha)}-\hat Z^\alpha F_u\right|&\le\left|\frac{\mu}{(1+t)^2}Z^\alpha u-\hat Z^\alpha\left(\frac{\mu}{(1+t)^2}u\right)\right|+\left|\frac{\mu}{1+t}\partial_tZ^\alpha u-\hat Z^\alpha\left(\frac{\mu}{1+t}\partial_t u\right)\right|.
		\end{aligned}
	\end{equation}
	Making computation as we did in deriving the bound for $\left|F_\theta^{(\alpha)}-\hat Z^\alpha F_\theta\right|$, one can see that the second term is bounded by $\mathbbm{1}_{\alpha_0+\alpha_3>0}\frac{\mu}{(1+t)^2}\left|\partial Z^{\le|\alpha|-1}u\right|$. Using \eqref{commute Z and damping, for u}, one can bound the first term as
	\begin{equation}
		\left|\frac{\mu}{(1+t)^2}Z^\alpha u-\hat Z^\alpha\left(\frac{\mu}{(1+t)^2}u\right)\right|\lesssim\mathbbm{1}_{\alpha_0+\alpha_3>0}\frac{\mu}{(1+t)^3}\left|Z^{\le|\alpha|-1}u\right|. 
	\end{equation}
	Finally, from \eqref{Poincare ineq.}, we have that
	\begin{equation}
		\left|F_u^{(\alpha)}-\hat Z^\alpha F_u\right|\lesssim\mathbbm{1}_{\alpha_0+\alpha_3>0}\frac{\mu}{(1+t)^2}\left|\partial Z^{\le|\alpha|-1}u\right|. 
	\end{equation}
	This completes the proof.
\end{proof}

\begin{lem} \label{lemF^alpha-Z^alphaF}
	For $\Phi\in\{\theta,u\}$ and $|\alpha|\le 5$, we have that
	\begin{equation}
		\|F_{\Phi}^{(\alpha)}-\hat Z^\alpha F_\Phi\|\lesssim\mathbbm{1}_{\alpha_0+\alpha_3>0}\mu(1+t)^{-2-\frac{\mu}{2}}E_{|\alpha|-1}[\Phi].
	\end{equation}

\end{lem}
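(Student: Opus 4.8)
The plan is to derive the $L^2$ estimate stated in Lemma \ref{lemF^alpha-Z^alphaF} directly from the pointwise bound established in Lemma \ref{estimate of F^alpha-Z^alpha F}. That earlier lemma gives
\begin{equation*}
	\left|F_\Phi^{(\alpha)}-\hat Z^\alpha F_\Phi\right|\lesssim\mathbbm{1}_{\alpha_0+\alpha_3>0}\frac{\mu}{(1+t)^2}\left|\partial Z^{\le|\alpha|-1}\Phi\right|,
\end{equation*}
so the first step is simply to take the $L^2_x$ norm of both sides. Since the weight $\frac{\mu}{(1+t)^2}$ is spatially constant, it pulls out of the norm, leaving $\mathbbm{1}_{\alpha_0+\alpha_3>0}\frac{\mu}{(1+t)^2}\|\partial Z^{\le|\alpha|-1}\Phi\|$. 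Here one should recall that $\|\partial Z^{\le|\alpha|-1}\Phi\|$ is shorthand, by the convention in \eqref{abbreviation}, for a finite sum $\sum_{|\beta|\le|\alpha|-1}C_\beta\|\partial Z^\beta\Phi\|$ with universal constants $C_\beta$, which is exactly the type of quantity appearing (up to the time weight) in the definition \eqref{def of Ek} of $E_{|\alpha|-1}[\Phi]$.

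The second step is to match this against the energy functional. By \eqref{def of Ek}, $E_{|\alpha|-1}[\Phi](t)=(1+t)^{\frac{\mu}{2}}\sum_{0\le|\beta|\le|\alpha|-1}\|\partial Z^\beta\Phi(t)\|+(1+t)^{\frac{\mu}{2}-1}\sum_{0\le|\beta|\le|\alpha|-1}\|Z^\beta\Phi(t)\|$, so in particular
\begin{equation*}
	\sum_{0\le|\beta|\le|\alpha|-1}\|\partial Z^\beta\Phi(t)\|\le(1+t)^{-\frac{\mu}{2}}E_{|\alpha|-1}[\Phi](t).
\end{equation*}
Combining this with the outcome of step one yields
\begin{equation*}
	\left\|F_\Phi^{(\alpha)}-\hat Z^\alpha F_\Phi\right\|\lesssim\mathbbm{1}_{\alpha_0+\alpha_3>0}\frac{\mu}{(1+t)^2}\cdot(1+t)^{-\frac{\mu}{2}}E_{|\alpha|-1}[\Phi](t)=\mathbbm{1}_{\alpha_0+\alpha_3>0}\mu(1+t)^{-2-\frac{\mu}{2}}E_{|\alpha|-1}[\Phi](t),
\end{equation*}
which is precisely the claimed inequality. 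One small point to be careful about: when $|\alpha|=0$ the indicator vanishes and there is nothing to prove, so one may freely assume $|\alpha|\ge1$, in which case $|\alpha|-1\ge0$ and $E_{|\alpha|-1}$ is well-defined.

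This lemma is genuinely routine — there is no real obstacle, since all the work has already been done in Lemma \ref{estimate of F^alpha-Z^alpha F}. The only thing requiring a moment's attention is the bookkeeping of the $Z^{\le|\alpha|-1}$ notation: one must confirm that the universal-constant convention of \eqref{abbreviation} applies here (it does, since the constants in Lemma \ref{estimate of F^alpha-Z^alpha F} and in \eqref{def of Ek} are all universal), so that taking the $L^2$ norm term by term and then bounding by $E_{|\alpha|-1}[\Phi]$ introduces only universal constants absorbed into the $\lesssim$. If one wanted a more refined statement one could note that in fact only the $\|\partial Z^\beta\Phi\|$ part of the energy is used, but the stated bound in terms of the full $E_{|\alpha|-1}[\Phi]$ is all that is needed downstream.
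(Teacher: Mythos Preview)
Your proof is correct and follows exactly the paper's approach: the paper's proof is the single sentence ``It is a direct result of Lemma \ref{estimate of F^alpha-Z^alpha F} and the definition \eqref{def of Ek} of $E_k$,'' and your proposal spells out precisely these two steps.
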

\begin{proof}
	It is a direct result of Lemma \ref{estimate of F^alpha-Z^alpha F} and the definition \eqref{def of Ek} of $E_k$. 
\end{proof}

To control the $L^2$-norms of $F_{\theta,1}$ and $F_{u,1}$, we need the following two auxiliary lemmas. 

\begin{lem}
	\label{auxiliary asymmetric estimate}
	Suppose $\Phi_i\in\{\theta,u\}$, $i=1,2,3$, then we have
	\begin{equation}
		\|Z^{\le5}\partial\Phi_1 Z^{\le2}\partial\Phi_2\|\lesssim(1+t)^{-\frac{1}{2}-\mu}\eta[\theta,u]E_5[\theta,u],
		\label{estimate of Z5D*Z2D}
	\end{equation}
	\begin{equation}
		\|Z^{\le2}\Phi_1 Z^{\le5}\partial\Phi_2\|+\|Z^{\le5}\Phi_1 Z^{\le2}\partial\Phi_2\|\lesssim(1+t)^{\frac{1}{2}-\mu}\eta[\theta,u]E_5[\theta,u],
		\label{estimate of Z2*Z5D+Z5*Z2D}
	\end{equation}
	\begin{equation}
		\|Z^{\le2}\Phi_1 Z^{\le2}\partial\Phi_2Z^{\le5}\partial\Phi_3\|+\|Z^{\le5}\Phi_1 Z^{\le2}\partial\Phi_2Z^{\le2}\partial\Phi_3\|\lesssim(1+t)^{-\frac{3}{2}\mu}\eta[\theta,u]^2E_5[\theta,u].
		\label{estimate of Z2*Z2D*Z5D+Z5*Z2D*Z2D}
	\end{equation}
\end{lem}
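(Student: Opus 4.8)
The plan is to prove all three inequalities by one and the same device: in each product place the unique highest–order factor — the one carrying $Z^{\le5}$ — into $L^2$, place every remaining low–order factor (carrying $Z^{\le2}$) into $L^\infty$, and then read off the definitions \eqref{def of Ek} of $E_5$ and \eqref{def of eta} of $\eta$ and match powers of $1+t$. This splitting is legitimate because $\theta,u$ and all of their $Z$–derivatives are supported in $\overline{B_{1/2+t}(0)}\subset B_{1+t}(0)$, since differentiation and multiplication by $t,x_i$ do not enlarge this support (Lemma \ref{finite speed of propagation}).

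First I would record four elementary building blocks, valid for any $\Phi\in\{\theta,u\}$. Swapping $\partial$ past $Z^{\le k}$ via Corollary \ref{lempartialZ} and reading off \eqref{def of Ek} gives $\|Z^{\le5}\partial\Phi\|\lesssim(1+t)^{-\mu/2}E_5[\theta,u]$ and $\|Z^{\le5}\Phi\|\lesssim(1+t)^{1-\mu/2}E_5[\theta,u]$; reading off \eqref{def of eta} (again with Corollary \ref{lempartialZ}) gives $|Z^{\le2}\partial\Phi|_\infty\lesssim(1+t)^{-\frac{1+\mu}{2}}\eta[\theta,u]$. The fourth block, and the only one that is not immediate, is the \emph{un-differentiated} bound $|Z^{\le2}\Phi|_\infty\lesssim(1+t)^{\frac{1-\mu}{2}}\eta[\theta,u]$. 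I would obtain it from the fundamental theorem of calculus along rays: for $|\alpha|\le2$, $|x|<1+t$, and $\omega=x/|x|$, compact support gives $Z^\alpha\Phi(x)=-\int_{|x|}^{1+t}\partial_r\bigl(Z^\alpha\Phi\bigr)(r\omega)\,dr$, whence $|Z^\alpha\Phi|_\infty\lesssim(1+t)\,|\partial Z^{\le2}\Phi|_\infty\lesssim(1+t)^{\frac{1-\mu}{2}}\eta[\theta,u]$ (and $|\partial Z^\alpha\Phi|\lesssim|Z^{\le2}\partial\Phi|$ for $|\alpha|\le2$ again by Corollary \ref{lempartialZ}). If an $L^\infty$ Poincaré–type inequality for functions supported in $B_{1+t}(0)$ is already on the books, this step is a one-line citation instead.

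With these four blocks, each inequality is a single Hölder step followed by bookkeeping of exponents. For \eqref{estimate of Z5D*Z2D}: $\|Z^{\le5}\partial\Phi_1\,Z^{\le2}\partial\Phi_2\|\le\|Z^{\le5}\partial\Phi_1\|\,|Z^{\le2}\partial\Phi_2|_\infty\lesssim(1+t)^{-\frac\mu2-\frac{1+\mu}{2}}\eta[\theta,u]E_5[\theta,u]=(1+t)^{-\frac12-\mu}\eta[\theta,u]E_5[\theta,u]$. The two summands in \eqref{estimate of Z2*Z5D+Z5*Z2D} are treated with the two different splittings $|Z^{\le2}\Phi_1|_\infty\|Z^{\le5}\partial\Phi_2\|$ and $\|Z^{\le5}\Phi_1\|\,|Z^{\le2}\partial\Phi_2|_\infty$; using respectively the fourth/first and second/third blocks, both exponents collapse to $\tfrac{1-\mu}{2}-\tfrac\mu2=1-\tfrac\mu2-\tfrac{1+\mu}{2}=\tfrac12-\mu$. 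For \eqref{estimate of Z2*Z2D*Z5D+Z5*Z2D*Z2D} one puts the $Z^{\le5}$ factor in $L^2$ and the two $Z^{\le2}$ factors in $L^\infty$; the first summand yields $(1+t)^{\frac{1-\mu}{2}-\frac{1+\mu}{2}-\frac\mu2}=(1+t)^{-\frac32\mu}$ and the second $(1+t)^{1-\frac\mu2-(1+\mu)}=(1+t)^{-\frac32\mu}$, each times $\eta[\theta,u]^2E_5[\theta,u]$, as claimed.

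The only real content — hence the step I would write out carefully — is the un-differentiated estimate $|Z^{\le2}\Phi|_\infty\lesssim(1+t)^{\frac{1-\mu}{2}}\eta[\theta,u]$, because $\eta$ controls solely $|Z^{\le2}\partial\Phi|_\infty$, so recovering $\Phi$ itself forces the use of compact support in $B_{1+t}(0)$ at the price of one factor $1+t$. Everything else is routine Hölder and weight arithmetic; the only points to watch are that $Z^{\le k}\partial\Phi$ and $\partial Z^{\le k}\Phi$ agree up to universal constants (Corollary \ref{lempartialZ}), and that, since each $\Phi_i$ may independently be $\theta$ or $u$, every bound must be stated with $E_5[\theta,u]$ and $\eta[\theta,u]$ on the right rather than the individual energies.
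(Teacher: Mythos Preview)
Your proof is correct and follows exactly the approach the paper indicates: H\"older's inequality to split into an $L^2$ factor (controlled by $E_5$) and $L^\infty$ factors (controlled by $\eta$), with the Poincar\'e inequality supplying the un-differentiated bound $|Z^{\le2}\Phi|_\infty\lesssim(1+t)|\partial Z^{\le2}\Phi|_\infty$. The paper's proof is a one-sentence pointer to H\"older, Lemma~\ref{lem23}, and the definitions of $E_k,\eta$; your four building blocks and exponent bookkeeping simply spell this out, and your fundamental-theorem-of-calculus argument for the fourth block is precisely the $p=\infty$ case of Lemma~\ref{lem23}, which is already available.
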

\begin{proof}
	These inequalities can be proved by using H\"older's inequality, Lemma \ref{Poincare ineq.}, and the definitions \eqref{def of Ek}\eqref{def of eta} of $E_k$ and $\eta$. 
\end{proof}

\begin{lem}
	\label{L2 norm of multiplications}
	Suppose $\Phi_i\in\{\theta,u\}$, $i=1,2,3$. For any multi-index $\alpha\in\mathbb{N}^5$ with $|\alpha|\le5$, we have that
	\begin{equation}
		\label{L2 of Z5(Phi1 DPhi2)}
		\|\hat Z^\alpha(\Phi_1\partial\Phi_2)\|\lesssim(1+t)^{\frac{1}{2}-\mu}\eta[\theta,u]E_5[\theta,u],
	\end{equation}
	\begin{equation}
		\label{L2 of Z5(DPhi1 DPhi2)}
		\|\hat Z^\alpha(\partial\Phi_1\partial\Phi_2)\|\lesssim(1+t)^{-\frac{1}{2}-\mu}\eta[\theta,u]E_5[\theta,u],
	\end{equation}
	\begin{equation}
		\label{L2 of Z5(Phi1 DPhi2 DPhi3)}
		\|\hat Z^\alpha(\Phi_1\partial\Phi_2\partial\Phi_3)\|\lesssim(1+t)^{-\frac{3}{2}\mu}\eta[\theta,u]^2E_5[\theta,u].
	\end{equation}
\end{lem}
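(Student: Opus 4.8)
The plan is to reduce all three inequalities to the weighted $L^\infty$--$L^2$ estimates already established, by distributing the vector fields $\hat Z^\alpha$ via the Leibniz rule and then assigning at most two vector fields to one factor (controlled in $L^\infty$ through $\eta$ or Lemma \ref{L^infty for Z4Phi, lem}) and the remaining factors in $L^2$ (controlled by $E_5$). First I would invoke the relation \eqref{relation between Z and Zhat}, namely $\hat Z^\alpha=Z^{\le|\alpha|}$, so that $\hat Z^\alpha(\Phi_1\partial\Phi_2)$, after applying the Leibniz rule, becomes a universal-constant sum of terms of the form $Z^{\beta}\Phi_1\cdot Z^{\gamma}\partial\Phi_2$ with $|\beta|+|\gamma|\le|\alpha|\le5$; one of $|\beta|,|\gamma|$ is then $\le2$ (since $|\alpha|\le5$ forces $\min\le2$). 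This splits each product into the two patterns appearing in Lemma \ref{auxiliary asymmetric estimate}: $Z^{\le2}\Phi_1\,Z^{\le5}\partial\Phi_2$ or $Z^{\le5}\Phi_1\,Z^{\le2}\partial\Phi_2$ (and similarly $Z^{\le2}\partial\Phi_1\,Z^{\le5}\partial\Phi_2$ versus $Z^{\le5}\partial\Phi_1\,Z^{\le2}\partial\Phi_2$ for the second, and the three-factor analogues for the third). So \eqref{L2 of Z5(Phi1 DPhi2)} follows from \eqref{estimate of Z2*Z5D+Z5*Z2D}, \eqref{L2 of Z5(DPhi1 DPhi2)} from \eqref{estimate of Z5D*Z2D}, and \eqref{L2 of Z5(Phi1 DPhi2 DPhi3)} from \eqref{estimate of Z2*Z2D*Z5D+Z5*Z2D*Z2D}.

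The mild subtlety is the commutation $Z^\gamma\partial=\partial Z^\gamma$ up to lower order terms. When the Leibniz rule deposits $Z^\gamma$ onto the factor $\partial\Phi_2$, I would use Corollary \ref{lempartialZ}, i.e.\ $\partial Z^{\le k}=Z^{\le k}\partial$, to rewrite $Z^{\le|\gamma|}\partial\Phi_2$ as $\partial Z^{\le|\gamma|}\Phi_2$ (all constants universal, so the abbreviation notation is legitimate here). This puts every term precisely in the form handled by Lemma \ref{auxiliary asymmetric estimate}, whose proof is itself just H\"older plus the Poincar\'e-type inequality \eqref{Poincare ineq.} and the definitions \eqref{def of Ek}, \eqref{def of eta} of $E_5$ and $\eta$.

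The only genuine work is bookkeeping: tracking that in a product of $m$ factors with a total of at most five vector fields distributed among them, at least $m-1$ factors receive at most $\lfloor 5/m\rfloor\le2$ vector fields, so those $m-1$ factors can be placed in $L^\infty$ against a single $L^2$ factor. For $m=2$ this is $\min(|\beta|,|\gamma|)\le2$; for $m=3$ (the case of \eqref{L2 of Z5(Phi1 DPhi2 DPhi3)}) the worst split of $|\beta|+|\gamma|+|\delta|\le5$ still has two of the three exponents $\le2$, matching the two patterns in \eqref{estimate of Z2*Z2D*Z5D+Z5*Z2D*Z2D}. I do not expect any real obstacle here; the ``hard part'' is merely verifying the combinatorial claim about the distribution of vector fields and confirming that the time-weight powers $\tfrac12-\mu$, $-\tfrac12-\mu$, $-\tfrac{3}{2}\mu$ coming out of Lemma \ref{auxiliary asymmetric estimate} are exactly those asserted in the statement, which they are by inspection.

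\begin{proof}
	Each inequality follows by applying the Leibniz rule, the relation $\hat Z^\alpha=Z^{\le|\alpha|}$ from \eqref{relation between Z and Zhat}, the commutation identity \eqref{equivalent form of DZ^alpha}, and then Lemma \ref{auxiliary asymmetric estimate}.

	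Consider first $\|\hat Z^\alpha(\Phi_1\partial\Phi_2)\|$. By \eqref{relation between Z and Zhat} and the Leibniz rule, $\hat Z^\alpha(\Phi_1\partial\Phi_2)$ is a finite sum, with universal coefficients, of terms $Z^{\beta}\Phi_1\cdot Z^{\gamma}\partial\Phi_2$ with $|\beta|+|\gamma|\le|\alpha|\le5$. By \eqref{equivalent form of DZ^alpha} we may replace $Z^{\gamma}\partial\Phi_2$ by $\partial Z^{\le|\gamma|}\Phi_2$, again up to universal constants. Since $|\beta|+|\gamma|\le5$, we have $\min(|\beta|,|\gamma|)\le2$; hence each such term is of the form $Z^{\le2}\Phi_1\,Z^{\le5}\partial\Phi_2$ or $Z^{\le5}\Phi_1\,Z^{\le2}\partial\Phi_2$. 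Applying \eqref{estimate of Z2*Z5D+Z5*Z2D} gives \eqref{L2 of Z5(Phi1 DPhi2)}.

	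The proof of \eqref{L2 of Z5(DPhi1 DPhi2)} is identical, except that $\hat Z^\alpha(\partial\Phi_1\partial\Phi_2)$ expands into terms $Z^{\beta}\partial\Phi_1\cdot Z^{\gamma}\partial\Phi_2$ with $|\beta|+|\gamma|\le5$; after using \eqref{equivalent form of DZ^alpha} on both factors and noting $\min(|\beta|,|\gamma|)\le2$, each term has the form $Z^{\le2}\partial\Phi_1\,Z^{\le5}\partial\Phi_2$ or $Z^{\le5}\partial\Phi_1\,Z^{\le2}\partial\Phi_2$, both estimated by \eqref{estimate of Z5D*Z2D}.

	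Finally, for \eqref{L2 of Z5(Phi1 DPhi2 DPhi3)}, the Leibniz rule expands $\hat Z^\alpha(\Phi_1\partial\Phi_2\partial\Phi_3)$ into terms $Z^{\beta}\Phi_1\cdot Z^{\gamma}\partial\Phi_2\cdot Z^{\delta}\partial\Phi_3$ with $|\beta|+|\gamma|+|\delta|\le5$. Among the three exponents at least two are $\le2$; using \eqref{equivalent form of DZ^alpha} on the factors carrying $Z^{\gamma}$ and $Z^{\delta}$, every term takes one of the forms $Z^{\le2}\Phi_1\,Z^{\le2}\partial\Phi_2\,Z^{\le5}\partial\Phi_3$ or $Z^{\le5}\Phi_1\,Z^{\le2}\partial\Phi_2\,Z^{\le2}\partial\Phi_3$ (after relabelling the factors, using the symmetry $\Phi_2\leftrightarrow\Phi_3$). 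Invoking \eqref{estimate of Z2*Z2D*Z5D+Z5*Z2D*Z2D} completes the proof.
\end{proof}
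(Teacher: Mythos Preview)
Your proposal is correct and follows essentially the same approach as the paper: invoke $\hat Z^\alpha=Z^{\le|\alpha|}$, expand by the Leibniz rule, use the pigeonhole principle to ensure enough factors carry at most two vector fields, and apply Lemma \ref{auxiliary asymmetric estimate}. The paper's proof is terser (it just writes down the pigeonhole decomposition of the index sums and cites Lemma \ref{auxiliary asymmetric estimate}), while you spell out the commutation step via \eqref{equivalent form of DZ^alpha}; this extra step is harmless but not strictly needed, since Lemma \ref{auxiliary asymmetric estimate} is already stated in the form $Z^{\le k}\partial\Phi$.
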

\begin{proof}
	By \eqref{relation between Z and Zhat} and using the pigeonhole principle, we have
	\begin{equation}
		\sum_{|\beta|+|\gamma|\le 5}=\sum_{\substack{|\beta|+|\gamma|\le 5 \\ |\beta|\le2}}+\sum_{\substack{|\beta|+|\gamma|\le 5 \\ |\gamma|\le2}},
	\end{equation}
	\begin{equation}
		\sum_{|\beta|+|\gamma|+|\delta|\le 5}=\sum_{\substack{|\beta|+|\gamma|+|\delta|\le 5 \\ |\beta|+|\gamma|\le2}}+\sum_{\substack{|\beta|+|\gamma|+|\delta|\le 5 \\ |\gamma|+|\delta|\le2}}+\sum_{\substack{|\beta|+|\gamma|+|\delta|\le 5 \\ |\alpha|+|\delta|\le2}}.
	\end{equation}
	Then the conclusions follow immediately from the Leibniz rule and Lemma \ref{auxiliary asymmetric estimate}. 
\end{proof}

Now we are ready to estimate the $L^2$-norms of $\hat{Z}^{\alpha}F_{\theta,1}$ and $\hat{Z}^{\alpha}F_{u,1}$. 

\begin{cor} \label{ZFtheta1}
	For $ \Phi \in \{\theta,u\} $ and $|\alpha|\le5$, we have that
	\begin{equation}
		\left\|\hat{Z}^{\alpha} F_{\Phi,1} \right\|\lesssim(1+t)^{-\frac{1}{2}-\mu}\eta[\theta,u]E_5[\theta,u].
		\label{estimate of Z^alpha F1}
	\end{equation}
\end{cor}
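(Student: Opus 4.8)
The plan is to recall the structure of the semilinear forcing terms and reduce the corollary to the multiplication estimates already established. Recall that $F_{\theta,1}$ has the schematic form $\sum g_{\alpha,\beta,i}(\theta,u)\,\partial_\alpha\theta\,\partial_\beta u_i$ where each coefficient $g_{\alpha,\beta,i}$ is \emph{linear} in $(\theta,u)$; hence $F_{\theta,1}$ is a sum of terms of type $\partial\Phi_1\,\partial\Phi_2$ and of type $\Phi_1\,\partial\Phi_2\,\partial\Phi_3$ with $\Phi_i\in\{\theta,u\}$. Likewise $F_{u,1}=-(\partial_t u\cdot\nabla)u+\nabla u\cdot\nabla\theta+(\gamma-1)\nabla\theta\,\nabla\cdot u$ is purely a sum of terms of type $\partial\Phi_1\,\partial\Phi_2$ (the component-form expression $F_{u_i,1}=-\partial_tu_j\partial_ju_i+\partial_iu_j\partial_j\theta+(\gamma-1)\partial_i\theta\partial_ju_j$ makes this transparent). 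So in both cases $F_{\Phi,1}$ is a finite linear combination of products $\partial\Phi_1\partial\Phi_2$ and $\Phi_1\partial\Phi_2\partial\Phi_3$.

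Next I would apply $\hat Z^\alpha$ with $|\alpha|\le5$ and distribute it over each such product using the Leibniz rule for the vector fields. The only subtlety is that $\hat Z$ (unlike $\partial$) does not satisfy an exact Leibniz rule on its own, but since $\hat Z_j\in\{\partial,S+1,\Omega\}$ and $S,\Omega,\partial$ are all first-order differential operators, $\hat Z^\alpha(fg)=\sum_{\beta+\gamma\le\alpha}C_{\beta,\gamma}(\hat Z^\beta f)(\hat Z^\gamma g)$ holds up to lower-order terms absorbed by \eqref{relation between Z and Zhat}; concretely one writes $\hat Z^\alpha(fg)=Z^{\le|\alpha|}(fg)$ and uses the standard Leibniz expansion for $Z^{\le|\alpha|}$. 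Either way, $\hat Z^\alpha(\partial\Phi_1\partial\Phi_2)$ is a sum of terms covered by \eqref{L2 of Z5(DPhi1 DPhi2)}, and $\hat Z^\alpha(\Phi_1\partial\Phi_2\partial\Phi_3)$ is a sum covered by \eqref{L2 of Z5(Phi1 DPhi2 DPhi3)}. Applying Lemma \ref{L2 norm of multiplications} termwise yields
\begin{equation}
	\|\hat Z^\alpha F_{\Phi,1}\|\lesssim(1+t)^{-\frac12-\mu}\eta[\theta,u]E_5[\theta,u]+(1+t)^{-\frac32\mu}\eta[\theta,u]^2E_5[\theta,u].
\end{equation}
Finally, since the bootstrap assumption \eqref{bootstrap assumption} gives $\eta[\theta,u]\le M\epsilon\lesssim1$ and, for $0<\mu\le1$, $(1+t)^{-\frac32\mu}\le(1+t)^{-\frac12-\mu}\cdot(1+t)^{\frac12-\frac12\mu}$— hmm, that exponent is nonnegative, so instead I would simply note $(1+t)^{-\frac32\mu}\le(1+t)^{-\frac12-\mu}$ fails in general; rather one keeps the second term as is and observes $\eta^2\le M\epsilon\cdot\eta\lesssim\eta$, while the time weight $(1+t)^{-\frac32\mu}$ is \emph{worse} than $(1+t)^{-\frac12-\mu}$ only when $\mu>1$. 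For $0<\mu\le1$ we have $-\tfrac32\mu\ge-\tfrac12-\mu$ iff $\mu\le1$, so in fact $(1+t)^{-\frac32\mu}$ decays \emph{slower}; hence the honest statement is that the second term is $\lesssim(1+t)^{-\frac32\mu}\eta E_5$, and one absorbs it by writing the final bound with the dominant weight. I would therefore state the cleanest route: bound both contributions by $(1+t)^{-\frac12-\mu}\eta[\theta,u]E_5[\theta,u]$ after using $\eta\lesssim M\epsilon$ to trade $\eta^2$ for $\eta$ and using $(1+t)^{-\frac32\mu}(1+t)^{\mu-\frac12}\le 1$, i.e. $(1+t)^{-\frac32\mu}\le(1+t)^{\frac12-\mu}$, which does hold— wait, $-\tfrac32\mu\le\tfrac12-\mu\iff-\tfrac12\mu\le\tfrac12\iff\mu\ge-1$, always true. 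So $(1+t)^{-\frac32\mu}\eta^2E_5\lesssim(1+t)^{\frac12-\mu}\eta\cdot\eta E_5$ is not quite the target either.

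The main obstacle, then, is purely bookkeeping of the time weights: one must verify that the cubic contribution $(1+t)^{-\frac32\mu}\eta^2E_5$ is dominated by the quadratic one $(1+t)^{-\frac12-\mu}\eta E_5$ under the bootstrap regime. This follows because $\eta\lesssim M\epsilon\le1$ kills one power of $\eta$, and the remaining comparison $(1+t)^{-\frac32\mu}\lesssim(1+t)^{-\frac12-\mu}$ is equivalent to $-\tfrac12\mu\le-\tfrac12$, i.e. $\mu\ge1$; since here $0<\mu\le1$ this only holds at $\mu=1$, so for $\mu<1$ one instead notes that on the time interval $t\le T_\epsilon$ one has $(1+t)^{-\frac32\mu}=(1+t)^{-\frac12-\mu}(1+t)^{\frac12-\frac12\mu}$ and the extra factor $(1+t)^{\frac12-\frac12\mu}\epsilon\le(1+t)^{\frac12(1-\mu)}\epsilon\le M^{-2}\lesssim1$ by \eqref{usage of T_epsilon}, absorbing the surplus growth into the other small factor $\eta$. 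With this elementary interpolation of the time weight against $\epsilon$ via \eqref{usage of T_epsilon}, the estimate \eqref{estimate of Z^alpha F1} follows immediately, and the same argument applies verbatim to both $\Phi=\theta$ and $\Phi=u$.
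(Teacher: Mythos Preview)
Your approach is the same as the paper's: reduce to Lemma~\ref{L2 norm of multiplications} and then absorb the cubic contribution $(1+t)^{-\frac32\mu}\eta^2E_5$ into $(1+t)^{-\frac12-\mu}\eta E_5$ via the observation $(1+t)^{\frac{1-\mu}{2}}\eta\le(1+T_\epsilon)^{\frac{1-\mu}{2}}M\epsilon\le M^{-1}$ from \eqref{usage of T_epsilon}. After all the false starts, the final paragraph lands exactly on the paper's one-line computation, so the argument is correct.

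There is one small omission in your structural bookkeeping. You classify $F_{\theta,1}$ as consisting only of $\partial\Phi_1\partial\Phi_2$ and $\Phi_1\partial\Phi_2\partial\Phi_3$ terms, but the first summand $-\frac{\mu}{1+t}u\cdot\nabla\theta$ is of type $(1+t)^{-1}\Phi_1\partial\Phi_2$, with no derivative on the first factor. The paper explicitly includes this shape (``$\Phi_1\partial\Phi_2$'') and notes $|\hat Z^\alpha[(1+t)^{-1}]|\lesssim(1+t)^{-1}$; then \eqref{L2 of Z5(Phi1 DPhi2)} gives $(1+t)^{\frac12-\mu}\eta E_5$, and the extra $(1+t)^{-1}$ brings it to the target weight $(1+t)^{-\frac12-\mu}\eta E_5$. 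This is a trivial fix, but without it one term of $F_{\theta,1}$ is unaccounted for. (The schematic ``$\sum g(\theta,u)\partial_\alpha\theta\partial_\beta u_i$'' that you quote from the paper is itself slightly loose on this point; the proof of the corollary is where the paper is precise.)
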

\begin{proof}
	$F_{\Phi,1}$ consists of terms in the form of $\Phi_1\partial\Phi_2$, $\partial\Phi_1\partial\Phi_2$, and $\Phi_1\partial\Phi_2\partial\Phi_3$.
	Note that $|\hat Z^\alpha[(1+t)^{-1}]|\lesssim(1+t)^{-1}$ and $\hat Z^\alpha=Z^{\le|\alpha|}$. Using the Leibniz rule and Lemma \ref{L2 norm of multiplications} we can obtain that
		\begin{equation}
		\left\|\hat{Z}^{\alpha} F_{\theta,1} \right\|\lesssim(1+t)^{-\frac{1}{2}-\mu}\eta[\theta,u]E_5[\theta,u]+(1+t)^{-\frac{3}{2}\mu}\eta[\theta,u]^2E_5[\theta,u],
	\end{equation}
	\begin{equation}
		\left\|\hat{Z}^{\alpha} F_{u,1} \right\|\lesssim(1+t)^{-\frac{1}{2}-\mu}\eta[\theta,u]E_5[\theta,u].
	\end{equation}
Furthermore, using \eqref{usage of T_epsilon} and the bootstrap assumption \eqref{bootstrap assumption}, we have
\begin{equation}
(1+t)^{\frac{1-\mu}{2}}\eta\le(1+T_\epsilon)^{\frac{1-\mu}{2}}M\epsilon\le M^{-1}\ll1,
\end{equation}
and this proves the corollary. 	
\end{proof}

To control the $L^2$-norms of $F_{\theta,2}$ and $F_{u,2}$, we need the following lemma. To simplify the notation a bit, hereinafter we shall write the energies by $E_k,\chi_k,\tilde\chi_k,\eta$ instead of $E_k[\theta,u]$, $\chi_k[\theta,u]$, $\tilde\chi_k[\theta,u]$, $\eta[\theta,u]$. 

\begin{lem}
Suppose $\Phi_i\in\{\theta,u\}$, $i=1,2,3$. We have that
\begin{equation} \label{Z^leq2 Z^leq4 partial^2}
	\|Z^{\le2}\Phi_1Z^{\le4}\partial^2\Phi_2\|\lesssim(1+t)^{-\frac{1}{2}-\mu}\eta[\theta,u]\chi_5[\theta,u],
\end{equation}
\begin{equation} \label{Z^leq5 Z^leq2 partial^2}
	\left\| Z^{\leq 5} \Phi_{1} Z^{\leq 2} \partial^{2}  \Phi_{2}  \right\|\lesssim(1+t)^{-\frac{1}{2}-\mu}E_5[\theta,u]\left(\chi_5[\theta,u]+\tilde\chi_5[\theta,u]+E_5[\theta,u]\right).
\end{equation}
\end{lem}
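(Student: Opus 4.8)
The plan is to split each of the two claimed estimates according to where the derivatives land, exactly in the spirit of Lemma \ref{auxiliary asymmetric estimate} and Lemma \ref{L2 norm of multiplications}. For \eqref{Z^leq2 Z^leq4 partial^2}, I would write $Z^{\le2}\Phi_1 Z^{\le4}\partial^2\Phi_2$ and estimate the low-order factor $Z^{\le2}\Phi_1$ in $L^\infty$ and the high-order factor $Z^{\le4}\partial^2\Phi_2$ in $L^2$. The point is that $\|Z^{\le4}\partial^2\Phi_2\|$ is \emph{not} directly controlled by $E_5$ (that would cost one more vector field), but it is controlled by $\chi_5$: indeed $\chi_5[\Phi]=(1+t)^{\mu/2}\sum_{|\alpha|\le4}\|\sigma\partial^2Z^\alpha\Phi\|$ and $\sigma\ge1$ on the support (actually $\sigma = 1+t-|x|$, and on the relevant region $\sigma\gtrsim 1$ by the finite speed of propagation, Lemma \ref{finite speed of propagation}, which gives $|x|\le 1/2+t$ so $\sigma\ge1/2$), hence $\sum_{|\alpha|\le4}\|\partial^2Z^\alpha\Phi\|\lesssim(1+t)^{-\mu/2}\chi_5[\Phi]$. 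Meanwhile $|Z^{\le2}\Phi_1|_\infty$ must be extracted from $\eta$: from the definition \eqref{def of eta}, $\eta[\Phi]=(1+t)^{(1+\mu)/2}|Z^{\le2}\partial\Phi|_\infty$, so $|Z^{\le2}\partial\Phi_1|_\infty\lesssim(1+t)^{-(1+\mu)/2}\eta[\Phi_1]$; to get $|Z^{\le2}\Phi_1|_\infty$ without a derivative one uses Lemma \ref{KSlow} (or the Poincaré-type inequality \eqref{Poincare ineq.}) on $B_{1+t}(0)$, which trades $|x|^{-1/2}$ for the extra factor and, combined with compact support in $B_{1/2+t}(0)$, yields $|Z^{\le2}\Phi_1|_\infty\lesssim(1+t)^{-(1+\mu)/2}\eta[\Phi_1]$ up to harmless constants — but I must be careful near $x=0$ where $|x|^{-1/2}$ blows up; there one simply uses the Gagliardo-Nirenberg bound as in Lemma \ref{L^infty for Z4Phi, lem}. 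Multiplying the two bounds gives the factor $(1+t)^{-(1+\mu)/2}\cdot(1+t)^{-\mu/2}=(1+t)^{-1/2-\mu}$ after absorbing the $(1+t)^{\mu/2}$ from $\chi_5$, which is precisely the claimed rate, with the product $\eta\cdot\chi_5$.

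For \eqref{Z^leq5 Z^leq2 partial^2} the roles are reversed: now the high number of vector fields is on $\Phi_1$ (up to $Z^{\le5}$) and only $Z^{\le2}\partial^2$ on $\Phi_2$. I would put $Z^{\le5}\Phi_1$ in $L^2$, bounded by $(1+t)^{1-\mu/2}E_5[\Phi_1]$ via the definition \eqref{def of Ek} together with \eqref{Poincare ineq.} (the $Z^{\le5}\Phi_1$ term of $E_5$ carries the weight $(1+t)^{\mu/2-1}$, hence $\|Z^{\le5}\Phi_1\|\lesssim(1+t)^{1-\mu/2}E_5$), and $Z^{\le2}\partial^2\Phi_2$ in $L^\infty$. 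The latter is the delicate factor: a second-order derivative of a $Z^{\le2}$-derivative is essentially a $\partial^2 Z^{\le2}$ object, which is controlled pointwise by the Klainerman-Sideris machinery. Concretely I would invoke the weighted $L^\infty$ embeddings — Lemma \ref{lem1+t-x} or rather Lemma \ref{lem(1+t-x)3/2} applied to $f=\partial^2 Z^{\le2}\Phi_2$ type quantities, together with Lemma \ref{Klainerman-Sideris estimate} to convert $\sigma$-weighted $L^2$ norms of $\partial^2$ into $\partial Z^{\le1}$ plus $(1+t)\Box$ — so that $|\sigma^{?}Z^{\le2}\partial^2\Phi_2|_\infty$ is dominated by a combination of $E_5$, $\chi_5$ and $\tilde\chi_5$; the appearance of $\tilde\chi_5$ (which carries the $\sigma^2\partial^4$ weight, $|\alpha|\le2$) is exactly what one needs when Lemma \ref{lem(1+t-x)3/2} is used, since its right-hand side involves $\|\sigma^2\partial^2 f\|$ with $f$ itself a second derivative, i.e. $\|\sigma^2\partial^4 Z^{\le2}\Phi_2\|$. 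Collecting the weights — $(1+t)^{1-\mu/2}$ from the $L^2$ factor and the decay $(1+t)^{-3/2-\mu/2}$-ish from the $L^\infty$ factor after accounting for the $\sigma$-powers and the $(1+t)^{\mu/2}$ normalizations inside $\chi_5,\tilde\chi_5$ — should yield the overall rate $(1+t)^{-1/2-\mu}$ times $E_5(\chi_5+\tilde\chi_5+E_5)$. The $+E_5$ inside the parenthesis accounts for the $\partial Z^{\le1}$ terms in the Klainerman-Sideris inequality that are not improved by any $\sigma$ weight.

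The main obstacle I anticipate is the careful bookkeeping of weights and of the distinction between $\partial^2 Z^\alpha$ and $Z^\alpha\partial^2$: since $\partial$ commutes with $Z^{\le k}$ up to lower-order $Z$'s (Corollary \ref{lempartialZ}), $Z^{\le2}\partial^2\Phi_2$ differs from $\partial^2 Z^{\le2}\Phi_2$ only by terms with fewer derivatives, which are strictly better, but one must confirm these error terms are indeed absorbed into $\chi_5$ or $E_5$ and do not require a fourth energy. A secondary subtlety is the behaviour of the weight $\sigma=1+t-|x|$, which can be as small as $1/2$ (never zero, by finite speed of propagation) but also as large as $\sim 1+t$; the Klainerman-Sideris estimates are only useful where $\sigma$ is small, so one should dyadically split into $\{\sigma\lesssim 1\}$ and $\{\sigma\gtrsim 1\}$ — on the latter region $\sigma$-weighted norms are comparable to plain $L^2$ norms times powers of $(1+t)$, and everything reduces to the already-established Lemma \ref{L2 norm of multiplications}. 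Once these splittings are in place, the remaining computations are routine applications of Hölder, the Poincaré inequality \eqref{Poincare ineq.}, and the definitions of the energies.
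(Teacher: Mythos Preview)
Your approach to \eqref{Z^leq2 Z^leq4 partial^2} has a genuine gap. You claim $|Z^{\le2}\Phi_1|_\infty\lesssim(1+t)^{-(1+\mu)/2}\eta$, but neither Poincar\'e nor Lemma~\ref{KSlow} gives this. The $L^\infty$ Poincar\'e inequality \eqref{Poincare ineq.} yields only $|Z^{\le2}\Phi_1|_\infty\lesssim(1+t)|\partial Z^{\le2}\Phi_1|_\infty\lesssim(1+t)^{(1-\mu)/2}\eta$, which after multiplying by $\|Z^{\le4}\partial^2\Phi_2\|\lesssim(1+t)^{-\mu/2}\chi_5$ produces $(1+t)^{1/2-\mu}\eta\chi_5$ --- a full power of $(1+t)$ too large. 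The paper avoids this loss by inserting $\sigma^{-1}\cdot\sigma$ and using the Hardy-type bound \eqref{Hardy inequality near boundary}, namely $|\sigma^{-1}Z^{\le2}\Phi_1|_\infty\lesssim|\partial Z^{\le2}\Phi_1|_\infty$; then the remaining $\sigma$ pairs exactly with $\|\sigma\partial^2 Z^{\le4}\Phi_2\|$, which is what $\chi_5$ controls. This $\sigma$-pairing, not a direct $L^\infty$ bound on $Z^{\le2}\Phi_1$, is the point.

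For \eqref{Z^leq5 Z^leq2 partial^2} your ingredients (Lemma~\ref{lem(1+t-x)3/2}, the emergence of $\tilde\chi_5$) are the right ones, but the proposed split $\{\sigma\lesssim1\}$ versus $\{\sigma\gtrsim1\}$ is vacuous: by finite speed of propagation $\sigma\ge1/2$ everywhere on the support, so the second region is the whole support and you recover no decay. The paper instead splits along $|x|=(1+t)/2$. In the interior $\{|x|\le(1+t)/2\}$ one has $\sigma\sim1+t$, so the $\sigma^{3/2}$ weight from Lemma~\ref{lem(1+t-x)3/2} is traded against $\|\sigma^{-3/2}Z^{\le5}\Phi_1\|\lesssim(1+t)^{-3/2}\|Z^{\le5}\Phi_1\|$, supplying exactly the missing $(1+t)^{-3/2}$. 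In the exterior $\{|x|\ge(1+t)/2\}$ one has $|x|\sim1+t$ and uses instead Lemma~\ref{KSlow} applied to $\sigma Z^{\le2}\partial^2\Phi_2$, paired with the $L^2$ Hardy bound $\|\sigma^{-1}Z^{\le5}\Phi_1\|\lesssim\|\partial Z^{\le5}\Phi_1\|$. Without this interior/exterior decomposition your direct H\"older estimate loses $(1+t)^{3/2}$.
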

\begin{proof}
The first inequality is proved by a direct calculation: 
\begin{equation}
	\begin{aligned}
		\|Z^{\le2}\Phi_1Z^{\le4}\partial^2\Phi_2\|&\le\left|\sigma^{-1} Z^{\le2}\Phi_1 \right|_\infty\|\sigma Z^{\le4}\partial^2\Phi_2\|\\
		\overset{\eqref{equivalent form of DZ^alpha}\eqref{Hardy inequality near boundary}}&{\lesssim}\left|\partial Z^{\le2}\Phi_1\right|_\infty\|\sigma\partial^2Z^{\le4}\Phi_2\|\\
		&\lesssim(1+t)^{-\frac{1}{2}-\mu}\eta[\theta,u]\chi_5[\theta,u]. 
	\end{aligned}
\end{equation}
In order to prove the second inequality, we shall divide into two parts:
\begin{equation}
	\left\| Z^{\leq5} \Phi_{1} Z^{\leq 2} \partial^{2}  \Phi_{2}  \right\|\le\left\| Z^{\leq 5} \Phi_{1} Z^{\leq 2} \partial^{2}  \Phi_{2}  \right\|_{ L^{2}( |x| \leq \frac{1+t}{2} ) } + \left\| Z^{\leq 5} \Phi_{1} Z^{\leq 2} \partial^{2}  \Phi_{2}  \right\|_{ L^{2}( |x| \geq \frac{1+t}{2} ) }.
\end{equation}
\textbf{Estimates of nonlinearities for $  |x| \leq \frac{1+t}{2} $.} Noting that $\operatorname{supp}(\theta,u)\subset B_{1/2+t}(0)$ and $\sigma^{\frac{1}{2}}\lesssim \sigma$ holds on the support of $(\theta,u)$. To control the first term, we make use of Lemma \ref{lem(1+t-x)3/2} to deduce that
\begin{equation}
	\begin{aligned}
		&| \sigma^{ \frac{3}{2} } Z^{\leq 2} \partial^{2}  \Phi_{2}  |_{\infty}\\
		\overset{\eqref{equivalent form of DZ^alpha}}{\lesssim}&\left\| \sigma \partial^{2}Z^{\leq 2}   \Phi_{2} \right\|^{ \frac{1}{2} } \cdot \left( \left\| \sigma^{2} \partial^{4}  Z^{\leq 2}  \Phi_{2}  \right\|^{ \frac{1}{2} } + \left\| \sigma \partial^3  Z^{\leq 2}\Phi_{2} \right\|^{ \frac{1}{2} } \right) + \left\| \sigma^{ \frac{1}{2} }\partial^{2}  Z^{\leq 2}   \Phi_{2} \right\|\\
		\lesssim& (1+t)^{-\frac{\mu}{2}}\left[\chi_{3}^{\frac{1}{2}}(\tilde{\chi}_5^{\frac{1}{2}}+\chi_{4}^{\frac{1}{2}})+\chi_{3}\right].
	\end{aligned}
\end{equation} 
Hence, The first term is estimated by
\begin{equation}
	\begin{aligned}
		\left\| Z^{\leq 5} \Phi_{1} Z^{\leq 2} \partial^{2}  \Phi_{2}  \right\|_{ L^{2}( |x| \leq \frac{1+t}{2} ) }&\le\left\|\sigma^{ -\frac{3}{2} } Z^{\leq 5} \Phi_{1} \right\|_{ L^{2}( |x| \leq \frac{1+t}{2} ) } \cdot | \sigma^{ \frac{3}{2} } Z^{\leq 2} \partial^{2}  \Phi_{2}  |_{\infty} \\
		&\lesssim(1+t)^{ - \frac{3}{2} } \left\|Z^{\leq 5} \Phi_{1} \right\|\cdot | \sigma^{ \frac{3}{2} } Z^{\leq 2} \partial^{2}  \Phi_{2}  |_{\infty}\\
		&\lesssim(1+t)^{-\frac{1}{2}-\mu}E_5\cdot\left[\chi_{3}^{\frac{1}{2}}(\tilde{\chi}_5^{\frac{1}{2}}+\chi_{4}^{\frac{1}{2}})+\chi_{3}\right].
	\end{aligned}
\end{equation}
\textbf{Estimates of nonlinearities for $  |x| \geq \frac{1+t}{2} $.} To bound the second term, we use Lemma \ref{KSlow} to deduce that
\begin{equation}
	\begin{aligned}
	|\sigma Z^{\le2}\partial^2\Phi_2\|_{L^\infty(|x|\ge\frac{1+t}{2})}&\lesssim(1+t)^{-\frac{1}{2}}\left\|\partial_r\left[\sigma Z^{\le3}\partial^2\Phi_2\right]\right\|^{\frac{1}{2}}\left\|\sigma Z^{\le3}\partial^2\Phi_2\right\|^{\frac{1}{2}}\\
		&\lesssim(1+t)^{-\frac{1}{2}}\left(\|Z^{\le3}\partial^2\Phi_2\|+\left\|\sigma Z^{\le3}\partial^3\Phi_2\right\|\right)^{\frac{1}{2}}\left\|\sigma Z^{\le3}\partial^2\Phi_2\right\|^{\frac{1}{2}}\\
		&\lesssim(1+t)^{-\frac{1+\mu}{2}}\chi_4^{\frac{1}{2}}(\chi_5+E_4)^{\frac{1}{2}}. 
	\end{aligned}
\end{equation}
Subsequently, the second term is bounded by
\begin{equation}
	\begin{aligned}
		\left\| Z^{\leq 5} \Phi_{1} Z^{\leq 2} \partial^{2}  \Phi_{2}  \right\|_{ L^{2}( |x| \geq \frac{1+t}{2} ) }&\le\left\|\sigma^{-1}Z^{\leq 5} \Phi_{1} \right\|\cdot \| \sigma Z^{\leq 2} \partial^{2}  \Phi_{2}  \|_{L^\infty(|x|\ge\frac{1+t}{2})}\\
		\overset{\eqref{Hardy inequality near boundary}}&{\lesssim}\|\partial Z^{\leq 5} \Phi_{1}\|\cdot (1+t)^{-\frac{1+\mu}{2}}\chi_4^{\frac{1}{2}}(\chi_5+E_4)^{\frac{1}{2}}\\
		&\lesssim(1+t)^{-\frac{1}{2}-\mu}\chi_4^{\frac{1}{2}}(\chi_5+E_4)^{\frac{1}{2}}E_5.
	\end{aligned}
\end{equation}
Combining the estimates of two parts together, we obtain
\begin{equation}
	\left\| Z^{\leq5} \Phi_{1} Z^{\leq 2} \partial^{2}  \Phi_{2}  \right\|\lesssim(1+t)^{-\frac{1}{2}-\mu}E_5\left(\chi_5+\tilde\chi_5+E_5\right). 
\end{equation}
\end{proof}

With the results of the previous lemmas in hand, we can now estimate $L^2$-norms of $\hat{Z}^{\alpha}F_{\theta,2}$ and $\hat{Z}^{\alpha}F_{u,2}$. 
\begin{cor}\label{ZFtheta2}
	For $ \Phi \in \{\theta,u\} $ and $|\alpha|\le4$, we have that	
	\begin{equation} 
		\label{estimate of Z^alpha F2}
		\begin{aligned}
			 \left\|  \hat{Z}^{\alpha} F_{\Phi,2} \right\|  
			& \lesssim  (1+t)^{ - \frac{1}{2} - \mu } \left[E_{5}\left(E_{5}+\chi_5+\tilde\chi_5\right) + \chi_{5} \eta\right] .
		\end{aligned}
	\end{equation}
\end{cor}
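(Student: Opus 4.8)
The plan is to decompose $F_{\Phi,2}$ into its constituent monomials, apply $\hat Z^\alpha$, and estimate each resulting $L^2$ norm by means of the two inequalities \eqref{Z^leq2 Z^leq4 partial^2}--\eqref{Z^leq5 Z^leq2 partial^2} just proved. Recall that
\[
F_{\theta,2}=(\gamma-1)\theta\Delta\theta-2u\cdot\nabla\partial_t\theta-\sum_{i,j}u_iu_j\partial^2_{ij}\theta,\qquad
F_{u,2}=-(u\cdot\nabla)\partial_tu+u\cdot\nabla^2\theta+(\gamma-1)\theta\Delta u,
\]
so every term of $F_{\Phi,2}$ is either of the bilinear shape $\Phi_1\partial^2\Phi_2$ or of the cubic shape $\Phi_1\Phi_2\partial^2\Phi_3$ with $\Phi_i\in\{\theta,u\}$, the cubic shape occurring only for $\Phi=\theta$.

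For a bilinear term $\Phi_1\partial^2\Phi_2$, I would use $\hat Z^\alpha=Z^{\le|\alpha|}$, the Leibniz rule, and (iterating Corollary \ref{lempartialZ}) the commutation $Z^{\le k}\partial^2=\partial^2Z^{\le k}$, to write $\hat Z^\alpha(\Phi_1\partial^2\Phi_2)$ as a finite sum, with universal constants, of products $Z^\beta\Phi_1\cdot\partial^2Z^\gamma\Phi_2$ with $|\beta|+|\gamma|\le|\alpha|\le4$. In each such product the pigeonhole principle forces $|\beta|\le2$ or $|\gamma|\le2$, whence
\[
\|\hat Z^\alpha(\Phi_1\partial^2\Phi_2)\|\lesssim\|Z^{\le2}\Phi_1\,Z^{\le4}\partial^2\Phi_2\|+\|Z^{\le5}\Phi_1\,Z^{\le2}\partial^2\Phi_2\|,
\]
and then \eqref{Z^leq2 Z^leq4 partial^2} and \eqref{Z^leq5 Z^leq2 partial^2} bound the right-hand side by $(1+t)^{-\frac12-\mu}\big[\eta\chi_5+E_5(E_5+\chi_5+\tilde\chi_5)\big]$, which is precisely the asserted estimate \eqref{estimate of Z^alpha F2}.

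For the cubic term $u_iu_j\partial^2_{ij}\theta$ in $F_{\theta,2}$, the same manipulations give a finite sum of $Z^\beta u_i\cdot Z^\gamma u_j\cdot\partial^2Z^\delta\theta$ with $|\beta|+|\gamma|+|\delta|\le4$. Since $|\beta|\le4$, Lemma \ref{L^infty for Z4Phi, lem} yields $|Z^\beta u_i|_\infty\lesssim M^{-1}\lesssim1$, reducing matters to $\|Z^\gamma u_j\,\partial^2Z^\delta\theta\|$ with $|\gamma|+|\delta|\le4$, which is handled exactly as the bilinear case above and produces the same bound. Summing the contributions of the finitely many monomials of $F_{\Phi,2}$ completes the proof.

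Since the argument is essentially bookkeeping on top of the two inequalities already established, I do not anticipate a real obstacle. The only point needing a little care is the cubic term, where one must peel off an $L^\infty$-factor of $u$ via Lemma \ref{L^infty for Z4Phi, lem} before the surviving bilinear piece can be matched with \eqref{Z^leq2 Z^leq4 partial^2}--\eqref{Z^leq5 Z^leq2 partial^2}; one also has to check that the derivative counts stay within the ranges ($\le2$, $\le4$, $\le5$) for which those two inequalities were proved, which is exactly why the hypothesis $|\alpha|\le4$ (rather than $\le5$) appears.
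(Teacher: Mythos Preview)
Your proposal is correct and follows essentially the same approach as the paper: decompose $F_{\Phi,2}$ into bilinear and cubic monomials, apply the Leibniz rule together with the pigeonhole principle to land in the ranges covered by \eqref{Z^leq2 Z^leq4 partial^2}--\eqref{Z^leq5 Z^leq2 partial^2}, and for the cubic term strip off one factor in $L^\infty$ via Lemma~\ref{L^infty for Z4Phi, lem} to reduce to the bilinear case. The paper's write-up differs only in cosmetic bookkeeping (it keeps $\Phi_2\partial^2\Phi_3$ grouped and invokes the bilinear estimate \eqref{bilinear term in F_Phi,2} directly, and records the slightly sharper indices $Z^{\le4}\Phi_1\,Z^{\le1}\partial^2\Phi_2$ rather than $Z^{\le5}\Phi_1\,Z^{\le2}\partial^2\Phi_2$).
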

\begin{proof}
$F_{\theta,2}$ and $F_{u,2}$ consist of terms in the form of $\Phi_1\partial^2\Phi_2$ and $\Phi_1\Phi_2\partial^2\Phi_3$. Using the pigeonhole principle and the above lemmas, we have that
\begin{equation}
	\label{bilinear term in F_Phi,2}
	\begin{aligned}
		\|\hat Z^\alpha(\Phi_1\partial^2\Phi_2)\|&\lesssim\left\|Z^{\le2}\Phi_1Z^{\le4}\partial^2\Phi_2\right\|+\left\| Z^{\leq4} \Phi_{1} Z^{\leq 1} \partial^{2}  \Phi_{2}  \right\|\\
		&\lesssim  (1+t)^{ - \frac{1}{2} - \mu } \left[E_{5}\left(E_{5}+\chi_5+\tilde\chi_5\right) + \chi_{5} \eta\right]. 
	\end{aligned}
\end{equation}
Moreover, the cubic nonlinear term is bounded by
\begin{equation}
	\begin{aligned}
		\|\hat Z^\alpha(\Phi_1\Phi_2\partial^2\Phi_3)\|&\lesssim|Z^{\le4}\Phi_1|_\infty\|Z^{\le4}(\Phi_2\partial^2\Phi_3)\|\\
		\overset{\eqref{bilinear term in F_Phi,2}}&{\lesssim}M^{-1}(1+t)^{ - \frac{1}{2} - \mu } \left[E_{5}\left(E_{5}+\chi_5+\tilde\chi_5\right) + \chi_{5} \eta\right]. 
	\end{aligned}
\end{equation}
\end{proof}

The following lemma is helpful to the estimate of $ \tilde\chi_5 $.
\begin{lem} \label{lem(1+t-x)partial2FPhi}
	Recall that $\sigma(t,x)=1+t-|x|$. For $\Phi\in\{\theta,u\}$, we have the following weighted estimate for forcing terms:	
	\begin{equation} 
		\begin{aligned}
			\left\| \sigma \partial^{2} Z^{\leq 2} F_{\Phi} \right\|  \lesssim (1+t)^{-\mu -\frac{1}{2} } \eta[\theta,u]\cdot \left( \tilde{\chi}_{5} [\theta,u] + \chi_{4} [\theta,u ] \right).
		\end{aligned}
	\end{equation}
\end{lem}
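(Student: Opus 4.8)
The strategy is to recall the structure of the forcing terms $F_\Phi=F_{\Phi,1}+F_{\Phi,2}$ and estimate the weighted quantity $\|\sigma\partial^2 Z^{\le2}F_\Phi\|$ term by term, using the previously established pointwise and weighted $L^2$ inequalities. First I would commute $\partial^2 Z^{\le2}$ inside $F_\Phi$: by \eqref{equivalent form of DZ^alpha} and \eqref{cononical filtration}, $\partial^2 Z^{\le 2}$ applied to a product of $\theta,u$ and their derivatives produces, via the Leibniz rule, a finite sum of terms of the schematic form $Z^{\le a}\Phi_1\cdot Z^{\le b}\partial\Phi_2$ (from $F_{\Phi,1}$, which is bilinear of the form $g_{\alpha,\beta,i}(\theta,u)\partial_\alpha\theta\,\partial_\beta u_i$), and $Z^{\le a}\Phi_1\cdot Z^{\le b}\partial^2\Phi_2$ plus cubic analogues $Z^{\le a}\Phi_1 Z^{\le b}\Phi_2 Z^{\le c}\partial^2\Phi_3$ (from $F_{\Phi,2}$), where $a+b\le 4$ and $a+b+c\le 4$ after accounting for the two extra $\partial$'s and the $\le 2$ vector fields. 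The coefficients $g_{\alpha,\beta,i}$ are affine in $(\theta,u)$, so by Lemma \ref{L^infty for Z4Phi, lem} their $Z^{\le 4}$-derivatives contribute at most an $O(1)$ (in fact $O(M^{-1})$) multiplicative factor and may be ignored for the purpose of the stated bound.

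Next I would bound each resulting $\sigma$-weighted $L^2$ product using the pigeonhole split $a+b\le 4 \Rightarrow \min(a,b)\le 2$. For the worst class of terms $\|\sigma\, Z^{\le 2}\partial\Phi_1\cdot Z^{\le 4}\partial^2\Phi_2\|$ (arising from $\partial^2$ hitting $F_{\Phi,1}$), I put $Z^{\le 2}\partial\Phi_1$ in $L^\infty$ and use $\sigma^{\frac12}\lesssim\sigma$ on the support together with \eqref{Hardy inequality near boundary}-type arguments to write $\|\sigma Z^{\le 4}\partial^2\Phi_2\|\lesssim\|\sigma\partial^2 Z^{\le4}\Phi_2\|\lesssim(1+t)^{-\frac\mu2}\chi_5$; combined with $|Z^{\le2}\partial\Phi_1|_\infty\lesssim(1+t)^{-\frac{1+\mu}{2}}\eta$ this yields $(1+t)^{-\mu-\frac12}\eta\,\chi_5$, which is of the claimed type (recall $\chi_4\le\chi_5\le\tilde\chi_5+\chi_4$ in spirit, and the statement allows $\tilde\chi_5+\chi_4$). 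For terms where instead $\partial^2$ lands on the $F_{\Phi,2}$ part, one gets fourth-order derivatives $\sigma^2\partial^4 Z^{\le 2}\Phi$, which is exactly controlled by $\tilde\chi_5$: schematically $\|\sigma\, Z^{\le2}\Phi_1\cdot Z^{\le4}\partial^2 Z^{\le2}\Phi_2\|$ — here one must be careful that $F_{\Phi,2}$ already carries two derivatives, so $\partial^2 Z^{\le 2}F_{\Phi,2}$ carries up to four — put the coefficient $Z^{\le 2}\Phi_1$ (or $Z^{\le 2}\partial Z^{\le 2}\Phi$ pieces) in $L^\infty$ via $\eta$ and the high-order factor in $L^2$ via $\tilde\chi_5$ (after the weight bookkeeping $\sigma^2\partial^4$), producing $(1+t)^{-\mu-\frac12}\eta\,\tilde\chi_5$. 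The cubic terms gain an extra $|Z^{\le 4}\Phi|_\infty\lesssim M^{-1}$ and are therefore harmless.

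The main obstacle I anticipate is the careful bookkeeping of the weights $\sigma$ and the powers of $(1+t)$, rather than any conceptual difficulty: one must verify in every case that exactly one factor of $\sigma$ is available to pair with a second-order derivative to invoke $\chi_k$, or two factors of $\sigma$ with a fourth-order derivative to invoke $\tilde\chi_k$, while the leftover low-order factor is estimated in $L^\infty$ and supplies precisely the decay $(1+t)^{-\frac{1+\mu}{2}}$ packaged in $\eta$ (using $|Z^{\le 2}\partial\Phi|_\infty = (1+t)^{-\frac{1+\mu}{2}}\eta$ from \eqref{def of eta}). A subtle point is that when $\partial^2$ distributes onto the coefficient $g(\theta,u)$ the extra derivatives fall on a factor that is only bounded in $L^\infty$ by $M^{-1}$ (Lemma \ref{L^infty for Z4Phi, lem}) but whose $\sigma$-weighted $\partial^2$ is controlled by $\chi_5$; in that sub-case one reassigns which factor goes to $L^\infty$ and which to $L^2$, again using the pigeonhole principle $\min(a,b)\le2$, and the same final bound results. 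Collecting all the finitely many schematic terms and using $\eta, E_5\lesssim M\epsilon\ll1$ to absorb cross terms, one arrives at
\begin{equation}
	\left\| \sigma \partial^{2} Z^{\leq 2} F_{\Phi} \right\|  \lesssim (1+t)^{-\mu -\frac{1}{2} } \eta[\theta,u]\cdot \left( \tilde{\chi}_{5} [\theta,u] + \chi_{4} [\theta,u ] \right),
\end{equation}
which is the assertion.
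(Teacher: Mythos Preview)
Your overall strategy coincides with the paper's: expand $\partial^2 Z^{\le 2}F_\Phi$ by the Leibniz rule, pigeonhole so that the low-order factor carries at most two vector fields, put that factor in $L^\infty$ via $\eta$ (or via the Hardy-type bound $|\sigma^{-1}Z^{\le 2}\Phi|_\infty\lesssim|\partial Z^{\le 2}\Phi|_\infty$ from \eqref{Hardy inequality near boundary} when the low-order factor is undifferentiated), and place the $\sigma$-weighted high-order factor in $L^2$ via $\chi_k$ or $\tilde\chi_5$. This is exactly what the paper does, term by term.

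There is one genuine slip in your bookkeeping. For the $F_{\Phi,1}$ contributions you write the worst term as $\|\sigma\, Z^{\le 2}\partial\Phi_1\cdot Z^{\le 4}\partial^2\Phi_2\|$ and bound it by $\chi_5$, then assert ``$\chi_5\le\tilde\chi_5+\chi_4$ in spirit.'' That inequality is false: $\chi_5$ contains $\|\sigma\partial^2 Z^\alpha\Phi\|$ with $|\alpha|=4$, which neither $\chi_4$ nor $\tilde\chi_5$ controls. The fix is that your count is too generous: $F_{\Phi,1}$ is built from $\partial\Phi_1\partial\Phi_2$ (and $(1+t)^{-1}\Phi\partial\Phi$, $\Phi\partial\Phi\partial\Phi$), so after distributing $\partial^2 Z^{\le 2}$ the highest factor is $Z^{\le 2}\partial^3\Phi = \partial^2(\partial Z^{\le 2}\Phi)$, which is controlled by $\|\sigma\partial^2 Z^{\le 3}\Phi\|\lesssim (1+t)^{-\mu/2}\chi_4$, not $\chi_5$. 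Once you count the orders carefully (as the paper does case by case) you land exactly on $\chi_4$ for all $F_{\Phi,1}$-type terms and on $\chi_4+\tilde\chi_5$ for the $\Phi_1\partial^2\Phi_2$ and $\Phi_1\Phi_2\partial^2\Phi_3$ terms, the latter requiring the Hardy trick $|\sigma^{-1}Z^{\le 2}\Phi_1|_\infty\lesssim|\partial Z^{\le 2}\Phi_1|_\infty$ to upgrade the single $\sigma$ to the $\sigma^2$ needed for $\tilde\chi_5$.
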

\begin{proof}
	$F_{\Phi}$ consists of terms like $(1+t)^{-1}\Phi_{1} \partial \Phi_{2}$, $\partial\Phi_{1} \partial \Phi_{2}$, $\Phi_{1} \partial \Phi_{2} \partial \Phi_{3}$, $\Phi_{1} \partial^{2} \Phi_{2}$, and $\Phi_{1} \Phi_{2} \partial^{2} \Phi_{3}$. We estimate each kind of terms one by one. We first bound the $(1+t)^{-1}\Phi_{1} \partial \Phi_{2}$ terms by
	\begin{equation}
		\begin{aligned}
		&\ \ \ \left\| \sigma \partial^{2}Z^{\leq 2} ((1+t)^{-1} \Phi_{1} \partial \Phi_{2}) \right\|\\
		&\lesssim (1+t)^{-1} \left\| \sigma \partial^{2}Z^{\leq 2} ( \Phi_{1} \partial \Phi_{2}) \right\| \\
		&\lesssim (1+t)^{-1} \sum_{\tau\in S_2} \left\| \sigma Z^{\leq 2} (\partial^{2} \Phi_{\tau(1)} \partial \Phi_{\tau(2)}) \right\| + (1+t)^{-1} \left\|\sigma Z^{\leq 2}(\Phi_1\partial^{3}\Phi_2)\right\| \\
			\overset{\eqref{Poincare ineq.}}&{\lesssim} (1+t)^{-1} \sum_{\tau\in S_2} \left\| \sigma Z^{\leq 2}\partial^{2} \Phi_{\tau(1)} \right\|\cdot \left| \partial Z^{\leq 2} \Phi_{\tau(2)} \right|_\infty + \left| \partial Z^{\leq 2} \Phi_1 \right|_\infty \cdot \left\| \sigma \partial^{3}Z^{\leq 2} \Phi_2 \right\| \\
			&\lesssim(1+t)^{-\frac{1}{2}-\mu}\eta\chi_4.
		\end{aligned}
	\end{equation}
	
	The $\partial\Phi_{1} \partial \Phi_{2}$ terms are bounded by
	\begin{equation} 
		\begin{aligned}
		&\ \ \ \left\|\sigma \partial^{2} Z^{\leq 2} ( \partial \Phi_{1} \partial \Phi_{2}) \right\| \\
		& \lesssim \sum_{\tau\in S_2} \left(  \left\| \sigma Z^{\leq 2} \partial \Phi_{\tau(1)}Z^{\leq 2} \partial^{3} \Phi_{\tau(2)} \right\| + \left\|\sigma Z^{\leq 2} \partial^{2} \Phi_{\tau(1)} Z^{\leq 1} \partial^{2} \Phi_{\tau(2)}  \right\| \right) \\
			& \lesssim \sum_{\tau\in S_2}\left(  \left\| \sigma Z^{\leq 2}\partial^{3} \Phi_{\tau(2)} \right\|\cdot \left| \partial Z^{\leq 2} \Phi_{\tau(1)} \right|_\infty + \left\| \sigma Z^{\leq 2}\partial^{2} \Phi_{\tau(1)} \right\|\cdot \left| \partial Z^{\leq 1} \Phi_{\tau(2)} \right|_\infty\right)  \\
			&\lesssim(1+t)^{-\frac{1}{2}-\mu}\eta\chi_4.
		\end{aligned}
	\end{equation}
	
The $\Phi_1\partial\Phi_{2} \partial \Phi_{3}$ terms are bounded by
\begin{equation}
	\begin{aligned}
		&\ \ \ \left\| \sigma \partial^{2} Z^{\leq 2}( \Phi_1\partial\Phi_{2} \partial \Phi_{3}) \right\| \\
		&\lesssim \sum_{\tau\in S_3} \left( \left\| \sigma Z^{\leq 2}(\partial^{3} \Phi_{\tau(1)}\partial\Phi_{\tau(2)} \Phi_{\tau(3)}) \right\| + \left\| \sigma Z^{\leq 2}(\partial^{2} \Phi_{\tau(1)}\partial^{2}\Phi_{\tau(2)} \Phi_{\tau(3)}) \right\| \right) \\
		&\ \ \ +\sum_{\tau\in S_3}  \left\| \sigma Z^{\leq 2}(\partial^{2} \Phi_{\tau(1)}\partial\Phi_{\tau(2)} \partial \Phi_{\tau(3)}) \right\| \\
		 \overset{\eqref{L^infty for Z4Phi}}&{\lesssim}M^{-1} \sum_{\tau\in S_2} \left(\left\| \sigma Z^{\leq 2} \partial^3 \Phi_{\tau(1)} \right\| \cdot | \partial Z^{\leq 2} \Phi_{\tau(2)} |_{\infty} + \left\| \sigma Z^{\leq 2} \partial^2 \Phi_{\tau(1)} \right\| \cdot | \partial^2 Z^{\leq 1} \Phi_{\tau(2)} |_{\infty}  \right) \\
		&\ \ \  +  \sum_{\tau\in S_3} \left\| \sigma Z^{\leq 2} \partial^2 \Phi_{\tau(1)} \right\| \cdot | \partial Z^{\leq 2} \Phi_{\tau(2)} |_{\infty}| \partial Z^{\leq 2} \Phi_{\tau(3)} |_{\infty}\\
		&\lesssim M^{-1} (1+t)^{-\frac{1}{2}-\mu}\eta\chi_4.
	\end{aligned}
\end{equation}

The $\Phi_1\partial^2\Phi_{2}$ terms are bounded by
\begin{equation} 
	\begin{aligned}
		&\ \ \ \left\| \sigma \partial^{2} Z^{\leq 2}( \Phi_{1} \partial^{2} \Phi_{2}) \right\| \\
		&\lesssim \left\| \sigma Z^{\leq 2} \partial \Phi_{1} Z^{\leq 2} \partial^{3} \Phi_{2} \right\| + \sum_{\tau\in S_2} \left\| \sigma Z^{\leq 2} \partial^{2} \Phi_{\tau(1)}Z^{\leq 1} \partial^{2} \Phi_{\tau(2)} \right\| + \left\| \sigma Z^{\leq 2}\Phi_{1} Z^{\leq 2}\partial^{4} \Phi_{2} \right\|  \\
		& \lesssim \left\| \sigma Z^{\leq 2} \partial^3 \Phi_{2} \right\| \cdot | \partial Z^{\leq 2} \Phi_{1} |_{\infty} + \sum_{\tau\in S_2} \left\| \sigma Z^{\leq 2} \partial^2 \Phi_{\tau(1)} \right\| \cdot | \partial^2 Z^{\leq 1} \Phi_{\tau(2)} |_{\infty} \\
		&\ \ \  + \left\| \sigma^{2} Z^{\leq 2} \partial^{4} \Phi_{2} \right\| \cdot | \sigma^{-1} Z^{\leq 2} \Phi_{1} |_{\infty} \\
		\overset{\eqref{Hardy inequality near boundary}}&{\lesssim} (1+t)^{-\frac{1}{2}-\mu}\eta(\chi_4 + \tilde{\chi}_{5}).
	\end{aligned}
\end{equation}

The $\Phi_{1} \Phi_{2} \partial^{2} \Phi_{3}$ terms are bounded by
\begin{equation} 
	\begin{aligned}
		&\ \ \ \left\| \sigma \partial^{2} Z^{\leq 2}( \Phi_1\Phi_{2} \partial^{2} \Phi_{3}) \right\| \\
		& \lesssim \sum_{\tau\in S_3} \left(  \left\| \sigma Z^{\leq 2}( \Phi_{\tau(1)}\partial^{2}\Phi_{\tau(2)} \partial^{2} \Phi_{\tau(3)}) \right\| + \left\| \sigma Z^{\leq 2}( \partial\Phi_{\tau(1)}\partial\Phi_{\tau(2)} \partial^{2} \Phi_{\tau(3)}) \right\| \right) \\
		&\ \ \  + \sum_{\tau\in S_3} \left( \left\| \sigma Z^{\leq 2}( \Phi_{\tau(1)}\partial\Phi_{\tau(2)} \partial^{3}\Phi_{\tau(3)}) \right\| + \left\| \sigma Z^{\leq 2}( \Phi_{\tau(1)}\Phi_{\tau(2)}\partial^{4} \Phi_{\tau(3)}) \right\| \right) \\
		& \lesssim \sum_{\tau\in S_3} \left\| \sigma Z^{\leq 2} \partial^2 \Phi_{\tau(1)} \cdot Z^{\leq 1} \partial^2  \Phi_{\tau(2)} \right\| \cdot | Z^{\leq 2} \Phi_{\tau(3)} |_{\infty} \\
		&\ \ \  +\sum_{\tau\in S_3} \left\| \sigma Z^{\leq 2} \partial^{\leq 1} \partial^2 \Phi_{\tau(1)} \right\| \cdot |\partial Z^{\leq 2} \Phi_{\tau(2)} |_{\infty} | \partial^{\leq 1}Z^{\leq 2} \Phi_{\tau(3)} |_{\infty}  \\
		&\ \ \  +\sum_{\tau\in S_3} \left\| \sigma Z^{\leq 2} \partial^4 \Phi_{\tau(3)} \right\| \cdot |\sigma^{-1} \partial Z^{\leq 2} \Phi_{\tau(2)} |_{\infty} | Z^{\leq 2} \Phi_{\tau(3)} |_{\infty}  \\
		\overset{\eqref{L^infty for Z4Phi}\eqref{Hardy inequality near boundary}}&{\lesssim} M^{-1} (1+t)^{-\frac{1}{2}-\mu}\eta(\chi_4 + \tilde{\chi}_{5}).
	\end{aligned}
\end{equation}

By combining all the estimates, we obtain the desired result. 
\end{proof}

\section{The estimate of auxiliary energies}
In this section we will show that the auxiliary energies $\eta$, $\chi_{5}$ and $\tilde\chi_{5}$ are actually controlled by $E_{5}$.
 
\begin{lem} \label{thmtildechi}
	If the bootstrap assumptions hold on $[0,T]\subset[0,T_\epsilon]$, then we have
	\begin{equation} 
		\label{estimate of tilde chi}
		\begin{aligned}
			\tilde{\chi}_{5} [\theta,u] (t) \lesssim \chi_{5} [\theta,u] (t)+ E_{3}[\theta,u](t).
		\end{aligned}
	\end{equation}
	holds for all $t\in[0,T]$. 
\end{lem}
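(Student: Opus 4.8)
The plan is to bound $\tilde\chi_5$, which involves $\|\sigma^2\partial^4 Z^{\le 2}\Phi\|$, by applying the weighted Klainerman--Sideris estimate of Lemma \ref{Klainerman-Sideris estimate} with $\alpha=1$ to the functions $\Phi_1=\partial^2 Z^\beta\theta$ and $\partial^2 Z^\beta u$ for $|\beta|\le 2$. Concretely, for such $\Phi\in\{\theta,u\}$ and $|\beta|\le2$, Lemma \ref{Klainerman-Sideris estimate} gives
\begin{equation}
	\|\sigma^2\partial^2(\partial^2 Z^\beta\Phi)\|\lesssim \|\sigma\,\partial Z^{\le1}(\partial^2 Z^\beta\Phi)\|+(1+t)\|\sigma\,\Box(\partial^2 Z^\beta\Phi)\|.
\end{equation}
The first term on the right is controlled, via \eqref{equivalent form of DZ^alpha}, by $\|\sigma\partial^2 Z^{\le3}\Phi\|+\|\sigma\partial^3 Z^{\le3}\Phi\|$. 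The leading piece $\|\sigma\partial^2 Z^{\le3}\Phi\|$ is exactly of the form appearing in $\chi_5$ (after multiplying by $(1+t)^{\mu/2}$). For the piece with one extra derivative, $\|\sigma\partial^3 Z^{\le3}\Phi\|=\|\sigma\partial(\partial^2 Z^{\le3}\Phi)\|$, I would apply the Poincaré/Hardy-type inequality near the boundary \eqref{Hardy inequality near boundary} (which bounds $\|\sigma^{-1}g\|\lesssim\|\partial g\|$, equivalently lets one trade a $\sigma$-weight against a derivative in the reverse direction) — or more directly, simply observe that on $\operatorname{supp}(\theta,u)$ one has $\sigma\lesssim 1+t$, so $\|\sigma\partial^3 Z^{\le3}\Phi\|\lesssim(1+t)\|\partial^3 Z^{\le3}\Phi\|$; but that is too lossy. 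The cleaner route is to iterate: bound $\|\sigma\partial^3 Z^{\le3}\Phi\|$ by another application of the KS machinery or absorb it. Actually the safest is to note $\partial^3 Z^{\le 3}\Phi = \partial (\partial^2 Z^{\le 3}\Phi)$ and that $\partial^2 Z^{\le 3}\Phi$ is among the quantities bounded in $\chi_5$ without the $\sigma$-weight via $E_5$; since $\|\sigma\partial g\|$ with a $\sigma$ weight is genuinely of size $\chi$, one sees $\|\sigma\partial^3 Z^{\le3}\Phi\|\lesssim(1+t)^{-\mu/2}\chi_5$, which is fine.

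The second term, $(1+t)\|\sigma\,\Box(\partial^2 Z^\beta\Phi)\|$, is the main point. Since $\partial$ and $\Box$ commute, $\Box(\partial^2 Z^\beta\Phi)=\partial^2\Box(Z^\beta\Phi)$, and using \eqref{commute hat Z with box} together with the wave equations \eqref{eqn of Z^alpha theta}, \eqref{eqn of Z^alpha u}, we have $\Box Z^\beta\Phi = -\frac{\mu}{1+t}\partial_t Z^\beta\Phi\ (+\frac{\mu}{(1+t)^2}Z^\beta u)\ +F_\Phi^{(\beta)}$. Therefore
\begin{equation}
	(1+t)\|\sigma\,\partial^2\Box(Z^\beta\Phi)\|\lesssim (1+t)\Big\|\sigma\,\partial^2\Big(\tfrac{\mu}{1+t}\partial_t Z^\beta\Phi\Big)\Big\|+(1+t)\Big\|\sigma\,\partial^2\Big(\tfrac{\mu}{(1+t)^2}Z^\beta u\Big)\Big\|+(1+t)\|\sigma\,\partial^2 F_\Phi^{(\beta)}\|.
\end{equation}
In the first two (linear damping) terms, distributing $\partial^2$ onto the smooth coefficients $(1+t)^{-1},(1+t)^{-2}$ only improves decay, so these are bounded by $\mu\|\sigma\partial^2 Z^{\le2}\partial\Phi\|+\text{(lower order)}\lesssim(1+t)^{-\mu/2}\chi_5+(1+t)^{-\mu/2}E_5$ after inserting the definitions of $\chi_5$ and $E_5$ and using \eqref{Hardy inequality near boundary} for the undifferentiated pieces. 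For the forcing term, I split $F_\Phi^{(\beta)}=\hat Z^\beta F_\Phi+(F_\Phi^{(\beta)}-\hat Z^\beta F_\Phi)$; the commutator piece is handled by Lemma \ref{estimate of F^alpha-Z^alpha F} (it produces $(1+t)^{-2}|\partial Z^{\le|\beta|-1}\Phi|$, whose $\sigma$-weighted $\partial^2$-norm is tiny), and the genuine piece $\|\sigma\partial^2 Z^{\le2}F_\Phi\|$ is exactly what Lemma \ref{lem(1+t-x)partial2FPhi} estimates, giving $(1+t)^{-\mu-1/2}\eta(\tilde\chi_5+\chi_4)$.

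Collecting everything, $(1+t)^{\mu/2}\sum_{|\beta|\le2}\|\sigma^2\partial^4 Z^\beta\Phi\|\lesssim \chi_5+E_3+(1+t)^{1/2-\mu}\eta\,(\tilde\chi_5+\chi_4)$, i.e. $\tilde\chi_5\lesssim \chi_5+E_3+(1+t)^{1/2-\mu}\eta\,\tilde\chi_5+(1+t)^{1/2-\mu}\eta\,\chi_4$. The main obstacle — really the only subtlety — is that $\tilde\chi_5$ reappears on the right with the small prefactor $(1+t)^{1/2-\mu}\eta$; by the bootstrap assumption \eqref{bootstrap assumption} and \eqref{usage of T_epsilon} this prefactor is $\lesssim M^{-1}\ll1$, so it can be absorbed into the left-hand side, and since $\chi_4\lesssim(1+t)E_4\lesssim(1+t)^{1-\mu/2}\cdot M\epsilon$ is also controlled (the extra $\eta$-factor again beating it down), the last term is dominated by $\chi_5+E_3$. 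This yields \eqref{estimate of tilde chi}. I would carry the steps out in the order: (i) apply Lemma \ref{Klainerman-Sideris estimate}; (ii) reduce the first term to $\chi_5,E$-quantities via \eqref{equivalent form of DZ^alpha} and \eqref{Hardy inequality near boundary}; (iii) substitute the wave equation and handle the linear damping terms; (iv) invoke Lemmas \ref{estimate of F^alpha-Z^alpha F} and \ref{lem(1+t-x)partial2FPhi} for the forcing; (v) absorb the $\tilde\chi_5$ self-term using the smallness from the bootstrap assumption.
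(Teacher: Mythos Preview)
Your proposal is correct and follows essentially the same approach as the paper: apply Lemma~\ref{Klainerman-Sideris estimate} with $\alpha=1$ to $\partial^2 Z^\beta\Phi$, bound the first term by $\chi_5$, substitute the wave equation into the $\Box$-term, invoke Lemmas~\ref{estimate of F^alpha-Z^alpha F} and~\ref{lem(1+t-x)partial2FPhi} for the forcing, and absorb the resulting $M^{-1}\tilde\chi_5$ self-term via \eqref{usage of T_epsilon}. Your treatment of the first KS term meanders unnecessarily (it is simply $\|\sigma\partial^3 Z^{\le 3}\Phi\|\lesssim(1+t)^{-\mu/2}\chi_5$ since $\partial^3 Z^{\le 3}=\partial^2 Z^{\le 4}$ schematically), and the exponent in your collected bound should be $(1-\mu)/2$ rather than $1/2-\mu$, but neither slip affects the validity.
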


\begin{proof}
	Suppose $\Phi\in\{\theta,u\}$. From Lemma \ref{Klainerman-Sideris estimate}, we have
	\begin{equation}
		\begin{aligned}
			\left\| \sigma^2 \partial^{2} Z^{\leq 2} \Phi \right\| \lesssim \left\|\sigma  \partial^3 Z^{\leq 3} \Phi \right\| + (1+t) \left\|\sigma \Box Z^{\leq 2} \Phi \right\| .
		\end{aligned}
	\end{equation}	
	This implies that
	\begin{equation}
		\begin{aligned}
			\tilde\chi_{5} [\Phi] (t) \lesssim \chi_5[\Phi](t)+ (1+t)^{1+ \frac{\mu}{2} } \left\| \sigma \Box \partial^{2}Z^{\leq 2} \Phi \right\|.
		\end{aligned}
	\end{equation}	

	From \eqref{eqn of Z^alpha theta} \eqref{eqn of Z^alpha u}, we obtain
	\begin{equation} 
		\begin{aligned}
		&\left\| \sigma \Box \partial^{2}Z^{\leq 2} \Phi \right\| \\
		=& \left\| \sigma \partial^{2} \Box Z^{\leq 2} \Phi \right\| \\
		 \lesssim& \left\| \sigma  \partial^{2} Z^{\leq 2}[(1+t)^{-1}\partial\Phi ]\right\| +  \left\| \sigma \partial^{2}Z^{\leq 2}[(1+t)^{-2}\Phi]\right\|+\sum_{\substack{|\alpha|\le4\\ \alpha_0+\alpha_1+\alpha_2\ge2}}\left\| \sigma F_{\Phi}^{(\alpha)} \right\|.
		\end{aligned}
	\end{equation}

Since $|Z(1+t)^{-1}|\lesssim(1+t)^{-1}$, $|\partial(1+t)^{-k}|\lesssim(1+t)^{-k-1}$, it is easy to obtain
 \begin{equation} 
		\begin{aligned}
			&\left|\partial^{2} Z^{\leq 2}[(1+t)^{-1}\partial\Phi ]\right|+\left|\partial^{2}Z^{\leq 2}[(1+t)^{-2}\Phi]\right|\\
			\lesssim&(1+t)^{-1} \left|\partial^{2} Z^{\leq 2} \partial^{\leq 1} \Phi  \right| + (1+t)^{-3} \left| Z^{\leq 2} \partial \Phi \right| + (1+t)^{-4} \left| Z^{\leq 2} \Phi \right|.
		\end{aligned}
	\end{equation}
Noting that $ \sigma \leq 1+t $ on $B_{1+t}(0)$, we have
 \begin{equation} 
	\begin{aligned}
		&\left\| \sigma\partial^{2} Z^{\leq 2}[(1+t)^{-1}\partial\Phi ]\right\|+\left\|\sigma\partial^{2}Z^{\leq 2}[(1+t)^{-2}\Phi]\right\| \\
		\lesssim &(1+t)^{-1} \left\|\sigma \partial^{2} Z^{\leq 2} \partial^{\leq 1} \Phi \right\| +  (1+t)^{-2} \left\| Z^{\leq 2} \partial \Phi \right\|+(1+t)^{-3} \left\|Z^{\le2}\Phi\right\|\\
		\lesssim&(1+t)^{-1- \frac{\mu}{2} } \chi_{4} + (1+t)^{-2- \frac{\mu}{2} }E_{2}.
	\end{aligned}
\end{equation}
From Lemma \ref{lem(1+t-x)partial2FPhi}, the forcing terms are bounded by
	\begin{equation} 
		\begin{aligned}
			\sum_{\substack{|\alpha|\le4\\ \alpha_0+\alpha_1+\alpha_2\ge2}}\left\| \sigma F_{\Phi}^{(\alpha)} \right\|&\lesssim\sum_{\substack{|\alpha|\le4\\ \alpha_0+\alpha_1+\alpha_2\ge2}}\left\| \sigma \hat Z^\alpha F_{\Phi} \right\|+\left\| \sigma \left(F_{\Phi}^{(\alpha)}-\hat Z^\alpha F_{\Phi}\right) \right\|\\
			\overset{\eqref{relation between Z and Zhat}\eqref{estimate of F^alpha-Z^alpha F, ineq}}&{\lesssim} \left\| \sigma  \partial^{2}Z^{\leq 2} F_{\Phi} \right\|+(1+t)^{-2}\left\|\sigma\partial Z^{3}\Phi\right\|\\
			 & \lesssim (1+t)^{-1-\frac{\mu}{2}}(1+t)^{ \frac{1-\mu}{2} } \eta\cdot ( \tilde{\chi}_{5} + \chi_{4})+(1+t)^{-1-\frac{\mu}{2}}E_3\\
			\overset{\eqref{usage of T_epsilon}}&{\lesssim}(1+t)^{-1-\frac{\mu}{2}} \left[M^{-1} ( \tilde{\chi}_{5}  + \chi_{4})+E_3\right] .
		\end{aligned}
	\end{equation}
In summary, we have that
	\begin{equation} 
		\begin{aligned}
			(1+t)^{1+ \frac{\mu}{2} } \left\| \sigma \Box \partial^{2}Z^{\leq 2} \Phi \right\| 
			& \lesssim E_{3} + \chi_{4}+M^{-1} \tilde{\chi}_{5} .
		\end{aligned}
	\end{equation}
Consequently, we arrive at
	\begin{equation} 
		\begin{aligned}
			\tilde{\chi}_{5} [\Phi] (t) & \lesssim \chi_{5} [\theta,u] + E_{3}[\theta,u]+M^{-1} \tilde{\chi}_{5} [\theta,u] .
		\end{aligned}
	\end{equation}
Since $ M $ is large enough, we then have 
	\begin{equation} 
		\begin{aligned}
			\tilde{\chi}_{5} [\theta,u] (t) \lesssim \chi_{5} [\theta,u] (t) + E_{3}[\theta,u](t).
		\end{aligned}
	\end{equation}
This completes the estimate of $\tilde\chi$.

\end{proof}

\begin{lem} \label{thmchiE}
	If the bootstrap assumption holds on $[0,T]\subset[0,T_\epsilon]$, then we have
	\begin{equation}
		\label{estimate of chi}
		\chi_5[\theta,u](t)\lesssim E_5[\theta,u](t)+M^{-1}\left(\eta[\theta,u](t)+\tilde\chi_5[\theta,u](t)\right)
	\end{equation}
	holds for all $t\in[0,T]$. 
\end{lem}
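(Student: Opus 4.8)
The plan is to start from the Klainerman–Sideris weighted $L^2$--$L^2$ estimate (Lemma \ref{Klainerman-Sideris estimate}) applied to $Z^\alpha\Phi$ for $\Phi\in\{\theta,u\}$ and $|\alpha|\le4$. This gives
\[
\left\|\sigma\partial^2 Z^{\le4}\Phi\right\|\lesssim\left\|\partial Z^{\le5}\Phi\right\|+(1+t)\left\|\sigma\Box Z^{\le4}\Phi\right\|,
\]
hence, after multiplying by $(1+t)^{\mu/2}$,
\[
\chi_5[\Phi](t)\lesssim E_5[\Phi](t)+(1+t)^{1+\frac{\mu}{2}}\left\|\sigma\Box Z^{\le4}\Phi\right\|.
\]
The whole task is then to bound the right-most term. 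Using the equations \eqref{eqn of Z^alpha theta}, \eqref{eqn of Z^alpha u} for $Z^\alpha\theta$ and $Z^\alpha u$, I would replace $\Box Z^\alpha\Phi$ by $F_\Phi^{(\alpha)}$ minus the linear damping contributions $\frac{\mu}{1+t}\partial_t Z^\alpha\Phi$ (and, in the $u$ case, $+\frac{\mu}{(1+t)^2}Z^\alpha u$). The linear damping pieces are harmless: $\left\|\sigma\,\frac{\mu}{1+t}\partial_t Z^{\le4}\Phi\right\|\lesssim\mu\left\|\partial Z^{\le4}\Phi\right\|\lesssim\mu(1+t)^{-\frac{\mu}{2}}E_5$, and similarly for the $(1+t)^{-2}$ term using the Poincaré-type inequality \eqref{Poincare ineq.}; so these contribute $\lesssim E_5$ after multiplication by $(1+t)^{1+\frac{\mu}{2}}\cdot(1+t)^{-1}$... wait, more carefully, $(1+t)^{1+\mu/2}\cdot(1+t)^{-1}\cdot(1+t)^{-\mu/2}E_5 = E_5$, which is exactly the target order.

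Next I would handle the forcing $F_\Phi^{(\alpha)}$ for $|\alpha|\le4$, splitting $\|\sigma F_\Phi^{(\alpha)}\|\le\|\sigma\hat Z^\alpha F_\Phi\|+\|\sigma(F_\Phi^{(\alpha)}-\hat Z^\alpha F_\Phi)\|$. The difference term is controlled by Lemma \ref{estimate of F^alpha-Z^alpha F}: it is $\lesssim\mu(1+t)^{-2}\|\sigma\partial Z^{\le3}\Phi\|\lesssim\mu(1+t)^{-1-\frac{\mu}{2}}E_4$, which again gives $\lesssim E_5$ after the $(1+t)^{1+\mu/2}$ prefactor. For the main piece $\|\sigma\hat Z^\alpha F_\Phi\|$ with $|\alpha|\le4$, I would use $\hat Z^\alpha=Z^{\le|\alpha|}$ and the structure of $F_\theta, F_u$ (terms of type $(1+t)^{-1}\Phi_1\partial\Phi_2$, $\partial\Phi_1\partial\Phi_2$, $\Phi_1\partial\Phi_2\partial\Phi_3$, $\Phi_1\partial^2\Phi_2$, $\Phi_1\Phi_2\partial^2\Phi_3$), distributing $\sigma$ and $Z^{\le4}$ via the Leibniz rule and the pigeonhole principle. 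The first-order (semilinear) terms are the easy ones: by Corollary \ref{ZFtheta1}-type reasoning (placing the higher-order factor in $L^2$ weighted by $\sigma$ where the weight is absorbed by Poincaré, and the low-order factor in $L^\infty$) these give $\lesssim(1+t)^{-1-\frac{\mu}{2}}\eta E_5$, so after the prefactor $\lesssim\eta E_5\cdot(1+T_\epsilon)^{-0}\cdots$; using the bootstrap bound $\eta\lesssim M\epsilon$ and \eqref{usage of T_epsilon} this is $\lesssim M^{-1}E_5$. The second-order terms $\sigma Z^{\le4}(\Phi_1\partial^2\Phi_2)$ are the delicate ones: here one gets, via Lemma with $\|Z^{\le2}\Phi_1 Z^{\le4}\partial^2\Phi_2\|\lesssim(1+t)^{-\frac12-\mu}\eta\chi_5$ and $\|Z^{\le4}\Phi_1 Z^{\le1}\partial^2\Phi_2\|$ handled by Hardy near the light cone, a bound of the form $(1+t)^{-1-\frac{\mu}{2}}\big[\,\eta\chi_5+M^{-1}\tilde\chi_5+\cdots\,\big]$; multiplied by $(1+t)^{1+\mu/2}$ and using $\eta\lesssim M\epsilon\lesssim M^{-1}$ this becomes $M^{-1}(\chi_5+\tilde\chi_5)$.

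Collecting everything yields $\chi_5\lesssim E_5+M^{-1}\eta+M^{-1}\tilde\chi_5+M^{-1}\chi_5$, and since $M$ is large the last term is absorbed into the left side, giving the claimed estimate \eqref{estimate of chi}. The main obstacle I anticipate is the quasilinear term $\Phi_1\partial^2\Phi_2$ inside $\sigma Z^{\le4}F_\Phi$: one must carefully track which derivative falls on $\sigma$, keep the number of vector fields on the top-order factor at $\le5$ (not $6$), and decide — by pigeonhole on the total of five vector fields split among at least two factors — whether the high-order factor is the one carrying $\partial^2$ (giving $\chi_5$) or the "coefficient" $\Phi_1$ (giving $E_5$ via Hardy's inequality \eqref{Hardy inequality near boundary} to convert $\sigma^{-1}\Phi_1$ to $\partial\Phi_1$); a sloppy split here would lose a power of $\sigma$ or introduce $\tilde\chi_5$ without the saving $M^{-1}$. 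A secondary subtlety is that the weighted $L^\infty$ bound on $\sigma^{-1}Z^{\le2}\Phi_1$ must be obtained via Hardy near the boundary rather than a naive Sobolev embedding, since $\sigma$ degenerates on $\partial B_{1+t}(0)$.
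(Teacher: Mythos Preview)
Your overall strategy matches the paper's: apply Klainerman--Sideris to $Z^{\le4}\Phi$, replace $\Box Z^\alpha\Phi$ via the equations, estimate the forcing, and absorb the $\chi_5$ term on the right by taking $M$ large. However, there is a slip in your first display: Lemma~\ref{Klainerman-Sideris estimate} with exponent $0$ yields
\[
\|\sigma\partial^2 Z^{\le4}\Phi\|\lesssim\|\partial Z^{\le5}\Phi\|+(1+t)\,\|\Box Z^{\le4}\Phi\|,
\]
\emph{without} a weight $\sigma$ on the $\Box$ term. If one carries your extra $\sigma$ through, the semilinear contribution does not close: there is no mechanism to ``absorb a $\sigma$ by Poincar\'e'' as you suggest, and crudely using $\sigma\le 1+t$ costs a full power of $(1+t)$, which is fatal. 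Your claimed intermediate bound $\lesssim(1+t)^{-1-\mu/2}\eta E_5$ for the semilinear piece is not correct with or without the weight.

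Once the spurious $\sigma$ is removed, the argument is actually shorter than your outline: the paper simply cites the already-proven Corollary~\ref{ZFtheta1}, Corollary~\ref{ZFtheta2} (valid for $|\alpha|\le4$), and Lemma~\ref{lemF^alpha-Z^alphaF} to obtain
\[
(1+t)^{1+\frac{\mu}{2}}\|\Box Z^{\le4}\Phi\|\lesssim E_5+(1+t)^{\frac{1-\mu}{2}}\bigl[E_5(E_5+\eta+\chi_5+\tilde\chi_5)+\eta\chi_5\bigr],
\]
after which the bootstrap assumption and \eqref{usage of T_epsilon} turn the prefactor $(1+t)^{\frac{1-\mu}{2}}(E_5+\eta)$ into $M^{-1}$. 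Your lengthy discussion of the quasilinear term $\Phi_1\partial^2\Phi_2$ (Hardy near the boundary, splitting by pigeonhole, tracking where $\tilde\chi_5$ enters) is precisely the content of Corollary~\ref{ZFtheta2} and need not be repeated here.
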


\begin{proof}
	Suppose $\Phi\in\{\theta,u\}$. From Lemma \ref{Klainerman-Sideris estimate}, it holds that
	\begin{equation}
		\begin{aligned}
			\left\| \sigma \partial^{2} Z^{\leq 4} \Phi \right\| \lesssim \left\|  \partial Z^{\leq 5} \Phi \right\| + (1+t) \left\| \Box Z^{\leq 4} \Phi \right\|,
		\end{aligned}
	\end{equation}	
	which implies that
	\begin{equation}
		\begin{aligned}
			\chi_{5} [\Phi] (t) \lesssim E_{5} [\Phi] (t)+ (1+t)^{ 1 + \frac{\mu}{2} } \left\| \Box  Z^{\leq 4} \Phi \right\| .
		\end{aligned}
	\end{equation}	
	
	By combining \eqref{eqn of Z^alpha theta} and \eqref{eqn of Z^alpha u}, we have
	\begin{equation} \label{tBoxf}
		\begin{aligned}
			 \left\| \Box Z^{\leq 4} \Phi \right\| &\lesssim (1+t)^{-1} \left\| \partial Z^{\leq 4} \Phi  \right\| + \sum\limits_{ |\alpha| \leq 4} \left\| F_{\Phi}^{(\alpha)} \right\|\\
			 \overset{\eqref{relation between Z and Zhat}}&{\lesssim}(1+t)^{-1-\frac{\mu}{2}}E_4+\underbrace{\|\hat{Z}^{\le4}F_{\Phi,1}\|}_{\eqref{estimate of Z^alpha F1}}+\underbrace{\|\hat{Z}^{\le4}F_{\Phi,2}\|}_{\eqref{estimate of Z^alpha F2}}+\sum\limits_{ |\alpha| \leq 4} \underbrace{\left\| F_{\Phi}^{(\alpha)} -\hat Z^\alpha F_\Phi\right\|}_{\text{Lemma \ref{lemF^alpha-Z^alphaF}}}\\
			 &\lesssim(1+t)^{-1-\frac{\mu}{2}}E_4+(1+t)^{-\frac{1}{2}-\mu}
\left[E_5(E_5+\eta+\chi_5+\tilde\chi_5)+\eta\chi_5\right],
		\end{aligned}
	\end{equation}
which finally gives by invoking the bootstrap assumption and \eqref{usage of T_epsilon} together
\begin{equation}
	\begin{aligned}
		\chi_5[\Phi](t)&\lesssim E_5+M^{-1}(\eta+\chi_5+\tilde{\chi}_5),
	\end{aligned}
\end{equation}
and the result follows since $M$ is large enough. 
\end{proof}

\begin{cor}
	If the bootstrap assumption holds on $[0,T]\subset[0,T_\epsilon]$, then we have
	\begin{equation}
		\label{estimate of chi and tilde chi}
		\chi_5[\theta,u](t)+\tilde\chi_5[\theta,u](t)\lesssim E_5[\theta,u](t)+M^{-1}\eta[\theta,u](t)
	\end{equation}
	holds for all $t\in[0,T]$. 
\end{cor}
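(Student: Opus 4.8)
The plan is to combine Lemma \ref{thmtildechi} and Lemma \ref{thmchiE} and then run a standard absorption argument, using that $M$ is chosen large enough (see \eqref{M}). Since both lemmas are already available under the bootstrap assumption on $[0,T]\subset[0,T_\epsilon]$, the only work is linear algebra among the four quantities $E_5$, $\eta$, $\chi_5$, $\tilde\chi_5$, together with the trivial monotonicity $E_3[\theta,u](t)\le E_5[\theta,u](t)$ that follows directly from the definition \eqref{def of Ek} of $E_k$.

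First I would substitute the bound $\tilde\chi_5[\theta,u]\lesssim\chi_5[\theta,u]+E_3[\theta,u]$ from Lemma \ref{thmtildechi} into the bound $\chi_5[\theta,u]\lesssim E_5[\theta,u]+M^{-1}\big(\eta[\theta,u]+\tilde\chi_5[\theta,u]\big)$ from Lemma \ref{thmchiE}. This yields
\begin{equation}
	\chi_5[\theta,u](t)\lesssim E_5[\theta,u](t)+M^{-1}\eta[\theta,u](t)+M^{-1}\chi_5[\theta,u](t),
\end{equation}
where I have absorbed $M^{-1}E_3\lesssim E_5$ into the $E_5$ term (note $M^{-1}<1$). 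Crucially, the implied constants here are universal, independent of $M$, so for $M$ large enough the last term $M^{-1}\chi_5[\theta,u](t)$ can be absorbed into the left-hand side. This is legitimate because $\chi_5[\theta,u](t)$ is finite for each fixed $t\in[0,T]$: indeed $(\theta,u)\in C_c^\infty$ and $\chi_5[\theta,u](t)\le(1+t)E_5[\theta,u](t)<\infty$. Hence
\begin{equation}
	\chi_5[\theta,u](t)\lesssim E_5[\theta,u](t)+M^{-1}\eta[\theta,u](t).
\end{equation}

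Next I would feed this estimate back into Lemma \ref{thmtildechi}, obtaining
\begin{equation}
	\tilde\chi_5[\theta,u](t)\lesssim\chi_5[\theta,u](t)+E_3[\theta,u](t)\lesssim E_5[\theta,u](t)+M^{-1}\eta[\theta,u](t),
\end{equation}
again using $E_3\le E_5$. Adding the two displays gives the claimed bound $\chi_5[\theta,u](t)+\tilde\chi_5[\theta,u](t)\lesssim E_5[\theta,u](t)+M^{-1}\eta[\theta,u](t)$ for all $t\in[0,T]$, which completes the proof. There is no real obstacle here; the only point requiring a moment's care is the absorption step, where one must check that the constant in Lemma \ref{thmchiE} multiplying $M^{-1}\tilde\chi_5$ (and hence, after substitution, $M^{-1}\chi_5$) is universal and does not itself grow with $M$ — this is exactly how Lemma \ref{thmchiE} and Lemma \ref{thmtildechi} were stated, so the argument closes cleanly.
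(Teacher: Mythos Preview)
Your proof is correct and follows essentially the same approach as the paper: substitute Lemma \ref{thmtildechi} into Lemma \ref{thmchiE}, absorb the $M^{-1}\chi_5$ term, and then reinvoke Lemma \ref{thmtildechi} for $\tilde\chi_5$. Your added remarks on the finiteness of $\chi_5$ and the universality of the implicit constants are fine extra care, but the argument is the same.
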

\begin{proof}
	Plugging \eqref{estimate of tilde chi} into \eqref{estimate of chi}, we obtain
	\begin{equation}
		\chi_5\lesssim E_5+M^{-1}(\eta+\tilde \chi_5)\lesssim E_5+M^{-1}\eta+M^{-1}\chi_5, 
	\end{equation}
	which yields \eqref{estimate of chi and tilde chi} due to the large constant $M$. The estimate of $\tilde\chi_5$ follows easily from invoking \eqref{estimate of tilde chi} again.  
\end{proof}

The next lemma aims to control $\eta$ by $E$ and $\chi$.
\begin{lem} \label{lemetachi}
	
	If $ \Phi \in C^\infty([0,T]\times\mathbb{R}^2)$, and $\operatorname{supp}\Phi(t,\cdot)\subset \overline{B_{1+t}(0)}$, then it holds that
	\begin{equation}\label{eta1}
		\begin{aligned}
			\eta[\Phi](t) \lesssim E_{4}[\Phi] (t)+\chi_{4}[ \Phi](t).
		\end{aligned}
	\end{equation}
\end{lem}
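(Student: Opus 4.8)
The plan is to derive the pointwise bound for $|Z^{\le2}\partial\Phi|_\infty$ from the weighted Sobolev embeddings of Section 2, tracking the time weights carefully. First I would recall that $\eta[\Phi](t)=(1+t)^{\frac{1+\mu}{2}}|Z^{\le2}\partial\Phi(t)|_\infty$, so it suffices to bound $|Z^\alpha\partial\Phi|_\infty$ for $|\alpha|\le2$ by $(1+t)^{-\frac{1+\mu}{2}}(E_4[\Phi]+\chi_4[\Phi])$. Since $\Phi(t,\cdot)$ is supported in $\overline{B_{1+t}(0)}$, the weight $\sigma(t,x)=1+t-|x|$ satisfies $\sigma\ge0$ there, and $\sigma^{1/2}\lesssim\sigma$ only near the light cone — so I expect to split into the region $|x|\le\frac{1+t}{2}$ (where $\sigma\sim1+t$) and $|x|\ge\frac{1+t}{2}$.

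The key step is to apply Lemma \ref{lem1+t-x} to the function $f=Z^\alpha\partial\Phi$ (with $|\alpha|\le2$), which gives
\begin{equation}
	|\sigma^{\frac12}Z^\alpha\partial\Phi|_\infty\lesssim\|Z^\alpha\partial\Phi\|^{\frac12}\left(\|\sigma\partial^2Z^\alpha\partial\Phi\|^{\frac12}+\|\partial Z^\alpha\partial\Phi\|^{\frac12}\right)+\|\sigma^{\frac12}\partial Z^\alpha\partial\Phi\|.
\end{equation}
Using $\partial Z^{\le k}=Z^{\le k}\partial$ (Corollary \ref{lempartialZ}), the first factor is $\lesssim\|\partial Z^{\le2}\Phi\|\lesssim(1+t)^{-\frac{\mu}{2}}E_2[\Phi]$; the term $\|\sigma\partial^2Z^\alpha\partial\Phi\|\lesssim\|\sigma\partial^2Z^{\le3}\partial\Phi\|\lesssim(1+t)^{-\frac\mu2}\chi_4[\Phi]$ after absorbing one more derivative into the $Z$-count (so $|\alpha|\le3$ inside $\chi$, hence the $\chi_4$); the term $\|\partial Z^\alpha\partial\Phi\|\lesssim(1+t)^{-\frac\mu2}E_3[\Phi]$; and $\|\sigma^{\frac12}\partial Z^\alpha\partial\Phi\|\le(1+t)^{\frac12}\|\partial Z^{\le3}\Phi\|\lesssim(1+t)^{\frac{1-\mu}{2}}E_3[\Phi]$. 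Combining, $|\sigma^{\frac12}Z^{\le2}\partial\Phi|_\infty\lesssim(1+t)^{-\frac\mu2}(E_4[\Phi]+\chi_4[\Phi])^{\frac12}(E_4[\Phi])^{\frac12}+(1+t)^{\frac{1-\mu}{2}}E_4[\Phi]$, hence $\lesssim(1+t)^{\frac{1-\mu}{2}}(E_4[\Phi]+\chi_4[\Phi])$ by Young's inequality. This controls the region $|x|\le\frac{1+t}{2}$ since there $\sigma^{1/2}\gtrsim(1+t)^{1/2}$, giving the needed extra $(1+t)^{-1/2}$ to upgrade $(1+t)^{\frac{1-\mu}{2}}$ to $(1+t)^{-\frac{\mu}{2}}$ — wait, we need $(1+t)^{-\frac{1+\mu}{2}}$, which matches exactly: $|Z^{\le2}\partial\Phi|_\infty\lesssim(1+t)^{-\frac12}|\sigma^{\frac12}Z^{\le2}\partial\Phi|_\infty\lesssim(1+t)^{-\frac{1+\mu}{2}}(E_4+\chi_4)$ on $|x|\le\frac{1+t}{2}$.

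For the far region $|x|\ge\frac{1+t}{2}$, I would instead invoke Lemma \ref{KSlow}, which gives $|Z^\alpha\partial\Phi(x)|\lesssim|x|^{-\frac12}\|\partial^{\le1}\Omega^{\le1}Z^\alpha\partial\Phi\|\lesssim(1+t)^{-\frac12}\|\partial Z^{\le4}\Phi\|\lesssim(1+t)^{-\frac{1+\mu}{2}}E_4[\Phi]$, using $|x|\gtrsim1+t$ in this region and that $\Omega=Z_4$. Assembling the two regions yields $\eta[\Phi](t)=(1+t)^{\frac{1+\mu}{2}}|Z^{\le2}\partial\Phi|_\infty\lesssim E_4[\Phi](t)+\chi_4[\Phi](t)$, as claimed. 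The main obstacle is bookkeeping: carefully counting how the extra $\partial$ in $Z^\alpha\partial\Phi$ interacts with the $Z$-count so that exactly $E_4$ and $\chi_4$ (rather than $E_5$, $\chi_5$) appear, and verifying that the time-weight powers line up to produce precisely $(1+t)^{-\frac{1+\mu}{2}}$; everything else is a routine application of the embedding lemmas already established.
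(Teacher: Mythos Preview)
Your overall strategy is the same as the paper's --- split into $|x|\le\tfrac{1+t}{2}$ and $|x|\ge\tfrac{1+t}{2}$, use Lemma~\ref{lem1+t-x} for the inner region and Lemma~\ref{KSlow} for the outer --- and the far-region argument is fine. However, there is a genuine gap in your near-region estimate, and your final arithmetic masks it.

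The problem is the term $\|\sigma^{1/2}\partial Z^\alpha\partial\Phi\|$ coming from Lemma~\ref{lem1+t-x}. You bound it crudely by $\sigma^{1/2}\le(1+t)^{1/2}$, obtaining $(1+t)^{1/2}\|\partial Z^{\le3}\Phi\|\lesssim(1+t)^{\frac{1-\mu}{2}}E_3$. Plugging this in gives $|\sigma^{1/2}Z^{\le2}\partial\Phi|_\infty\lesssim(1+t)^{\frac{1-\mu}{2}}(E_4+\chi_4)$, and after dividing by $\sigma^{1/2}\gtrsim(1+t)^{1/2}$ on the inner region you get
\[
|Z^{\le2}\partial\Phi|_\infty\lesssim(1+t)^{-1/2}\cdot(1+t)^{\frac{1-\mu}{2}}(E_4+\chi_4)=(1+t)^{-\frac{\mu}{2}}(E_4+\chi_4),
\]
\emph{not} $(1+t)^{-\frac{1+\mu}{2}}(E_4+\chi_4)$ as you claim. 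Your assertion that ``it matches exactly'' is an arithmetic slip: $-\tfrac12+\tfrac{1-\mu}{2}=-\tfrac{\mu}{2}$, which is off by a full $(1+t)^{1/2}$. With this bound, $\eta\lesssim(1+t)^{1/2}(E_4+\chi_4)$, which is useless.

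The fix is not to throw away the $\sigma$-weight but to interpolate. Writing $\partial Z^\alpha\partial\Phi=\partial^2 Z^{\le2}\Phi$ (via \eqref{equivalent form of DZ^alpha}), Cauchy--Schwarz in the weight gives
\[
\|\sigma^{1/2}\partial^2 Z^{\le2}\Phi\|\le\|\sigma\,\partial^2 Z^{\le2}\Phi\|^{1/2}\,\|\partial^2 Z^{\le2}\Phi\|^{1/2}\lesssim(1+t)^{-\mu/2}\,\chi_3^{1/2}E_3^{1/2}.
\]
Now $(1+t)^{\mu/2}|\sigma^{1/2}Z^{\le2}\partial\Phi|_\infty\lesssim E_2^{1/2}(\chi_4+E_3)^{1/2}+\chi_3^{1/2}E_3^{1/2}\lesssim\chi_4+E_4$, and dividing by $(1+t)^{1/2}$ on the inner region yields the required $(1+t)^{-\frac{1+\mu}{2}}$. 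This is exactly how the paper handles the term (its display suppresses the interpolation, but the final bound $\chi_4+E_3$ is only reachable this way).
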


\begin{proof}
	If $|x|\ge\frac{1+t}{2}$, it follows from Lemma \ref{KSlow} that
	\begin{equation}\label{eta2}
		\begin{aligned}
			(1+t)^{\frac{1+\mu}{2}}|Z^{\leq 2} \partial \Phi (t,x)| \lesssim  (1+t)^{- \frac{\mu}{2}} \left\| Z^{\leq 4} \partial \Phi \right\|\lesssim E_4[\Phi](t).
		\end{aligned}
	\end{equation}
If $ |x| \leq \frac{t+1}{2} $, we have $1+t\lesssim\sigma$. Thus by Lemma \ref{lem1+t-x}, we see that
	\begin{equation} \label{eta3}
		\begin{aligned}
			&\ \ \ (1+t)^{\frac{1+\mu}{2}}|Z^{\leq 2} \partial \Phi (t,x)|\lesssim (1+t)^{\frac{\mu}{2} } |\sigma^{ \frac{1}{2} } Z^{\leq 2} \partial \Phi |_{\infty }\\
			&\lesssim (1+t)^{\frac{\mu}{2} }\left[ \left\| \partial Z^{\leq 2} \Phi \right\|^{ \frac{1}{2} } \left( \left\| \sigma \partial^{3}Z^{\leq 2} \Phi \right\|^{ \frac{1}{2} } + \left\| \partial^{2}Z^{\le2} \Phi \right\|^{ \frac{1}{2} }   \right)+\left\|\sigma^{\frac{1}{2}}\partial^2Z^{\le2}\Phi\right\|\right]\\
			& \lesssim  E_{2}^{\frac{1}{2}} [\Phi] \cdot \chi_{4}^{\frac{1}{2}} [ \Phi] + E_{2}^{\frac{1}{2}} [\Phi] \cdot E_{3}^{\frac{1}{2}} [\Phi] \\
			& \lesssim \chi_{4}[ \Phi]+ E_{3}[\Phi].
		\end{aligned}
	\end{equation}		
Then \eqref{eta1} follows from 	\eqref{eta2} and \eqref{eta3}.
\end{proof}

\begin{cor} \label{thmetaE}
	If the bootstrap assumptions hold on $[0,T]\subset[0,T_\epsilon]$, then we have
	\begin{equation}
		\label{estimate of auxiliary energies}
		\begin{aligned}
			\eta[\theta,u](t)+\chi_5[\theta,u](t)+\tilde\chi_5[\theta,u](t) \lesssim  E_{5} [ \theta ,u]
		\end{aligned}
	\end{equation}
	holds for all $t\in[0,T]$.

\end{cor}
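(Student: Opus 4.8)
The plan is to combine the two estimates already established in this section — the pointwise-to-energy bound of Lemma \ref{lemetachi} and the inequality \eqref{estimate of chi and tilde chi} controlling $\chi_5+\tilde\chi_5$ — through an absorption argument that exploits the largeness of the fixed constant $M$ in the hierarchy \eqref{M}. No new analytic estimate is needed; the work consists entirely in chaining the previous lemmas and then cleaning up a self-improving inequality.

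First I would apply Lemma \ref{lemetachi} separately to $\Phi=\theta$ and to $\Phi=u$. This is legitimate because the finite speed of propagation (Lemma \ref{finite speed of propagation}, cf.\ \eqref{estimate on the support}) gives $\operatorname{supp}(\theta,u)(t,\cdot)\subset\overline{B_{1/2+t}(0)}\subset\overline{B_{1+t}(0)}$, while $(\theta,u)\in C^\infty([0,T]\times\R^2)$. Summing the two copies of \eqref{eta1} and invoking the trivial monotonicity $E_4\le E_5$ and $\chi_4\le\chi_5$ (immediate from \eqref{def of Ek} and the definition of $\chi_k$), one obtains $\eta[\theta,u](t)\lesssim E_5[\theta,u](t)+\chi_5[\theta,u](t)$. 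Next I would substitute \eqref{estimate of chi and tilde chi}, namely $\chi_5+\tilde\chi_5\lesssim E_5+M^{-1}\eta$, into the right-hand side, arriving at $\eta[\theta,u](t)\le C E_5[\theta,u](t)+CM^{-1}\eta[\theta,u](t)$ for some universal constant $C$.

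The decisive — and only mildly delicate — step is the absorption. Since $(\theta,u)(t,\cdot)\in C_c^\infty(\R^2)$ for each fixed $t\in[0,T]$ by the local smooth theory, the quantity $\eta[\theta,u](t)$ is finite \emph{a priori} (this is not a consequence of the bootstrap bound but of smoothness and compact support), so once $M$ is chosen large enough that $CM^{-1}\le\frac12$ — consistent with \eqref{M} — the term $CM^{-1}\eta$ may be subtracted from both sides, yielding $\eta[\theta,u](t)\lesssim E_5[\theta,u](t)$. Feeding this back into \eqref{estimate of chi and tilde chi} immediately gives $\chi_5[\theta,u](t)+\tilde\chi_5[\theta,u](t)\lesssim E_5[\theta,u](t)$, and summing the two bounds produces \eqref{estimate of auxiliary energies}. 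I do not anticipate a genuine obstacle; the single point demanding care is that every implied constant above is universal (independent of $\epsilon$ and of $M$), which holds because each input inequality from Lemmas \ref{thmtildechi}, \ref{thmchiE}, \ref{lemetachi} was stated with universal constants, so the one large choice of $M$ in \eqref{M} suffices to close the absorption.
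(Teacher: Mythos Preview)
Your proposal is correct and matches the paper's own proof essentially step for step: apply Lemma \ref{lemetachi} to bound $\eta$ by $E_4+\chi_4\lesssim E_5+\chi_5$, insert \eqref{estimate of chi and tilde chi} to obtain $\eta\lesssim E_5+M^{-1}\eta$, absorb using the largeness of $M$, and then reinsert into \eqref{estimate of chi and tilde chi} to control $\chi_5+\tilde\chi_5$. Your additional remarks on the finiteness of $\eta$ and the universality of the implicit constants are sound and make the absorption step cleaner than the paper's terse version.
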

\begin{proof}
	By Lemma \ref{lemetachi}, we find that $\eta\overset{\eqref{estimate of chi and tilde chi}}{\lesssim}E_4+E_5+M^{-1}\eta$. 
	Then the choice of $M$ yields $\eta\lesssim E_5$. The estimate of $\chi_5$ and $\tilde\chi_5$ is obtained by invoking \eqref{estimate of chi and tilde chi} again. 
\end{proof}

\section{The Estimate of $E_{5}[u]$}

\begin{lem} \label{energy estimate of u}
	If the bootstrap assumption holds on $[0,T]\subset[0,T_\epsilon]$, then we have
	\begin{equation} 
		\begin{aligned}
			\sum_{|\alpha|\le k}\|\partial Z^\alpha u(t)\|\lesssim\sum_{|\alpha|\le k}\|\partial Z^\alpha \theta(t)\|
		\end{aligned}
	\end{equation}
	holds for all $t\in[0,T]$, and for $0\le k\le5$. 
\end{lem}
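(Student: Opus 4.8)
The plan is to derive, for each $|\alpha|\le k$, an $L^2$ estimate for $\partial Z^\alpha u$ directly from the system \eqref{2} rather than from the wave equation \eqref{wave eqn of u}, because the equation $\partial_t u + \frac{\mu}{1+t}u + u\cdot\nabla u + \nabla\theta = 0$ already expresses $\partial_t u$ in terms of $\nabla\theta$ plus lower-order/nonlinear terms, while $\nabla u = \nabla u$ must be controlled via the vorticity-free condition $\operatorname{curl}u=0$. First I would apply $Z^\alpha$ to the second equation of \eqref{2}; using the commutator relations \eqref{representation of commutators} and \eqref{cononical filtration}, together with Corollary \ref{lempartialZ}, this gives
\begin{equation*}
	\partial_t Z^\alpha u = -\frac{\mu}{1+t}Z^\alpha u - \nabla Z^\alpha\theta - Z^\alpha(u\cdot\nabla u) + (\text{commutator terms involving }Z^{<\alpha}),
\end{equation*}
so that $\|\partial_t Z^\alpha u\|$ is bounded by $\|\nabla Z^\alpha\theta\| + (1+t)^{-1}\|Z^\alpha u\|$ plus nonlinear terms of the type estimated in Lemma \ref{L2 norm of multiplications} and Corollary \ref{ZFtheta1}, which carry a small factor $(1+t)^{\frac{1-\mu}{2}}\eta\lesssim M^{-1}$ after invoking the bootstrap assumption and \eqref{usage of T_epsilon}.

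The remaining and genuinely essential point is to control the spatial derivatives $\|\nabla Z^\alpha u\|$. Here I would use that $\operatorname{curl}u=0$ is preserved (Remark \ref{vorticity free}), hence $\operatorname{curl}Z^\alpha u$ is a combination of $Z^{<\alpha}u$-type terms (since $Z$ does not commute with $\operatorname{curl}$ exactly, but the commutators are lower order), and combine this with $\operatorname{div}u$, which from the first equation of \eqref{2} equals $-\frac{1}{1+(\gamma-1)\theta}(\partial_t\theta + u\cdot\nabla\theta)$. Using the standard elliptic (Hodge-type) identity $\|\nabla v\|^2 = \|\operatorname{div}v\|^2 + \|\operatorname{curl}v\|^2$ for $v$ decaying at infinity (legitimate since everything is compactly supported), we get
\begin{equation*}
	\|\nabla Z^\alpha u\| \lesssim \|\operatorname{div}Z^\alpha u\| + \|\operatorname{curl}Z^\alpha u\| \lesssim \|\partial Z^{\le|\alpha|}\theta\| + (\text{small}\cdot E_5 + \text{l.o.t.}),
\end{equation*}
where $\operatorname{div}Z^\alpha u$ is handled by commuting $Z^\alpha$ past $\operatorname{div}$ and substituting the first equation, and the nonlinear piece $Z^{\le|\alpha|}(\theta\,\partial_t\theta)$ etc.\ is again controlled using the bilinear estimates already established, absorbing the factor $\|(1+(\gamma-1)\theta)^{-1}-1\|_\infty\lesssim M^{-1}$.

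Putting these together, $\|\partial Z^\alpha u\| = \|\partial_t Z^\alpha u\| + \|\nabla Z^\alpha u\|$ (up to constants) is bounded by $\sum_{|\beta|\le|\alpha|}\|\partial Z^\beta\theta\|$ plus $(1+t)^{-1}\|Z^{\le|\alpha|}u\|$ plus nonlinear terms bounded by $M^{-1}E_5[\theta,u]$ plus a contribution from lower-order $\|\partial Z^{<\alpha}u\|$. I would then run an induction on $k$ (the base case $k=0$ being \eqref{2} itself, with $\|Z^\alpha u\|$ handled by the Poincar\'e-type inequality \eqref{Poincare ineq.} to convert $(1+t)^{-1}\|u\|$ into $\|\partial u\|$), and at the end absorb the small nonlinear self-contribution $M^{-1}E_5[u]$ into the left-hand side using that $M$ is large, together with Corollary \ref{thmetaE} to dispose of $\eta,\chi_5,\tilde\chi_5$ in favor of $E_5$. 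The main obstacle is the careful bookkeeping of the commutators between $Z^\alpha$ and the operators $\operatorname{div},\operatorname{curl},\nabla$ — ensuring every commutator term is strictly lower order in $|\alpha|$ so the induction closes — and verifying that the coefficient $(1+(\gamma-1)\theta)^{-1}$ appearing when one inverts the first equation does not spoil the estimates, which is exactly where Lemma \ref{L^infty for Z4Phi, lem} is used.
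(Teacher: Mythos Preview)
Your proposal is correct and follows essentially the same route as the paper: use the second equation of \eqref{2} for $\partial_t Z^\alpha u$, the div--curl identity together with $\operatorname{curl}u\equiv0$ and the first equation of \eqref{2} for $\nabla Z^\alpha u$, then induct on $k$ and absorb the $M^{-1}\|\partial Z^{\le k}u\|$ contribution. Two minor simplifications relative to your plan: the paper does not divide by $1+(\gamma-1)\theta$ but instead writes $\nabla\cdot u=-\partial_t\theta-u\cdot\nabla\theta-(\gamma-1)\theta\nabla\cdot u$ and treats the last term as a small perturbation (so you never need to differentiate the reciprocal), and Corollary~\ref{thmetaE} is not needed here---only the raw bootstrap bound $(1+t)^{\frac{1-\mu}{2}}\eta\lesssim M^{-1}$ is used, and $\chi_5,\tilde\chi_5$ never enter this argument.
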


\begin{proof}
	It follows from $ \operatorname{curl} u \equiv 0  $ (see Remark \ref{vorticity free}) that
	\begin{equation} 
		\begin{aligned}
			& \operatorname{curl} Z^{\alpha} u \overset{\eqref{representation of commutators}}{=} Z^{\alpha} \underbrace{\operatorname{curl} u}_{=0} + \partial Z^{ < |\alpha|} u = \partial Z^{ < |\alpha|} u,  \\
			& \operatorname{div} Z^{\alpha} u \overset{\eqref{representation of commutators}}{=}  Z^{\alpha} \operatorname{div} u + \partial Z^{ < |\alpha|} u.
		\end{aligned}
	\end{equation}
	From equations in $ (\ref{2} )$, we have
	\begin{equation}
		\begin{aligned}
			& \nabla\cdot u= - \partial_t\theta - u\cdot\nabla\theta-(\gamma-1) \theta\nabla\cdot u, \\
			& \partial_tu = -\frac{\mu}{1+t} u - u\cdot \nabla u-\nabla\theta.
			\label{Du by Dtheta}
		\end{aligned}
	\end{equation}
	Applying $Z^\alpha$ to both sides derives
	\begin{equation}
		\begin{aligned}
			& Z^{\alpha} \operatorname{div} u  = - Z^{\alpha}\partial_t\theta - Z^{\alpha} (u\cdot\nabla\theta) - (\gamma-1) Z^{\alpha} (\theta \operatorname{div} u), \\
			& Z^{\alpha}\partial_tu = - \mu Z^{\alpha} ( \frac{u}{1+t} ) - Z^{\alpha} ( u\cdot \nabla u ) - Z^{\alpha} \nabla\theta.
		\end{aligned}
		\label{ZDu by ZDtheta}
	\end{equation}
	
We shall prove the lemma by induction. If $k=0$, then we have
	\begin{equation}
		\begin{aligned}
			\label{estimate of nabla u}
			\left\| \nabla u \right\| \overset{\eqref{estimate of Du by div u}}&{\le} \left\| \operatorname{div} u \right\|\overset{\eqref{Du by Dtheta}\eqref{L^infty for Z4Phi}}{\lesssim} \left\| \partial\theta \right\|+M^{-1}\|\partial u\|, \\
		\end{aligned}
	\end{equation}
	\begin{equation}
		\begin{aligned}
			\left\| \partial_{t} u \right\| & \overset{\eqref{Du by Dtheta}}{\lesssim} (1+t)^{-1} \left\| u \right\| + | u |_\infty \cdot \|\nabla u\| + \left\| \nabla\theta  \right\| \\
			\overset{\eqref{L^infty for Z4Phi}\eqref{Poincare ineq.}}&{\le} \left\| \nabla u \right\| + \left\| \nabla \theta \right\|\overset{\eqref{estimate of nabla u}}{\lesssim} \left\| \partial\theta \right\|+M^{-1}\|\partial u\|.  \\
		\end{aligned}
	\end{equation}
	Summing up the above two inequalities and noting that $M$ is large enough, we get 
\[
\|\partial u\|\lesssim\|\partial\theta\|.  
\]
	
	Next we suppose $1\le k\le 5$, and assume that $\|\partial Z^{<k}u\|\lesssim\|\partial Z^{<k}\theta\|$. From \eqref{ZDu by ZDtheta}, for $|\alpha|=k$, we have 
	\begin{equation}
		\begin{aligned}
			&\left\| \nabla Z^{\alpha} u \right\|\\
\overset{\eqref{estimate of Du by div u}}{\le} &\left\| \operatorname{div} Z^{\alpha} u \right\| \overset{\eqref{representation of commutators}}{\le}
			\left\| Z^{\alpha} \operatorname{div} u\right\| +\left\|\partial Z^{<k} u \right\| \\
			\overset{\eqref{ZDu by ZDtheta}}{\lesssim}& \left\| Z^{\alpha} \partial_{t} \theta \right\| + \left\| Z^{\alpha} (u \cdot \nabla \theta) \right\| + \left\| Z^{\alpha} (\theta \operatorname{div} u) \right\| + \left\| \partial Z^{<k} \theta\right\|\\
			\overset{\eqref{representation of commutators}}{\lesssim}& \left\| \partial Z^{\le k} \theta \right\|+ \|Z^{\le k}uZ^{\le2}\partial\theta\|+ \|Z^{\le 2}uZ^{\le k}\partial\theta\|+\left\| Z^{\le k} \theta Z^{\le2} \partial u \right\|\\
&+ \left\| Z^{\le 2} \theta Z^{\le k} \partial u \right\|\\
			\overset{\eqref{Poincare ineq.}}{\lesssim}&\left\| \partial Z^{\le k} \theta \right\|+(1+t)^{\frac{1-\mu}{2}}\eta\left(\left\| \partial Z^{\le k} \theta \right\|+\left\| \partial Z^{\le k}u \right\|\right)\\
			\overset{\eqref{bootstrap assumption}\eqref{usage of T_epsilon}}{\lesssim}&\left\| \partial Z^{\le k} \theta \right\|+M^{-1}\left\| \partial Z^{\le k}u \right\|. 
		\end{aligned}
		\label{estimate of DZu}
	\end{equation}
	
	For the time derivative of $Z^\alpha u$, using the fact that $|Z(1+t)^{-1}|\lesssim(1+t)^{-1}$, we have
	\begin{equation}
		\begin{aligned}
			\left\| \partial_t Z^ {\alpha} u \right\| \overset{\eqref{representation of commutators}}&{\le} \left\| Z^ {\alpha} \partial_t u \right\|+ C \left\| \partial Z^ {<k} u \right\| \\
			\overset{\eqref{ZDu by ZDtheta}}&{\lesssim} \left\| Z^{\alpha} \left( \frac{1}{1+t}  u\right) \right\| + \left\| Z^{\alpha} ( u \cdot \nabla u)  \right\| + \left\| Z^{\alpha} \nabla \theta \right\|   + \left\| \partial Z^ {<k} \theta  \right\| \\
			\overset{\eqref{representation of commutators}}&{\lesssim}(1+t)^{-1}\left\|Z^{\alpha}u   \right\| +  (1+t)^{-1}\left\|Z^ {<k}u\right\|\\
			&\ \ \ \ + \left\| Z^{\leq 2} u  Z^{\leq k} \partial u \right\| + \left\|Z^{\leq 2} \partial u Z^{\leq k} u \right\|+\left\| \partial Z^ {\le k} \theta  \right\| \\
			\overset{\eqref{Poincare ineq.}}&{\lesssim} \left\| \nabla Z^ {\alpha} u  \right\|+ (1+t)^{\frac{1-\mu}{2}}\eta\left\|\partial Z^{\le k}u\right\|+\left\| \partial Z^ {\le k} \theta  \right\|\\
			\overset{\eqref{usage of T_epsilon}\eqref{bootstrap assumption}\eqref{estimate of DZu}}&{\lesssim}\left\| \partial Z^{\le k} \theta \right\|+M^{-1}\left\| \partial Z^{\le k}u \right\|.
		\end{aligned}
		\label{estimate of DtZu}
	\end{equation}
	Summing up \eqref{estimate of DZu} and \eqref{estimate of DtZu} with respect to $|\alpha|=k$, we get 
\[\|\partial Z^{\le k}u\|\lesssim\|\partial Z^{\le k}\theta\|.
\] 
This completes the proof.
\end{proof}

\begin{thm}
	If the bootstrap assumptions hold on $[0,T]\subset[0,T_\epsilon]$, then we have
	\begin{equation} 
		\label{estimate of E_5[u]}
		\begin{aligned}
			E_5[u](t)\lesssim E_5[\theta](t)
		\end{aligned}
	\end{equation}
	holds for all $t\in[0,T]$. 
\end{thm}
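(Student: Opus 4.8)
The plan is to deduce the estimate directly from Lemma \ref{energy estimate of u}, which already bounds $\sum_{|\alpha|\le k}\|\partial Z^\alpha u\|$ by $\sum_{|\alpha|\le k}\|\partial Z^\alpha\theta\|$ for $0\le k\le5$. The energy $E_5$ defined in \eqref{def of Ek} additionally carries the undifferentiated part $(1+t)^{\frac{\mu}{2}-1}\sum_{|\alpha|\le5}\|Z^\alpha u\|$, so the only real task is to show that this part is dominated by the derivative part, after which everything collapses to Lemma \ref{energy estimate of u}.

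To this end I would first record that $Z^\alpha u(t,\cdot)$ is supported in $\overline{B_{1+t}(0)}$: by Lemma \ref{finite speed of propagation} the solution $(\theta,u)$ is supported in $\overline{B_{1/2+t}(0)}\subset\overline{B_{1+t}(0)}$, and each component of $Z$ is a first-order differential operator, so applying $Z^\alpha$ does not enlarge the support. Hence the equivalence \eqref{equivalent norm of Ek} — itself a consequence of the Poincar\'e-type inequality \eqref{Poincare ineq.} — applies with $\Phi=u$, giving
\begin{equation}
	E_5[u](t)\le 2(1+t)^{\frac{\mu}{2}}\sum_{|\alpha|\le5}\|\partial Z^\alpha u(t)\|.
\end{equation}

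Next I would apply Lemma \ref{energy estimate of u} with $k=5$ and then invoke the definition \eqref{def of Ek} of $E_5[\theta]$:
\begin{equation}
	(1+t)^{\frac{\mu}{2}}\sum_{|\alpha|\le5}\|\partial Z^\alpha u(t)\|\lesssim(1+t)^{\frac{\mu}{2}}\sum_{|\alpha|\le5}\|\partial Z^\alpha\theta(t)\|\le E_5[\theta](t),
\end{equation}
where the last step just discards the nonnegative term $(1+t)^{\frac{\mu}{2}-1}\sum_{|\alpha|\le5}\|Z^\alpha\theta(t)\|$. Chaining the two displays yields $E_5[u](t)\lesssim E_5[\theta](t)$ for every $t\in[0,T]$, which is the claim.

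I do not anticipate a genuine obstacle here: the statement is essentially a repackaging of Lemma \ref{energy estimate of u} into the energy-functional notation. The one point meriting a line of justification is the support property of $Z^\alpha u$, which licenses the use of the Poincar\'e inequality in passing from $E_5[u]$ to its derivative part; this is immediate from Lemma \ref{finite speed of propagation} together with the fact that the $Z_i$ are differential operators and therefore preserve the spatial support.
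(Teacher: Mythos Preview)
Your proof is correct and matches the paper's own argument, which simply states that the theorem is a direct consequence of Lemma \ref{energy estimate of u} together with the definition \eqref{def of Ek} (implicitly via the equivalence \eqref{equivalent norm of Ek}). Your expansion of the support argument and the explicit chaining of inequalities are exactly the details the paper leaves implicit.
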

\begin{proof}
	This is a direct consequence of Lemma \ref{energy estimate of u} and the definition \eqref{def of Ek} of energy $E_k$. 
\end{proof}

\section{The estimate of $E_{5} [ \theta ]$}
\label{section estimate of E5theta}
In this section, we finally come to a somewhat standard energy estimate. To proceed, we introduce the multiplier
\begin{equation}\label{multi}
	m_{\mu}^{\alpha} \triangleq \frac{\mu}{2} (1+t)^{\mu -1} Z^{\alpha}\theta+  (1+t)^{\mu} \partial_{t} Z^{\alpha} \theta =m_{\mu 1}^\alpha+m_{\mu 2}^\alpha
\end{equation}
for any multi-index $\alpha\in\mathbb{N}^5$. Clearly we have
\begin{equation}
	\label{estimate of the multiplier}
	\|m_{\mu}^\alpha\|\lesssim\|m_{\mu 1}^\alpha\|+\|m_{\mu 2}^\alpha\|\lesssim (1+t)^{\frac{\mu}{2}}E_{|\alpha|}[\theta]. 
\end{equation}

\begin{lem} [Multiplying the multiplier] \label{lemEF}
	If the solution $(\theta,u)$ exists on $[0,t]$, then we have
	\begin{equation} \label{energyoftheta}
		\begin{aligned}
			&  E_{5}^{2} [ \theta ] (t) + \frac{\mu(2-\mu)}{4} G_{5}^{2} [ \theta ] (t)\lesssim E_{5}^{2} [ \theta ] ( 0) +  \sum\limits_{ 0 \leq |\alpha| \leq 5} \left|\int_{0}^{t}\int_{ \R^{2} }  F_{\theta }^{(\alpha)} \cdot m_{\mu}^{\alpha} dxds \right|,
		\end{aligned}
	\end{equation}
	where $G_k^2[\theta]$ is defined by	
	\begin{equation}
		\begin{aligned}
		G_{k}^{2} [ \theta ] (t) = \sum\limits_{ 0 \leq |\alpha| \leq k}  \int_{0}^{t} (1+ \tau )^{\mu-3} \left\|  Z^{\alpha} \theta ( \tau) \right\|^{2}  d \tau .
		\end{aligned}
	\end{equation}
\end{lem}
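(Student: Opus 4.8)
The plan is to use the standard multiplier/energy method applied to the wave equations \eqref{eqn of Z^alpha theta} for each $Z^\alpha\theta$ with $|\alpha|\le 5$, using $m_\mu^\alpha$ as the multiplier. First I would multiply both sides of \eqref{eqn of Z^alpha theta} by $m_\mu^\alpha$ and integrate over $\R^2$. The main task is to identify, among the resulting terms, a total time derivative $\frac{d}{dt}\mathcal{E}^\alpha(t)$ of a nonnegative quantity comparable to $(1+t)^\mu\big(\|\partial Z^\alpha\theta\|^2+(1+t)^{-2}\|Z^\alpha\theta\|^2\big)$, plus a nonnegative "good" term with the weight $(1+\tau)^{\mu-3}\|Z^\alpha\theta\|^2$ (this gives $G_5^2$), plus an error term $\int F_\theta^{(\alpha)}\cdot m_\mu^\alpha$. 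Concretely, pairing $\Box Z^\alpha\theta=\partial_t^2 Z^\alpha\theta-\Delta Z^\alpha\theta$ against $(1+t)^\mu\partial_t Z^\alpha\theta$ produces $\frac12\frac{d}{dt}\big[(1+t)^\mu(|\partial_t Z^\alpha\theta|^2+|\nabla Z^\alpha\theta|^2)\big]-\frac{\mu}{2}(1+t)^{\mu-1}(|\partial_t Z^\alpha\theta|^2+|\nabla Z^\alpha\theta|^2)$ after integrating by parts in $x$; pairing the damping term $\frac{\mu}{1+t}\partial_t Z^\alpha\theta$ against $(1+t)^\mu\partial_t Z^\alpha\theta$ gives $\mu(1+t)^{\mu-1}|\partial_t Z^\alpha\theta|^2$, which together with the previous negative term leaves $+\frac{\mu}{2}(1+t)^{\mu-1}|\partial_t Z^\alpha\theta|^2-\frac{\mu}{2}(1+t)^{\mu-1}|\nabla Z^\alpha\theta|^2$. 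The remaining pieces of the multiplier, namely $m_{\mu1}^\alpha=\frac{\mu}{2}(1+t)^{\mu-1}Z^\alpha\theta$, are chosen precisely to generate the cross term $\frac{\mu}{2}(1+t)^{\mu-1}\partial_t Z^\alpha\theta\cdot Z^\alpha\theta=\frac{\mu}{4}\frac{d}{dt}\big[(1+t)^{\mu-1}|Z^\alpha\theta|^2\big]+\frac{\mu(1-\mu)}{4}(1+t)^{\mu-2}|Z^\alpha\theta|^2$ and, from the damping term, $\frac{\mu^2}{2}(1+t)^{\mu-2}|Z^\alpha\theta|^2$ plus a term $\frac{\mu}{2}(1+t)^{\mu-1}Z^\alpha\theta\cdot\Delta Z^\alpha\theta=-\frac{\mu}{2}(1+t)^{\mu-1}|\nabla Z^\alpha\theta|^2-\ldots$ after integration by parts.

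The key algebraic point I would verify carefully is that after collecting all contributions, the $|\nabla Z^\alpha\theta|^2$ terms cancel (or combine with favorable sign), and the zeroth-order terms assemble into $\frac{\mu(2-\mu)}{4}(1+t)^{\mu-3}\|Z^\alpha\theta\|^2$ — note $2-\mu>0$ since $0<\mu\le1$ — which is exactly what appears in $G_5^2$. The factor $(1+t)^{\mu-3}$ (rather than $(1+t)^{\mu-2}$) presumably arises from one more integration by parts or from the coefficient $-\frac{\mu}{(1+t)^2}$-type structure when tracking the lower-order terms; I would double-check the exact power by writing out $\frac{d}{dt}$ acting on $(1+t)^{\mu-1}\|Z^\alpha\theta\|^2$ versus $(1+t)^{\mu-2}\|Z^\alpha\theta\|^2$ and matching. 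Then integrating the identity from $0$ to $t$, the total-derivative part yields $\mathcal{E}^\alpha(t)-\mathcal{E}^\alpha(0)$, the good term yields the $G_k^2$ contribution (after summing over $|\alpha|\le k$ and using $\sigma$-weighted Poincaré/Hardy where needed), and the error term stays as $\int_0^t\int_{\R^2}F_\theta^{(\alpha)}\cdot m_\mu^\alpha$. Finally I would invoke \eqref{equivalent norm of Ek} (equivalence of $E_k$ with the purely first-derivative version, valid on the support $B_{1+t}(0)$ via the Poincaré inequality \eqref{Poincare ineq.}) to conclude that $\mathcal{E}^\alpha(t)$ controls $E_{|\alpha|}^2[\theta](t)$ from above and below up to universal constants, and sum over $0\le|\alpha|\le5$.

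The main obstacle I anticipate is bookkeeping the zeroth-order terms so that the coefficient comes out to be exactly $\frac{\mu(2-\mu)}{4}$ with sign positive and with the correct weight $(1+\tau)^{\mu-3}$; in particular one must be careful that the cross terms coming from pairing $m_{\mu1}^\alpha$ against $\Box Z^\alpha\theta$ and against the damping term, after integration by parts in $x$ (which moves $\Delta$ onto $Z^\alpha\theta$ and produces $|\nabla Z^\alpha\theta|^2$ with a definite sign) and in $t$, do not leave an uncontrolled $|\nabla Z^\alpha\theta|^2$ term of the wrong sign. A secondary point is that the boundary terms at $|x|\to\infty$ vanish because $\theta$ has compact support (Lemma \ref{finite speed of propagation}), and the boundary term at time $0$ is absorbed into $E_5^2[\theta](0)$ using \eqref{estimate of the multiplier}. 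The error term $\sum_{|\alpha|\le5}\big|\int_0^t\int F_\theta^{(\alpha)}\cdot m_\mu^\alpha\big|$ is simply carried forward — it will be estimated in the subsequent lemmas using the forcing bounds from Section 4 and \eqref{estimate of the multiplier} — so the present lemma only needs the clean identity plus Cauchy–Schwarz on that term, with everything else folded into $E_5^2[\theta]$ and $G_5^2[\theta]$.
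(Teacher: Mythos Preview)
Your proposal is correct and follows essentially the same approach as the paper: multiply \eqref{eqn of Z^alpha theta} by the multiplier $m_\mu^\alpha$, collect the resulting expression into a total time derivative of a quantity equivalent to $(1+t)^\mu\|\partial Z^\alpha\theta\|^2+(1+t)^{\mu-2}\|Z^\alpha\theta\|^2$ plus the nonnegative term $\frac{\mu(2-\mu)}{4}(1+t)^{\mu-3}\|Z^\alpha\theta\|^2$, integrate over $[0,t]\times\R^2$, and sum over $|\alpha|\le5$. One small slip to watch: pairing $-\Delta Z^\alpha\theta$ against $m_{\mu1}^\alpha$ and integrating by parts gives $+\frac{\mu}{2}(1+t)^{\mu-1}\|\nabla Z^\alpha\theta\|^2$ (not $-$), which is exactly what cancels the $-\frac{\mu}{2}(1+t)^{\mu-1}\|\nabla Z^\alpha\theta\|^2$ from the $m_{\mu2}^\alpha$ pairing; with that correction your bookkeeping goes through and the $(1+t)^{\mu-3}$ coefficient emerges as you anticipated.
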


\begin{proof}
Multiplying \eqref{eqn of Z^alpha theta} by $m_{\beta}^{\alpha}=\beta (1+t)^{\mu -1} Z^{\alpha} \theta+ (1+t)^{\mu} \partial_{t} Z^{\alpha} \theta $ yields
	\begin{equation} \label{energy11}
		\begin{aligned}
			& \partial_{t} \left[ \frac{1}{2} (1+t)^{\mu}| \partial Z^{\alpha} \theta |^{2} + \beta (1+t)^{\mu -1} Z^{\alpha} \theta \partial_{t} Z^{\alpha} \theta + \frac{1}{2}\mu\beta (1+t)^{\mu -2} | Z^{\alpha} \theta |^{2} \right] \\
			& + \left(\frac{1}{2}\mu - \beta\right)(1+t)^{\mu -1} | \partial_{t} Z^{\alpha} \theta |^{2} + \left(-\frac{1}{2}\mu + \beta\right)(1+t)^{\mu -1} | \nabla Z^{\alpha} \theta |^{2} \\
			& + \frac{1}{2}\mu\beta (2-\mu ) (1+t)^{\mu -3} | Z^{\alpha} \theta |^{2} + \beta (1-\mu ) (1+t)^{\mu -2} Z^{\alpha} \theta \partial_{t} Z^{\alpha} \theta  \\
			 =& \operatorname{div}\left(\nabla Z^{\alpha} \theta \cdot m^{\alpha}\right) + F_{\theta }^{(\alpha)} \cdot m_\beta^{\alpha}.
		\end{aligned}
	\end{equation}
Further calculation to the last term in the left hand side yields
	\begin{equation} \label{energy11term}
		\begin{aligned}
			&\ \ \ \ \beta (1-\mu) (1+t)^{\mu -2} Z^{\alpha} \theta \partial_{t} Z^{\alpha} \theta \\
			& =  \beta (1-\mu) (1+t)^{\mu -2} \partial_{t} \left( \frac{1}{2} | Z^{\alpha} \theta |^{2} \right) \\
			& =   \partial_{t} \left[ \beta (1-\mu) (1+t)^{\mu -2}  \frac{1}{2} | Z^{\alpha} \theta |^{2} \right]  - \beta (1-\mu ) (\mu -2) (1+t)^{\mu -3}\frac{1}{2} | Z^{\alpha} \theta |^{2}  .
		\end{aligned}
	\end{equation}
	
	Setting the parameter $\beta= \frac{\mu}{2} $, which means our multiplier will be determined to \eqref{multi}. And substituting $ (\ref{energy11term}) $ into $ (\ref{energy11}) $, we finally get
	\begin{equation} \label{energy1}
		\begin{aligned}
			& \partial_{t}\left [ \frac{1}{2} (1+t)^{\mu}| \partial Z^{\alpha} \theta |^{2} +\frac{\mu}{2} (1+t)^{\mu -1} Z^{\alpha} \theta \partial_{t} Z^{\alpha} \theta + \frac{\mu}{4} (1+t)^{\mu -2} | Z^{\alpha} \theta |^{2} \right] \\
			& + \frac{\mu}{4} (2-\mu ) (1+t)^{\mu -3} | Z^{\alpha} \theta |^{2}  \\
			=&\operatorname{div}(\nabla Z^{\alpha} \theta \cdot m^{\alpha}) + F_{\theta }^{(\alpha)} \cdot m_\mu^{\alpha}.
		\end{aligned}
	\end{equation}	
Since $0< \mu \le 1$, we may rewrite the term in the square bracket of the first line in $ (\ref{energy1}) $ as
	\begin{equation}
		\begin{aligned}
			&  \frac{1}{2} (1+t)^{\mu}| \partial Z^{\alpha} \theta |^{2} +\frac{\mu}{2} (1+t)^{\mu -1} Z^{\alpha} \theta \partial_{t} Z^{\alpha} \theta + \frac{\mu}{4} (1+t)^{\mu -2} | Z^{\alpha} \theta |^{2} \\
			 =& (1+t)^{\mu} \left[ \frac{2-\mu}{4} | \partial_{t} Z^{\alpha} \theta |^{2} +  \frac{1}{2} | \nabla Z^{\alpha} \theta |^{2} \right]  + \frac{\mu}{4} (1+t)^{\mu -2} \left[ (1+t)\partial_{t} Z^{\alpha} \theta + Z^{\alpha} \theta  \right]^{2}, 
		\end{aligned}
	\end{equation}
which is equivalent to	
	\begin{equation}
		(1+t)^{\mu} | \partial Z^{\alpha} \theta |^{2}  + (1+t)^{\mu -2} |Z^{\alpha} \theta |^{2}.
	\end{equation}
	
Then integrating $ (\ref{energy1}) $ over $ [0,t] \times \R^{2} $ yields
	\begin{equation} 
		\begin{aligned}
			& (1+t)^{\mu} \left\|  \partial Z^{\alpha} \theta \right\|  ^{2}  + (1+t)^{\mu -2} \left\|Z^{\alpha} \theta \right\|^{2}  + \frac{\mu(2-\mu)}{4} \int_{0}^{t} (1+s)^{\mu-3} \left\|  Z^{\alpha} \theta (s) \right\|^{2}  ds \\
			 \lesssim &E_{5}^{2} [ \theta ] ( 0) +  \left| \int_{0}^{t}\int_{ \R^{2} } F_{\theta }^{(\alpha)} \cdot m_\mu^{\alpha} dxds \right|.
		\end{aligned}
	\end{equation}
Therefore, we get the desired inequality by summing up the above inequality for $\alpha$ from $0$ to $5$. 
\end{proof}

\begin{lem}
	If the bootstrap assumptions hold on $[0,T] \subset [0,T_{\epsilon}]$, then for $|\alpha|\le5$ and $t\in[0,T]$, we have that
	\begin{equation}
		\label{forcing estimate in energy estimate}
		\begin{aligned}
			&\left|\int_0^t\int_{\R^2}F_\theta^{(\alpha)}m_\mu^\alpha dxds\right|\\
\lesssim&\int_0^t(1+s)^{-2}E_5^2[\theta](s)ds+\int_0^t(1+s)^{-\frac{1+\mu}{2}}
E_5^3[\theta](s)ds\\
&+M^{-1}\left ( E_{5}^{2} [\theta](t) +E_{5}^{2} [\theta](0) \right ). 
		\end{aligned}
	\end{equation}
\end{lem}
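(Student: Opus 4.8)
The plan is to bound $\int_0^t\!\int_{\R^2}F_\theta^{(\alpha)}m_\mu^\alpha\,dxds$ by splitting
$F_\theta^{(\alpha)}=\bigl(F_\theta^{(\alpha)}-\hat Z^\alpha F_\theta\bigr)+\hat Z^\alpha F_{\theta,1}+\hat Z^\alpha F_{\theta,2}$ and treating the non-principal parts by Cauchy--Schwarz in $x,s$ and the genuinely top-order part by integration by parts against the multiplier. The ingredients I will invoke repeatedly are: the multiplier bound $\|m_\mu^\alpha\|\lesssim(1+s)^{\mu/2}E_5[\theta]$ (from \eqref{estimate of the multiplier}), the comparison $\eta+\chi_5+\tilde\chi_5\lesssim E_5\lesssim E_5[\theta]$ (Corollary \ref{thmetaE} together with \eqref{estimate of E_5[u]}), and the pointwise bounds $|\theta|_\infty+|u|_\infty\lesssim M^{-1}$, $|\theta|_\infty+|u|_\infty\lesssim(1+s)^{\frac{1-\mu}{2}}E_5$ (Lemma \ref{L^infty for Z4Phi, lem} and its proof), $|\partial Z^{\le2}\theta|_\infty+|\partial Z^{\le2}u|_\infty\lesssim(1+s)^{-\frac{1+\mu}{2}}\eta$ (definition of $\eta$), plus the Poincar\'e inequality $\|Z^\alpha\theta\|\lesssim(1+s)\|\partial Z^\alpha\theta\|$ on $\operatorname{supp}\theta\subset B_{1+s}(0)$.

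For the linear error term, Lemma \ref{lemF^alpha-Z^alphaF} gives $\|F_\theta^{(\alpha)}-\hat Z^\alpha F_\theta\|\lesssim(1+s)^{-2-\frac{\mu}{2}}E_4[\theta]$, so pairing with $\|m_\mu^\alpha\|$ yields $\int_0^t(1+s)^{-2}E_5^2[\theta]\,ds$, the first term on the right-hand side. For $\hat Z^\alpha F_{\theta,1}$ ($|\alpha|\le5$) Corollary \ref{ZFtheta1} gives $\|\hat Z^\alpha F_{\theta,1}\|\lesssim(1+s)^{-\frac12-\mu}\eta E_5$, and for $\hat Z^\alpha F_{\theta,2}$ with $|\alpha|\le4$ Corollary \ref{ZFtheta2} together with $\eta+\chi_5+\tilde\chi_5\lesssim E_5$ gives $\|\hat Z^\alpha F_{\theta,2}\|\lesssim(1+s)^{-\frac12-\mu}E_5^2$; pairing each with $\|m_\mu^\alpha\|$ and using $\eta\lesssim E_5\lesssim E_5[\theta]$ produces $\int_0^t(1+s)^{-\frac{1+\mu}{2}}E_5^3[\theta]\,ds$. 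It remains to deal with $\hat Z^\alpha F_{\theta,2}$ for $|\alpha|=5$. Recall $F_{\theta,2}=(\gamma-1)\theta\Delta\theta-2u\cdot\nabla\partial_t\theta-\sum_{i,j}u_iu_j\partial^2_{ij}\theta$; by the Leibniz rule and the pigeonhole splittings of the multi-index sum used in Lemma \ref{L2 norm of multiplications}, every summand of $\hat Z^\alpha F_{\theta,2}$ in which not all five vector fields land on the second-order factor is of the form $Z^{\le2}\Phi_1\,Z^{\le4}\partial^2\Phi_2$, $Z^{\le5}\Phi_1\,Z^{\le2}\partial^2\Phi_2$, or $M^{-1}$ times such a product (for the cubic term, whose $L^\infty$ factors then carry at most four derivatives, by Lemma \ref{L^infty for Z4Phi, lem}); hence these parts have $L^2$-norm $\lesssim(1+s)^{-\frac12-\mu}E_5(\eta+\chi_5+\tilde\chi_5+E_5)\lesssim(1+s)^{-\frac12-\mu}E_5^2$ by \eqref{Z^leq2 Z^leq4 partial^2}\eqref{Z^leq5 Z^leq2 partial^2}, and again contribute $\int_0^t(1+s)^{-\frac{1+\mu}{2}}E_5^3[\theta]\,ds$.

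The core of the argument is the top-order piece $(\gamma-1)\theta\,\Delta Z^\alpha\theta-2u\cdot\nabla\partial_t Z^\alpha\theta-u_iu_j\,\partial^2_{ij}Z^\alpha\theta$ ($|\alpha|=5$), which the forcing lemmas cannot reach (they would require $\chi_6$ or $E_6$) and which I integrate by parts against $m_\mu^\alpha=\tfrac{\mu}{2}(1+s)^{\mu-1}Z^\alpha\theta+(1+s)^\mu\partial_t Z^\alpha\theta$. For a \emph{symmetric} coefficient $h_{ij}\in\{(\gamma-1)\theta\,\delta_{ij},\,-u_iu_j\}$, in $\int h_{ij}\partial^2_{ij}Z^\alpha\theta\cdot(1+s)^\mu\partial_t Z^\alpha\theta\,dx$ I move one spatial derivative and use symmetry to write $h_{ij}\partial_jZ^\alpha\theta\,\partial_i\partial_tZ^\alpha\theta=\tfrac12\partial_t\bigl(h_{ij}\partial_iZ^\alpha\theta\,\partial_jZ^\alpha\theta\bigr)-\tfrac12\partial_th_{ij}\,\partial_iZ^\alpha\theta\,\partial_jZ^\alpha\theta$; integrating in time then leaves a boundary term $\lesssim(1+s)^\mu|h|_\infty\|\partial Z^\alpha\theta\|^2\big|_0^t\lesssim M^{-1}\bigl(E_5^2[\theta](t)+E_5^2[\theta](0)\bigr)$ (using $|h|_\infty\lesssim M^{-1}$) plus volume integrals whose integrands carry an extra derivative on $h$ (bounded via $|\partial h|_\infty\lesssim(1+s)^{-\frac{1+\mu}{2}}\eta$) or an extra factor $(1+s)^{\mu-1}$ (bounded via $|h|_\infty\lesssim(1+s)^{\frac{1-\mu}{2}}E_5$), all of which are $\lesssim(1+s)^{-\frac{1+\mu}{2}}(\eta+E_5)E_5^2[\theta]\lesssim(1+s)^{-\frac{1+\mu}{2}}E_5^3[\theta]$. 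The transport term $-2u\cdot\nabla\partial_t Z^\alpha\theta$ paired with $(1+s)^\mu\partial_t Z^\alpha\theta$ equals $(1+s)^\mu\int(\nabla\cdot u)|\partial_t Z^\alpha\theta|^2$ after one spatial integration by parts, hence $\lesssim(1+s)^\mu|\nabla u|_\infty\|\partial Z^\alpha\theta\|^2\lesssim(1+s)^{-\frac{1+\mu}{2}}\eta E_5^2[\theta]$ with no boundary term. Finally, pairing these same top-order terms with $\tfrac{\mu}{2}(1+s)^{\mu-1}Z^\alpha\theta$, a single spatial integration by parts suffices (no $\partial_t$ acts on $Z^\alpha\theta$), and, after using $\|Z^\alpha\theta\|\lesssim(1+s)\|\partial Z^\alpha\theta\|$, every resulting integrand is again $\lesssim(1+s)^{-\frac{1+\mu}{2}}E_5^3[\theta]$. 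Summing over $|\alpha|\le5$ gives \eqref{forcing estimate in energy estimate}.

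I expect the main obstacle to be precisely this last step: the quasilinear interaction $g(\theta,u)\,\partial^2 Z^\alpha\theta$ at the highest order $|\alpha|=5$ must be integrated by parts against the energy multiplier and symmetrized using $h_{ij}=h_{ji}$, and the time-decay accounting has to be tight enough that every term except the two unavoidable boundary contributions—producing the $M^{-1}\bigl(E_5^2[\theta](t)+E_5^2[\theta](0)\bigr)$—decays at least like $(1+s)^{-\frac{1+\mu}{2}}$, which is exactly the rate needed for the later Gr\"onwall step; achieving this requires repeatedly trading powers of $(1+s)$ among $E_5$, $\eta$, and the weights in $m_\mu^\alpha$ via \eqref{usage of T_epsilon} and the bootstrap assumption \eqref{bootstrap assumption}.
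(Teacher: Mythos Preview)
Your proposal is correct and follows essentially the same route as the paper. The only differences are cosmetic: you shortcut the case $|\alpha|\le4$ of $\hat Z^\alpha F_{\theta,2}$ by citing Corollary~\ref{ZFtheta2} directly (the paper instead runs the same top-order extraction uniformly for all $|\alpha|\le5$), and you package the quadratic term $(\gamma-1)\theta\,\Delta Z^\alpha\theta$ and the cubic term $-u_iu_j\,\partial_{ij}^2Z^\alpha\theta$ together under a single symmetric coefficient $h_{ij}$, whereas the paper treats them in two separate (but parallel) computations. The integration-by-parts mechanism against $m_{\mu1}^\alpha$ and $m_{\mu2}^\alpha$, the use of the $L^\infty$ Poincar\'e bound $|\Phi|_\infty\lesssim(1+s)^{\frac{1-\mu}{2}}\eta$ to handle the $(1+s)^{\mu-1}$ remainder, and the appearance of the $M^{-1}\bigl(E_5^2[\theta](t)+E_5^2[\theta](0)\bigr)$ boundary contribution are all exactly as in the paper.
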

\begin{proof}
We decompose $F_\theta^{(\alpha)}$ as
\begin{equation}
	F_\theta^{(\alpha)}=(F_\theta^{(\alpha)}-\hat Z^\alpha F_\theta)+\hat Z^\alpha F_\theta. 
\end{equation}
It follows from Lemma \ref{lemF^alpha-Z^alphaF} that
\begin{equation} 
	\begin{aligned}
		&\ \ \ \ \ \left|\int_{0}^{t}\int_{ \R^{2} }  (F_{\theta }^{(\alpha)} - \hat{Z}^{\alpha } F_{\theta } ) \cdot m_\mu^{\alpha} dxds \right| 
		\lesssim \int_{0}^{t} \left\|  F_{\theta }^{(\alpha)} - \hat{Z}^{\alpha } F_{\theta } \right\|  \cdot \left\| m_\mu^{\alpha}  \right\|  ds  \\
		&\overset{\eqref{estimate of the multiplier}}{\lesssim} \int_{0}^{t} (1+s)^{-2-\frac{\mu}{2}} E_{4} [ \theta ] ( s) \cdot (1+s)^{\frac{\mu}{2}} E_{5} [ \theta ] ( s)   ds \\
& \lesssim \int_{0}^{t} (1+s)^{-2} E_{5}^{2} [ \theta ] ( s) ds.
	\end{aligned}
\end{equation}
Using Lemma \ref{estimate of Z^alpha F1}, we can derive that
\begin{equation} \label{estimate of Z^alpha F_theta1 m^alpha}
	\begin{aligned}
		&\ \ \ \left|\int_{0}^{t}\int_{ \R^{2} } \hat{Z}^{\alpha } F_{\theta,1 } \cdot m_\mu^{\alpha} dxds \right| 
		\lesssim \int_{0}^{t} \left\|  \hat{Z}^{\alpha } F_{\theta,1 } \right\|  \cdot \left\| m_\mu^{\alpha}  \right\|  ds  \\
		\overset{\eqref{estimate of the multiplier}}&{\lesssim} \int_{0}^{t} (1+s)^{-\mu-\frac{1}{2}} \eta  [ \theta,u ]( s) \cdot E_{5} [ \theta,u ] ( s) \cdot (1+s)^{\frac{\mu}{2}} E_{5} [ \theta ] ( s)   ds \\
		\overset{\eqref{estimate of auxiliary energies}\eqref{estimate of E_5[u]}}&{\lesssim} \int_{0}^{t} (1+s)^{-\frac{\mu+1}{2}}  E_{5}^{3} [ \theta] ( s) ds.
	\end{aligned}
\end{equation}

We are left with the term $\left| \int_{0}^{t}\int_{ \R^{2} }  F_{\theta,2}^{(\alpha)} \cdot m_\mu^{\alpha} dxds \right|$. The second order derivative part $ F_{\theta,2 } $ of $F_\theta$  consists of terms in the forms of $ \Phi \partial^{\beta} \theta$ with $ \partial^{\beta} \in \{ \partial_{t} \partial_{x_{i}}, \partial_{x_{i}}^{2} \} $, as well as $ u_{i} u_{j} \partial_{ij}^{2} \theta$. 

\underline{\emph{The quadratic terms}}. We use the following decomposition:

\begin{equation} 
	\begin{aligned}
		\hat{Z}^{\alpha } ( \Phi \partial^{\beta} \theta ) 
		& = \left(  \hat{Z}^{\alpha } ( \Phi \partial^{\beta} \theta ) - \Phi\hat{Z}^{\alpha }\partial^{\beta} \theta \right) 
		+ \left(\Phi\hat{Z}^{\alpha }\partial^{\beta} \theta - \Phi\partial^{\beta}\hat{Z}^{\alpha }\theta \right)  \\
		& \ \ \ +  \Phi\partial^{\beta} (\hat{Z}^{\alpha } - {Z}^{\alpha })\theta 
		+   \Phi\partial^{\beta} {Z}^{\alpha }\theta \\
		\overset{\eqref{representation of commutators}}&{ =} Z^{\le2}\Phi Z^{\le4}\partial^{\beta}\theta +   Z^{\le5}\Phi Z^{\le2}\partial^{\beta}\theta +  \Phi\partial^{\beta} {Z}^{\alpha }\theta.
	\end{aligned}
\end{equation}
The lower order terms can be bounded by
\begin{equation} 
	\begin{aligned}
		&\ \ \ \ \left|\int_{0}^{t}\int_{ \R^{2} } \left( Z^{\le2}\Phi Z^{\le4}\partial^{\beta}\theta +   Z^{\le5}\Phi Z^{\le2}\partial^{\beta}\theta\right)  \cdot m_\mu^{\alpha} dxds \right| \\
		& \lesssim \int_{0}^{t} \left( \|Z^{\le2}\Phi_1Z^{\le4}\partial^2\Phi_2\| + \left\| Z^{\leq 5} \Phi_{1} Z^{\leq 2} \partial^{2}  \Phi_{2}  \right\|\right)  \cdot \left\| m_\mu^{\alpha}  \right\| ds  \\ 
		\overset{\eqref{estimate of the multiplier}\eqref{Z^leq2 Z^leq4 partial^2}\eqref{Z^leq5 Z^leq2 partial^2}}&{\lesssim}  \int_{0}^{t} (1+s)^{ -\frac{1+\mu}{2} } E_5\left[ E_5\left( E_5+\chi_5+\tilde{\chi}_5 \right) + \eta \chi_5 \right] ds\\
		\overset{\eqref{estimate of auxiliary energies}\eqref{estimate of E_5[u]}}&{\lesssim}\int_{0}^{t} (1+s)^{-\frac{\mu+1}{2}}  E_{5}^{3} [ \theta] ( s) ds.
	\end{aligned}
\end{equation}
For the highest order term, we have
\begin{equation} 
	\begin{aligned}
		&\left|\int_{0}^{t}\int_{ \R^{2} }   \Phi\partial^{\beta} {Z}^{\alpha }\theta \cdot m_{\mu}^{\alpha} dxds \right
|\\
\lesssim &\left|\int_{0}^{t}\int_{ \R^{2} }   \Phi\partial^{\beta} {Z}^{\alpha }\theta \cdot m^{\alpha}_{\mu 1} dxds \right|  + \left|\int_{0}^{t}\int_{ \R^{2} }   \Phi\partial^{\beta} {Z}^{\alpha }\theta \cdot m^{\alpha}_{\mu 2} dxds \right| \\
		 \triangleq& I_{1} +I_{2}.
	\end{aligned}
\end{equation}
Writing $ \partial^{\beta}=\partial_{i} \partial $ and using integration by parts, we have	
\begin{equation} 
	\begin{aligned}
		I_{1}	& \lesssim \left| \sum\limits_{ |\alpha| \leq 5} \int_{0}^{t}\int_{ \R^{2} }
		\partial_{i} \Phi \partial {Z}^{\alpha }\theta \cdot m^{\alpha}_{\mu 1} dxds\right| 
		+ \left| \sum\limits_{ |\alpha| \leq 5} \int_{0}^{t}\int_{ \R^{2} }
		\Phi \partial {Z}^{\alpha }\theta \cdot \partial_{i} m^{\alpha}_{\mu 1} dxds\right| \\
		& \lesssim \int_{0}^{t} \left(  \left\| \partial \Phi \cdot \partial Z^{\leq 5} \theta \right\| \cdot \left\|m^{\alpha}_{\mu 1} \right\| + \left\| \Phi \cdot \partial Z^{\leq 5} \theta \right\| \cdot \left\|\nabla m^{\alpha}_{\mu 1} \right\| \right) ds \\
		\overset{\eqref{estimate of the multiplier}}&{\lesssim}  \int_{0}^{t} (1+s)^{-\frac{1+\mu}{2}}  \eta  [ \theta,u ]( s) \cdot E_{5}^{2} [ \theta,u ] ( s) ds \\
		\overset{\eqref{estimate of auxiliary energies}\eqref{estimate of E_5[u]}}&{\lesssim} \int_{0}^{t} (1+s)^{-\frac{1+\mu}{2}}  E_{5}^{3} [ \theta] ( s) ds.
	\end{aligned}
\end{equation}	
To control $I_{2}$, we need to consider $\partial^{\beta}=\partial_{t} \partial_{i}  $ and $ \partial^{\beta}=\partial_{i}^{2}  $ separately.
If $\partial^{\beta}=\partial_{t} \partial_{i}  $, it follows from integration by parts that
\begin{equation} 
	\begin{aligned}
		I_{2}
		& = \left|\int_{0}^{t} (1+s)^{\mu} \int_{ \R^{2} }   \Phi\partial_{s} \partial_{i} {Z}^{\alpha }\theta \cdot \partial_{s} {Z}^{\alpha }\theta dxds \right| \\
		& \lesssim \left|\int_{0}^{t} (1+s)^{\mu} \int_{ \R^{2} }   \Phi \partial_{i} (\left| \partial_{s} {Z}^{\alpha }\theta \right|^{2} )dxds \right| 
		\lesssim \left|\int_{0}^{t} (1+s)^{\mu} \int_{ \R^{2} } \partial_{i} \Phi \cdot \left| \partial_{s} {Z}^{\alpha }\theta \right|^{2} dxds \right| \\
		& \lesssim \int_{0}^{t} (1+s)^{\mu} |\partial \Phi|_{\infty} \cdot \left\| \partial Z^{\leq 5} \theta \right\| ^{2}  ds
		\lesssim \int_{0}^{t} (1+s)^{-\frac{1+\mu}{2}}  \eta  [ \theta,u ]( s) \cdot E_{5}^{2} [ \theta,u ] ( s) ds\\
		\overset{\eqref{estimate of auxiliary energies}\eqref{estimate of E_5[u]}}&{\lesssim} \int_{0}^{t} (1+s)^{-\frac{1+\mu}{2}}  E_{5}^{3} [ \theta] ( s) ds.
	\end{aligned}
\end{equation}			
If $\partial^{\beta}=\partial_{i}^{2}$, using again integration by parts, we have that
\begin{equation} 
	\begin{aligned}
		I_{2}
		& = \left|\int_{0}^{t} (1+s)^{\mu} \int_{ \R^{2} }   \Phi\partial_{i}^{2} {Z}^{\alpha }\theta \cdot \partial_{s} {Z}^{\alpha }\theta dxds \right| \\
		& \lesssim \left|\int_{0}^{t} (1+s)^{\mu} \int_{ \R^{2} }   \partial_{i} \Phi \cdot \partial_{i} {Z}^{\alpha } \Phi \cdot \partial_{s} {Z}^{\alpha }\theta  dxds \right| \\
		&+ \left|\int_{0}^{t} (1+s)^{\mu} \int_{ \R^{2} }  \Phi \partial_{s} (\left| \partial_{i} {Z}^{\alpha }\theta \right|^{2}) dxds \right| \\
		& \lesssim \int_{0}^{t} (1+s)^{\mu} |\partial \Phi| \cdot \left\| \partial Z^{\leq 5} \theta \right\| ^{2} 
		+  \left|\int_{0}^{t} (1+s)^{\mu-1} \int_{ \R^{2} }  \Phi \left| \partial_{i} {Z}^{\alpha }\theta \right|^{2} dxds \right| \\
		& +  \left|\left(   \int_{ \R^{2} } (1+t)^{\mu} \Phi(t) \left| \partial_{i} {Z}^{\alpha }\theta(t) \right|^{2} dx
		-  \int_{ \R^{2} }  \Phi(0) \left| \partial_{i} {Z}^{\alpha }\theta(0) \right|^{2} dx \right) \right|  \\
		\overset{\eqref{Poincare ineq.}}&{\lesssim} \int_{0}^{t} (1+s)^{-\frac{\mu+1}{2}}  \eta  [ \theta,u ]( s) \cdot E_{5}^{2} [ \theta,u ] ( s) ds +  M^{-1}\left ( E_{5}^{2} [\theta](t) +E_{5}^{2} [\theta](0)  \right)\\
		\overset{\eqref{estimate of auxiliary energies}\eqref{estimate of E_5[u]}}&{\lesssim} \int_{0}^{t} (1+s)^{-\frac{1+\mu}{2}}  E_{5}^{3} [ \theta] ( s) ds+M^{-1}\left ( E_{5}^{2} [\theta](t) +E_{5}^{2} [\theta](0) \right ).
	\end{aligned}
\end{equation}			

\underline{\emph{The cubic term}}. In the following, we handle the cubic nonlinear term $ u_{i} u_{j} \partial_{ij}^{2} \theta$. We shall use the decomposition:
\begin{equation} 
	\begin{aligned}
		\hat{Z}^{\alpha } ( u_{i} u_{j} \partial_{ij}^{2} \theta ) 
		& = \left(  \hat{Z}^{\alpha } ( u_{i} u_{j} \partial_{ij}^{2} \theta ) -  u_{i} u_{j}\hat{Z}^{\alpha }\partial^{\beta} \theta \right) 
		+ \left( u_{i} u_{j}\hat{Z}^{\alpha }\partial_{ij}^{2} \theta -  u_{i} u_{j}\partial_{ij}^{2}\hat{Z}^{\alpha }\theta \right)  \\
		& \ \ \ +   u_{i} u_{j}\partial_{ij}^{2} (\hat{Z}^{\alpha } - {Z}^{\alpha })\theta 
		+    u_{i} u_{j}\partial_{ij}^{2} {Z}^{\alpha }\theta \\
		\overset{\eqref{representation of commutators}}&{ =} Z^{\le2}(u_{i} u_{j}) Z^{\le4}\partial_{ij}^{2}\theta +   Z^{\le5}(u_{i} u_{j}) Z^{\le2}\partial_{ij}^{2}\theta +  u_{i} u_{j}\partial_{ij}^{2} {Z}^{\alpha }\theta.
	\end{aligned}
\end{equation}	
By \eqref{L^infty for Z4Phi}, we get the following estimates for the lower order terms:
\begin{equation} 
	\begin{aligned}	
		\left\|Z^{\le2}(u_{i} u_{j}) Z^{\le4}\partial_{ij}^{2}\theta \right\| 
		\lesssim \left\|Z^{\le2}u_{i}  Z^{\le4}\partial_{ij}^{2}\theta \right\| \cdot \left|  Z^{\le2}u_{j} \right| _{\infty} 
		\overset{\eqref{Z^leq2 Z^leq4 partial^2}}{\lesssim} 
		M^{-1} (1+t)^{-\frac{1}{2}-\mu} \eta \chi_5,
	\end{aligned}
\end{equation}		
\begin{equation} 
	\begin{aligned}	
		\left\|Z^{\le5}(u_{i} u_{j}) Z^{\le2}\partial_{ij}^{2}\theta \right\| 
		&\lesssim \left\|Z^{\le2}u_{i}  Z^{\le3}u_{j}  Z^{\le2}\partial_{ij}^{2}\theta \right\|\\
		&\lesssim \left\|Z^{\le2}u_{i}  Z^{\le2}\partial_{ij}^{2}\theta \right\| \cdot \left|  Z^{\le3}u_{j} \right| _{\infty} \\
		&\overset{\eqref{Z^leq2 Z^leq4 partial^2}}{\lesssim} 
		M^{-1} (1+t)^{-\frac{1}{2}-\mu} \eta \chi_5.
	\end{aligned}
\end{equation}		
Hence, the lower order terms can be bounded by
\begin{equation} 
	\begin{aligned}
		& \ \ \ \left|\int_{0}^{t}\int_{ \R^{2} } \left(Z^{\le2}(u_{i} u_{j}) Z^{\le4}\partial_{ij}^{2}\theta +   Z^{\le5}(u_{i} u_{j}) Z^{\le2}\partial_{ij}^{2}\theta\right)  \cdot m_{\mu}^{\alpha} dxds \right| \\
		& \lesssim \int_{0}^{t} \left( \|Z^{\le2}(u_{i} u_{j}) Z^{\le4}\partial_{ij}^{2}\theta\| + \left\| Z^{\le5}(u_{i} u_{j}) Z^{\le2}\partial_{ij}^{2}\theta \right\|\right)  \cdot \left\| m_{\mu}^{\alpha}  \right\| ds  \\ 
		\overset{\eqref{L^infty for Z4Phi}\eqref{estimate of the multiplier}}&{\lesssim} M^{-1} \int_{0}^{t} (1+s)^{ -\frac{1+\mu}{2} } E_5\eta \chi_5 ds\overset{\eqref{estimate of auxiliary energies}\eqref{estimate of E_5[u]}}{\lesssim} \int_{0}^{t} (1+s)^{-\frac{1+\mu}{2}}  E_{5}^{3} [ \theta] ( s) ds.
	\end{aligned}
\end{equation}	
To bound the highest order term, we split it as	
\begin{equation} 
	\begin{aligned}
		&\ \ \ \left|\int_{0}^{t}\int_{ \R^{2} }   u_{i} u_{j} \partial_{ij}^{2}{Z}^{\alpha } \theta \cdot m_{\mu}^{\alpha} dxds \right| \\
		& \lesssim \left|\int_{0}^{t}\int_{ \R^{2} }   u_{i} u_{j} \partial_{ij}^{2}{Z}^{\alpha } \theta \cdot m^{\alpha}_{\mu 1} dxds \right|  + \left|\int_{0}^{t}\int_{ \R^{2} }   u_{i} u_{j} \partial_{ij}^{2}{Z}^{\alpha } \theta \cdot m^{\alpha}_{\mu 2} dxds \right| \\
		& \triangleq J_{1} +J_{2}.
	\end{aligned}
\end{equation}			
Using integration by parts, $J_1$ can be estimated as
\begin{equation} 
	\begin{aligned}	
		J_{1} 	& \lesssim \left| \int_{0}^{t}\int_{ \R^{2} } \left(  \partial_{i} (u_{i} u_{j}) \partial_{j} {Z}^{\alpha } \theta \cdot m^{\alpha}_{\mu 1} + u_{i} u_{j} \cdot \partial_{j} {Z}^{\alpha } \theta \cdot \partial_{i} m^{\alpha}_{\mu 1} \right)  dxds \right| \\
		& \lesssim 	\int_{0}^{t} \left| \partial_{i} (u_{i} u_{j}) \right| _{\infty} \left\| \partial_{j} {Z}^{\alpha} \theta  \right\| \cdot \left\|  m^{\alpha}_{\mu 1} \right\| ds
		+ \int_{0}^{t} \left| u_{i} u_{j} \right| _{\infty} \left\| \partial_{j} {Z}^{\alpha} \theta  \right\| \cdot \left\| \nabla m^{\alpha}_{\mu 1} \right\| ds \\
		\overset{\eqref{L^infty for Z4Phi}\eqref{estimate of the multiplier}}&{\lesssim}M^{-1}\int_{0}^{t} (1+s)^{-\frac{1+\mu}{2}}  \eta  [ \theta,u ]( s) \cdot E_{5}^{2} [ \theta,u ] ( s) ds\\
		\overset{\eqref{estimate of auxiliary energies}\eqref{estimate of E_5[u]}}&{\lesssim} \int_{0}^{t} (1+s)^{-\frac{1+\mu}{2}}  E_{5}^{3} [ \theta] ( s) ds.
	\end{aligned}
\end{equation}		
For the term $J_2$, one has
\begin{equation} 
	\begin{aligned}	
		J_{2} & = \left|\int_{0}^{t} (1+s)^{\mu} \int_{ \R^{2} }   u_{i} u_{j} \partial_{ij}^{2} Z^{\alpha } \theta \cdot \partial_{s} Z^{\alpha } \theta dxds \right| \\
		& \lesssim \left|\int_{0}^{t}(1+s)^{\mu}  \int_{ \R^{2} } \left[   \partial_{j} (u_{i} u_{j}) \partial_{i} {Z}^{\alpha } \theta \cdot \partial_{s} Z^{\alpha } \theta + u_{i} u_{j} \cdot \partial_{i} {Z}^{\alpha } \theta \cdot \partial_{s}\partial_{j} Z^{\alpha } \theta \right]   dxds \right|,
	\end{aligned}
\end{equation}			
the first term of which can be controlled by		
\begin{equation} 
	\begin{aligned}		
		&\left| \int_{0}^{t}(1+s)^{\mu}  \int_{ \R^{2} }   \partial_{j} (u_{i} u_{j}) \partial_{i} {Z}^{\alpha } \theta \cdot \partial_{s} Z^{\alpha } \theta  dxds \right|\\
	\overset{\eqref{L^infty for Z4Phi}}{\lesssim} &M^{-1}\int_{0}^{t} (1+s)^{-\frac{\mu+1}{2}}  \eta   E^{2}_5ds\\
		\overset{\eqref{estimate of auxiliary energies}\eqref{estimate of E_5[u]}}{\lesssim}& \int_{0}^{t} (1+s)^{-\frac{1+\mu}{2}}  E_{5}^{3} [ \theta] ( s) ds,
	\end{aligned}
\end{equation}	
while the second term can be estimated as
\begin{equation} 
	\begin{aligned}		
		& \ \ \ \left|\int_{0}^{t}(1+s)^{\mu}  \int_{ \R^{2} }   u_{i} u_{j} \cdot \partial_{i} {Z}^{\alpha } \theta \cdot \partial_{s}\partial_{j} Z^{\alpha } \theta  dxds \right|	\\
		& \leq \frac{1}{2}  (1+t)^{\mu} \left| \int_{ \R^{2} }   u_{i}(t) u_{j}(t) \cdot \partial_{i} {Z}^{\alpha } \theta(t) \cdot \partial_{j} Z^{\alpha } \theta(t)  dx \right|\\
		&\ \ \ +\frac{1}{2}\left| \int_{ \R^{2} }   u_{i}(0) u_{j}(0) \cdot \partial_{i} {Z}^{\alpha } \theta(0) \cdot \partial_{j} Z^{\alpha } \theta(0)  dx \right|\\
		&\ \ \  + \frac{1}{2}\int_{0}^{t} \int_{ \R^{2} } \Big[  \left| \mu(1+s)^{\mu-1}  u_{i} u_{j} \cdot \partial_{i} {Z}^{\alpha } \theta \cdot \partial_{j} Z^{\alpha } \theta \right|\\
		&\ \ \ + \left| (1+s)^{\mu} \partial_{s} (u_{i} u_{j}) \cdot \partial_{i} {Z}^{\alpha } \theta \cdot \partial_{j} Z^{\alpha } \theta \right| 		\Big] dxds \\
		\overset{\eqref{L^infty for Z4Phi}}&{\lesssim} M^{-2}\left ( E_{5}^{2} [\theta](t) +E_{5}^{2} [\theta](0) \right ) +  M^{-1} \int_{0}^{t} (1+s)^{-\frac{1+\mu}{2}}  \eta  [ \theta,u ]( s) \cdot E_{5}^{2} [ \theta,u ] ( s) ds\\
		\overset{\eqref{estimate of auxiliary energies}\eqref{estimate of E_5[u]}}&{\lesssim} M^{-2} \left( E_{5}^{2} [\theta](t) +E_{5}^{2} [\theta](0)  \right)+M^{-1}\int_{0}^{t} (1+s)^{-\frac{1+\mu}{2}}  E_{5}^{3} [ \theta] ( s) ds.
	\end{aligned}
\end{equation}	
Combining all the estimates above, we obtain the desired estimate \eqref{forcing estimate in energy estimate}. 
\end{proof}

\begin{lem} 
	If the bootstrap assumptions hold on $[0,T] \subset [0,T_{\epsilon}]$, then we have
	\begin{equation} 
		\label{estimate of E5 theta}
		\begin{aligned}
			E_{5} [ \theta ] (t)\lesssim E_{5} [ \theta ] ( 0) 
		\end{aligned}
	\end{equation}
	holds for all $t \in [0,T]$.
\end{lem}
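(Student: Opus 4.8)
The plan is to combine the energy identity of Lemma~\ref{lemEF} with the forcing estimate \eqref{forcing estimate in energy estimate}, absorb the top-order error term coming from the linear damping, and then close a Gronwall inequality whose exponential factor is tamed by the very definition of $T_\epsilon$. All the hard analytic work has already been carried out in the two preceding lemmas, so what remains is essentially bookkeeping together with the correct choice of the large parameter $M$.

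First I would discard the nonnegative term $\frac{\mu(2-\mu)}{4}G_5^2[\theta](t)$ in \eqref{energyoftheta} and insert \eqref{forcing estimate in energy estimate}, summed over $|\alpha|\le5$ (finitely many indices, each bounded by the same right-hand side), to get
\[
E_5^2[\theta](t)\le C_1E_5^2[\theta](0)+C_1\int_0^t(1+s)^{-2}E_5^2[\theta](s)\,ds+C_1\int_0^t(1+s)^{-\frac{1+\mu}{2}}E_5^3[\theta](s)\,ds+\frac{C_1}{M}\bigl(E_5^2[\theta](t)+E_5^2[\theta](0)\bigr)
\]
for some universal $C_1>0$. Choosing $M\ge 4C_1$, the term $\tfrac{C_1}{M}E_5^2[\theta](t)$ is absorbed into the left side. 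Next I would use the bootstrap assumption \eqref{bootstrap assumption}, which gives $E_5[\theta](s)\le E_5[\theta,u](s)\le M\epsilon$, to linearize the cubic term, $E_5^3[\theta](s)\le M\epsilon\,E_5^2[\theta](s)$, arriving at
\[
E_5^2[\theta](t)\le 4C_1E_5^2[\theta](0)+2C_1\int_0^t\Bigl[(1+s)^{-2}+M\epsilon(1+s)^{-\frac{1+\mu}{2}}\Bigr]E_5^2[\theta](s)\,ds,
\]
so that Gronwall's inequality yields $E_5^2[\theta](t)\le 4C_1E_5^2[\theta](0)\exp\bigl(2C_1\int_0^t[(1+s)^{-2}+M\epsilon(1+s)^{-\frac{1+\mu}{2}}]\,ds\bigr)$.

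The last step is to bound the exponent uniformly on $[0,T_\epsilon]$. Since $\int_0^t(1+s)^{-2}\,ds\le1$, only the second integral matters. For $0<\mu<1$ it is $\le\frac{2}{1-\mu}(1+T_\epsilon)^{\frac{1-\mu}{2}}$, and the defining relation $(1+T_\epsilon)^{\frac{1-\mu}{2}}\epsilon=M^{-2}$ gives $M\epsilon\int_0^t(1+s)^{-\frac{1+\mu}{2}}\,ds\le\frac{2}{M(1-\mu)}\le1$ once $M$ is large enough; for $\mu=1$ it equals $\ln(1+t)\le\ln(1+T_\epsilon)$, and $\ln(1+T_\epsilon)\epsilon=M^{-1}$ gives $M\epsilon\ln(1+T_\epsilon)=1$. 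In either case the exponent is at most $4C_1$, so $E_5[\theta](t)\le 2\sqrt{C_1}\,e^{2C_1}E_5[\theta](0)$, which is \eqref{estimate of E5 theta}. I expect the only mildly delicate point to be keeping the constants straight across the two regimes of $\mu$: the factor $\tfrac{1}{1-\mu}$ appearing when $\mu<1$ is harmless because $\mu$ is a fixed parameter and $M$ (hence $\epsilon_0$) is chosen afterward, but it is precisely the reason the two definitions of $T_\epsilon$ must be tailored differently.
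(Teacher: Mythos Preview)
Your proposal is correct and follows essentially the same route as the paper: substitute the forcing estimate into the energy identity of Lemma~\ref{lemEF}, absorb the $M^{-1}E_5^2[\theta](t)$ term by largeness of $M$, linearize $E_5^3$ via the bootstrap bound $E_5[\theta]\le M\epsilon$, apply Gronwall, and control the resulting exponent using the defining relations for $T_\epsilon$ in the two regimes $\mu\in(0,1)$ and $\mu=1$. Your constant tracking is more explicit than the paper's, but the argument is the same.
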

\begin{proof}
	Substituting \eqref{forcing estimate in energy estimate} into lemma \ref{lemEF} and noting that $M$ is large enough, we have
	\begin{equation} 
		\begin{aligned}
			E_{5}^{2} [ \theta ] (t) 
			& \lesssim  E_{5}^{2} [ \theta ] ( 0) + \int_{0}^{t} (1+s)^{-2} E_{5}^{2} [ \theta ] ( s) ds+\int_{0}^{t} (1+s)^{-\frac{\mu+1}{2}}  E_{5}^{3} [ \theta] ( s) ds.
		\end{aligned}
	\end{equation}
Employing Gronwall inequality and using the bootstrap assumption yield
	\begin{equation} 
		\begin{aligned}
			E_{5}^{2} [ \theta ] (t) &\lesssim E_{5}^{2} [ \theta ] ( 0)  \exp \left( \int_{0}^{t} (1+s)^{-2}ds +\int_{0}^{t}(1+s)^{ - \frac{1+\mu}{2} }  M \epsilon  ds \right)\\
			&\lesssim E_{5}^{2} [ \theta ] ( 0)  \exp \left( \int_{0}^{t}(1+s)^{ - \frac{1+\mu}{2} }  M \epsilon  ds \right).
		\end{aligned}
	\end{equation}
	If $\mu\in(0,1)$, we obtain
	\begin{equation} 
		\begin{aligned}
			E_{5}^{2} [ \theta ] (t) & \lesssim E_{5}^{2} [ \theta ] ( 0)  \exp \left( (1+T_{\epsilon})^{ - \frac{1-\mu}{2}} M \epsilon \right) \\
			\overset{\eqref{usage of T_epsilon}}&{\lesssim} E_{5}^{2} [ \theta ] ( 0)  \exp (M^{-1}) \lesssim E_{5}^{2} [ \theta ] ( 0),
		\end{aligned}
	\end{equation}
	while if $ \mu = 1 $, recall the choice of $T_{\epsilon}$ in this case:
	\begin{equation} 
		\begin{aligned}
			\ln(1+T_{\epsilon})\epsilon = \frac{1}{M},
		\end{aligned}
	\end{equation}
	 we have
	\begin{equation} 
		\begin{aligned}
			E_{5}^{2} [ \theta ] (t)
			& \lesssim E_{5}^{2} [ \theta ] ( 0)  \exp \left(\ln(1+T_{\epsilon}) M \epsilon\right)\lesssim E_{5}^{2} [ \theta ] ( 0) .
		\end{aligned}
	\end{equation}
\end{proof}

Finally we are able to prove the key bootstrap lemma. 
\begin{proof}[Proof of the bootstrap lemma \ref{bootstrap lemma}]
	From \eqref{estimate of auxiliary energies}\eqref{estimate of E_5[u]}\eqref{estimate of E5 theta}, we have that
	\begin{equation}
		\eta[\theta,u](t)+E_5[\theta,u](t)\le CE_5[\theta](0)\le\frac{1}{2}M\epsilon. 
	\end{equation}
\end{proof}

\section{Acknowledgement}

The authors would like to express their sincere thank to professor Yi Zhou for the helpful discussion.

The second author was partially supported by NSFC(12271487, 12171097).

\appendix
\section{Useful lemmas}
\begin{lem} \label{lem1}
	For any vector-valued function $U(x) = (U_{1}(x), U_{2}(x))\in H^1(\R^2)^2$, there holds that
	\begin{equation} 
		\label{estimate of Du by div u}
		\left\| \nabla U \right\| \leq \left\| \operatorname{curl} U \right\|+\left\| \operatorname{div} U \right\|.
	\end{equation}
\end{lem}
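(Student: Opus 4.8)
The plan is to establish the stronger pointwise-in-$L^2$ identity
\[
\left\|\nabla U\right\|^2=\left\|\operatorname{div}U\right\|^2+\left\|\operatorname{curl}U\right\|^2,
\]
from which the claimed inequality is immediate since $\sqrt{a^2+b^2}\le a+b$ for nonnegative $a,b$. First I would reduce to the case $U\in C_c^\infty(\R^2)^2$ by the density of $C_c^\infty$ in $H^1$, noting that both sides of the identity are continuous with respect to the $H^1$ norm; this justifies all the integrations by parts below.

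Next I would simply expand. With $\operatorname{div}U=\partial_1U_1+\partial_2U_2$ and $\operatorname{curl}U=\partial_1U_2-\partial_2U_1$, we get
\[
\left\|\operatorname{div}U\right\|^2+\left\|\operatorname{curl}U\right\|^2=\sum_{i,j=1}^2\left\|\partial_iU_j\right\|^2+2\int_{\R^2}\left(\partial_1U_1\,\partial_2U_2-\partial_2U_1\,\partial_1U_2\right)dx,
\]
and $\sum_{i,j}\|\partial_iU_j\|^2=\|\nabla U\|^2$. It therefore suffices to show the cross term vanishes. Integrating by parts twice (first in $x_1$, then in $x_2$) gives $\int\partial_1U_1\,\partial_2U_2\,dx=-\int U_1\,\partial_1\partial_2U_2\,dx=\int\partial_2U_1\,\partial_1U_2\,dx$, so the integrand $\partial_1U_1\,\partial_2U_2-\partial_2U_1\,\partial_1U_2$ integrates to zero. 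This yields the identity, and hence $\|\nabla U\|\le\|\operatorname{curl}U\|+\|\operatorname{div}U\|$.

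There is essentially no obstacle here: the only point requiring a word of care is the validity of the integration by parts, which is why the reduction to $C_c^\infty$ (or, equivalently, an approximation argument) is carried out at the outset; everything else is a one-line computation.
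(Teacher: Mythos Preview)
Your proof is correct and establishes exactly the same identity $\|\nabla U\|^2=\|\operatorname{div}U\|^2+\|\operatorname{curl}U\|^2$ that the paper uses. The only difference is the tool: the paper obtains the identity in one line via Parseval, observing that $|\xi|^2(|\hat U_1|^2+|\hat U_2|^2)=|\xi_1\hat U_1+\xi_2\hat U_2|^2+|\xi_2\hat U_1-\xi_1\hat U_2|^2$, whereas you work in physical space and kill the cross term by integrating by parts after a density reduction to $C_c^\infty$. Both arguments are equally short; the Fourier version avoids the approximation step entirely since Parseval is valid directly for $H^1$ functions, while your version has the minor advantage of not invoking the Fourier transform at all.
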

\begin{proof}
	The result follows from the Parseval identity and the fact that $|\xi|^2(|\hat U_1|^2+|\hat U_2|^2)=|\xi_2\hat U_1-\xi_1\hat U_2|^2+|\xi_1\hat U_1+\xi_2\hat U_2|^2$.
\end{proof}

\begin{lem}[Poincar\'e inequality] \label{lem23}
	Suppose $p\in[1,\infty]$. For any $f\in W^{1,p}_0(B_R(0))$, where $B_R(0)=\{x\in\R^2:|x|<R\}$, we have that
	\begin{equation}
		\begin{aligned}
			\|f\|_{L^p(\R^2)}\le R\|\nabla f\|_{L^p(\R^2)}.
		\end{aligned}
		\label{Poincare ineq.}
	\end{equation}	
\end{lem}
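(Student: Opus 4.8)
The plan is to reduce \eqref{Poincare ineq.} to a one-dimensional estimate along the $x_1$-direction and to exploit the fact that $f$ vanishes at \emph{both} endpoints $x_1=\pm R$ of the enclosing slab $\{|x_1|<R\}$; this two-sided vanishing is exactly what makes the constant come out as $R$ rather than the crude $2R$ produced by a one-sided fundamental theorem of calculus.

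By a standard density argument it suffices to prove \eqref{Poincare ineq.} for $f\in C_c^\infty(B_R(0))$: for $p\in[1,\infty)$ both sides are continuous in the $W^{1,p}$-norm and $C_c^\infty(B_R(0))$ is dense in $W^{1,p}_0(B_R(0))$, while the case $p=\infty$ is recovered by first establishing the estimate for every $p<\infty$ and then letting $p\to\infty$, using that $B_R(0)$ has finite measure so that $\|g\|_{L^p(B_R(0))}\to\|g\|_{L^\infty(B_R(0))}$ for $g\in\{f,\partial_1 f\}$.

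So fix $x_2$ with $|x_2|<R$. Since $f$ has compact support inside the open ball, $f(x_1,x_2)=0$ whenever $|x_1|\ge R$, and the fundamental theorem of calculus gives, for every $x_1\in(-R,R)$,
\begin{equation}
	f(x_1,x_2)=\int_{-R}^{x_1}\partial_1 f(s,x_2)\,ds=-\int_{x_1}^{R}\partial_1 f(s,x_2)\,ds.
\end{equation}
Averaging these two representations yields the pointwise bound
\begin{equation}
	|f(x_1,x_2)|\le\frac12\int_{-R}^{R}|\partial_1 f(s,x_2)|\,ds.
\end{equation}
For $p\in[1,\infty)$, H\"older's inequality in $s$ gives $\big(\int_{-R}^{R}|\partial_1 f(s,x_2)|\,ds\big)^p\le(2R)^{p-1}\int_{-R}^{R}|\partial_1 f(s,x_2)|^p\,ds$, whence integrating in $x_1$ over $(-R,R)$ produces one more factor of $2R$:
\begin{equation}
	\int_{-R}^{R}|f(x_1,x_2)|^p\,dx_1\le\frac{(2R)^{p-1}}{2^p}\cdot 2R\int_{-R}^{R}|\partial_1 f(s,x_2)|^p\,ds=R^p\int_{-R}^{R}|\partial_1 f(s,x_2)|^p\,ds.
\end{equation}
Integrating in $x_2$ over $\R$ and using $|\partial_1 f|\le|\nabla f|$ yields $\|f\|_{L^p(\R^2)}\le R\|\nabla f\|_{L^p(\R^2)}$; for $p=\infty$ the second display directly gives $|f(x_1,x_2)|\le\frac12(2R)\|\partial_1 f\|_{L^\infty}\le R\|\nabla f\|_{L^\infty}$.

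There is essentially no real obstacle here: the only point that requires a moment's thought is extracting the \emph{sharp} constant $R$, which is why one slices along coordinate lines (thereby avoiding the radial Jacobian factor $r$) and integrates from both endpoints before averaging; everything else is the classical Friedrichs--Poincar\'e computation.
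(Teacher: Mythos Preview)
Your proof is correct and obtains the stated constant $R$, but the route differs from the paper's. You slice along the $x_1$-axis, view $B_R(0)$ as sitting inside the slab $\{|x_1|<R\}$, and use the two-sided averaging $|f|\le\tfrac12\int_{-R}^{R}|\partial_1 f|\,ds$ to knock the naive $2R$ down to $R$; this is the classical Friedrichs argument and works verbatim for any domain of width $2R$ in one coordinate direction. The paper instead works in polar coordinates: from $|f(r,\phi)|^p\le(R-r)^{p-1}\int_r^R|\partial_r f|^p\,d\rho$ one integrates in $r\,dr\,d\phi$, switches the $r$- and $\rho$-integrals, and bounds $\int_0^\rho(R-r)^{p-1}r\,dr\le\frac{R^p}{p}\rho$ (using $r\le\rho$), which reconstructs the polar measure $\rho\,d\rho$ and yields the slightly sharper constant $R/p^{1/p}$. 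So your remark that one slices in Cartesian coordinates ``thereby avoiding the radial Jacobian factor $r$'' somewhat mischaracterizes the issue: in the paper's argument that Jacobian is not an obstacle but precisely what makes the polar computation close with a clean constant. Either approach is perfectly adequate for the paper's purposes, where only $\|f\|_{L^p}\lesssim R\|\nabla f\|_{L^p}$ is ever used.
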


\begin{proof}
	It suffices to prove the inequality for $f\in C_c^\infty(B_R(0))$. We first deal with the case $p\in[1,\infty)$. In the polar coordinate $ (r, \phi) $ of $\R^2$, we have
	\begin{equation}
		\begin{aligned}
			|f(r, \phi)|^p & = \left|\int_{r}^{R} \partial_rf (\xi, \phi) d\xi\right|^p \overset{\text{H\"older}}{\le} (R-r)^{p-1} \int_r^R|\partial_rf(\rho,\phi)|^pd\rho.
		\end{aligned}
	\end{equation}
	Hence, the $L^p$ norm of $f$ can be estimated as
	\begin{equation}
		\begin{aligned}
			\int_{ \R^{2} }|f|^pdx&=\int_{0}^{2\pi}d\phi\int_0^R|f(r,\phi)|^prdr\\
			&\le\int_{0}^{2\pi}d\phi\int_0^R\left((R-r)^{p-1}\int_r^R|\partial_rf(\rho,\phi)|^pd\rho\right)rdr\\
			\overset{\text{Fubini}}&{\le}\int_{0}^{2\pi}d\phi\int_0^R\left(\int_0^\rho(R-r)^{p-1}rdr\right)|\partial_rf(\rho,\phi)|^pd\rho\\
			&\le\frac{R^p}{p}\int_{0}^{2\pi}d\phi\int_0^R|\partial_rf(\rho,\phi)|^p\rho d\rho=\frac{R^p}{p}\int_{ \R^{2} }|\partial_rf|^pdx.
		\end{aligned}
	\end{equation}
	Note that $|\partial_rf|\le|\nabla f|$, this completes the proof. The case $p=\infty$ can be proved with the fundamental theorem of calculus. 
\end{proof}	

\begin{lem}[Hardy inequality, \cite{Hardy}, Section 2.1]
	Suppose $n\ge2$, $p\in[1,n)$, and $f\in W^{1,p}(\mathbb{R}^n)$, then we have that
	\begin{equation}
		\label{Hardy inequality}
		\left\|\frac{f}{|x|}\right\|_{L^p(\R^n)}\le\frac{p}{n-p}\|\nabla f\|_{L^p(\R^n)}. 
	\end{equation}
\end{lem}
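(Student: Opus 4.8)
The plan is to prove the estimate first for $f\in C_c^\infty(\mathbb{R}^n)$ and then extend it to a general $f\in W^{1,p}(\mathbb{R}^n)$ by a density-and-Fatou argument, using that $C_c^\infty(\mathbb{R}^n)$ is dense in $W^{1,p}(\mathbb{R}^n)$. The algebraic engine of the proof is the identity
\[
\operatorname{div}\left(\frac{x}{|x|^{p}}\right)=\frac{n-p}{|x|^{p}},\qquad x\neq0,
\]
which is a direct computation. First I would fix $f\in C_c^\infty(\mathbb{R}^n)$ and a small radius $\delta>0$, multiply this identity by $|f|^{p}$, integrate over the region $\{|x|>\delta\}$, and integrate by parts. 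The only boundary contribution comes from the sphere $\{|x|=\delta\}$, where the outward unit normal of the region is $-x/|x|$, so the flux of $x/|x|^{p}$ across it equals $-\delta^{1-p}$; hence the boundary term is $-\delta^{1-p}\int_{|x|=\delta}|f|^{p}\,dS$, which is $O(\delta^{\,n-p})$ and tends to $0$ as $\delta\to0$ precisely because $p<n$. Letting $\delta\to0$ yields
\[
(n-p)\int_{\mathbb{R}^n}\frac{|f|^{p}}{|x|^{p}}\,dx=-\int_{\mathbb{R}^n}\frac{x}{|x|^{p}}\cdot\nabla\!\left(|f|^{p}\right)dx
=-p\int_{\mathbb{R}^n}\frac{|f|^{p-2}f}{|x|^{p-1}}\,\frac{x}{|x|}\cdot\nabla f\,dx.
\]

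Next I would estimate the right-hand side by $p\int_{\mathbb{R}^n}\frac{|f|^{p-1}}{|x|^{p-1}}\,|\nabla f|\,dx$ and apply H\"older's inequality with exponents $p'=p/(p-1)$ and $p$, which gives
\[
(n-p)\int_{\mathbb{R}^n}\frac{|f|^{p}}{|x|^{p}}\,dx\le p\left(\int_{\mathbb{R}^n}\frac{|f|^{p}}{|x|^{p}}\,dx\right)^{1/p'}\left\|\nabla f\right\|_{L^{p}(\mathbb{R}^n)}.
\]
Since $f$ is smooth and compactly supported and $p<n$, the weighted integral $\int|f|^{p}|x|^{-p}\,dx$ is finite, so it may be divided out, producing the claimed inequality with the sharp constant $p/(n-p)$ for all $f\in C_c^\infty(\mathbb{R}^n)$. (When $p=1$ no H\"older step is needed: one bounds the right-hand side directly by $\|\nabla f\|_{L^1}$.) Finally, for general $f\in W^{1,p}(\mathbb{R}^n)$ I would pick $f_k\in C_c^\infty(\mathbb{R}^n)$ with $f_k\to f$ in $W^{1,p}$, pass to a subsequence converging a.e., and combine Fatou's lemma applied to $\{|f_k|^{p}/|x|^{p}\}$ with $\|\nabla f_k\|_{L^{p}}\to\|\nabla f\|_{L^{p}}$ to conclude.

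The main obstacle I anticipate is the careful justification of the integration by parts near the origin — in particular verifying rigorously that the flux over the shrinking sphere vanishes — together with the related point that $t\mapsto|t|^{p}$ is not $C^1$ at $t=0$ when $1\le p<2$, so the chain rule used for $\nabla(|f|^{p})$ needs care where $f$ changes sign. The clean way to handle both at once is to replace $|f|^{p}$ by the regularization $(f^{2}+\varepsilon^{2})^{p/2}$, carry out the above computation (which is then entirely smooth), and let $\varepsilon\to0$ at the end by dominated or monotone convergence.
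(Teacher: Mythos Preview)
Your proof is correct and is the standard one. The paper, however, does not supply its own proof of this lemma: it is stated with a direct citation to the reference and then used as a black box. So there is nothing to compare against beyond noting that your argument recovers exactly the classical derivation (divergence identity for $x/|x|^p$, integration by parts, H\"older, then density), with the sharp constant $p/(n-p)$.
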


\begin{lem}[Hardy type inequality in a ball]
	 For smooth $f(t, x)$ on $[0, T]\times\R^2$ with $\operatorname{supp}_xf\subset B\left(0,\frac12+t\right)$, we have that
	\begin{equation}
\begin{aligned}
		\label{Hardy inequality near boundary}
		&\left\|\frac{f}{t+1-|x|}\right\|_{L^\infty(\R^2)}\lesssim\|\nabla f\|_{L^\infty(\R^2)},\\
&\left\|\frac{f}{t+1-|x|}\right\|_{L^2(\R^2)}\lesssim\|\nabla f\|_{L^2(\R^2)}.\\
\end{aligned}
	\end{equation}
\end{lem}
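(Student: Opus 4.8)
\textbf{Proof proposal for the Hardy-type inequality in a ball \eqref{Hardy inequality near boundary}.}

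The plan is to reduce both inequalities to the standard one-dimensional Hardy inequality along rays emanating from the origin, exploiting the compact support condition $\operatorname{supp}_x f\subset B(0,\tfrac12+t)$. Fix $t$ and write $\sigma=t+1-|x|$. On the support of $f$ we have $|x|\le\tfrac12+t$, hence $\sigma=t+1-|x|\ge\tfrac12$; more importantly, $f$ vanishes for $|x|\ge\tfrac12+t$, i.e. for $\sigma\le\tfrac12$. In polar coordinates $(r,\phi)$, for each fixed $\phi$ the function $r\mapsto f(t,r,\phi)$ is supported in $r\le\tfrac12+t$, and we may integrate inward from the support boundary: for $r\le\tfrac12+t$,
\begin{equation}
	f(t,r,\phi)=-\int_r^{\frac12+t}\partial_\rho f(t,\rho,\phi)\,d\rho.
\end{equation}

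First I would prove the $L^\infty$ estimate. From the above identity, since $|\partial_\rho f|\le|\nabla f|_\infty$ pointwise,
\begin{equation}
	|f(t,r,\phi)|\le\left(\tfrac12+t-r\right)|\nabla f|_\infty\le\sigma(t,x)\,|\nabla f|_\infty,
\end{equation}
because $\tfrac12+t-r\le t+1-r=\sigma$ (using $r=|x|$). Dividing by $\sigma$ gives the pointwise bound $|f/\sigma|\le|\nabla f|_\infty$ on the support, and it holds trivially (as $0$) off the support; taking the supremum over $x$ yields the first inequality.

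Next I would prove the $L^2$ estimate. The cleanest route is to change variables so that the weight becomes a distance to a boundary and then apply the classical one-dimensional Hardy inequality. Concretely, fix $\phi$ and set $g(\rho)=f(t,\rho,\phi)$, supported in $[0,\tfrac12+t]$. Writing $s=\tfrac12+t-\rho$ (so $s$ ranges over $[0,\tfrac12+t]$ with $g$ supported near $s$ large, vanishing at $s=0$), the function $h(s)=g(\tfrac12+t-s)$ vanishes at $s=0$, so the one-dimensional Hardy inequality $\int_0^\infty |h(s)/s|^2\,ds\le 4\int_0^\infty|h'(s)|^2\,ds$ applies. Since $t+1-\rho=\sigma$ satisfies $\tfrac12+t-\rho=\sigma-\tfrac12\le s$ and also $s=\tfrac12+t-\rho\le t+1-\rho=\sigma$, the weights $s$ and $\sigma$ are comparable on the support of $g$: indeed $\sigma-\tfrac12\le s\le\sigma$ and $\sigma\ge\tfrac12$ there, so $\tfrac12\sigma\le s\le\sigma$. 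Therefore
\begin{equation}
	\int_0^{\frac12+t}\left|\frac{g(\rho)}{t+1-\rho}\right|^2d\rho\lesssim\int_0^{\frac12+t}\left|\frac{h(s)}{s}\right|^2ds\lesssim\int_0^{\frac12+t}|h'(s)|^2ds=\int_0^{\frac12+t}|\partial_\rho g(\rho)|^2d\rho.
\end{equation}
Now multiply by $r\,dr\,d\phi$ and integrate. The only subtlety is that the full $L^2(\R^2)$ norm carries the Jacobian factor $r=|x|$; since the weight $1/\sigma$ and the derivative $|\partial_r f|\le|\nabla f|$ are treated pointwise and $r\le\tfrac12+t$ is bounded on the support, multiplying the one-dimensional inequality above through by $r$ and integrating in $\phi$ gives
\begin{equation}
	\left\|\frac{f}{t+1-|x|}\right\|_{L^2(\R^2)}^2=\int_0^{2\pi}\!\!\int_0^{\frac12+t}\left|\frac{f}{t+1-r}\right|^2 r\,dr\,d\phi\lesssim\int_0^{2\pi}\!\!\int_0^{\frac12+t}|\partial_r f|^2 r\,dr\,d\phi\le\|\nabla f\|_{L^2(\R^2)}^2,
\end{equation}
which is the desired estimate.

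The main obstacle, such as it is, is purely bookkeeping: one must be careful that the relevant weight is $t+1-|x|$ while the support is inside $B(0,\tfrac12+t)$, so the distance-to-support-boundary $\tfrac12+t-|x|$ and the weight-defining quantity $\sigma=t+1-|x|$ differ by the constant $\tfrac12$; this gap is what makes $\sigma$ bounded below by $\tfrac12$ on the support and hence makes the two weights comparable, so no singularity of $1/\sigma$ is ever encountered and the estimate is in fact easier than a genuine Hardy inequality at a boundary. An alternative, slicker proof avoiding polar coordinates would integrate by parts: $\int \sigma^{-2}f^2\,dx$, using $\nabla(\sigma^{-1})=\sigma^{-2}\nabla|x|=\sigma^{-2}x/|x|$ and $|\nabla|x||=1$, write $\sigma^{-2}f^2=-\nabla\cdot(\text{something})+\ldots$ and absorb; but the ray-integration argument above is the most transparent and requires no boundary terms thanks to the compact support.
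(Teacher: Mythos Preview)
Your $L^\infty$ argument is correct and matches the paper's approach.

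Your $L^2$ argument has a genuine gap. You derive the one-dimensional inequality
\[
\int_0^{\frac12+t}\left|\frac{g(\rho)}{t+1-\rho}\right|^2d\rho\lesssim\int_0^{\frac12+t}|\partial_\rho g(\rho)|^2d\rho,
\]
which is an inequality between two numbers depending only on $\phi$, and then write ``multiplying the one-dimensional inequality above through by $r$ and integrating in $\phi$''. But $r=\rho$ is the variable of integration, not a parameter you can multiply through by. The two-dimensional $L^2$ norm carries the Jacobian $\rho\,d\rho$, so what you actually need is the \emph{weighted} one-dimensional Hardy inequality
\[
\int_0^{\frac12+t}\left|\frac{g(\rho)}{t+1-\rho}\right|^2\rho\,d\rho\lesssim\int_0^{\frac12+t}|\partial_\rho g(\rho)|^2\rho\,d\rho,
\]
with constant independent of $t$. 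This is true, but it does not follow from the unweighted version, and the phrase ``$r\le\tfrac12+t$ is bounded on the support'' does not help: bounding $\rho$ from above on the left-hand side still leaves you comparing $(\tfrac12+t)\int|g'|^2\,d\rho$ with $\int|g'|^2\rho\,d\rho$, which fails near $\rho=0$.

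The fix is precisely the integration-by-parts route you dismiss at the end, and that is what the paper does: working directly with the measure $r\,dr$, one writes
\[
\int_0^{t+1}\frac{f^2 r}{(t+1-r)^2}\,dr
=-\int_0^{t+1}\frac{\partial_r(f^2 r)}{t+1-r}\,dr
=-2\int_0^{t+1}\frac{f\,\partial_r f\,r}{t+1-r}\,dr-\int_0^{t+1}\frac{f^2}{t+1-r}\,dr,
\]
drops the second (nonpositive after the sign) term, and applies Cauchy--Schwarz to absorb the first back into the left-hand side. The Jacobian $r$ is carried along automatically, and no separate weighted Hardy inequality is needed.
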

\begin{proof}
By the fundamental theorem of calculus, it is easy to get
\begin{equation}
\begin{aligned}
|f(t, r\omega)|\lesssim&\left|\int_{\mathbb{S}^1}\int_r^{t+1}\partial_rf(t, r\omega)drd\omega\right|\\
&\lesssim |\nabla f|(t+1-r),
\end{aligned}
	\end{equation}	
which yields the first inequality in \eqref{Hardy inequality near boundary}.

For the second inequality, noting that
\begin{equation}
\begin{aligned}
\left\|\frac{f}{t+1-r}\right\|^2_{L^2(\R^2)}\lesssim&\int_0^{t+1}\frac{f^2(t, r\omega)r}{(t+1-r)^2}dr\\
=&-\int_0^{t+1}\frac{\partial_r(f^2(t, r\omega)r)}{t+1-r}dr\\
=&-2\int_0^{t+1}\frac{f\partial_rfr}{t+1-r}dr-\int_0^{t+1}\frac{f^2}{t+1-r}dr\\
\le& -2\int_0^{t+1}\frac{f\partial_rfr}{t+1-r}dr\\
\lesssim& \left\|\frac{f}{t+1-r}\right\|_{L^2(\R^2)}\|\nabla f\|_{L^2(\R^2)},
\end{aligned}
	\end{equation}	
then it follows.

\end{proof}

\begin{lem}[Gagliardo-Nirenberg inequality, \cite{Nir59}, Page 125]
	Let $q \in [1,\infty]$ be a positive extended real quantity. Let $j,m $ be non-negative integers such that $j<m $. Furthermore, let $r \in [1,\infty]$ be a positive extended real quantity, $p\geq 1$ be real and $ \alpha \in [0,1] $ such that the relations
		\begin{equation}
		\frac{1}{p} = \frac{j}{n} + \alpha \left( \frac{1}{r} - \frac{m}{n} \right) + \frac{1-\alpha}{q}, \ \frac{j}{m} \leq \alpha \leq 1
	\end{equation}
hold. Then
	\begin{equation}
		\left\| D^{j} f  \right\|_{L^p(\mathbb{R}^n)}\le C \left\| D^{m} f  \right\|_{L^r(\R^n)} ^{\alpha} \left\| f  \right\|_{L^q(\R^n)} ^{1-\alpha}
	\end{equation}
for any $ f \in L^q(\R^n) $ such that $ D^{m} f \in L^r(\R^n) $, with two exceptional cases:

1. If $j=0$ (with the understanding that $D^{0} f = f $ ), $q=+\infty$ and $rm<n$, then an additional assumption is needed: either $f$ tends to $0$ at infinity, or $ f \in L^s(\R^n) $ for some finite value of $s$;

2. If $r>1$ and $m-j-\frac{n}{r}$ is a non-negative integer, then the additional assumption $\frac{j}{m} \leq \alpha < 1$ (notice the strict inequality) is needed.

In any case, the constant $C>0$ depends on the parameters $j,m,n,q,r,\alpha$, but not on $f$.

\end{lem}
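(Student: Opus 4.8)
The plan is to prove this along the classical lines: establish a base first-order Sobolev inequality, bootstrap it to all orders of differentiation by one-dimensional interpolation, and then invoke scaling to pin down the stated exponent relation. First I would reduce to $f \in C_c^\infty(\R^n)$ by density; this is precisely the step where the two exceptional hypotheses enter, so I would return to them at the end. The base case is the Gagliardo--Nirenberg--Sobolev inequality $\|f\|_{L^{n/(n-1)}(\R^n)} \lesssim \prod_{i=1}^{n}\|\partial_i f\|_{L^1}^{1/n} \le \|\nabla f\|_{L^1}$: bound $|f(x)|$ by $\int_\R |\partial_i f|\,dx_i$ for each coordinate $i$, multiply the $n$ resulting inequalities, and integrate one variable at a time using the generalized H\"older (Loomis--Whitney) inequality. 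Applying this to $|f|^{\gamma}$ with the power $\gamma$ chosen so that scaling matches, and using H\"older on the leftover factor, upgrades it to $\|f\|_{L^{q^\ast}} \lesssim \|\nabla f\|_{L^q}$ with $\frac{1}{q^\ast} = \frac{1}{q} - \frac{1}{n}$ for $1 \le q < n$; iterating in the number of derivatives and interpolating against $L^q$ by H\"older produces the full first-order scale.

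Next I would pass to intermediate derivatives $D^j$ with $0 < j < m$. The workhorse is the one-variable Landau--Kolmogorov estimate $\|g'\|_{L^p(I)} \lesssim \ell^{-1}\|g\|_{L^p(I)} + \ell\,\|g''\|_{L^p(I)}$ on an interval $I$ of length $\ell$, obtained from the fundamental theorem of calculus and the mean value theorem. Covering $\R^n$ by cubes of side $\ell$, applying this coordinate-wise, summing the $p$-th powers, and optimizing over $\ell$ yields $\|D^j f\|_{L^p} \lesssim \|D^m f\|_{L^p}^{j/m}\|f\|_{L^p}^{1-j/m}$; combining this with the first-order estimates above and a further H\"older interpolation gives $\|D^j f\|_{L^p} \lesssim \|D^m f\|_{L^r}^{\alpha}\|f\|_{L^q}^{1-\alpha}$ for a family of admissible pairs $(\alpha,p)$. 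The rescaling $f \mapsto f(\lambda x)$ then determines which pairs can occur: dimensional analysis forces $\frac{1}{p} = \frac{j}{n} + \alpha\left(\frac{1}{r} - \frac{m}{n}\right) + \frac{1-\alpha}{q}$ as a \emph{necessary} condition (let $\lambda \to 0$ and $\lambda \to \infty$), and it is exactly under this relation that the preceding construction closes.

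Finally I would dispose of the two exceptional cases. If $j = 0$, $q = \infty$ and $rm < n$, the intended target $L^\infty$ lies at the failing Sobolev endpoint $W^{m,r} \not\hookrightarrow L^\infty$, so one cannot simply discard the tail of $f$; adding either decay at infinity or $f \in L^s$ for some finite $s$ lets one route the argument through an intermediate finite exponent. If $r > 1$ and $m - j - n/r$ is a non-negative integer, then $D^j f$ sits on the borderline between $W^{m-j,r}$ and a H\"older/Lipschitz class, where the embedding degenerates; demanding the strict inequality $\frac{j}{m} \le \alpha < 1$ keeps the interpolation off that endpoint. I expect the main obstacle to be the interpolation-on-cubes step together with the scale optimization, and the careful bookkeeping of these endpoint restrictions; the remainder reduces to scaling and the elementary base inequality.
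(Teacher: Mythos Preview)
The paper does not prove this general statement at all: it is stated with a citation to Nirenberg's original paper and left without argument. The authors immediately remark that they only need the special case $j=0$, $m=2$, $n=2$, $p=\infty$, $q=r=2$, and in the \emph{following} lemma they give a short self-contained proof of that case by splitting the Fourier side into $\{|\xi|\le R\}$ and $\{|\xi|>R\}$, applying Cauchy--Schwarz on each piece, and optimizing over $R$.

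Your outline is the classical route to the full Gagliardo--Nirenberg inequality (base Sobolev via Loomis--Whitney, Landau--Kolmogorov for intermediate derivatives, scaling to fix the exponent relation, endpoint bookkeeping) and is broadly correct as a sketch. But it is vastly more than what the paper does or needs: you are proposing to reprove a cited result in full generality, whereas the paper simply quotes it and supplies a two-line Fourier proof of the single instance actually used. If your goal is to match the paper, you should just cite the reference; if you want to be self-contained in the same spirit as the authors, the Fourier-splitting argument for the specific case is far shorter and more to the point than the general machinery you describe.
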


\begin{rem}
	This is the original version of Gagliardo-Nirenberg interpolation inequality, for functions defined on $\R^n$. In this paper, we apply the result of case $ j=0, m = 2, n=2, p = \infty, q =r=2  $. For the readers' convenience, we provide a brief proof of this case.
\end{rem}

\begin{lem}[Gagliardo-Nirenberg inequality used in this paper] It holds that

	\begin{equation}
	\label{GN inequality}
	\left\|  f  \right\|_{L^\infty (\R^2)}\lesssim \left\| f  \right\|_{L^2(\R^2)} ^{\frac{1}{2}} \left\| \nabla^{2} f  \right\|_{L^2(\R^2)} ^{\frac{1}{2}} 
\end{equation}
for any $ f \in L^2(\R^2) $ such that $ \nabla^{2} f \in L^2(\R^2) $.

\end{lem}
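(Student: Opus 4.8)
The plan is to establish the bound first for $f\in C_c^\infty(\R^2)$ and then to pass to the general case by mollification. The only tools needed are the Fourier transform, Plancherel's theorem and the Cauchy--Schwarz inequality, in the same spirit as the Parseval-based argument used above.

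First I would write $f$ via Fourier inversion, so that $|f(x)|\lesssim\|\widehat f\,\|_{L^1(\R^2)}$ uniformly in $x$. The key step is to split the frequency integral at a radius $R>0$ to be chosen later:
\begin{equation}
\|\widehat f\,\|_{L^1}=\int_{|\xi|\le R}|\widehat f(\xi)|\,d\xi+\int_{|\xi|>R}|\xi|^{-2}\big(|\xi|^{2}|\widehat f(\xi)|\big)\,d\xi.
\end{equation}
By Cauchy--Schwarz and Plancherel, the first integral is $\lesssim(R^{2})^{1/2}\|\widehat f\,\|_{L^2}\lesssim R\,\|f\|_{L^2}$, while the second is $\lesssim\big(\int_{|\xi|>R}|\xi|^{-4}\,d\xi\big)^{1/2}\big\||\xi|^{2}\widehat f\,\big\|_{L^2}\lesssim R^{-1}\|\nabla^2 f\|_{L^2}$, where we used that $\int_{|\xi|>R}|\xi|^{-4}\,d\xi\simeq R^{-2}$ in dimension two and that $\big\||\xi|^{2}\widehat f\,\big\|_{L^2}\simeq\|\Delta f\|_{L^2}\le\sqrt2\,\|\nabla^{2}f\|_{L^2}$ by Plancherel. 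Hence $|f(x)|\lesssim R\,\|f\|_{L^2}+R^{-1}\|\nabla^2 f\|_{L^2}$ for every $x$ and every $R>0$; choosing $R=\big(\|\nabla^2 f\|_{L^2}/\|f\|_{L^2}\big)^{1/2}$ (the case $f\equiv0$ being trivial) yields $\|f\|_{L^\infty}\lesssim\|f\|_{L^2}^{1/2}\|\nabla^2 f\|_{L^2}^{1/2}$.

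To remove the regularity assumption, I would mollify: for $f$ with $f,\nabla^2 f\in L^2$ set $f_\delta=f*\rho_\delta\in C^\infty$, apply the inequality just proved with a constant independent of $\delta$, and let $\delta\to0$, using $f_\delta\to f$ and $\nabla^2 f_\delta\to\nabla^2 f$ in $L^2$; the uniform $L^\infty$ bound shows that $(f_\delta)$ is Cauchy in $L^\infty$, so $f$ admits a continuous representative satisfying the estimate. I do not anticipate any genuine difficulty here: the proof is short, and since the statement is claimed only up to an implicit constant, the sharp-constant subtleties of the general Gagliardo--Nirenberg theorem are irrelevant. The only mild bookkeeping points are the elementary inequality $\|\Delta f\|_{L^2}\le\sqrt2\,\|\nabla^{2}f\|_{L^2}$ and the routine justification of the mollification step.
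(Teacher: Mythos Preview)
Your proof is correct and follows essentially the same Fourier-splitting argument as the paper: both bound $|f|_\infty$ by $\|\widehat f\,\|_{L^1}$, split at $|\xi|=R$, apply Cauchy--Schwarz on each piece to get $R\|f\|+R^{-1}\|\nabla^2 f\|$, and optimize in $R$. The only addition in your version is the mollification step to pass to general $f$, which the paper omits.
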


\begin{proof}
	The case $\| f \| = 0$ holds true since $ |f|_\infty= 0 $. Otherwise, we have
	
	\begin{equation}
		\begin{aligned}
			|f|_\infty & \leq \left\| \hat{f} \right\| _{L^{1} (\R^{2})} = \int_{ \R^{2} } | \hat{f} (\xi) | d \xi \\
			& \leq \int_{ |\xi| \leq R } | \hat{f} (\xi) | d \xi + \int_{ |\xi| \geq R } |\xi|^{2} \cdot | \hat{f} (\xi) | \cdot  |\xi|^{-2}   d \xi \\
			& \leq \left(\int_{ |\xi| \leq R }  | \hat{f} (\xi) | ^{2} d \xi \right) ^{\frac{1}{2}} \cdot | B_{R} |^{\frac{1}{2}} \\
			& \ \ \ \ + \left( \int_{ |\xi| \geq R } |\xi|^{4} \cdot | \hat{f} (\xi) | ^{2} d \xi \right) ^{\frac{1}{2}} \cdot \left( \int_{ |\xi| \geq R } |\xi|^{-4} d \xi \right) ^{\frac{1}{2}} \\
			& \lesssim \| f \| \cdot R + \| \nabla^{2} f \| \cdot \left( \int_{R }^{+ \infty} r^{-4} r d r \right) ^{\frac{1}{2}} \\
			&\lesssim \| f \| \cdot R + \| \nabla^{2} f \| \cdot R^{-1} .
		\end{aligned}
	\end{equation}
Taking $ R = \frac{\| \nabla^{2} f \|^{\frac{1}{2}}}{\| f \|^{\frac{1}{2}}} $ yields
	\begin{equation}
		\begin{aligned}
			|f|_\infty \lesssim \| f \|^{\frac{1}{2}} \cdot \| \nabla^{2} f \|^{\frac{1}{2}},
		\end{aligned}
	\end{equation}
and hence the lemma follows.

\end{proof}

\newpage


\bibliographystyle{plain}
\bibliography{reference}

\end{CJK*}

\end{document}